\newtheorem{theorem}{Theorem}
\numberwithin{theorem}{section}
\newtheorem{proposition}[theorem]{Proposition}
\newtheorem{lemma}[theorem]{Lemma}
\newtheorem{corollary}[theorem]{Corollary}
\newcommand{\Aut}{{\mathrm{Aut}}}
\newcommand{\Ind}{{\mathrm{Ind}}}
\newcommand{\SL}{{\mathrm{SL}}}
\newcommand{\GL}{{\mathrm{GL}}}
\newcommand{\PGL}{{\mathrm{PGL}}}
\newcommand{\SO}{{\mathrm{SO}}}
\newcommand{\Hom}{{\mathrm{Hom}}}
\title{The Dual Pair $\mathrm{Aut}(C)\times F_{4}$ ($p$-adic case)}
\author{Edmund Karasiewicz and Gordan Savin}
\address{Edmund Karasiewicz: Department of Mathematics, University of Utah}
\address{Gordan Savin: Department of Mathematics, University of Utah}
\subjclass[2010]{11F27, 22E50}
\keywords{Theta correspondence; $p$-adic groups; exceptional groups; Fourier-Jacobi functor; periods}
\begin{document}
\begin{abstract}
We study the local theta correspondence for dual pairs of the form $\mathrm{Aut}(C)\times F_{4}$ over a $p$-adic field, where $C$ is a composition algebra of dimension $2$ or $4$, by restricting the minimal representation of a group of type $E$. We investigate this restriction through the computation of maximal parabolic Jacquet modules and the Fourier-Jacobi functor.

As a consequence of our results 
we  prove a multiplicity one result for the $\mathrm{Spin}(9)$-invariant linear functionals of irreducible representations of $F_{4}$ and classify 
 the $\mathrm{Spin}(9)$-distinguished representations. 
 \end{abstract}

\maketitle

\section{Introduction}

Let $F$ be a $p$-adic field, i.e. a nonarchimedean local field of characteristic $0$ and residual characteristic $p>0$. We study the local theta correspondence of the $F$-points of the dual pair $\mathrm{Aut}(C)\times F_{4}$, where $C$ is a composition $F$-algebra of dimension $2$ or $4$, by restricting the minimal representation $(\Pi,\mathcal{V})$ of a group of type $E$. For this introduction we specialize to $\mathrm{dim}(C)=4$, for simplicity. In this case, the group of type $E$ is the adjoint form of $E_{7,C}$, split when $C$ is split and the unique nonsplit form when $C$ is anisotropic. 

With a dual pair $\mathrm{Aut}(C)\times F_{4}\subset E_{7,C}$ one can lift representations from $\mathscr{G}=\mathrm{Aut}(C)(F)$ to $G=F_{4}(F)$ as follows. Given $\tau\in \mathrm{Irr}(\mathscr{G})$ a smooth irreducible representation of $\mathscr{G}$, the maximal $\tau$-isotypic quotient of $\mathcal{V}$ admits an action of $G$ and factors as $\tau\otimes \Theta(\tau)$, where $\Theta(\tau)$ is a smooth representation of $G$. The representation $\Theta(\tau)$ is called the big theta lift of $\tau$.
 Its maximal semisimple quotient $\theta(\tau)$ (co-socle) is called the small theta lift of $\tau$. Note that one may reverse the roles of $\mathscr{G}$ and $G$. The primary objective of this paper is to investigate the big and small theta lifts of the dual pair $\mathrm{Aut}(C)\times F_{4}\subset E_{7,C}$.

We begin by discussing the theta lift from $\mathrm{Aut}(C)$ to $F_{4}$. Our first theorem gives a qualitative behavior of the lift. 
It  is a combination of Theorems \ref{SuperCuspLift}, \ref{ThetaPS} and \ref{ThetaPSsubQ}. 

\begin{theorem}
Let $\tau\in \mathrm{Irr}(\mathrm{Aut}(C))$. Then: 
\begin{enumerate}
\item $\Theta(\tau)\neq 0$  and it is a finite length representation of $F_{4}$. 
\item If $\tau$ is tempered then $\Theta(\tau)$ is irreducible. 
\item If $\theta(\tau)\cong \theta(\tau')$, where $\tau'\in \mathrm{Irr}(\mathrm{Aut}(C))$, then $\tau\cong \tau'$.
\end{enumerate}
\end{theorem}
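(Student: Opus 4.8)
This statement packages together Theorems~\ref{SuperCuspLift}, \ref{ThetaPS} and \ref{ThetaPSsubQ}, whose proofs all rest on the explicit computation of the maximal parabolic Jacquet modules of the minimal representation $\mathcal{V}$ and on the Fourier--Jacobi functor; I describe the strategy for each of the three assertions.

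For (1), non-vanishing is established by computing, for suitable maximal parabolics $P=MN$ of the type-$E$ group, the Jacquet modules $r_{P}(\mathcal{V})$ and matching their layers, via a Kudla-type filtration, with smaller theta correspondences and with degenerate principal series of $F_{4}$ and of $\mathscr{G}$; one thereby sees that every $\tau \in \mathrm{Irr}(\mathscr{G})$ occurs as a quotient of $\mathcal{V}$, so $\Theta(\tau)\neq 0$. The supercuspidal case is the content of Theorem~\ref{SuperCuspLift}, where the relevant layer is particularly clean; the general case reduces to it by embedding $\tau$ into a representation parabolically induced from a supercuspidal and using exactness of the Jacquet functor together with induction in stages. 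Finiteness of length then follows because the same filtration shows that every Jacquet module $r_{Q}(\Theta(\tau))$, $Q$ a parabolic of $F_{4}$, has finite length, so $\Theta(\tau)$ is admissible; and $\Theta(\tau)$ is finitely generated over $F_{4}$ (for instance because $\mathcal{V}$ is finitely generated over $\mathscr{G}\times F_{4}$, a by-product of the Jacquet-module analysis, and $\tau$ is irreducible). Since a finitely generated admissible smooth representation of a $p$-adic reductive group has finite length, this gives (1).

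For (2), the tempered representations of $\mathscr{G}$ are the supercuspidals together with the representations fully, unitarily, parabolically induced from a supercuspidal on a proper Levi. In the supercuspidal case one reads irreducibility of $\Theta(\tau)$ directly off the Jacquet-module computation (Theorem~\ref{SuperCuspLift}): the layers of $r_{Q}(\Theta(\tau))$ pin down a unique irreducible subrepresentation and a unique irreducible quotient of $\Theta(\tau)$, and a Frobenius-reciprocity squeeze shows the two coincide. For $\tau$ induced from a unitary supercuspidal one computes $\Theta(\tau)$ through the Fourier--Jacobi functor along the corresponding parabolic of $F_{4}$ (Theorem~\ref{ThetaPS}); temperedness of $\tau$ forces every exponent of $\Theta(\tau)$, as read from the Jacquet modules, to obey Casselman's temperedness bounds, so $\Theta(\tau)$ is a finite direct sum of tempered representations, and since its cosocle $\theta(\tau)$ is irreducible (obtained alongside (3)) one concludes $\Theta(\tau)=\theta(\tau)$ is irreducible.

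For (3), the plan is to describe $\theta(\tau)$ precisely enough to recover $\tau$ from it (Theorem~\ref{ThetaPSsubQ}): from the computed Jacquet modules of $\Theta(\tau)$ along the maximal parabolics of $F_{4}$ one reads off the cuspidal support of $\theta(\tau)$ and isolates the distinguished constituent that returns $\tau$; equivalently, one runs a see-saw/back-lift argument, showing that $\tau$ and $\tau'$ are both quotients of the big lift from $F_{4}$ to $\mathscr{G}$ of $\theta(\tau)\cong\theta(\tau')$ and that this big lift has a unique irreducible quotient. The main obstacle throughout is the explicit determination of the maximal parabolic Jacquet modules of $\mathcal{V}$ and the bookkeeping of the Kudla/Fourier--Jacobi filtrations --- matching each layer with a smaller minimal representation or a degenerate principal series --- and in particular controlling the degenerate and non-tempered inputs, where reducibility of the ambient induced representations of $F_{4}$ can intervene; it is precisely there that the irreducibility in (2) and the exact identification of $\theta(\tau)$ needed for (3) are won or lost.
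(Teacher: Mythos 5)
Your outline captures the broad frame of the paper (Jacquet modules of the minimal representation plus the Fourier--Jacobi functor, with $\Theta(\tau)$ identified against degenerate principal series of $F_4$), but the two load-bearing mechanisms are misidentified, and this leaves genuine gaps. For supercuspidal $\tau$ the paper does not obtain irreducibility from a ``Frobenius-reciprocity squeeze'' on untwisted Jacquet layers of $\Theta(\tau)$: it first proves $\mathrm{FJ}(\mathcal{V})\cong\omega_{\psi}$ (Proposition \ref{FJMin}) and imports Kudla's irreducibility of the classical $\mathrm{O}(3)\times\mathrm{Sp}(6)$ lifts of supercuspidals to bound $\Theta(\tau)$ by at most two nontrivial constituents (Corollary \ref{AtMost2}); it then needs the rank-$3$ twisted coinvariants $\Theta(\tau)_{(\overline{N},\Psi^{\pm})}\cong\widetilde{\tau}$ (Proposition \ref{ThetaPsiCo}) together with a second Heisenberg parabolic and Lemma \ref{FJFactor} to cut this to a single nontrivial constituent (Proposition \ref{UniqueNonTriv}), and finally the rank-$0$ computation (Proposition \ref{NoTriv}), where the center of $M$ acts by nontrivial characters on $\mathcal{V}_{\overline{\mathcal{N}}}$, to exclude the trivial constituent. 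These twisted coinvariants are entirely absent from your sketch, yet they are also what gives nonvanishing for arbitrary $\tau$ (via $\mathcal{V}_{(\overline{N},\Psi)}\cong C_c^{\infty}(\mathrm{Aut}(C))$, Corollary \ref{Rk3Coin}) and the injectivity in (3) (a surjection $\Theta(\tau_2)\twoheadrightarrow\Theta(\tau_1)$ yields $\widetilde{\tau}_2\twoheadrightarrow\widetilde{\tau}_1$ after applying $(\overline{N},\Psi)$-coinvariants); untwisted Jacquet layers alone do not pin down a unique constituent, and your reduction of nonvanishing ``to the supercuspidal case by parabolic induction'' is not how the non-supercuspidal case is handled (it is a separate computation, Theorem \ref{ThetaPS}, via the Siegel-parabolic filtration and Frobenius reciprocity).

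Your tempered case (2) is wrong on two concrete points. The tempered irreducibles of $\mathrm{PGL}_2(F)$ are not exhausted by supercuspidals and unitarily induced principal series: you have omitted the Steinberg representation and its quadratic twists, which are precisely the case requiring Theorem \ref{ThetaPSsubQ}, where the twisted Jacquet modules decide that $\Theta(\mathrm{St}\otimes\chi_0)$ is the irreducible submodule (not the quotient) of $i_{Q}^{G}(\chi\circ\omega_{4})$ with $\chi=\chi_0|-|^{1/2}$. And for a unitary principal series $\tau$ the claim that Casselman-type exponent bounds force $\Theta(\tau)$ to be a sum of tempered representations is false: $\Theta(\tau)\cong i_{Q}^{G}(\chi_0\circ\omega_{4})$ is irreducible but not tempered, and its irreducibility is imported from Choi--Jantzen (Proposition \ref{CJDPS}), not deduced from temperedness. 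Finally, finite length in the paper is immediate from these explicit descriptions ($\Theta(\tau)$ is irreducible for supercuspidal $\tau$, and otherwise a quotient of the finite-length $i_{Q}^{G}(\chi\circ\omega_{4})$); your alternative route through admissibility plus finite generation is asserted rather than proved and is not needed once the explicit computation is in hand.
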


For lifting in the opposite direction, that is from $F_{4}$ to $\mathrm{Aut}(C)$, our main result is Theorem \ref{F4toPGL2Irr}. It says, if $\sigma\in\mathrm{Irr}(F_{4})$ such that $\Theta(\sigma)\neq 0$, then $\Theta(\sigma)\in \mathrm{Irr}(\mathrm{Aut}(C))$.

For our second theorem, we specialize to the case where $C$ is the algebra of $2\times 2$ matrices, so $\mathrm{Aut}(C)=\mathrm{PGL}_{2}$. In this case, we can 
completely describe $\Theta(\tau)$. In order to state the results, we note that the $F_4$ group in this paper is not realized as a Chevalley group but as 
 the group of automorphism of a 27-dimensional exceptional Jordan algebra $J$. Thus $F_4$ acts on 
  the 26-dimensional subspace $J^0$ of trace 0 elements in $J$ and its
maximal parabolic subgroups can be described  as stabilizers of singular subspaces of $J_0$ \cite{A87}. In particular, $F_4$ has maximal parabolic subgroups $Q$ and 
$Q_2$ stabilizing 1 and 2-dimensional singular spaces, respectively.  (We record that Levi subgroups of $Q$ and $Q_2$ have the type $B_3$ and 
$A_{2,\mathrm{long}} \times A_{1,\mathrm{short}}$, respectively.)   Observe that $Q$ and $Q_2$, via their actions on the stabilized 1 and 2-dimensional singular spaces, 
have quotients isomorphic to $\GL_1$ and $\GL_2$, respectively. 
In particular, a character $\chi$ of $\GL_1(F)$ defines a degenerate principal series representation $\Ind_Q^G(\chi)$, and 
a supercuspidal representation $\tau$ of $\mathrm{PGL}_{2}(F)$ defines a 
family of degenerate principal series representations $\Ind_{Q_2}^G(\tau \otimes |\det|^s)$.  

\begin{theorem}\label{IntroPGL2Theta} Let $\tau\in \mathrm{Irr}(\mathrm{PGL}_{2}(F))$.
\begin{enumerate}
\item If $\tau$ is a quotient of a principal series $\Ind_{\overline{\mathscr B}}^{\mathscr G}(\chi)$ then $\Theta(\tau)$ is a quotient of $\Ind_Q^G(\chi)$. 
(For the precise statement see Theorems \ref{ThetaPS} and \ref{ThetaPSsubQ} and Proposition \ref{SmallThetaPS}.) \label{PS}
\item If $\tau$ is a supercuspidal representation, then $\Theta(\tau)=\theta(\tau)$ is the unique irreducible quotient of 
 $\Ind_{Q_2}^G(\tau \otimes |\det|^{3/2})$.
\end{enumerate}
\end{theorem}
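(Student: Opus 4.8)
The plan is to combine the structural results already available — finite length and nonvanishing of $\Theta(\tau)$ from Theorem 1.1(1), and irreducibility for tempered $\tau$ from Theorem 1.1(2) — with an explicit analysis of the Jacquet module of the minimal representation $\mathcal{V}$ along the parabolic $Q_2$. Since a supercuspidal $\tau$ of $\mathrm{PGL}_2(F)$ is in particular tempered, Theorem 1.1(2) gives that $\Theta(\tau)$ is irreducible, so $\Theta(\tau)=\theta(\tau)$ automatically; the content is to identify this irreducible representation as the Langlands quotient (equivalently here, the unique irreducible quotient) of $\Ind_{Q_2}^G(\tau\otimes|\det|^{3/2})$.

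First I would compute the $Q_2$-Jacquet module $\mathcal{V}_{N_2}$ of the minimal representation, where $N_2$ is the unipotent radical of $Q_2$, and track the action of the Levi factor, whose derived group is (up to isogeny) $\mathrm{SL}_2^{\mathrm{long}}\times\mathrm{SL}_3$, together with the $\mathrm{GL}_2$-quotient through which $Q_2$ acts on the stabilized $2$-dimensional singular subspace of $J^0$. The dual pair $\mathrm{PGL}_2\times F_4$ sits inside $E_{7}$ so that the $\mathrm{PGL}_2$ here is commensurable with a factor visible inside the $A_2\times A_1$ Levi of $Q_2$; the key point is to exhibit, after passing to $\tau$-coisotypic components, a nonzero $G$-equivariant map from $\mathcal{V}$ onto a parabolically induced representation $\Ind_{Q_2}^G(\tau\otimes|\det|^s)$ for the specific $s=3/2$. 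This normalization of $s$ is forced by matching the central/torus character of $\tau$ on the $\mathrm{GL}_2$-quotient against the way the modulus character of $N_2$ and the action of the torus on $\mathcal{V}_{N_2}$ interact — essentially a half-sum-of-roots bookkeeping computation. By Frobenius reciprocity, producing such a surjection is equivalent to showing $\Hom_{Q_2}(\mathcal{V}_{N_2},\tau\otimes|\det|^{3/2})\neq 0$ after twisting by the modulus, which reduces to identifying $\tau\otimes|\det|^{3/2}$ as a subquotient (in fact a quotient, by supercuspidality of $\tau$ on the $\mathrm{GL}_2$-factor, which makes the relevant piece of $\mathcal{V}_{N_2}$ split off) of the $\tau$-part of $\mathcal{V}_{N_2}$.

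Concretely the steps are: (i) describe $\mathcal{V}_{N_2}$ as a module for $M_2\ltimes(N_2/[N_2,N_2])$ using the known minimal-representation techniques (Fourier–Jacobi / Jacquet functor computations for the exceptional group, as developed earlier in the paper); (ii) restrict to the $\mathrm{PGL}_2\times\mathrm{GL}_2$ part and, using supercuspidality of $\tau$, isolate the summand on which $\mathrm{PGL}_2$ acts by $\tau$, reading off the $\mathrm{GL}_2$-action as $|\det|^{3/2}$ up to an unramified twist to be pinned down by the modulus character; (iii) apply second-adjunction/Frobenius reciprocity to get a nonzero $G$-map $\mathcal{V}\twoheadrightarrow_{\tau}\Ind_{Q_2}^G(\tau\otimes|\det|^{3/2})$, hence a nonzero map $\Theta(\tau)\to\Ind_{Q_2}^G(\tau\otimes|\det|^{3/2})$; (iv) invoke Theorem 1.1(2) to conclude $\Theta(\tau)$ is irreducible, so this map is injective with image the (unique, since $\tau\otimes|\det|^{3/2}$ sits in the positive Weyl chamber relative to $Q_2$) irreducible submodule; dually, reading off a quotient map instead, identify $\Theta(\tau)$ with the unique irreducible \emph{quotient} (the Langlands quotient) of $\Ind_{Q_2}^G(\tau\otimes|\det|^{3/2})$.

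The main obstacle I anticipate is step (ii): precisely pinning the exponent to $3/2$ rather than some other half-integer. This requires a careful identification of how the $\mathrm{GL}_2$-quotient of $Q_2$ embeds in the torus of $E_7$ relative to the normalization of the minimal representation, and a correct accounting of the twist by $\delta_{Q_2}^{1/2}$; an off-by-a-twist error here is the most likely pitfall. A secondary subtlety is verifying that the $\tau$-isotypic piece genuinely splits off as a \emph{quotient} of $\mathcal{V}_{N_2}$ (so that the induced map is onto the full induced representation and not merely into a proper submodule), which is where supercuspidality of $\tau$ — guaranteeing that the relevant Jacquet module is a projective, hence injective, object in the appropriate category — does the work; one must check that the geometric lemma filtration on $\mathcal{V}_{N_2}$ has the $\tau$-part appearing as a genuine direct summand, with all other graded pieces having no $\tau$-coinvariants. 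Once these two points are secured, finite length plus Theorem 1.1(2) do the rest with no further estimates.
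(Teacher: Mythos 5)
Your reduction of the problem to identifying the already-irreducible representation $\Theta(\tau)$ (irreducibility being Theorem \ref{SuperCuspLift}) is fine, but the identification step has a genuine gap, located exactly where you write ``dually, reading off a quotient map instead.'' Frobenius reciprocity in the form $\Hom_{G}(\mathcal{V},\Ind_{Q_2}^{G}(\sigma_0))\cong\Hom_{M_2}(\mathcal{V}_{N_2},\sigma_0)$ produces a nonzero map \emph{into} the induced representation, i.e.\ it would realize $\Theta(\tau)$ as a \emph{submodule} of $\Ind_{Q_2}^{G}(\tau\otimes|\det|^{3/2})$, whereas the theorem asserts that $\Theta(\tau)$ is its unique irreducible \emph{quotient}. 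These two constituents need not coincide when the induced representation is reducible, and nothing in your argument shows they do; moreover your chamber bookkeeping is off (the induction in the statement is unnormalized, and with $\rho_{Q_2}=|\det|^{7/2}$ the normalized inducing exponent is negative, not positive). Passing from the submodule statement to the quotient statement is not formal: one needs either Bernstein's second adjointness, hence a computation of the Jacquet module along the \emph{opposite} parabolic $\overline{Q}_2$ with the correct exponent appearing there, or an intertwining operator exchanging the exponents $\pm 3/2$. This is precisely how the paper proceeds, but by a different route: a see-saw through the dual pair $G_2\times F_4\subset E_8$. From Magaard--Savin's filtration of $r_{P}(\mathcal{V}_{8})$ for the Heisenberg parabolic $P=MN\subset G_2$ (with $M\cong\GL_2$), whose top piece is $\mathcal{V}_{7}\otimes|\det|^{3/2}\oplus 1\otimes|\det|^{7/2}$, one sees that $\sigma\otimes\Ind_{P}^{G_2}(\tau\otimes|\det|^{3/2})$ is a quotient of $\mathcal{V}_{8}$; then, using the intertwining isomorphism $\Ind_{P}^{G_2}(\tau\otimes|\det|^{3/2})\cong\Ind_{P}^{G_2}(\tau^{\vee}\otimes|\det|^{-3/2})$ and supercuspidality of $\tau$ to discard the other pieces of the filtration, one computes $\Theta(\Ind_{P}^{G_2}(\tau\otimes|\det|^{3/2}))\cong\Ind_{Q_2}^{G}(\tau\otimes|\det|^{3/2})$, exhibiting $\sigma=\Theta(\tau)$ as a quotient; uniqueness of the irreducible quotient then follows because this induced representation is a quotient of a standard module (Proposition \ref{ThetaSCasQuotient}).

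A second, independent gap is your steps (i)--(ii): the $\mathscr{G}\times M_2$-module structure of $\mathcal{V}_{N_2}$ for the unipotent radical $N_2$ of $Q_2\subset F_4$ is nowhere computed in the paper --- Sections \ref{JMI} and \ref{JMII} treat only the Heisenberg parabolic of $F_4$ and the Borel of $\PGL_2$ via the Siegel parabolic of $E_7$ --- and it is not available ``off the shelf.'' Pinning the exponent to $3/2$ is not a half-sum-of-roots bookkeeping exercise; it would require an orbit analysis of $M_2\times\PGL_2$ on the relevant strata of the minimal representation, in the spirit of (but more involved than) Propositions \ref{Rk3Fiber} and \ref{Rk0Fiber}, together with a verification that the $\tau$-isotypic part sits where you want it in the resulting filtration. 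The paper's detour through $E_8$ exists precisely to avoid this computation, since there the exponent $3/2$ is read off directly from a known filtration. So your plan is a genuinely different route in outline, but as written both the central computation and the sub-versus-quotient step are missing, and the latter is not a cosmetic omission.
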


In (\ref{PS}) $\theta(\tau)$ is always isomorphic to the co-socle of the degenerate principal series. Since the co-socle of $\Ind_Q^G(|\cdot|^{5/2})$ 
is a sum of two irreducible representations the theta correspondence is not one to one, and this is the only place where it fails.

Next we want to highlight some consequences of our results, as they relate to the relative Langlands program of Sakellaridis-Venkatesh \cite{SV17}. 
We prove that that the rank one exceptional symmetric pair $(F_4, {\mathrm{Spin}_9})$ over a $p$-adic field is a Gelfand pair, a long time open problem:


\begin{theorem}\label{IntroApps}$ $
Let $\sigma\in\mathrm{Irr}(F_{4})$. Then $\mathrm{dim}\mathrm{Hom}_{\mathrm{Spin}(9)}(\tilde\sigma, \mathbb{C})\leq 1$. Moreover, the dimension is $1$ if and only if $\sigma$ is the theta lift of a generic representation of $\mathrm{PGL}_{2}(F)$. 

\end{theorem}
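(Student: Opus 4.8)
The plan is to deduce Theorem \ref{IntroApps} from the theta correspondence for the dual pair $\mathrm{PGL}_2 \times F_4 \subset E_{7}$ (the split case, $C = M_2(F)$), using the period/Fourier--Jacobi machinery developed in the body of the paper. The key point is that the Hom space $\Hom_{\Spin(9)}(\tilde\sigma,\bbC)$ should be identified with a Hom space on the $\mathrm{PGL}_2$-side of the correspondence, where multiplicity one is classical. Concretely, $\Spin(9)$ is the centralizer of a torus $\GL_1$ inside $F_4$ — equivalently, $F_4/\Spin(9)$ is the rank-one symmetric space whose "boundary degeneration" is governed by the parabolic $Q$ with Levi of type $B_3$ (note $\Spin(9) \supset \Spin(7)$, the semisimple part of the $B_3$-Levi). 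So first I would make precise the link: a $\Spin(9)$-invariant functional on $\tilde\sigma$ is, by Frobenius reciprocity applied to the minimal representation $\Pi$ of $E_7$ restricted along $\mathrm{PGL}_2 \times F_4$, the same as a $\mathrm{PGL}_2$-equivariant pairing between $\Theta(\sigma)$ (a representation of $\mathrm{PGL}_2$) and a fixed representation — I expect this fixed representation to be the even Weil/principal-series-type module attached to $\GL_1 \backslash \mathrm{PGL}_2$, i.e.\ the one whose $\mathrm{PGL}_2$-distinguished (= $\GL_1$-distinguished) vectors detect the "split torus period." The upshot is an isomorphism
\[
\Hom_{\Spin(9)}(\tilde\sigma,\bbC) \;\cong\; \Hom_{\mathrm{PGL}_2(F)}\bigl(\Theta(\sigma), \; \mathcal{S}(\GL_1\backslash \mathrm{PGL}_2)\bigr)^{\!\vee}\text{-type space},
\]
or more cleanly, after using that $\Theta(\sigma)$ is irreducible when nonzero (Theorem \ref{F4toPGL2Irr}), a statement of the form: $\sigma$ is $\Spin(9)$-distinguished iff $\Theta(\sigma) \neq 0$ and $\Theta(\sigma)$ is a $\GL_1$-distinguished (equivalently, generic) representation of $\mathrm{PGL}_2(F)$, with the dimension of the Hom space matching on both sides.

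The second step is the multiplicity-one bound. Granting the identification above, $\dim \Hom_{\Spin(9)}(\tilde\sigma,\bbC)$ is bounded by the dimension of the space of $\GL_1(F)$-invariant (Waldspurger/toric) functionals on an irreducible representation of $\mathrm{PGL}_2(F)$. For the \emph{split} torus $\GL_1 \subset \mathrm{PGL}_2$, this is exactly the statement that $\GL_1 \backslash \mathrm{PGL}_2$ is a multiplicity-one (strongly tempered, even) spherical variety: every irreducible $\pi$ of $\mathrm{PGL}_2(F)$ has at most a one-dimensional space of split-torus-invariant functionals, and it has a nonzero such functional precisely when $\pi$ is infinite-dimensional, i.e.\ generic (this is the local split Waldspurger / Tunnell computation, or can be seen directly from Mackey theory on $\GL_1\backslash\mathrm{PGL}_2/\overline{\mathscr B}$). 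Combined with Step 1 and the bijectivity statement in part (3) of the first theorem (so that the "generic $\tau$" whose lift is $\sigma$ is unique when it exists), this yields both the $\leq 1$ bound and the characterization: $\dim = 1 \iff \sigma = \Theta(\tau) = \theta(\tau)$ for some generic $\tau \in \mathrm{Irr}(\mathrm{PGL}_2(F))$.

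The main obstacle is Step 1 — turning the restriction of the minimal representation into an honest reciprocity for $\Spin(9)$-periods. The subtlety is that $\Spin(9)$ is not a Levi of any parabolic of $F_4$; it arises as the fixed points of an involution, and the "period integral" over $\Spin(9)$ must be matched against an integral over a unipotent-times-torus on the $\mathrm{PGL}_2$ side. I would handle this by working with the parabolic $Q \subset F_4$ (Levi type $B_3$, with $Q$ having a $\GL_1$-quotient) whose Jacquet module with respect to the minimal representation is computed in the body: restricting $\Pi$ to $\mathrm{PGL}_2 \times Q$ and taking the $Q$-Jacquet (or a partial Fourier--Jacobi) functor should produce a module over $\mathrm{PGL}_2 \times \GL_1$ that is essentially $\mathcal{S}$ of a single $\mathrm{PGL}_2 \times \GL_1$-orbit, which is what feeds the reciprocity; one then has to check that the $\Spin(9)$-functional is recovered from this (i.e.\ that no "boundary" contributions from smaller orbits contribute, which is where finiteness/temperedness from the first theorem and an exactness property of the Jacquet functor are used). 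A clean alternative, if the direct orbit analysis is unpleasant, is to invoke the results already proved in the paper identifying $\Theta(\sigma)$ explicitly (Theorem \ref{IntroPGL2Theta} and Theorem \ref{F4toPGL2Irr}) and to phrase the whole argument as: $\Hom_{\Spin(9)}(\tilde\sigma,\bbC) \neq 0 \iff \sigma$ is a quotient of $\Ind_Q^G(\chi)$ for $\chi$ such that the corresponding principal series of $\mathrm{PGL}_2$ is $\GL_1$-distinguished, i.e.\ $\chi$ trivial or generic-type, plus the supercuspidal case — reducing everything to the two explicit families in Theorem \ref{IntroPGL2Theta} and a case-by-case multiplicity count, which is routine once the families are in hand.
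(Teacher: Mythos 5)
There is a genuine gap, and it is in your Step 1: the period transfer you posit is the wrong one. The paper's proof (Theorem \ref{Spin9Period}) shows that the $\Spin(9)$-period on $\sigma$ corresponds to the \emph{Whittaker} period on $\Theta(\sigma)$, not to the split-torus ($\GL_1$-) period. Concretely, one applies Lemma \ref{ThetaDual} with $(U,\Psi)=(\overline{\mathscr{U}},\psi)$, where $\overline{\mathscr{U}}$ is the unipotent radical of a Borel of $\PGL_2$ sitting inside $E_7$ (not a parabolic of $F_4$), and the whole argument hinges on the computation $\mathcal{V}_{(\overline{\mathscr{U}},\psi)}\cong \ind_{\Spin(9)}^{G}(1)$ (Proposition \ref{MinRepPGL2TwistedJac}), which comes from the transitivity of $F_4$ on rank-one trace-one elements of the exceptional Jordan algebra with stabilizer $\Spin(9)$ (Lemma \ref{rk1tr1Trans}). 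Frobenius reciprocity then gives $(\Theta(\sigma)_{(\overline{\mathscr{U}},\psi)})^{*}\cong \Hom_{\Spin(9)}(\tilde\sigma,\bbC)$, and since $\Theta(\sigma)$ is irreducible or zero (Theorem \ref{F4toPGL2Irr}), uniqueness of Whittaker functionals yields the bound and the characterization (with Lemma \ref{SelfDual} handling the passage between $\sigma$ and $\tilde\sigma$). Your proposal never produces this identification, and your substitute mechanism, a Jacquet module along $Q\subset F_4$, does not see $\Spin(9)$ at all: $\Spin(9)$ is not contained in, nor naturally related to, $Q$ in the way your orbit heuristic requires.

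Moreover, even granting your Step 1, Step 2 would give the wrong answer: the dichotomy ``split-torus-distinguished $\iff$ infinite-dimensional/generic'' for $\PGL_2(F)$ is false, since the trivial representation is $\GL_1$-distinguished but not generic (and split-torus multiplicity one is itself delicate, unlike the Whittaker case). This is not a cosmetic issue; with the torus period your criterion would misclassify exactly the degenerate lifts that the Whittaker period is designed to exclude. Finally, your fallback -- ``reduce to the explicit families in Theorem \ref{IntroPGL2Theta} and count $\Spin(9)$-multiplicities case by case'' -- is not routine: computing $\Hom_{\Spin(9)}(\sigma,\bbC)$ directly for constituents of $\Ind_Q^G(\chi)$ is precisely the open problem the theta-reciprocity is introduced to solve, and the paper offers no independent way to do such a count.
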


The study of symmetric spaces, and more generally spherical spaces, has a long history.
In \cite{vD86} van Dijk proved that real forms of the symmetric pair $(F_4, \mathrm{Spin}(9))$ 
are generalized Gelfand pairs, a slightly weaker statement, as it concerns unitary representations only. Recently Rubio \cite{RR19} proved that 
 $(F_4, \mathrm{Spin}(9))$ is a Gelfand pair over $\mathbb C$. 
The usual approach  involves invariant distributions, see \cite{Gr91} or  \cite{AG08} for more information on this rich subject. 
 On the other hand, Howe \cite{Ho79} used the dual pair  $\SL_2 \times \mathrm{O}(n)$ to analyze the symmetric pair $(\mathrm{O}(n), \mathrm{O}(n-1))$. 
  It was observed in \cite{S94} that Howe's strategy can be applied to all rank one symmetric pairs. In this paper, at long last, 
 we execute this strategy for the exceptional symmetric pair. 
 Theorem \ref{IntroApps} is a consequence of the fact that the theta correspondence relates the $\mathrm{Spin}(9)$-period on representations of $F_4$ to the Whittaker period on 
representations of $\mathrm{PGL}_2(F)$. More precisely, we have 
\[ 
\mathrm{Hom}_{\mathrm{Spin}(9)}(\tilde{\sigma}, \mathbb{C}) \cong \mathrm{Hom}_{\mathscr{U},\psi}(\Theta(\sigma), \mathbb{C}) 
\] 
where $(\mathscr{U},\psi)$ is  a Whittaker datum for $\mathrm{PGL}_2(F)$.  Since we proved that $\Theta(\sigma)$ is irreducible (or zero) Theorem \ref{IntroApps} follows from 
uniqueness of the Whittaker functional for irreducible representations of $\mathrm{PGL}_2(F)$.  Moreover, since the lift from $\mathrm{PGL}_{2}(F)$ is completely known by 
Theorem \ref{IntroPGL2Theta}, 
we have a classification of $\mathrm{Spin}(9)$-distinguished representations of $F_4$, consistent with predictions made in  \cite{SV17}.

The primary tools in our analysis are computations of maximal parabolic Jacquet modules and the Fourier-Jacobi functor. Similar Jacquet module computations were used to study several different exceptional dual pairs in \cite{GS99, GS23, MS97}. In particular, this type of Jacquet module computation provides an important step in establishing Howe duality and dichotomy for exceptional dual pairs containing $G_{2}$, which was recently completed in \cite{GS23}.

Our main new input is the use of the Fourier-Jacobi functor. This allows us to relate the $\mathrm{Aut}(C)\times F_{4}$ theta correspondence to a classical $\mathrm{O}(3)\times \mathrm{Sp}(6)$ theta correspondence. Using the well developed theory of this classical theta correspondence we can efficiently derive results about the $\mathrm{Aut}(C)\times F_{4}$ theta correspondence.

Now we outline the contents of the paper and make a few more remarks on the proofs of our main results. Section \ref{Notation} introduces notation and recalls some preliminary material. Section \ref{JMI} contains the computations of (twisted) Jacquet modules of the minimal representation $\mathcal{V}$ with respect to a maximal Heisenberg parabolic of $F_{4}$. These calculations are done using a filtration of $\mathcal{V}$ with respect to a maximal Heisenberg parabolic subgroup of $E_{7}$ (recalled in Theorem \ref{HeisJacThm}). This filtration was first studied in Magaard-Savin \cite{MS97}. In this section we also review the Fourier-Jacobi functor.

In Section \ref{E7SuperCusp} we apply the results of Section \ref{JMI} to study the theta lift of $\tau$ a supercuspidal representation of $\mathrm{Aut}(C)$ to $F_{4}$. The main result of this section Theorem \ref{SuperCuspLift} states that $\Theta(\tau)$ is irreducible. The proof is based on the Fourier-Jacobi functor, which is the main new input in our analysis. Its utility stems from Proposition \ref{FJMin}, which says that the Fourier-Jacobi functor applied to the minimal representation of $E_{7}$ is isomorphic to the Weil representation as an $\mathrm{SO}(3)\times \mathrm{Sp}(6)$-representation. This is almost the setting of the classical dual pair $\mathrm{O}(3)\times \mathrm{Sp}(6)$. We use the well developed classical theory and basic properties of the Fourier-Jacobi functor to deduce that $\Theta(\tau)$ has at most two nontrivial constituents (Corollary \ref{AtMost2}). Then we apply the calculations from Section \ref{JMI} to show that $\Theta(\tau)$ is irreducible. Specifically, we use the twisted Jacquet module calculations to prove that $\Theta(\tau)$ has at most one nontrivial constituent (Proposition \ref{UniqueNonTriv}), and the untwisted Jacquet module to rule out the trivial representation (Proposition \ref{NoTriv}).

Next we specialize to the case when $C$ is the algebra of $2\times 2$ matrices, so $\mathrm{Aut}(C)=\mathrm{PGL}_{2}$. Section \ref{JMII} is roughly analogous to Section \ref{JMI}. The difference is that now we use a filtration of $\mathcal{V}$ with respect to a maximal Siegel parabolic subgroup of $E_{7}$ \cite{S94} (recalled in Theorem \ref{PFiltration}) to compute Jacquet modules with respect to a Borel subgroup of $\mathrm{PGL}_{2}$.

In Section \ref{LiftPSeries} we describe the theta lift of representations of $\mathrm{PGL}_{2}$ to $F_{4}$. This breaks up into two parts. First we consider constituents of principal series. For this we apply the results of Section \ref{JMII} on untwisted Jacquet modules to lift the constituents of principal series of $\mathrm{PGL}_{2}$ to $F_{4}$. Generically, the theta lift of a $\mathrm{PGL}_{2}$ principal series is a degenerate principal series of $F_{4}$ induced from the maximal parabolic subgroup $Q$. The complete description of the big theta lift is contained in Theorems \ref{ThetaPS} and \ref{ThetaPSsubQ}; the small theta lift is described in Proposition \ref{SmallThetaPS}. The approach of this section builds upon \cite{S94}.

Second, we consider supercuspidal representations in Subsection \ref{PGL2SC}. From Theorem \ref{SuperCuspLift} we know that the theta lift of a supercuspdial representation is irreducible. Here we refine this result in Proposition \ref{ThetaSCasQuotient} when $\mathrm{Aut}(C)=\mathrm{PGL}_{2}$. Specifically, we show that the theta lift is a quotient of an explicit representation of $F_{4}$ induced from the maximal parabolic subgroup $Q_2$. 
We note that this calculation uses the $G_{2}\times F_{4}\subset E_{8}$ dual pair studied in Magaard-Savin \cite{MS97}.

In Section \ref{F4toAutC} we consider the theta lift from $F_{4}$ to $\mathrm{Aut}(C)$. The main result is Theorem \ref{F4toPGL2Irr}, which states that if $\sigma\in\mathrm{Irr}(F_{4})$ and $\Theta(\sigma)\neq 0$, then $\theta(\sigma)\in\mathrm{Irr}(\mathrm{Aut}(C))$.

In Section \ref{Spin9} we characterize the irreducible representations of $F_{4}$ that are $\mathrm{Spin}(9)$-distinguished, i.e. possess a $\mathrm{Spin}(9)$-invariant linear functional.  The main result, Theorem \ref{Spin9Period}, is proved using the twisted Jacquet module calculations from Section \ref{JMII}.

Section \ref{QuadCase} concludes the paper with analogous (but easier) results when $\mathrm{dim}C=2$.
\section{Notation}\label{Notation}

\subsection{Representation theory of $p$-adic groups}

Let $F$ be a nonarchimedean local field of characteristic $0$ and residual characteristic $p>0$, with ring of integers $\mathcal{O}$ and maximal ideal $\mathfrak{p}$. We fix a uniformizer $\varpi\in \mathfrak{p}$ and a nontrivial additive character $\psi:F\rightarrow \mathbb{C}^{\times}$. We normalize the absolute value on $F$ so that $|\varpi|=q^{-1}$.

Let $G$ be the $F$-points of a connected reductive group. Let $\mathcal{M}(G)$ be the category of smooth $G$-representations and let $\mathrm{Irr}(G)$ be the set of isomorphism classes of irreducible objects. Given $\pi\in\mathcal{M}(G)$ we write $\widetilde{\pi}$ for the smooth contragradient representation of $\pi$. 

Let $P=MN$ be a parabolic subgroup of $G$ with a Levi decomposition. We use the following notation for unnormalized parabolic induction. Let $(\sigma,W)$ be a smooth representation of $M$ inflated to $P$. We write $\mathrm{Ind}_{P}^{G}(\sigma)$ for the space of right $G$-smooth functions $f:G\rightarrow W$ such that for any $g\in G$, $m\in M$, and $n\in N$ we have $f(mng)=\sigma(m)f(g)$. This is a $G$-representation with the action $(g\cdot f)(g^{\prime})=f(g^{\prime}g)$.

We fix $dn$ to be a Haar measure for $N$ and write $\delta_{P}$ for the modular character of $P$ defined by $d(pnp^{-1})=\delta_{P}(p)dn$. We write $i_{M}^{G}(\sigma)=\mathrm{Ind}_{P}^{G}(\delta_{P}^{1/2}\otimes\sigma)$ for normalized parabolic induction.

Let $(\pi,V)$ be a smooth $G$-representation. If $H\subset G$ is a subgroup with a character $\chi:H\rightarrow \mathbb{C}$, let $V_{(H,\chi)}$ denote the space of $(H,\chi)$-coinvariants. This space can be realized as the quotient of $V$ by the subspace $\mathrm{span}\{h\cdot v-\chi(h)v|v\in V,\,h\in H\}$ and is a representation of the subgroup of the normalizer of $H$ that fixes $\chi$, which we write as $\mathrm{Stab}_{G}(\chi)$.

If $P=MN\subset G$ is a parabolic subgroup, we write $r_{P}(V)=\delta_{P}^{-1/2}\otimes V_{N}$ for the normalized Jacquet module.

\subsection{Composition Algebras}

The theta-lift examined in this paper is based on exceptional dual pairs that can be constructed using composition and Jordan algebras. We begin by collecting some information on these algebras.

Let $C$ be a composition algebra over $F$ with quadratic norm form $n_{C}$. We write $B_{C}(x,y)=n_{C}(x+y)-n_{C}(x)-n_{C}(y)$ for the bilinear form associated to $n_{C}$ and $\overline{\phantom{x}}:C\rightarrow  C$ as $x\mapsto \overline{x}$ for conjugation (\cite{SV00}, Section 1.3). The trace of an element of $x\in C$ is $\mathrm{Tr}_{C}(x)=x+\overline{x}$ and $n_{C}(x)=x\overline{x}$. Note that $\mathrm{Tr}_{C}(xy)=-B_{C}(x,y)$, for all $x,y\in C$.  

We write $C^{0}$ for the subspace of trace $0$ elements of $C$. The group of $F$-algebra automorphisms $\mathrm{Aut}(C)$ preserves the norm and acts on $C^{0}$. Thus $\mathrm{Aut}(C)$ is contained in $O(C^{0},n_{C})$ the orthogonal group of the norm form.

Recall that $\mathrm{dim}_{F}(C)=1,2,4,8$. Over the $p$-adic field $F$ the possible composition algebras can be described explicitly. When $\mathrm{dim}_{F}(C)=2$, then $C$ is either a quadratic field extension of $F$, or $C$ is isomorphic to the split quadratic algebra $F\oplus F$ with norm form $(x,y)\mapsto xy$. In either case, the automorphism group of $\mathrm{Aut}(C)$ is generated by the conjugate map $x\mapsto \overline{x}$ and so is isomorphic to $\mu_{2}=\{\pm1\}$.

When $\mathrm{dim}(C)=4$, $C$ is either isomorphic to the split quaternion algebra, which can be realized as the algebra of $2\times 2$ matrices $M(2,F)$ with norm form given by the determinant; or $C$ is isomorphic to $D$, the unique (up to isomorphism) quaternion division algebra over $F$. The group of $\mathrm{Aut}(C)$ consists of inner automorphisms (Skolem-Noether Theorem) and so is isomorphic to $PC^{\times}$. When $C\cong M(2,F)$, then $\mathrm{Aut}(C)\cong \mathrm{PGL}_{2}(F)$.

When $\mathrm{dim}_{F}(C)=8$, then $C$ is isomorphic to $\mathbb{O}$ the split octonian algebra over $F$. Its automorphism group is the $F$-points of an algebraic group of type $G_{2}$.

For more information on composition algebras the reader can refer to \cite{J85, SV00}

\subsection{Jordan Algebras}\label{JordanAlg}

Next we describe a family of Jordan algebras indexed by a composition algebra $C$. For more details, see Pollack \cite[Chapter 2, Section 2]{P22}. 

Let
\begin{equation*}
\mathcal{J}=\mathcal{J}_{C}=\Big\{X=\left(\begin{smallmatrix}
c_{1}& x_{3} & \overline{x_{2}}\\
\overline{x_{3}} & c_{2} & x_{1}\\
x_{2} & \overline{x}_{1} & c_{3}
\end{smallmatrix}\right)|c_{j}\in F,\, x_{j}\in C\Big\}.
\end{equation*}
Given $A,B\in \mathcal{J}_{C}$ the Jordan multiplication is defined by 
\begin{equation*}
A* B=\frac{AB+BA}{2},
\end{equation*}
where $AB$, $BA$ denotes usual matrix multiplication. The algebra $\mathcal{J}_{C}$ is equipped with a cubic norm form 
\begin{equation*}
N_{\mathcal{J}}(X)=c_{1}c_{2}c_{3}-c_{1}n_{C}(x_{1})-c_{2}n_{C}(x_{2})-c_{3}n_{C}(x_{3})+\mathrm{Tr}(x_{1}x_{2}x_{3}).
\end{equation*} 
The norm form uniquely defines a symmetric trilinear form $(-,-,-)_{\mathcal{J}}:\mathcal{J}\times \mathcal{J}\times \mathcal{J}\rightarrow F$ normalized so that $(X,X,X)_{\mathcal{J}}=6N_{\mathcal{J}}(X)$.

We write $\mathrm{Tr}_{\mathcal{J}}:\mathcal{J}\rightarrow F$ for the map defined by $\mathrm{Tr}_{\mathcal{J}}(X)=c_{1}+c_{2}+c_{3}$. From this we define a nondegenerate pairing $\langle-,-\rangle_{\mathcal{J}}:\mathcal{J}\times\mathcal{J}\rightarrow F$ by $\langle X,X^{\prime}\rangle_{\mathcal{J}}=\mathrm{Tr}_{\mathcal{J}}(X*X^{\prime})$.

There is also a map $\phantom{X}^{\#}:\mathcal{J}\rightarrow\mathcal{J}$ defined by
\begin{equation*}
X^{\#}= 
\left(\begin{smallmatrix}
c_{2}c_{3}-n_{C}(x_{1})& \overline{x}_{2}\overline{x}_{1}-c_{3}x_{3} & x_{3}x_{1}-c_{2}\overline{x_{2}}\\
x_{1}x_{2}-c_{3}\overline{x}_{3} & c_{1}c_{3}-n_{C}(x_{2}) & \overline{x}_{3}\overline{x}_{2}-c_{1}x_{1}\\
\overline{x}_{1}\overline{x}_{3}-c_{2}x_{2} & x_{2}x_{3}-c_{1}\overline{x_{1}} & c_{1}c_{2}-n_{C}(x_{3})
\end{smallmatrix}\right).
\end{equation*}
This map can be used to define the cross product 
\begin{equation}\label{CrossProd1}
X\times Y=(X+Y)^{\#}-X^{\#}-(Y)^{\#}.
\end{equation}
Alternatively $X\times  Y \in\mathcal{J}$ is the unique element such that for all $Y\in \mathcal{J}$
\begin{equation}\label{CrossProd2}
\langle X\times Y, Z \rangle_{\mathcal{J}}= (X, Y ,Z)_{\mathcal{J}}.
\end{equation}

There is a notion of rank for elements in $\mathcal{J}$. Every element $X\in\mathcal{J}$ has rank at most $3$. If $N(X)=0$, then $X$ has rank at most $2$. If $X^{\#}=0$, then $x$ has rank at most 1. If $X=0$, then $X$ has rank $0$. (Pollack \cite[Chapter 3, Section 3]{P22})

\vskip 5pt 

We write $H_{C}$ for the group of invertible linear transformations of $\mathcal{J}=\mathcal{J}_{C}$ that scale the norm form $N_{\mathcal{J}}$ (i.e the 
group of similitudes of the cubic form), and $H_{C}^{1}$ for the subgroup preserving the norm form. 
We have a subgroup $\Aut(C) \times \GL_3(F) \rightarrow H_{C}$ where $g\in \Aut(C)$ acts naturally on entries of  elements of $\mathcal J$, while  
$h\in \GL_3(F)$  acts on $X\in \mathcal J$ by 
\[ 
\det(h) \cdot (h^{-1})^{\top} X h^{-1},
\] 
where $h^{\top}$ denotes the transpose of $h$. The similitude character of this transformation of $\mathcal J$ is $\det(h)$.  
  If $C=F$, then $H_F \cong \GL_3(F)$. 
In general, $\Aut (C) \times \GL_3(F)$ preserves the decomposition 
\[ 
\mathcal J_{C} =\mathcal J_F \oplus \mathcal J_{C^0} 
\] 
where $\mathcal J_{C^0}$ is the subspace consisting of 
\begin{equation*}
J(x)=\left(\begin{smallmatrix}
0& x_{3} & \overline{x_{2}}\\
\overline{x_{3}} & 0 & x_{1}\\
x_{2} & \overline{x}_{1} & 0
\end{smallmatrix}\right),
\end{equation*}
where $x=(x_{1},x_{2},x_{3})\in (C^{0})^3$.  
The following proposition in essence restates the 
known fact that the dual of the standard $3$-dimensional representation of $\GL_3$ is isomorphic to the exterior square of the standard representation twisted by 
determinant inverse. In any case it is easy to check. 
\begin{proposition}  Let $V_3$ be the standard representation of $\GL_3(F)$. Then $\mathcal J_{C^0}\cong C^0\otimes V_3$. Explicitly,  
$(g,h)\in \Aut(C) \times \GL_3(F)$ acts on $J(x)$ by $J(gxh^{\top})$. 

\end{proposition}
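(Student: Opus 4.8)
The plan is to produce the isomorphism via the evident map $x\mapsto J(x)$ and then verify $\Aut(C)$- and $\GL_3(F)$-equivariance separately.

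To begin, I would observe that $x=(x_1,x_2,x_3)\mapsto J(x)$ is an $F$-linear bijection $(C^0)^3\to\mathcal J_{C^0}$: it is onto by the definition of $\mathcal J_{C^0}$, and one-to-one because $x_1,x_2,x_3$ are read off as the $(2,3)$, $(3,1)$, $(1,2)$ entries of $J(x)$. Identifying $(C^0)^3=C^0\otimes F^3=C^0\otimes V_3$, it remains only to match the two actions. The $\Aut(C)$-factor is immediate: $\Aut(C)$ preserves the norm $n_C$, hence the trace and the conjugation $x\mapsto\overline x$, so it stabilizes $C^0$ and commutes with conjugation; since $g\in\Aut(C)$ acts entrywise on $\mathcal J$, it stabilizes $\mathcal J_{C^0}$ and sends $J(x)$ to $J(gx)$, which is precisely $g$ acting on the $C^0$-factor of $C^0\otimes V_3$.

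The substantive step will be the $\GL_3(F)$-equivariance. Since $\overline{x_j}=-x_j$ when $x_j\in C^0$, the matrix $J(x)$ depends $F$-linearly on $x$ and is, up to an overall sign, the standard cross-product (``hat'') matrix $\widehat x$ determined by $\widehat a\,v=a\times v$ for $a,v\in F^3$. I would then invoke the classical identity $g\,\widehat a\,g^{\top}=\det(g)\,\widehat{(g^{-1})^{\top}a}$ for $g\in\GL_3(F)$; because both sides are $F$-linear in $a$, it suffices to check it for $a\in F^3$, where it reduces to $\langle\widehat a\,b,c\rangle=\det[\,a\mid b\mid c\,]$ together with $\det[\,a\mid g^{\top}b\mid g^{\top}c\,]=\det(g)\det[\,(g^{-1})^{\top}a\mid b\mid c\,]$. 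Plugging this (with $g=(h^{-1})^{\top}$) into the $\GL_3(F)$-action $X\mapsto\det(h)(h^{-1})^{\top}Xh^{-1}$ and using $\det\big((h^{-1})^{\top}\big)=\det(h)^{-1}$, the two determinant twists cancel and one arrives at $\det(h)(h^{-1})^{\top}J(x)h^{-1}=J(xh^{\top})$; in other words $h$ acts on $J(x)$ through the standard action of $\GL_3(F)$ on the $V_3$-factor. This cancellation of the $\det$-twist is exactly the concrete shadow of the isomorphism $\Lambda^2 V_3\cong V_3^{*}\otimes\det$ noted just before the statement, and it is why the answer is $V_3$ rather than some twist of $V_3^{*}$.

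Finally, the $\Aut(C)$- and $\GL_3(F)$-actions clearly commute --- one moves the $C^0$-coefficients, the other recombines the three slots --- so together they realize the $\Aut(C)\times\GL_3(F)$-action on $C^0\otimes V_3$, with $(g,h)$ sending $J(x)$ to $J(gxh^{\top})$, as claimed. I expect the only genuine obstacle to be careful bookkeeping of conventions (row versus column vectors, and where $h$, $h^{\top}$, $h^{-1}$ appear) so that one really lands on $V_3$; once the conventions are fixed, every step above is a routine verification.
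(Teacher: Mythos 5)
Your argument is correct and is exactly the ``easy to check'' verification the paper has in mind: the paper offers no written proof beyond remarking that the statement restates $\wedge^2 V_3\cong V_3^{*}\otimes\det^{-1}$, and your use of the cofactor identity $\det(h)(h^{-1})^{\top}\widehat{a}\,h^{-1}=\widehat{ha}$ (extended $F$-linearly to $C^0$-coefficients) is precisely the concrete form of that fact, with the convention bookkeeping ($xh^{\top}$ as a row versus $hx$ as a column) handled correctly. No gaps; this matches the intended proof.
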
 


\subsection{Construction of exceptional Lie algebras}\label{DualPairs}

Let $\mathfrak{h}_{C}$ be the Lie algebra of $H_{C}^{1}$. We define vector spaces
\begin{align*}
\mathfrak{g}_{0,C}=&\mathfrak{sl}(3,F)\oplus \mathfrak{h}_{C},\\
\mathfrak{g}_{1,C}=&V_{3}\otimes \mathcal{J}_{C},\\
\mathfrak{g}_{-1,C}=&V_{3}^{*}\otimes \mathcal{J}_{C}^{*},
\end{align*}
where $V_3$ is the standard representation of $\mathfrak{sl}_3$ and $V_3^*$ the dual of $V_3$.  We identify $\mathcal{J}_{C}^{*}$ with $\mathcal{J}_{C}$ using the 
trace form. Consider the vector space
\begin{equation*}
\mathfrak{g}_{C}=\mathfrak{g}_{0,C}\oplus\mathfrak{g}_{1,C}\oplus\mathfrak{g}_{-1,C}.
\end{equation*}
The space $\mathfrak{g}_{C}$ can be given the structure of a Lie algebra that extends the Lie algebra structure on $\mathfrak{g}_{0,C}$ and the natural action of $\mathfrak{g}_{,C}$ on $\mathfrak{g}_{1,C}\oplus \mathfrak{g}_{-1,C}$. (See \cite{R97}, Section 1.3.) We write $\langle-,-\rangle_{C}$ for the Killing form of $\mathfrak{g}_{C}$. The Lie algebra $\mathfrak{g}_{F}$ is the split simple Lie algebra of type $F_{4}$. The Lie algebra $\mathfrak{g}_{C}$ is a simple Lie algebra of type $E_{n}$, where $n=6,7,8$ when $\mathrm{dim}_{F}(C)=2,4,8$, respectively.

Let $Y\in\mathfrak{sl}(3,F)$ and $X\in\mathcal{J}_{C}$, then $X\mapsto YX+XY^{\top}$ defines a Lie algebra action of $\mathfrak{sl}(3,F)$ on $\mathcal{J}_{C}$ extending the analogous action of $\mathfrak{sl}(3,F)$ on $\mathcal{J}_{F}$. This action induces an inclusion of Lie algebras $\mathfrak{h}_{F}\cong \mathfrak{sl}(3,F)\hookrightarrow \mathfrak{h}_{C}$, which induces an inclusion of Lie algebras $\mathfrak{g}_{F}\hookrightarrow \mathfrak{g}_{C}$.

Next we describe a Heisenberg parabolic subalgebra in $\mathfrak{g}_{C}$. Let $\mathfrak{t}$ be the subalgebra of diagonal matrices in $\mathfrak{sl}(3,F)\subset \mathfrak{g}_{0,C}$. The adjoint action of $\mathfrak{t}$ on $\mathfrak{g}_{C}$ provides a decomposition 
\begin{equation*}
\mathfrak{g}_{C}=\bigoplus_{\gamma\in \mathfrak{t}^{*}}\mathfrak{g}_{\gamma},
\end{equation*}
where $\mathfrak{g}_{\gamma}=\{X\in \mathfrak{g}_{C}|[h,X]=\gamma(h)X\text{ for all }h\in\mathfrak{t}\}$. The weights $\gamma\neq 0$ such that $\mathfrak{g}_{\gamma}\neq 0$ form a relative root system $\underline{\Phi}$ of type $G_{2}$ (\cite[Sections 9.2, 10.8]{GS05}). 
Note that the long relative root spaces are all isomorphic to $F$ and sit in $\mathfrak{sl}(3,F)$ while the short relative root spaces are isomorphic to $\mathcal{J}_{C}$ or $\mathcal{J}_{C}^{*}$.
Finally, observe that $\mathfrak g_0= \mathfrak{t} \oplus \mathfrak{h}_C$.  

\vskip 5pt 

We let $\{\alpha,\beta\}$ be a set of simple roots in the $G_{2}$ relative root system so that $\alpha$ is long, $\beta$ is short. 
Then the maximal root  $\alpha_{\max}=2\alpha+3\beta$ is a long root. Thus, without loss of generality, we can assume that 
$h_{\alpha_{\max}}=\mathrm{diag}(1,0,-1)\in\mathfrak{sl}(3,F)$.

\includegraphics[scale=1]{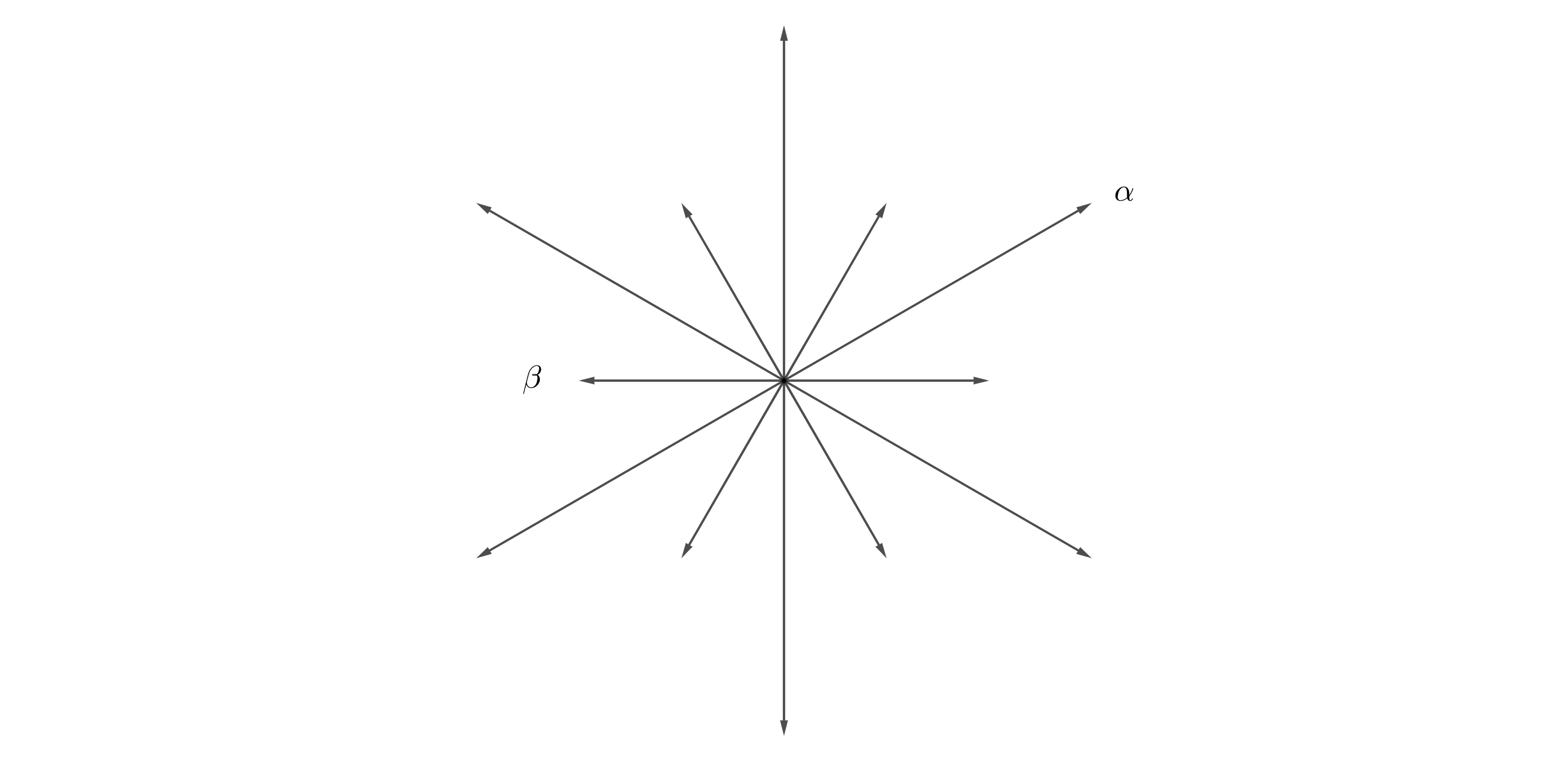}

The element $h_{\alpha_{\max}}$ defines a $\mathbb{Z}$-grading on $\mathfrak{g}_{C}$ supported on $\{0,\pm1,\pm2\}$. For $j\in \mathbb{Z}$, let
\begin{equation*}
\mathfrak{g}_{C}(j)=\{x\in \mathfrak{g}_{C}~| ~[h_{\alpha_{\max}},x]=jx\}.
\end{equation*}

Let $\mathfrak{p}=\oplus_{j\geq 0}\mathfrak{g}_{C}(j)$. Then $\mathfrak{p}$ is a Heisenberg parabolic subalgebra with Levi subalgebra 
\[ 
\mathfrak{m}=\mathfrak{g}_{C}(0)= \mathfrak{t}\oplus\mathfrak{h}_{C}\oplus\mathfrak{g}_{\beta}\oplus\mathfrak{g}_{-\beta} 
\]
 and nilpotent radical $\mathfrak{n}=\oplus_{j>0}\mathfrak{g}_{C}(j)$ with one dimensional center 
\[ 
\mathfrak{z}=\mathfrak{g}_{C}(2)= \mathfrak{g}_{\alpha_{\max}}. 
\]

Let $\mathcal{G}_{C}=\mathrm{Aut}(\frak{g}_{C})$. If $C\neq F$ then 
the connected component of $\mathcal{G}_{C}$ is an adjoint group of type $E_{n}$. The group $\mathcal G_F$ is $F_4$.
We omit the subscript $C$ when no confusion can arise. 
Then the maximal parabolic subalgebra $\mathfrak p$ corresponds to a maximal parabolic subgroup $\mathcal P=\mathcal M \mathcal N$ in $\mathcal G$. 
Let $\mathcal Z$ be the center of $\mathcal N$. Then $\mathcal M$ acts on $\mathcal N/\mathcal Z \cong \mathfrak{n}/\mathfrak{z}$. 
The space $\mathfrak{n}/\mathfrak{z}$ admits a symplectic and a quartic form, and $\mathcal M$ acts as a group of similitudes of these two forms.

\subsection{A symplectic space}\label{HeisenbergSpace} 
Using Pollack \cite[Chapter 3]{P22} we give an explicit construction of the  reductive group $\mathcal M$ and its representation on the symplectic space 
$\mathfrak{n}/\mathfrak{z}$. In terms of the restricted root system, we have 
\[ 
\mathfrak{n}/\mathfrak{z}\cong \mathfrak g_{\alpha} \oplus \mathfrak g_{\alpha+\beta} \oplus \mathfrak g_{\alpha+2\beta} \oplus \mathfrak g_{\alpha+3\beta}.
\] 
We identify $\mathcal J$ and $\mathcal J^*$ using the trace form, so $\mathfrak g_{\gamma}\cong \mathcal J$ for any short root $\gamma$. Thus we can identify 
 $\mathfrak{n}/\mathfrak{z}$ with 
\begin{equation*}
\mathbb{W}=\mathbb{W}_{C}=F\oplus \mathcal{J}\oplus\mathcal{J}\oplus F.
\end{equation*}
So any element  $w\in \mathbb{W}$ is a quadruple $w=(a,b,c,d)$, where $a,d\in F$ and $b,c\in \mathcal{J}$. The space  $\mathbb W$ comes with a symplectic form 
\begin{equation*}
\langle(a,b,c,d),(a^{\prime},b^{\prime},c^{\prime},d^{\prime})\rangle_{\mathbb{W}}=ad^{\prime}-\mathrm{Tr}(b*c^{\prime})+\mathrm{Tr}(c*b^{\prime})-da^{\prime}, 
\end{equation*}
and a quartic form
\begin{equation*}
q(a,b,c,d)=(ad-\mathrm{Tr}(b*c))^{2}+4aN(c)+4dN(b)-4\mathrm{Tr}(b^{\#}*c^{\#}).
\end{equation*}
Let $(-,-,-,-)_{\mathbb{W}}$ be the unique symmetric $4$-linear form on $\mathbb{W}$ such that $(v,v,v,v)=2q(v)$.
 Then $\mathcal M$ is isomorphic to the group of similitudes
\begin{equation*}
M_{C}=\{(g,\nu)\in \mathrm{GL}(\mathbb{W})\times \mathrm{GL}_1(F)|\langle gv,gv^{\prime}\rangle=\nu\langle v,v^{\prime}\rangle,\,q(gv)=\nu^{2}q(v)\text{ for all }v,v^{\prime}\in \mathbb{W}\}.
\end{equation*}
We write $M_{C}^{1}$ for the subgroup of elements where the similitude factor $\nu$ is equal to $1$. 

\vskip 5pt 

We highlight a few subgroups of $M^1_{C}$.
If $h\in H_{C}$ with similitude factor $\lambda$, then the map $(a,b,c,d)\mapsto (\lambda a,hb,\tilde{h}c,\lambda^{-1}d)$, where the action of $\tilde{h}$ on $\mathcal{J}$ is defined through the identification of $\mathcal{J}$ with $\mathcal{J}^{*}$ via the trace pairing, defines an element of $M^1_{C}$. We abuse notation and let $H_C$ denote this subgroup.

For $x\in \mathcal{J}$ let $n(x)$ be the map defined by
\begin{equation}\label{JordanUni}
n(x)(a,b,c,d)=(a,b+ax,c+b\times x+ax^{\#},d+\mathrm{Tr}(c*x)+\mathrm{Tr}(b*x^{\#})+aN(x)).
\end{equation}
The map $n(x)\in M_{\mathcal{J}}$ and has similitude factor equal to $1$. The group generated by these elements is isomorphic to 
the unipotent group $\exp(\mathfrak g_{\beta}) \subset \mathcal M$. 

Similarly, for $x\in \mathcal{J}$ let $\overline{n}(x)$ be the map defined by
\begin{equation*}
\overline{n}(x)(a,b,c,d)=(a+\mathrm{Tr}(b*x)+\mathrm{Tr}(c*x^{\#})+dN(x),b+c\times x+dx^{\#},c+dx,d).
\end{equation*}
The map $\overline{n}(x)\in M_{C}$ and has similitude factor equal to $1$. 
The group generated by these elements is isomorphic to 
the unipotent group $\exp(\mathfrak g_{-\beta}) \subset \mathcal M$. 

The two abelian groups generated by $n(x)$ and $\overline n(x)$, respectively, are unipotent radicals of two opposite maximal parabolic subgroups in $M_{C}^1$ 
with the Levi factor $H_{C}$.  These two parabolic groups are conjugate by 
\begin{equation}\label{WJInv}
(a,b,c,d)\mapsto (-d,c,-b,a).
\end{equation} 

\vskip 5pt 
 If $\lambda\in \mathrm{GL}_{1}(F)$, then 
 \[ 
 s_{\lambda}:(a,b,c,d)\mapsto (\lambda^{2},\lambda b,c,\lambda^{-1}d)
 \] 
 and 
 \[ 
 s^*_{\lambda}:(a,b,c,d)\mapsto (\lambda^{-1}, b,\lambda c,\lambda^{2}d)
 \] 
 are two elements of $M_C$ with the similitude factor $\nu=\lambda$. Thus $M_C$ is generated by $M^1_C$ and any of the 
 two one-parameter groups $s$ or $s^{\ast}$. We have written down both of these two groups for the sake of symmetry but also because they generate a 
 two dimensional torus whose Lie algebra is $\mathfrak t \subset \mathfrak m$.   
 
 \vskip 5pt 

Observe that $\Aut(C) \subset M_C$ where $\Aut(C)$ acts on the coordinates of $\mathbb W_C$. 
The centralizer of $\Aut(C)$ in $M_C$ is $M_F$, the Levi of the Heisenberg maximal parabolic of $F_4$. 
This group is isomorphic to $\mathrm{GSp}_6(F)$, as one can see from root data, for example.  We shall fix an isomorphism $M_F\cong \mathrm{GSp}_6(F)$
as follows.  Recall that $\mathbb W_C$ is a symplectic space. Under the action of $\Aut(C)$ it decomposes as 
\[ 
\mathbb W_C=\mathbb W_F \oplus (\mathcal J_{C^0} \oplus \mathcal J_{C^0}).
\] 
If $V_6$ is a 6-dimensional symplectic space then $C^0\otimes V_6$ is a symplectic space obtaining by tensoring the quadratic space $C^0$ and the symplectic space $V_6$. 
We pick $V_6$ so that 
 \begin{equation}\label{SympId}
 \mathcal J_{C^0} \oplus \mathcal J_{C^0}\cong C^0\otimes V_6, 
 \end{equation} 
 given by $(J(x),J(y))\mapsto (x_1,x_2,x_3,y_1,y_2,y_3)$, is an isomorphism of symplectic spaces. 
  Since $M_F$ commutes with $\Aut(C)$, and $\Aut(C)$ acts on $C^0$ irreducibly, $M_F$ must act on $V_6$, giving an identification 
 with $\mathrm{GSp}_6(F)$. Let $\mathrm{sim}$ denote the usual similitude character of $\mathrm{GSp}_6(F)$. 
 Observe that the similitude character of $M_{C}$ restricts to $\mathrm{sim}$  under the identification.  
 

\subsection{Orbits}\label{Orbits}   
We now describe orbits of $M_{\mathcal J}$ acting on $\mathbb W=\mathbb W_{C}$.  
Given $v=(a,b,c,d)\in \mathbb{W}_{\mathcal{J}}$ define $v^{\flat}=(a^{\flat},b^{\flat},c^{\flat},d^{\flat})$ (\cite[Proposition 1.0.3]{P22}), where
\begin{align*}
a^{\flat}=&-a(ad-\mathrm{Tr}(b*c))-2N(b);\\
b^{\flat}=&-2c\times b^{\#}+2ac^{\#}-(ad-\mathrm{Tr}(b*c))b;\\
c^{\flat}=&2b\times c^{\#}-2bd^{\#}+(ad-\mathrm{Tr}(b*c))c;\\
d^{\flat}=&d(ad-\mathrm{Tr}(b*c))+2N(c).
\end{align*}
Over the algebraic closure the orbits are classified by the rank for elements in $\mathbb{W}$, defined as follows. 
 Let $v\in \mathbb{W}$. The element $v$ has rank at most 4. If $q(v)=0$, then $v$ has rank at most 3. If $v^{\flat}=0$, then $v$ has rank at most 2. If $(v,v,w,w^{\prime})=0$ for all $w,w^{\prime}\in (v)^{\perp}$ (the orthogonal complement with respect to $\langle-,-\rangle_{\mathbb{W}}$), then $v$ has rank at most 1. If $v=0$, then $v$ has rank 0.

We need the following proposition \cite[Proposition 8.1]{GS22} and a simple corollary.

\begin{proposition} \label{WRank1} 
 A non-zero element  $(a,b,c,d)\in \mathbb{W}$ has rank 1 if and only if 
\begin{enumerate}
\item $b^{\#}-ac=0$,
\item $c^{\#}-db=0$,
\item $ad=h(b) \ast  \tilde h(c)$ for all $h\in H_{C}$, where $\tilde h$ is the dual action of $h$ on $\mathcal J^{\ast}$ identified with $\mathcal J$ using the trace form. \label{rk1Con3}
\end{enumerate}
\end{proposition}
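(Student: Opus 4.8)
The statement to prove is Proposition \ref{WRank1}, the characterization of rank $1$ elements of $\mathbb W_C$. Since this is quoted as \cite[Proposition 8.1]{GS22}, my plan is to reconstruct a clean self-contained argument rather than appeal to the reference. The strategy is to reduce to the split case over $\overline F$ (where $\mathbb W_C$ becomes the Freudenthal triple system attached to $\mathcal J = H_3(\mathbb O)$, or rather its Galois-stable subspace), use the known $\overline F$-orbit classification by rank recalled just before the proposition, and then descend. First I would recall that the rank-$\le 1$ condition, namely $(v,v,w,w') = 0$ for all $w,w' \in v^\perp$, is equivalent to $v^\flat = 0$ \emph{together with} the vanishing of a further covariant; concretely, rank $\le 1$ should be equivalent to the statement that the quadratic map $w \mapsto (v,v,w,w)$ vanishes on $v^\perp$, and one unwinds this using the explicit $4$-linear form attached to $q$.

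The key computational step is to translate $(v,v,w,w') = 0$ for all $w, w' \in v^\perp$ into the three displayed conditions. I would proceed as follows. Write $v = (a,b,c,d)$. The $4$-linear form $(-,-,-,-)_{\mathbb W}$ is obtained by polarizing $q(a,b,c,d) = (ad - \mathrm{Tr}(b*c))^2 + 4aN(c) + 4dN(b) - 4\mathrm{Tr}(b^{\#}*c^{\#})$. Polarizing twice in the first slot gives a symmetric bilinear form $B_v(w,w') = (v,v,w,w')_{\mathbb W}$ on $\mathbb W$, whose coefficients are the components of $v^\flat$ together with $\langle v, -\rangle_{\mathbb W}$-type terms. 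The condition that $B_v$ vanish on the hyperplane $v^\perp = \ker\langle v, -\rangle_{\mathbb W}$ means precisely that $B_v$ is a scalar multiple of $\langle v,-\rangle_{\mathbb W} \otimes \langle v, -\rangle_{\mathbb W}$ as a symmetric bilinear form; equating the ``non-$\langle v,-\rangle$ part'' of $B_v$ to zero should give exactly conditions (1) $b^{\#} = ac$ and (2) $c^{\#} = db$, while the remaining scalar-matching produces the identity in (3). The appearance of ``$h(b) * \tilde h(c)$ for all $h \in H_C$'' in (3) rather than just $ad = \mathrm{Tr}(b*c)/3$ or similar reflects that one wants the stronger, $H_C$-covariant statement $b * c^{\#}$-type identities; I would derive (3) by first establishing $ad \cdot X = $ (an expression symmetric under $H_C$) using (1), (2) and the Jordan identities $X^{\#\#} = N(X) X$, $(X^{\#}, X) = 3N(X)$, and then specializing.

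The descent from $\overline F$ to $F$ is the routine part: all three conditions (1)--(3) are given by $F$-rational polynomial identities in $(a,b,c,d)$, and ``rank $\le 1$'' was \emph{defined} by an $F$-rational condition (vanishing of $B_v$ on $v^\perp$), so the equivalence, once proved over $\overline F$, holds over $F$ provided I check that $v^\perp$ and the $4$-linear form are defined over $F$ --- which they are, being built from $\langle-,-\rangle_{\mathbb W}$ and $q$. Thus no genuine Galois-cohomological input is needed here. For the simple corollary that the excerpt announces will follow, I anticipate one reads off that a rank $1$ element is, up to the action of the parabolic $\exp(\mathfrak g_{\pm\beta}) \rtimes H_C$ and the involution \eqref{WJInv}, of the normalized form $(1, 0, 0, 0)$ or $(0, \xi, 0, 0)$ with $\xi$ rank $\le 1$ in $\mathcal J$; conditions (1)--(3) collapse to $b^{\#} = 0$, $c = 0$, $ad = 0$ in that chart.

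\textbf{Main obstacle.} The delicate point is condition (3): extracting the precise $H_C$-covariant form of the scalar identity, as opposed to a single numerical relation, requires care with the polarization bookkeeping and repeated use of the adjoint/cross-product identities in $\mathcal J$ (e.g. $b \times b^{\#} = N(b) \cdot \mathbf{1}$-type relations and the ``$(X\times Y)$'' trilinear identities \eqref{CrossProd2}). I expect that getting the constant and the exact shape of the quadratic-in-$(b,c)$ terms to match $h(b) * \tilde h(c)$ uniformly in $h$ is where most of the real work lies; the rest is polarization and a rationality remark.
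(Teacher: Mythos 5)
The first thing to observe is that the paper contains no proof of Proposition \ref{WRank1} to compare against: the statement is imported verbatim from \cite[Proposition 8.1]{GS22}, and the only argument supplied in the paper is the short deduction of Corollary \ref{SpecialRk1} from it. Your proposal therefore has to stand on its own, and as written it does not: it is an outline whose decisive step is deferred. The entire content of the proposition is the assertion that the bilinear form $B_v(w,w')=(v,v,w,w')_{\mathbb W}$, obtained by polarizing $q$ twice at $v=(a,b,c,d)$ and restricted to $v^{\perp}\times v^{\perp}$, vanishes exactly when the three listed identities hold, including the $H_C$-covariant identity (3); this is precisely the part you label ``should give'', ``I would derive'', and ``Main obstacle'' without carrying it out. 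The reduction to $\overline F$ and the descent remark buy nothing: the statement is a pointwise equivalence of polynomial conditions, so once the polarization computation is done it is done over $F$, and without it there is no proof over $\overline F$ either; likewise the appeal to the $\overline F$-orbit classification is not used in any essential way.

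There is also a concrete error in the one reduction you do spell out. A symmetric bilinear form $B_v$ vanishing identically on the hyperplane $v^{\perp}=\ker\lambda$, $\lambda=\langle v,-\rangle_{\mathbb W}$, is \emph{not} forced to be a scalar multiple of $\lambda\otimes\lambda$: the general such form is $\lambda\otimes\mu+\mu\otimes\lambda$ for an arbitrary linear functional $\mu$ (any form of this shape kills $\ker\lambda\times\ker\lambda$). Equivalently, writing $t(v,v,w)$ for the vector with $\langle t(v,v,w),w'\rangle_{\mathbb W}=(v,v,w,w')_{\mathbb W}$, the rank~$\leq 1$ condition says that $t(v,v,w)$ lies on the line $Fv$ for the relevant $w$, and conditions (1)--(3) are exactly the componentwise expression of this proportionality once $t(v,v,w)$ is computed explicitly in the coordinates $(a,b,c,d)$ (this is where $b^{\#}-ac$, $c^{\#}-db$ and the scalar relation in (3) arise, and where the adjoint identities $X^{\#\#}=N(X)X$, $\langle X^{\#},X\rangle=3N(X)$ are used). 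Starting instead from the false normal form $B_v=c\,\lambda\otimes\lambda$ and ``equating the non-$\lambda$ part to zero'', the bookkeeping you describe would not produce (1)--(3); in particular the converse direction (a nonzero $v$ satisfying (1)--(3) has rank $1$) is never addressed. So the proposal has a genuine gap: the central computation is missing, and the linear-algebra scaffolding meant to organize it is incorrect as stated.
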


\begin{corollary}\label{SpecialRk1}
 $\Xi=(1,b,c, d)\in \mathbb{W}$ has rank 1 if and only if $c=b^{\#}$ and $d=N(b)$. 
\end{corollary}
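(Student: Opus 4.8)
The plan is to deduce Corollary \ref{SpecialRk1} directly from Proposition \ref{WRank1} by substituting $a=1$ and tracking what the three rank-one conditions become. First I would set $a=1$ in condition (1) of Proposition \ref{WRank1}: it reads $b^{\#}-c=0$, i.e. $c=b^{\#}$. This is the first claimed equality, so condition (1) is exactly equivalent to $c=b^{\#}$ when $a=1$.

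Next I would substitute $a=1$ and $c=b^{\#}$ into condition (2), which says $c^{\#}-db=0$. Using the Jordan algebra identity $(b^{\#})^{\#}=N(b)\,b$ (the adjoint identity for cubic norm forms, valid in $\mathcal{J}_{C}$; see Pollack \cite[Chapter 3]{P22}), condition (2) becomes $N(b)\,b-db=0$, i.e. $(N(b)-d)\,b=0$. I would then split into the cases $b\neq 0$ and $b=0$: if $b\neq 0$ this forces $d=N(b)$; if $b=0$ then $c=b^{\#}=0$ as well, and one checks directly that $\Xi=(1,0,0,d)$ has rank $1$ exactly when $d=0=N(0)$, using the rank definitions from Section \ref{Orbits} (for instance $q(1,0,0,d)=d^{2}$, so rank $\leq 3$ already forces $d=0$). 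Either way, conditions (1) and (2) together with $a=1$ are equivalent to $c=b^{\#}$ and $d=N(b)$.

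It remains to verify that condition (\ref{rk1Con3}) of Proposition \ref{WRank1} is automatically satisfied once $a=1$, $c=b^{\#}$, $d=N(b)$. With these values the condition reads $N(b)=h(b)\ast\tilde h(b^{\#})$ for all $h\in H_{C}$. The key point is that for any $X\in\mathcal{J}$ one has $\langle X, X^{\#}\rangle_{\mathcal{J}}=\mathrm{Tr}_{\mathcal{J}}(X\ast X^{\#})=3N(X)$ — wait, here the pairing in condition (\ref{rk1Con3}) is the Jordan product landing in $\mathcal{J}$, not the trace pairing, so more precisely I would use that $X\ast X^{\#}=N(X)\,\mathds{1}$ is a scalar multiple of the identity, which is the standard degree-three identity in a cubic Jordan algebra. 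Then for $h\in H_{C}$ with similitude factor $\lambda$, the defining property $\tilde h(X^{\#})=\lambda^{-1}(hX)^{\#}$ (the adjoint transforms by the inverse similitude, since $\#$ is quadratic and $N$ is cubic) gives $h(b)\ast\tilde h(b^{\#})=\lambda^{-1}\,h(b)\ast (h(b))^{\#}=\lambda^{-1}N(h(b))\,\mathds{1}=\lambda^{-1}\cdot\lambda N(b)\,\mathds{1}=N(b)\,\mathds{1}$, which is the required identity. Conversely the forward implication is immediate: if $\Xi$ has rank $1$ then in particular (1) and (2) hold, giving $c=b^{\#}$ and $d=N(b)$ by the argument above.

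The main obstacle I anticipate is bookkeeping with the normalization conventions: getting the scalar in $X\ast X^{\#}=N(X)\,\mathds{1}$ right (versus a factor of $3$ or $6$ coming from the trilinear form normalization $(X,X,X)_{\mathcal{J}}=6N_{\mathcal{J}}(X)$), and confirming that $\#$ and the dual action $\tilde h$ transform under $H_{C}$ with the precise powers of the similitude $\lambda$ claimed above. These are routine but error-prone; I would pin them down by testing on a rank-one idempotent $X=\mathrm{diag}(1,0,0)$, for which $X^{\#}=0$, $N(X)=0$, $X\ast X=X$, to fix signs, and on $X=\mathrm{diag}(1,1,1)$, for which $X^{\#}=X$ and $N(X)=1$, to fix the scalar. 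Once these identities are in hand the corollary follows in a few lines. \EndProof
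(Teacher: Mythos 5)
Your proof is correct and follows essentially the same route as the paper: both directions are read off from Proposition \ref{WRank1} together with the standard cubic Jordan identities $(b^{\#})^{\#}=N(b)\,b$, $b*b^{\#}=N(b)$, and $\tilde h(b^{\#})=h(b)^{\#}/\nu$ for $h\in H_C$ with similitude factor $\nu$. The only cosmetic difference is in deducing $d=N(b)$ in the forward direction: the paper just takes $h=1$ in condition (\ref{rk1Con3}), which avoids your case split at $b=0$ and the appeal to the quartic form.
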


\begin{proof} If $\Xi$ has rank 1, then Proposition \ref{WRank1} implies $c=b^{\#}$  and $d=b *b^{\#}= N(b)$ (use $h=1$ in (\ref{rk1Con3})). 
In the opposite direction use $(b^{\#})^{\#}=N(b) \cdot b$ and $\tilde h(b^{\#})= h(b)^{\#}/\nu$ where $\nu$ is the similitude factor of $h$.  
\end{proof}


\subsection{The dual pair $\Aut(C)\times \mathcal G_F$ in $\mathcal G_C$}\label{DualPair}
Observe that $\Aut(C)$ naturally acts on $\mathcal J_C$ preserving the norm $N_{\mathcal J}$.  Thus $\mathrm{Der}(C)$, the Lie algebra of $\Aut(C)$, is a subalgebra of 
$\mathfrak h_C$. From the construction of $\frak{g}_{C}$ in Subsection \ref{DualPairs} it is evident that 
the centralizer of $\mathrm{Der}(C)$ in $\mathfrak{g}_C$ contains $\mathfrak{g}_F$, the Lie algebra of $\mathcal G_F$, the group of type 
$F_4$. This group is simply connected with trivial center. Thus the inclusion of Lie algebras lifts to an inclusion $\mathcal G_F\subseteq \mathcal G_C$. 

\begin{lemma}\label{MAX}  $\Aut(C) \times \mathcal G_F$ is a maximal subgroup in $\mathcal{G}_C$.  
\end{lemma}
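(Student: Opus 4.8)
The plan is to prove maximality by exhibiting $\Aut(C)\times\mathcal G_F$ as the identity component of the fixed-point subgroup of an automorphism of $\mathcal G_C$, or more directly, to realize $\mathcal G_F$ and $\Aut(C)$ as mutual centralizers and then invoke a rigidity argument at the level of Lie algebras. Concretely, the first step is to show that $\mathfrak g_F=\mathrm{Lie}(\mathcal G_F)$ is precisely the centralizer of $\mathrm{Der}(C)$ in $\mathfrak g_C$, not merely contained in it. Using the graded construction $\mathfrak g_C=\mathfrak{sl}_3\oplus\mathfrak h_C\oplus(V_3\otimes\mathcal J_C)\oplus(V_3^*\otimes\mathcal J_C^*)$, one computes the $\mathrm{Der}(C)$-invariants degree by degree: in degrees $\pm 1$ one gets $V_3\otimes(\mathcal J_C)^{\mathrm{Der}(C)}=V_3\otimes\mathcal J_F$, since $\mathcal J_C=\mathcal J_F\oplus\mathcal J_{C^0}$ and $\mathrm{Der}(C)$ acts on $C^0$ with no invariants (for $C\neq F$, $C^0$ is an irreducible nontrivial $\Aut(C)$-module, using $\dim C\geq 2$); in degree $0$ one gets $\mathfrak{sl}_3\oplus(\mathfrak h_C)^{\mathrm{Der}(C)}$, and identifying $(\mathfrak h_C)^{\mathrm{Der}(C)}=\mathfrak{sl}_3$ (the copy acting through the Jordan-algebra action on $\mathcal J_F$) finishes this step. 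Hence the centralizer is exactly $\mathfrak g_F$.

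The second step is the reverse: compute the centralizer of $\mathfrak g_F$ in $\mathfrak g_C$ and show it equals $\mathrm{Der}(C)$. The same degree-by-degree bookkeeping applies: an element centralizing all of $\mathfrak{sl}_3$ must lie in $\mathfrak g_0=\mathfrak t\oplus\mathfrak h_C$ together with the $\mathfrak{sl}_3$-invariants of the degree $\pm1$ pieces, but $V_3$ and $V_3^*$ have no nonzero $\mathfrak{sl}_3$-invariants, so the centralizer sits inside $\mathfrak h_C$; then requiring commutation with the remaining generators of $\mathfrak g_F$ (coming from the short relative root spaces $\mathfrak g_{\pm\beta}\cong\mathcal J_F\subset\mathcal J_C$ intersected with $\mathfrak g_F$) forces the element into $\mathrm{Der}(C)$, because the centralizer of $\mathfrak{sl}_3$ acting on $\mathcal J_F$ inside the similitude Lie algebra $\mathfrak h_C$ is exactly $\mathrm{Der}(C)$ (this is where the structure theory of $\mathfrak h_C$ as $\mathrm{Der}(C)\oplus$ (derivations coming from $\mathcal J_F$) enters). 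Thus $\Aut(C)$ and $\mathcal G_F$ are reductive subgroups of $\mathcal G_C$ that are mutual centralizers, so $\Aut(C)\times\mathcal G_F$ equals its own bicommutant.

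The final step upgrades ``mutual centralizers'' to ``maximal subgroup.'' Suppose $\Aut(C)\times\mathcal G_F\subseteq\mathcal H\subseteq\mathcal G_C$ with $\mathcal H$ a proper closed subgroup. If $\dim\mathcal H>\dim(\Aut(C)\times\mathcal G_F)$, then $\mathrm{Lie}(\mathcal H)$ is a $\mathrm{Der}(C)\oplus\mathfrak g_F$-submodule of $\mathfrak g_C$ strictly containing $\mathrm{Der}(C)\oplus\mathfrak g_F$; but the decomposition of $\mathfrak g_C$ under $\Aut(C)\times\mathcal G_F$ is $\mathfrak g_C=(\mathrm{Der}(C)\oplus\mathfrak g_F)\oplus(C^0\otimes W)$ for an explicit $\mathcal G_F$-module $W$ (the $26$-dimensional module, or a sum built from $V_3\otimes\mathcal J_{C^0}$ and its dual reorganized via the symplectic identification $\mathcal J_{C^0}\oplus\mathcal J_{C^0}\cong C^0\otimes V_6$), and $C^0\otimes W$ contains no trivial summand, indeed no $\mathcal G_F$-stable complement that is also $\Aut(C)$-stable other than $0$ — so $\mathrm{Lie}(\mathcal H)$ cannot be a subalgebra properly between the two. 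The case $\dim\mathcal H=\dim(\Aut(C)\times\mathcal G_F)$ but $\mathcal H$ bigger in components is excluded because $\Aut(C)\times\mathcal G_F$ contains its own normalizer's identity component and the outer part is controlled by $\mathrm{Out}(\mathcal G_C)$, which acts nontrivially on the conjugacy class and cannot normalize this particular pair while staying inside $\mathcal G_C$.

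The main obstacle I expect is the second step — pinning down the centralizer of $\mathfrak{sl}_3$ (acting through the Jordan action on $\mathcal J_F$) inside the similitude algebra $\mathfrak h_C$ and verifying it is exactly $\mathrm{Der}(C)$ with nothing extra; this requires a careful analysis of $\mathfrak h_C$ using the explicit description of $H_C$ via $\Aut(C)\times\GL_3(F)\to H_C$ and the fact that this map is surjective up to the expected kernel, which in turn rests on the structure theory of cubic norm forms. A secondary subtlety is making the module-decomposition argument in the last step rigorous: one must know the $\mathcal G_F$-module structure of the complement $C^0\otimes W$ well enough to rule out proper intermediate submodules, which follows once one observes that $W$ is $\mathcal G_F$-irreducible (or a known small sum of irreducibles) and $C^0$ is $\Aut(C)$-irreducible, so by a Goursat-type argument the only $\Aut(C)\times\mathcal G_F$-submodules of $C^0\otimes W$ are $0$ and the whole thing.
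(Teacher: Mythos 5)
Your Lie-algebra analysis (the decomposition $\mathfrak g_C=\mathrm{Der}(C)\oplus\mathfrak g_F\oplus C^0\otimes V_{26}$ and the irreducibility argument ruling out intermediate subalgebras) is essentially the paper's argument, and steps 1--2 on mutual centralizers, while not needed for maximality, are harmless. The genuine gap is in your final step, exactly where the group-level statement has to be completed: you must exclude a proper closed subgroup $\mathcal H$ with the \emph{same} identity component as $\Aut(C)\times\mathcal G_F$ but extra components, i.e.\ you must show that no element of $\mathcal G_C$ outside $\Aut(C)\times\mathcal G_F$ normalizes it. Your appeal to $\mathrm{Out}(\mathcal G_C)$ ``acting on the conjugacy class'' is a non-sequitur: for $\dim C=4,8$ the group $\mathcal G_C=\mathrm{Aut}(\mathfrak g_C)$ of type $E_7$ or $E_8$ is connected, so $\mathrm{Out}(\mathcal G_C)$ is trivial and the dangerous elements are \emph{inner}, namely elements of $\mathcal G_C$ normalizing the pair; nothing in your argument addresses them. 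The paper's route is: since $\Aut(C)\times\mathcal G_F$ has no outer automorphisms, such a normalizing element can be corrected by an element of the pair to a finite-order semisimple $z$ centralizing $\Aut(C)\times\mathcal G_F$; by irreducibility of $C^0\otimes V_{26}$ and triviality of the center of $\mathcal G_C$, $z$ must act as $-1$ on the complement; but the centralizer of a semisimple element contains a maximal torus, while the fixed subalgebra $\mathrm{Der}(C)\oplus\mathfrak g_F$ has rank $5<7$ (resp.\ $6<8$), a contradiction. Some argument of this kind (controlling the normalizer, not just the Lie algebra) is indispensable, and your proposal does not supply it.

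A second, smaller defect: the case $\dim C=2$ is not addressed, and there your steps 1--2 degenerate, since $\mathrm{Der}(C)=0$ (so the centralizer of $\mathrm{Der}(C)$ in $\mathfrak g_C$ is all of $\mathfrak g_C$, not $\mathfrak g_F$, and ``$C^0$ has no $\mathrm{Der}(C)$-invariants'' fails). In that case $\mathcal G_C$ is disconnected of type $E_6$, $\Aut(C)=\mu_2$ is generated by the outer automorphism, and one argues separately: $\mathcal G_F$ is maximal in $\mathcal G_C^{\circ}$ by the same Lie-algebra argument, and any intermediate $\mathcal H$ either meets $\mathcal G_C^{\circ}$ in all of it (hence equals $\mathcal G_C$) or in $\mathcal G_F$ (hence equals $\mu_2\times\mathcal G_F$). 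You would need to add both this case and the normalizer/rank argument above to make the proof complete.
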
 
\begin{proof} 
Observe that, under the adjoint action restricted to $\mathrm{Der}(C) \oplus \mathfrak g_{F}$, 
\[ 
\mathfrak g_C= \mathrm{Der}(C) \oplus \mathfrak g_{F} \oplus C^0\otimes V_{26} 
\] 
where $C^0$ and $V_{26}$ are irreducible representations of $\mathrm{Der}(C)$ and $\mathfrak g_{F}$ respectively. The irreducibility of $C^0\otimes V_{26}$  implies that 
$\mathrm{Der}(C) \oplus \mathfrak g_{F}$  is a maximal subalgebra of $\mathfrak g$. 

First assume $\dim(C)\neq 2$.  Then $\Aut(C) \times \mathcal G_F$ is connected. 
Since $\mathrm{Der}(C) \oplus \mathfrak g_{F}$ is maximal, any proper subgroup $H$ of $\mathcal{G}_{C}$  containing $\Aut(C) \times \mathcal G_F$ must have 
$\Aut(C) \times \mathcal G_F$  as its connected component of identity. Now $\Aut(C) \times \mathcal G_F$ has no 
outer automorphisms, so if $H$ is disconnected, then there exists a semisimple finite order element $z\in \mathcal{G}_C$ centralizing $\Aut(C) \times \mathcal G_F$. 
Since $C^0\otimes V_{26}$ is irreducible, $z$ would have to act on it as $-1$. But the centralizer of the semisimple element $z$ must contain a maximal torus. Since $\Aut(C) \times \mathcal G_F$ has rank $5$ and $\mathcal{G}_{C}$ has rank 7, this is a contradiction.
 
 Second, assume $\dim C=2$. Note that in this case $\mathcal{G}_{C}$ is disconnected and the component group is generated by the outerautomorphism of the $E_{6}$  Dynkin diagram, which has order $2$. As above we can see that $\mathcal{G}_{F}$ is maximal in $\mathcal{G}_{C}^{\circ}$, the connected component of the identity. Since the generator of $\Aut(C)=\mu_2$ is the outerautomorphism in $\mathcal{G}_{C}$, the conclusion of the lemma holds in this case, too. 
\end{proof}

We are now in a position to understand  rational $\mathcal G_C$-conjugacy classes of $\Aut(C) \times \mathcal G_F\subset \mathcal{G}_C$ over $F$. 
Over $\bar F$, there is one conjugacy class, that is, any subgroup of $\mathcal G_C$ isomorphic to $\Aut(C) \times\mathcal{G}_F$  
is conjugate to $\Aut(C) \times \mathcal{G}_F$,  this is due to Dynkin. By Lemma \ref{MAX}, the normalizer of  $\Aut(C) \times \mathcal G_F$ 
in $\mathcal{G}_C$ is  $\Aut(C) \times \mathcal G_F$, hence conjugacy classes over $F$ correspond to the kernel of the map of pointed sets 
\[ 
H^1(F, \Aut(C)) \times H^1(F, \mathcal G_F) \rightarrow H^1(F, \mathcal{G}_C). 
\] 
If $F$ is $p$-adic, then $H^1(F, \mathcal G_F)$ is trivial, hence we are reduced to the map $H^1(F, \Aut(C)) \rightarrow H^1(F, \mathcal{G}_C)$. 
This maps sends a rational form $C'$ of $C$ (i.e. an element of $H^1(F, \Aut(C))$) to $\mathcal G_{C'}$. This map is clearly injective, hence 
we have only one conjugacy class. Furthermore, injectivity implies that $\Aut(C) \times \mathcal G_F$ can be a subgroup of $\mathcal G_{C'}$ only when 
$C\cong C'$.  

In this paper we use two different constructions of the dual pair $\mathrm{Aut}(C)\times \mathcal{G}_{F}\subseteq \mathcal{G}_{C}$ to investigate the theta correspondence. From the above these two subgroups are conjugate. Thus the results obtained using each construction are compatible.

\subsection{Degenerate Principal series on $F_{4}$}

In this section, we collect the results of Choi-Jantzen \cite{CJ10} that describe the structure of degenerate principal series on $F_{4}$. Henceforth we write $G=\mathcal G_F$ 
for the unique group of type $F_4$ over $F$. 

We note that in this paper we use the Bourbaki labeling of simple roots of $F_{4}$ \cite[Plate VIII]{B02}, but Choi-Jantzen \cite{CJ10} use the reverse order. i.e. the two labelings are related by the permutation $(14)(23)$, written in disjoint cycle notation.

\begin{proposition}[Theorems 3.1 and 6.1 \cite{CJ10}]\label{CJDPS}
Let $\chi$ be a character of $F^{\times}$ such that $\chi=|-|^{s}\chi_{0}$, where $\chi_{0}$ is a unitary character and $s\in \mathbb{C}$. Let $Q$ be a maximal parabolic subgroup associated with the simple root $\alpha_{4}$ and let $\omega_{4}$ be the fundamental weight that pairs nontrivially with $\alpha_{4}^{\vee}$.
\begin{enumerate}
\item $i_{Q}^{G}(\chi\circ\omega_{4})$ is reducible if and only if $s= \pm\frac{11}{2}, \pm\frac{5}{2},\pm\frac{1}{2}$ with $\chi_{0}$ trivial, or $s=\pm\frac{1}{2}$ with $\chi_{0}$ of order $2$.
\item If $s= \pm\frac{11}{2}, -\frac{5}{2},\pm\frac{1}{2}$ and $\chi_{0}$ has order dividing $2$, then $i_{Q}^{G}(\chi\circ\omega_{4})$ has a unique irreducible quotient.
\item If $s=\frac{5}{2}$ and $\chi_{0}$ is trivial, then $i_{Q}^{G}(\chi\circ\omega_{4})$ has a maximal semisimple quotient of the form $\sigma^{+}\oplus\sigma^{-}$, where $\sigma^{+}$ and $\sigma^{-}$ are distinct irreducible representations of $G$. 
\end{enumerate}
\end{proposition}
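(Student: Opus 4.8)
This is \cite[Theorems 3.1 and 6.1]{CJ10}; we outline how one proves it. The plan is to study the degenerate principal series $i_Q^G(\chi\circ\omega_4)$, whose inducing datum is the character of the Levi $M_Q$ (of type $B_3$) that factors through the $\GL_1$-quotient $\omega_4\colon M_Q\to\GL_1$, by combining three standard inputs: the long standard intertwining operator and its normalizing ($c$-)function, Jacquet module computations via the geometric lemma, and Frobenius reciprocity. First I would locate the reducibility points. Generic irreducibility of $i_Q^G(\chi\circ\omega_4)$ fails exactly where the long intertwining operator $M_w\colon i_Q^G(\chi\circ\omega_4)\to i_{Q}^G\bigl((\chi\circ\omega_4)^{w}\bigr)$, with $w$ the longest representative of $W_{M_Q}\backslash W/W_{M_Q}$, degenerates. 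Factoring $M_w$ along a reduced word into rank-one operators yields a product of explicit rational functions in $q^{-s}$ indexed by the roots of $N_Q$ moved by $w$; reading the zeros and poles off the $F_4$ root combinatorics produces the candidate list $s=\pm\tfrac{11}{2},\pm\tfrac{5}{2},\pm\tfrac12$ with $\chi_0$ trivial, and $s=\pm\tfrac12$ with $\chi_0$ of order $2$.

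Next I would promote the candidates to a precise submodule picture. Compute the Jacquet modules $r_P\bigl(i_Q^G(\chi\circ\omega_4)\bigr)$ for $P=Q$ and for the other maximal parabolics of $G$ via the Bernstein--Zelevinsky filtration; each graded piece is induced from a smaller Levi and its composition factors can be listed explicitly. Counting constituents in these Jacquet modules bounds the length of $i_Q^G(\chi\circ\omega_4)$, and Frobenius reciprocity together with the (non)vanishing of specific Jacquet module maps identifies the socle and cosocle. For $s=\pm\tfrac{11}{2},-\tfrac52,\pm\tfrac12$ with $\chi_0^2=1$ one checks that precisely one irreducible quotient occurs---at $s=\tfrac{11}{2}$ (and dually at $-\tfrac{11}{2}$) it is the trivial representation, while the remaining points give a one-dimensional or a small unipotent constituent---which is conclusion (2). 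The Aubert--Schneider--Stuhler involution serves as a consistency check, interchanging the pictures at $s$ and $-s$.

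The subtle point, and the main obstacle, is conclusion (3): at $s=\tfrac52$ with $\chi_0$ trivial the maximal semisimple quotient splits as $\sigma^{+}\oplus\sigma^{-}$ with $\sigma^{+}\not\cong\sigma^{-}$. To establish this I would compute $r_Q\bigl(i_Q^G(\chi\circ\omega_4)\bigr)$ for $\chi=|-|^{5/2}$ and exhibit that the relevant layer of its cosocle is already a direct sum of two non-isomorphic pieces; these lift to two distinct irreducible quotients of $i_Q^G(\chi\circ\omega_4)$, after which one must rule out any further constituent lying above them (again by the constituent count from the Jacquet modules). The non-isomorphism $\sigma^{+}\not\cong\sigma^{-}$ can be certified by separating their Jacquet modules---for instance by the action of the maximal torus, or by matching $\sigma^{\pm}$ with two distinct unipotent representations of $F_4$ attached to a common nilpotent orbit but to different characters of the associated component group. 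Pinning down the exact reducibility numbers and all multiplicities is precisely the part that demands the detailed bookkeeping of \cite{CJ10}; the above is the conceptual skeleton.
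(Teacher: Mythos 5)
The paper does not actually prove this proposition: it is imported verbatim from Choi--Jantzen, and the only ``proof'' in the paper is the citation to \cite[Theorems 3.1 and 6.1]{CJ10}. Your sketch is compatible with how that result is actually established (Jantzen-style analysis of degenerate principal series: Jacquet modules of $i_{Q}^{G}(\chi\circ\omega_{4})$ via the geometric lemma, constituent counts, Frobenius reciprocity, intertwining operators, duality between $s$ and $-s$), so there is no clash of methods --- but as written it is a plan rather than a proof. The decisive content of the statement, namely the exact reducibility list, the length bounds at each special point, and above all the fact that at $s=\tfrac{5}{2}$ the cosocle is $\sigma^{+}\oplus\sigma^{-}$ with $\sigma^{+}\not\cong\sigma^{-}$, is exactly what you defer to \cite{CJ10}; since the paper itself treats the proposition as a quoted external input, that deferral is acceptable, but one should be clear that your text does not independently verify any of these numbers.

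Two cautions on the skeleton itself. First, reducibility of a degenerate principal series is not equivalent to degeneration (zeros or poles of the $c$-function) of the long intertwining operator $M_w$; that analysis only produces candidate points, and points where $M_w$ is an isomorphism can still require a separate argument, which is why the Jacquet-module constituent count in your second paragraph carries the real weight. Second, the parenthetical ``at $s=\tfrac{11}{2}$ (and dually at $-\tfrac{11}{2}$) it is the trivial representation'' should be stated more carefully: at $s=-\tfrac{11}{2}$ the trivial representation is the unique irreducible \emph{submodule}, and the unique irreducible quotient is the complementary (large) constituent; the paper itself uses the $s=\tfrac{11}{2}$ normalization when it identifies the quotient with the trivial representation in Proposition \ref{SmallThetaPS}.
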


\subsection{$\mathrm{PGL}_{2}(F)$}

In this subsection, we recall basic facts about $\mathscr G= \mathrm{PGL}_{2}(F)$.

\begin{lemma}
The characters of $\mathscr{G}$ are in bijection with the quadratic characters of $F^{\times}$.  
\end{lemma}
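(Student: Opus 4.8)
The plan is to produce a group isomorphism between the character group of $\mathscr G = \mathrm{PGL}_2(F)$ and the group of quadratic characters of $F^\times$ (equivalently, characters of $F^\times/(F^\times)^2$). First I would recall that a character (one-dimensional smooth representation) of $\mathscr G$ is the same thing as a character of the abelianization $\mathscr G^{\mathrm{ab}}$, so the task reduces to computing $\mathrm{PGL}_2(F)^{\mathrm{ab}}$. The key structural input is that $\mathrm{SL}_2(F)$ is its own commutator subgroup and is perfect (for $p$-adic $F$; indeed $\mathrm{SL}_2(F)$ is generated by unipotents, and elementary matrix identities show it is perfect), together with the determinant exact sequence $1 \to \mathrm{SL}_2(F) \to \mathrm{GL}_2(F) \xrightarrow{\det} F^\times \to 1$.

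The main steps, in order: (1) observe that any character $\chi$ of $\mathrm{PGL}_2(F)$ pulls back to a character $\tilde\chi$ of $\mathrm{GL}_2(F)$ which is trivial on the center $Z = F^\times \cdot I$; (2) since $\mathrm{SL}_2(F)$ is perfect, $\tilde\chi$ is trivial on $\mathrm{SL}_2(F)$, hence factors through $\det$ as $\tilde\chi = \eta \circ \det$ for a unique character $\eta$ of $F^\times$; (3) the condition that $\tilde\chi$ is trivial on $Z$ forces $\eta(z^2) = 1$ for all $z \in F^\times$, i.e. $\eta$ is trivial on $(F^\times)^2$, so $\eta$ is a quadratic character; (4) conversely, any quadratic character $\eta$ of $F^\times$ gives $\eta \circ \det$ on $\mathrm{GL}_2(F)$, trivial on $Z$ since $\det(zI) = z^2$, hence descends to $\mathrm{PGL}_2(F)$; and (5) these two assignments are mutually inverse group homomorphisms, giving the claimed bijection.

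The only genuine point requiring care is the perfectness of $\mathrm{SL}_2(F)$; I would either cite it or note the standard argument: the elementary unipotent $\begin{pmatrix} 1 & x \\ 0 & 1 \end{pmatrix}$ is a commutator $[\mathrm{diag}(t,t^{-1}), \begin{pmatrix} 1 & y \\ 0 & 1 \end{pmatrix}]$ whenever $x = (t^2-1)y$, and since $F^\times$ has elements $t$ with $t^2 \neq 1$ (as $|F^\times| > 2$), one can solve for $y$ given any $x$; together with the analogous statement for lower-triangular unipotents and the fact that these generate $\mathrm{SL}_2(F)$, this shows $[\mathrm{SL}_2(F),\mathrm{SL}_2(F)] = \mathrm{SL}_2(F)$. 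Everything else is formal. I do not expect any real obstacle here; the lemma is essentially a bookkeeping statement about $\mathrm{PGL}_2(F)^{\mathrm{ab}} \cong F^\times/(F^\times)^2$.
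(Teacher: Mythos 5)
Your proposal is correct, and the argument you give (characters of $\mathscr G$ factor through $\mathscr G^{\mathrm{ab}}$, perfectness of $\mathrm{SL}_2(F)$ forces any character of $\mathrm{GL}_2(F)$ to factor through $\det$, and triviality on the center $\det(zI)=z^2$ forces the resulting character of $F^\times$ to be quadratic) is exactly the standard proof. The paper states this lemma as a well-known fact without proof, so there is nothing to compare beyond noting that your write-up supplies precisely the bookkeeping the authors took for granted; the only substantive ingredient, perfectness of $\mathrm{SL}_2(F)$, is handled correctly by your commutator identity since $F$ has more than three elements.
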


Let $\mathscr{B}=\mathscr T \mathscr{U}$ be a Borel subgroup of $\mathscr G$ and let $\overline{\mathscr{B}}=\mathscr T \overline{\mathscr{U}}$ be its opposite.  We fix an identification $\mathscr T\cong F^{\times}$ so that 
$\mathscr T$ acts on $\mathscr{\overline{U}}$ by multiplication. In particular, the modular character is $\delta_{\overline{\mathscr{B}}}(t)=|t|$.  
Let $\chi$ be a character of $\mathscr{T}$ and define $i_{\overline{\mathscr{B}}}^{\mathscr{G}}(\chi)=\mathrm{Ind}^{\mathscr{G}}_{\overline{\mathscr{B}}}(\delta_{\overline{\mathscr{B}}}^{1/2}\cdot\chi)$.  Let $\mathrm{St}$ denote the Steinberg representation of $\mathscr G$. 
We have the following well known result: 

\begin{lemma}\label{PGL2PS} Let $\chi$ be a character of $F^{\times}$ such that $\chi=|-|^{s}\chi_{0}$, where $\chi_{0}$ is a unitary character and $s\in \mathbb{C}$.  
The representation $i_{\overline{\mathscr{B}}}^{\mathscr{G}}(\chi)$ is irreducible unless $s=\pm 1/2$ and $\chi_0$ is a quadratic character. 
\begin{enumerate}
\item If $s=1/2$ and $\chi_0$ is a quadratic character. Then $i_{\overline{\mathscr{B}}}^{\mathscr{G}}(\chi)$ has length 2.  The unique irreducible 
quotient is the one dimensional representation obtained by inflating $\chi_0$  to $\mathscr G$.  The unique irreducible submodule is 
 $\mathrm{St}\otimes \chi_0$.  
\item If $s=-1/2$ and $\chi_0$ is a quadratic character. Then $i_{\overline{\mathscr{B}}}^{\mathscr{G}}(\chi)$ has length 2.  The unique irreducible 
quotient is $\mathrm{St}\otimes \chi_0$. The unique irreducible submodule the one dimensional representation obtained by inflating $\chi_0$  to $\mathscr G$. 
\end{enumerate}
\end{lemma}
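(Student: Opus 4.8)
The plan is to treat $\mathscr G=\PGL_2(F)$ as a split group of semisimple rank one and run the classical analysis of its principal series; everything here is standard (Bernstein--Zelevinsky, Casselman), and one could also pull $i_{\overline{\mathscr B}}^{\mathscr G}(\chi)$ back along $\GL_2(F)\twoheadrightarrow\PGL_2(F)$ to a $\GL_2(F)$ principal series and quote the $\GL_2$ theory verbatim. I outline a reasonably self-contained route.

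First I would compute the normalized Jacquet module $r_{\overline{\mathscr B}}\big(i_{\overline{\mathscr B}}^{\mathscr G}(\chi)\big)$ by the geometric lemma: using the Bruhat decomposition $\mathscr G=\overline{\mathscr B}\sqcup\overline{\mathscr B}w\overline{\mathscr B}$ it carries a two-step filtration whose graded pieces are the characters $\chi$ and $w\chi=\chi^{-1}$ of $\mathscr T\cong F^{\times}$ (no $\delta$-twists appear because the induction is normalized). No irreducible subquotient of $i_{\overline{\mathscr B}}^{\mathscr G}(\chi)$ is supercuspidal, hence each has nonzero Jacquet module, and since $r_{\overline{\mathscr B}}$ is exact this forces the length of $i_{\overline{\mathscr B}}^{\mathscr G}(\chi)$ to be at most $2$. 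Moreover Frobenius reciprocity gives $\dim\Hom_{\mathscr G}\big(i_{\overline{\mathscr B}}^{\mathscr G}(\chi),i_{\overline{\mathscr B}}^{\mathscr G}(\chi)\big)=\#\{w:w\chi=\chi\}$, which equals $1$ whenever $\chi^{2}\neq\bfone$; so for $\chi^{2}\neq\bfone$ the representation is indecomposable, hence either irreducible or a length-two module whose two constituents are distinct (they are distinguished by their Jacquet modules, $\chi$ versus $\chi^{-1}$).

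Next I would locate the reducibility points. Reducibility of $i_{\overline{\mathscr B}}^{\mathscr G}(\chi)$ is equivalent to the standard intertwining operator $A(\chi)\colon i_{\overline{\mathscr B}}^{\mathscr G}(\chi)\to i_{\overline{\mathscr B}}^{\mathscr G}(\chi^{-1})$ (meromorphic continuation of an integral over the unipotent radical) failing to be an isomorphism. The rank-one Gindikin--Karpelevich computation for $\PGL_2$ --- equivalently the explicit $c$-function, or the Bernstein--Zelevinsky criterion $\chi_1\chi_2^{-1}=|-|^{\pm1}$ on the $\GL_2$ side, where here $\chi_1\chi_2^{-1}=\chi^{2}$ --- shows $A(\chi)$ is an isomorphism unless $\chi^{2}=|-|^{\pm1}$, i.e. unless $s=\pm\tfrac12$ and $\chi_0$ is quadratic. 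This gives the first assertion of the lemma, and combined with the previous paragraph it shows that in the remaining cases $i_{\overline{\mathscr B}}^{\mathscr G}(\chi)$ has length exactly $2$ with distinct constituents.

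It remains to identify sub and quotient when $s=\pm\tfrac12$ and $\chi_0$ is quadratic. Twisting by the character $\chi_0\circ\det$ of $\mathscr G$ (these are exactly the characters of $\mathscr G$ by the preceding lemma, and such a twist carries $i_{\overline{\mathscr B}}^{\mathscr G}(\chi')$ to $i_{\overline{\mathscr B}}^{\mathscr G}(\chi'\chi_0)$) reduces to $\chi_0=\bfone$. For $s=-\tfrac12$, $\chi_0=\bfone$ one has $i_{\overline{\mathscr B}}^{\mathscr G}(|-|^{-1/2})=\Ind_{\overline{\mathscr B}}^{\mathscr G}(\bfone)$ (unnormalized), which contains the constant functions; hence the trivial representation is the unique irreducible submodule and the infinite-dimensional irreducible quotient is, by definition, the Steinberg representation $\St$. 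Passing to contragredients sends $i_{\overline{\mathscr B}}^{\mathscr G}(\chi)$ to $i_{\overline{\mathscr B}}^{\mathscr G}(\chi^{-1})$, hence interchanges $s=\tfrac12$ with $s=-\tfrac12$ while fixing both $\St$ and the trivial representation; this yields the $s=\tfrac12$ statement with the roles of sub and quotient swapped. Reinstating the $\chi_0$-twist gives the asserted structure in all cases, with one-dimensional constituent $\chi_0$ inflated to $\mathscr G$ and the other constituent $\St\otimes\chi_0$. The only genuine computation is the reducibility criterion; if one wishes to avoid Gindikin--Karpelevich one can compute the image of $A(\chi)$ directly on the open Bruhat cell, or simply cite the $\GL_2$ result. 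The remaining steps are bookkeeping with the normalization of induction and the Weyl-group action on $\mathscr T\cong F^{\times}$, which is where care is needed.
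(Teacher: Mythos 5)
Your argument is correct. Note that the paper itself offers no proof of Lemma \ref{PGL2PS}: it is stated as a well-known fact about principal series of $\PGL_2(F)$, so any complete argument is by definition a different (more detailed) route than the paper's. What you write is the standard one and it works with the paper's conventions: the geometric lemma gives $r_{\overline{\mathscr B}}\bigl(i_{\overline{\mathscr B}}^{\mathscr G}(\chi)\bigr)$ of length two with constituents $\chi$ and $\chi^{-1}$, which together with the absence of supercuspidal subquotients bounds the length by $2$ and (via Frobenius reciprocity) gives indecomposability when $\chi^{2}\neq\bfone$; pulling back to $\GL_2(F)$ the character $\chi$ of $\mathscr T$ becomes the pair $(\chi,\chi^{-1})$, so the Bernstein--Zelevinsky criterion $\chi_1\chi_2^{-1}=|-|^{\pm1}$ reads $\chi^{2}=|-|^{\pm1}$, i.e.\ $s=\pm\tfrac12$ with $\chi_0$ quadratic, exactly the reducibility points claimed; and your identification of sub and quotient is right, since with $\delta_{\overline{\mathscr B}}(t)=|t|$ one has $i_{\overline{\mathscr B}}^{\mathscr G}(|-|^{-1/2})=\Ind_{\overline{\mathscr B}}^{\mathscr G}(\bfone)$ unnormalized, whose constants give the trivial submodule and whose quotient is $\St$, after which twisting by $\chi_0\circ\det$ and passing to contragredients (which swaps $s=\pm\tfrac12$ and exchanges sub with quotient, fixing $\St\otimes\chi_0$ and the character) covers all cases. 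One small point of care: the blanket assertion that reducibility is equivalent to the standard intertwining operator $A(\chi)$ failing to be an isomorphism needs normalization near $\chi^{2}=\bfone$, where the unnormalized operator has a pole at $\chi=\bfone$; this does not affect your proof, since the unitary points are already handled by the $\GL_2$/Bernstein--Zelevinsky criterion you invoke (and could also be settled by the Frobenius reciprocity count plus complete reducibility of unitary inductions).
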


\subsection{Theta lifting preliminaries}

In this subsection we establish preliminaries to discuss a theta lift for $\mathrm{Aut}(C)\times F_{4}\subset E_{7}$. 

Recall that associated to a quaternion algebra $C$ we have the following groups: $\mathcal{G}$ is the $F$-points of a connected semisimple adjoint group of type $E_{7}$; $G$ is the $F$-points of a connected semisimple group of type $F_{4}$; $\mathscr{G}$ is the $F$-points of the automorphism group of $C$. Let $(\Pi,\mathcal{V})=(\Pi_{\mathrm{min}},\mathcal{V}_{\mathrm{min}})$ be the minimal representation of $\mathcal{G}$ (Savin \cite[Theorem 2.2]{S94}).

To begin we define the big theta lift. Let $\tau$ be an irreducible smooth $\mathscr{G}$-representation. The maximal $\tau$-isotypic quotient of $\mathcal{V}$ is naturally a $\mathscr{G}\times G$-representation $\mathcal{V}_{\tau}$. Furthermore, $\mathcal{V}_{\tau}$ admits a factorization $\mathcal{V}_{\tau}\cong \tau\otimes \Theta(\tau)$, where $\Theta(\tau)$ is a $G$-representation. We call $\Theta(\tau)$ the big theta lift of $\tau$ with respect to the restriction of $\mathcal{V}$ to $\mathscr{G}\times G$. (For details, see \cite[Chapter 2, Section 3]{MVW87}.) Let $\theta(\tau)$ be the maximal semisimple quotient (cosocle) of $\Theta(\tau)$.

The main objective of this work is to investigate $\Theta(\tau)$ and $\theta(\tau)$. The following simple lemma is important for our analysis. If $V$ is a vector space, we write $V^{*}$ for its linear dual.

\begin{lemma}\label{ThetaDual}
Let $\tau\in \mathrm{Irr}(\mathscr{G})$. Let $U\subset G$ be a unipotent subgroup with a character $\Psi$ and let $H=\mathrm{Stab}_{G}(U,\Psi)$. There is an $H$-module isomorphism
\begin{equation*}
(\Theta(\tau)_{(U,\Psi)})^{*}\cong \mathrm{Hom}_{\mathscr{G}}(\mathcal{V}_{(U,\Psi)},\tau),
\end{equation*}
where the $H$ acts on $\mathrm{Hom}_{\mathscr{G}}(\mathcal{V}_{(U,\Psi)},\tau)$ by $(h\cdot f)(v)=f(h^{-1}\cdot v)$. 

In particular, there is an isomorphism of $G$-modules
\begin{equation*}
\Theta(\tau)^{*}\cong \mathrm{Hom}_{\mathscr{G}}(\mathcal{V},\tau).
\end{equation*}
\end{lemma}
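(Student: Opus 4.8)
The plan is to deduce the displayed statement from the general $(U,\Psi)$-isomorphism by specializing to the trivial subgroup. Taking $U=\{1\}$ and $\Psi$ trivial, we have $\mathcal{V}_{(U,\Psi)}=\mathcal{V}$, $\Theta(\tau)_{(U,\Psi)}=\Theta(\tau)$, and $H=\mathrm{Stab}_G(U,\Psi)=G$, so the first isomorphism reads $\Theta(\tau)^*\cong \mathrm{Hom}_{\mathscr G}(\mathcal V,\tau)$ as $G$-modules. So the substance is all in the general statement, and I would prove that directly.

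First I would unwind the definitions: $\mathcal V_\tau$ is the maximal $\tau$-isotypic quotient of $\mathcal V$ as a $\mathscr G$-representation, and it carries a commuting $G$-action because $G$ centralizes $\mathscr G$ inside $\mathcal G$; the factorization $\mathcal V_\tau\cong \tau\otimes\Theta(\tau)$ is the defining property of $\Theta(\tau)$, realized concretely as $\Theta(\tau)=\mathrm{Hom}_{\mathscr G}(\tau,\mathcal V_\tau)$ with $G$ acting through its action on $\mathcal V_\tau$. The key formal point is that for any smooth $G$-module $X$, the coinvariants functor $X\mapsto X_{(U,\Psi)}$ is exact in the relevant sense and, more importantly, that $\mathrm{Hom}_{\mathscr G}(-,\tau)$ converts the $\tau$-isotypic quotient into something computable: since $\tau$ is irreducible, $\mathrm{Hom}_{\mathscr G}(\mathcal V,\tau)=\mathrm{Hom}_{\mathscr G}(\mathcal V_\tau,\tau)$ because any $\mathscr G$-map from $\mathcal V$ to $\tau$ kills the intersection of the kernels of all such maps, which is precisely the kernel of $\mathcal V\twoheadrightarrow \mathcal V_\tau$. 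Then $\mathrm{Hom}_{\mathscr G}(\mathcal V_\tau,\tau)=\mathrm{Hom}_{\mathscr G}(\tau\otimes\Theta(\tau),\tau)\cong \mathrm{Hom}_{\mathscr G}(\tau,\tau)\otimes\Theta(\tau)^*\cong\Theta(\tau)^*$ by Schur's lemma (here $\Theta(\tau)^*$ is the full linear dual, matching the statement). One must track the $G$-action through each of these identifications: the $G$-action on $\mathrm{Hom}_{\mathscr G}(\mathcal V,\tau)$ is $(h\cdot f)(v)=f(h^{-1}v)$, and under the chain above this matches the linear dual of the $G$-action on $\Theta(\tau)$, giving the $G$-module (not just vector space) isomorphism.

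For the twisted version I would apply the same reasoning after passing to $(U,\Psi)$-coinvariants: the functor $(-)_{(U,\Psi)}$ commutes with the $\mathscr G$-action (as $U\subset G$ centralizes $\mathscr G$), so $\mathcal V_{(U,\Psi)}$ is a smooth $\mathscr G\times H$-module and its maximal $\tau$-isotypic $\mathscr G$-quotient is $(\mathcal V_\tau)_{(U,\Psi)}=\tau\otimes\Theta(\tau)_{(U,\Psi)}$ because coinvariants commute with the tensor decomposition (the $U$-action sits entirely on the $\Theta(\tau)$ factor). Applying $\mathrm{Hom}_{\mathscr G}(-,\tau)$ as above then yields $(\Theta(\tau)_{(U,\Psi)})^*\cong \mathrm{Hom}_{\mathscr G}(\mathcal V_{(U,\Psi)},\tau)$ as $H$-modules, with the $H$-action $(h\cdot f)(v)=f(h^{-1}v)$.

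The only genuine subtlety — the step I would be most careful about — is the compatibility of the $(U,\Psi)$-coinvariants functor with the $\tau$-isotypic quotient: one needs that forming the maximal $\tau$-isotypic $\mathscr G$-quotient and forming $(U,\Psi)$-coinvariants commute, which relies on right-exactness of both functors and on $U$ and $\mathscr G$ commuting so that the two operations genuinely act on disjoint "coordinates." This is straightforward but should be stated cleanly. Everything else is Schur's lemma and bookkeeping of group actions, and I would present it in two short paragraphs: one establishing the general $(U,\Psi)$ isomorphism, and one noting the particular case $U=\{1\}$.
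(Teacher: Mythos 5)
Your proposal is correct and follows essentially the same route as the paper: factor any $\mathscr{G}$-map to $\tau$ through the maximal $\tau$-isotypic quotient, identify $(\tau\otimes\Theta(\tau))_{(U,\Psi)}$ with $\tau\otimes\Theta(\tau)_{(U,\Psi)}$, and finish with tensor-Hom adjunction and Schur's lemma while tracking the $H$-action. The one step you flag as subtle (compatibility of $(U,\Psi)$-coinvariants with the $\tau$-isotypic quotient) is exactly what the paper proves, by pulling a map $g:\mathcal{V}_{(U,\Psi)}\to\tau$ back along $\mathrm{pr}:\mathcal{V}\to\mathcal{V}_{(U,\Psi)}$, factoring the pullback through $\tau\otimes\Theta(\tau)$, and observing that the induced map then factors through the coinvariants; your sketch amounts to the same argument.
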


\begin{proof} Let $\phi:\mathcal{V}\twoheadrightarrow \tau\otimes \Theta(\tau)$ be a surjection and $\mathrm{pr}:\mathcal{V}\rightarrow \mathcal{V}_{(U,\Psi)}$ the canonical quotient. 

First we show there is an $H$-module isomorphism 
\begin{equation*}
\frak{F}:\mathrm{Hom}_{\mathscr{G}}(\tau\otimes \Theta(\tau)_{(U,\Psi)},\tau)\rightarrow \mathrm{Hom}_{\mathscr{G}}(\mathcal{V}_{(U,\Psi)},\tau),
\end{equation*}
defined by $f\mapsto f\circ\phi_{(U,\Psi)}$. The map $\frak{F}$ is injective because $\phi_{(U,\Psi)}$ is surjective.

Now let $g\in\mathrm{Hom}_{\mathscr{G}}(\mathcal{V}_{(U,\Psi)},\tau)$. Then the map $g\circ\mathrm{pr}\in \mathrm{Hom}_{\mathscr{G}}(\mathcal{V},\tau)$ factors through $\tau\otimes \Theta(\tau)$ via $\phi$ which then implies that $g$ factors through $\tau\otimes \Theta(\tau)_{(U,\Psi)}$. Thus $g=f\circ \phi_{(U,\Psi)}$ for some $f\in \mathrm{Hom}_{\mathscr{G}}(\tau\otimes \Theta(\tau)_{(U,\Psi)},\tau)$. So, $\frak{F}$ is surjective.

Finally, we claim that the tensor-Hom adjunction and Schur's lemma give a canonical isomorphism $\mathrm{Hom}_{\mathscr{G}}(\tau\otimes \Theta(\tau)_{(U,\Psi)},\tau)\cong (\Theta(\tau)_{(U,\Psi)})^{*}$, which completes the result. 

The tensor-Hom adjunction gives a canonical $H$-module isomorphism 
\begin{equation*}
\mathrm{Hom}_{\mathscr{G}}(\tau\otimes \Theta(\tau)_{(U,\Psi)},\tau)\rightarrow \mathrm{Hom}(\Theta(\tau)_{(U,\Psi)},\mathrm{Hom}_{\mathscr{G}}(\tau,\tau)),
\end{equation*}
defined by $F(a,b)\mapsto (b \mapsto (a\mapsto F(a,b)))$.  By Schur's lemma, there is a canonical isomorphism $\mathrm{Hom}_{\mathscr{G}}(\tau,\tau)\cong \mathbb{C}$ so that $id_{\tau}\mapsto 1$.\end{proof}

\section{Jacquet Modules I}\label{JMI}

Let $\mathcal{G}=\mathcal{G}_C$ and $G=\mathcal G_F$. Let  $\Aut(C) \times G$  be the dual pair described in subsection \ref{DualPair} 
and let  $\mathcal{P}=\mathcal{M}\mathcal{N}$ be the Heisenberg maximal parabolic subgroup  in $\mathcal G$. 
Since $\Aut(C) \subset \mathcal M$, the centralizer of $\Aut(C)$ in $\mathcal P$ is the Heisenberg maximal parabolic $P=MN$ in $G$. 
We write $\overline{\mathcal{P}}=\mathcal{M}\overline{\mathcal{N}}$ and $\overline{P}=M\overline{N}$ for the parabolic subgroups opposite to $\mathcal{P}$ and $P$, respectively.

Our study of the theta lifting is based on two tools. The first are 
functors of twisted co-invariants of the minimal representation $\mathcal V$ of $\mathcal{G}$. The second is the Fourier-Jacobi functor. 
To compute these functors, we use a $\overline{\mathcal{P}}$-filtration of $\mathcal V$ due to Magaard-Savin \cite[Theorem 6.1]{MS97}, which we now recall. 
In the following, $\Omega$ is the minimal nontrivial $\mathcal{M}$-orbit in $\mathcal{N}/Z$. Under the isomorphism $\mathcal{N}/Z\cong \mathbb W_C$, 
$\Omega$ is the set of rank 1 elements in $\mathbb W_C$. 

\begin{theorem}\label{HeisJacThm} 
Let $(\Pi,\mathcal{V})$ be the minimal representation of $\mathcal{G}$ and let $\overline{\mathcal{P}}=\mathcal{M}\overline{\mathcal{N}}$ be the Heisenberg parabolic subgroup of $\mathcal{G}$ opposite to $\mathcal{P}$. Let $\overline{Z}$ be the center of $\overline{\mathcal{N}}$. Then $\mathcal{V}$ has a $\overline{\mathcal{P}}$-filtration given by the exact sequence

\begin{equation}\label{HeisJac}
0\rightarrow C_{c}^{\infty}(\Omega)\rightarrow \mathcal{V}_{\overline{Z}}\rightarrow \mathcal{V}_{\overline{\mathcal{N}}}\rightarrow 0.
\end{equation}
Furthermore, the action of $\overline{\mathcal{P}}$ is described as follows:
\begin{enumerate}
\item Let $m\overline{n}\in \mathcal{M}\overline{\mathcal{N}}$ and $f\in C_{c}^{\infty}(\Omega)$. Then
\begin{align*}
[\Pi(\overline{n})f](x)=&\psi(\langle x,\overline{n}\rangle)f(x);\\
[\Pi(m)f](x)=& \chi_C(m) |\mathrm{det}(m)|^{s/d}f(m^{-1}\cdot x).
\end{align*}
\item $\mathcal{V}_{\overline{\mathcal{N}}}\cong [\mathcal{V}(\mathcal{M})\otimes|\mathrm{det}|^{t/d}]\oplus \chi_C|\mathrm{det}|^{s/d}$,
where $\mathcal{V}(\mathcal{M})$ is the minimal representation of $\mathcal{M}$ (center acting trivially). 
\end{enumerate}
Here $\mathrm{det}$ is the determinant of the representation of $\mathcal{M}$ acting on $\overline{\mathcal{N}}/\overline{Z}$; 
$\chi_C$ is a quadratic character, trivial unless $C$ is a quadratic field, and then corresponding to $C$ by the local class field theory;
$d$ is the dimension of $\mathcal{N}/Z$. The values of $s$, $t$, and $d$ are given in the following table.
\begin{center}
\begin{tabular}{|c||c|c|c|}\hline
$\mathcal{G}$&$s$&$t$&$d$\\ \hline\hline
$E_{6}$&$4$&$2$&$20$\\ \hline
$E_{7}$&$6$&$3$&$32$\\ \hline
$E_{8}$&$10$&$6$&$56$\\ \hline
\end{tabular} 
\end{center}
\end{theorem}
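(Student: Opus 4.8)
\emph{Proof sketch.} This is \cite[Theorem 6.1]{MS97}; here is the strategy one would follow. The plan is to compute $\mathcal{V}_{\overline{Z}}$ as a smooth representation of $\overline{\mathcal{P}}=\mathcal{M}\overline{\mathcal{N}}$ by realizing it as the space of global sections of an $\mathcal{M}$-equivariant sheaf on the Pontryagin dual of the abelian group $\overline{\mathcal{N}}/\overline{Z}$, and then to read off (\ref{HeisJac}) from the $\mathcal{M}$-orbit stratification of the support of that sheaf. Since $\overline{Z}$ is the center of $\overline{\mathcal{N}}$ and is normalized by $\mathcal{M}$, the coinvariants $\mathcal{V}_{\overline{Z}}$ carry an action of $\overline{\mathcal{P}}$, and $\overline{\mathcal{N}}/\overline{Z}$ acts smoothly on it. By the theory of smooth representations of locally profinite abelian groups, $\mathcal{V}_{\overline{Z}}$ is the space of sections of an $\mathcal{M}$-equivariant sheaf $\mathscr{F}$ on $(\overline{\mathcal{N}}/\overline{Z})^{\vee}$; using the Killing form pairing $\overline{\mathfrak{n}}/\overline{\mathfrak{z}}\times\mathfrak{n}/\mathfrak{z}\to F$ together with $\psi$, I identify $(\overline{\mathcal{N}}/\overline{Z})^{\vee}$ with $\mathcal{N}/Z\cong\mathbb{W}_C$ as $\mathcal{M}$-spaces. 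In these terms the assertion is that $\mathscr{F}$ is supported on $\overline{\Omega}=\Omega\cup\{0\}$, that its restriction to the open stratum $\Omega$ accounts for the submodule $C_c^{\infty}(\Omega)$ with the stated twisted action, and that its stalk at $0$ — which is exactly the full Jacquet module $\mathcal{V}_{\overline{\mathcal{N}}}$ — is $[\mathcal{V}(\mathcal{M})\otimes|\det|^{t/d}]\oplus\chi_C|\det|^{s/d}$. Exactness of (\ref{HeisJac}) is then the standard short exact sequence $0\to j_!(\mathscr{F}|_{\Omega})\to\mathscr{F}\to i_*(\mathscr{F}|_{\{0\}})\to 0$ of the open--closed decomposition, once the two outer terms are identified.

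The essential input is the minimality of $\mathcal{V}$. The annihilator of $\mathcal{V}$ in $U(\mathfrak{g})$ is the Joseph ideal, whose associated variety is the closure of the minimal nilpotent orbit $\mathcal{O}_{\min}$; a moment-map/wavefront-set argument then forces $\mathrm{Supp}(\mathscr{F})\subseteq\overline{\mathcal{O}_{\min}}\cap\mathbb{W}_C$, and this intersection is precisely the rank $\leq 1$ locus $\overline{\Omega}$ — the defining equations of rank $1$ vectors in Proposition \ref{WRank1} being the restrictions to $\mathrm{Sym}(\mathfrak{n}/\mathfrak{z})$ of the quadratic generators of the Joseph ideal that lie in $\overline{\mathfrak{n}}\cdot\overline{\mathfrak{n}}$. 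On the open orbit $\Omega=\mathcal{M}/\mathcal{M}_{\Xi}$, taking the base point $\Xi=(1,0,0,0)$, which has rank $1$ by Corollary \ref{SpecialRk1}, an $\mathcal{M}$-equivariant sheaf is determined by the stalk at $\Xi$, namely the twisted Jacquet module $\mathcal{V}_{(\overline{\mathcal{N}},\psi_{\Xi})}$; minimality again forces this to be one-dimensional, so $\mathscr{F}|_{\Omega}$ is an $\mathcal{M}$-equivariant line bundle and its (compactly supported) sections form $C_c^{\infty}(\Omega)$ as a vector space. The precise character of $\mathcal{M}$ twisting this $C_c^{\infty}(\Omega)$ — hence the factor $\chi_C|\det|^{s/d}$ in the formula for $\Pi(m)$ — is pinned down by tracking the action on one generator using the explicit formulas for $n(x)$, $\overline{n}(x)$, the Levi $H_C$, and the cocharacters $s_{\lambda}$ of Subsection \ref{HeisenbergSpace}; the power $s/d$ and the appearance of $\chi_C$ (only when $C$ is a quadratic field) fall out of comparing modular characters and of the quadratic twist built into the construction of $\mathcal{G}_C$.

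Finally I would identify the stalk at $0$, i.e. $\mathcal{V}_{\overline{\mathcal{N}}}$. It is a finite length $\mathcal{M}$-module, and since the derived group of $\mathcal{M}$ is of type $D_6$, $A_5$, or $E_7$ when $\mathcal{G}$ has type $E_7$, $E_6$, or $E_8$, one expects its non-character constituents to be the minimal representation $\mathcal{V}(\mathcal{M})$. I would produce the surjection onto $\mathcal{V}(\mathcal{M})\otimes|\det|^{t/d}$ together with the complementary character $\chi_C|\det|^{s/d}$, and check, e.g. by a Gelfand--Kirillov dimension count or by pushing the Jacquet-module computation down to a Borel, that there is nothing else; for $E_8$ this step can alternatively be organized as a restriction from $E_8$ to $E_7$, matching the inductive pattern $\mathcal{M}\supset E_7$. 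Throughout, the bookkeeping of the twists $|\det|^{t/d}$, $|\det|^{s/d}$ and the table of $(s,t,d)$ is forced by comparison of central and modular characters. \emph{The main obstacle} is the two uses of minimality — that $\mathscr{F}$ is supported on $\overline{\Omega}$ and that its stalk over the open orbit is one-dimensional — which genuinely encode that $\mathcal{V}$ is as small as an infinite-dimensional representation of $\mathcal{G}$ can be; proving these cleanly requires either the Joseph-ideal description of $\mathrm{Ann}(\mathcal{V})$ or an explicit model for $\mathcal{V}$ (for instance as the unique irreducible submodule of a suitable degenerate principal series), not a soft argument.
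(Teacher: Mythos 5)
The paper does not actually prove this statement: Theorem \ref{HeisJacThm} is quoted from Magaard--Savin \cite[Theorem 6.1]{MS97}, with only the remark that their argument extends to non-split $C$ and that the quadratic twist $\chi_C$ was observed in \cite{GS22}. Your sketch follows essentially the same strategy as that cited source --- Fourier analysis of $\mathcal{V}_{\overline{Z}}$ along $\overline{\mathcal{N}}/\overline{Z}$, minimality forcing the support of the associated sheaf into the rank $\leq 1$ locus with one-dimensional stalks over the open orbit $\Omega$, and identification of the fiber at the trivial character with $\mathcal{V}_{\overline{\mathcal{N}}}$ --- so it is consistent with the paper's treatment, the genuinely hard inputs (the two uses of minimality and the computation of $\mathcal{V}_{\overline{\mathcal{N}}}$) being exactly what the citation to \cite{MS97} carries, as you acknowledge.
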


\vskip 5pt 
\noindent 
\textbf{Remark:} In Magaard-Savin \cite{MS97}, the groups are split. Nevertheless, their proof still applies to $\mathcal{G}=\mathcal{G}_{C}$, where $C$ is a split or non-split composition algebra over $F$.  The quadratic twist by $\chi_C$ was observed in \cite{GS22}. 

\vskip 5pt

Given $\tau\in \mathrm{Irr}(\mathscr{G})$ there is a surjective map $\mathcal{V}\twoheadrightarrow \tau\otimes \Theta(\tau)$. 
To study $\Theta(\tau)$ we apply the functor of $(\overline N,\Psi)$-coinvariants to sequence (\ref{HeisJac}), where $\Psi$ is a character of $\overline{N}$.
 
Let $M_{\Psi}=\mathrm{Stab}_{M}(\Psi)$. Let $\mathcal{N}(\Psi)=\{n\in \mathcal{N}|\psi(\langle n,\overline{n}\rangle)=\Psi(\overline{n})\text{ for all }\overline{n}\in \overline{N}\}$. Let $\Omega_{\Psi}=\Omega\cap \mathcal{N}(\Psi)$.

\begin{lemma}\label{HeisMinJacCalc}
The restriction map $C^{\infty}_{c}(\Omega)\rightarrow C^{\infty}_{c}(\Omega_{\Psi})$ induces a $\mathscr{G}\times M_{\Psi}$-module isomorphism 
\begin{equation*}
C^{\infty}_{c}(\Omega)_{(\overline{N},\Psi)}\cong C^{\infty}_{c}(\Omega_{\Psi}).
\end{equation*}
\end{lemma}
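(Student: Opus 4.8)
The plan is to realize $C_c^\infty(\Omega)_{(\overline N,\Psi)}$ as the space of $\Psi$-twisted coinvariants of $C_c^\infty(\Omega)$ and to identify it by a standard Bruhat-type argument: restriction of functions to the subset where the twisted action becomes free. Concretely, $\overline N$ acts on $C_c^\infty(\Omega)$ via $[\Pi(\overline n)f](x)=\psi(\langle x,\overline n\rangle)f(x)$ from Theorem \ref{HeisJacThm}(1), so the $(\overline N,\Psi)$-coinvariants are the quotient of $C_c^\infty(\Omega)$ by the closed span of $\{\psi(\langle x,\overline n\rangle)f(x)-\Psi(\overline n)f(x)\}$. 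First I would observe that for $x\in\Omega$, the character $\overline n\mapsto \psi(\langle x,\overline n\rangle)$ of $\overline N$ equals $\Psi$ precisely when $x\in\Omega\cap\mathcal N(\Psi)=\Omega_\Psi$ (using the identification $\mathcal N/Z\cong\mathbb W_C$ and the pairing between $\mathcal N$ and $\overline N$). So $\Omega$ decomposes as the disjoint union of $\Omega_\Psi$ and its complement $\Omega\setminus\Omega_\Psi$, and this decomposition is by $\overline N$-invariant subsets; since $\overline N$ acts on $\Omega$ only through a character that depends on $x$, both pieces are open (the condition $x\in\Omega_\Psi$ is given by finitely many algebraic equations intersected with $\Omega$, but more relevantly the decomposition is along the level sets of a continuous map, hence clopen in the $p$-adic topology, so both $C_c^\infty(\Omega_\Psi)$ and $C_c^\infty(\Omega\setminus\Omega_\Psi)$ are direct summands of $C_c^\infty(\Omega)$ as $\overline N$-modules).

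Next I would show that the $(\overline N,\Psi)$-coinvariants of $C_c^\infty(\Omega\setminus\Omega_\Psi)$ vanish. On this piece, for each $x$ there exists $\overline n\in\overline N$ with $\psi(\langle x,\overline n\rangle)\neq\Psi(\overline n)$, and locally (on a small compact open neighborhood) one such $\overline n$ works uniformly; this is exactly the hypothesis needed to run the standard partition-of-unity argument (see e.g. the localization principle in \cite{MVW87} or \cite{BZ76}) showing that every $f\in C_c^\infty(\Omega\setminus\Omega_\Psi)$ is a finite sum of elements of the form $\Pi(\overline n)g-\Psi(\overline n)g$, hence dies in the coinvariants. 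On the complementary piece $C_c^\infty(\Omega_\Psi)$, the twisted $\overline N$-action is by the scalar $\Psi(\overline n)$ on every function (since $\langle x,\overline n\rangle$ pairs to give exactly $\Psi$ there), so $\overline N$ already acts by $\Psi$ and the $(\overline N,\Psi)$-coinvariant functor is the identity on it. Combining, the restriction map $C_c^\infty(\Omega)\to C_c^\infty(\Omega_\Psi)$ induces the claimed isomorphism $C_c^\infty(\Omega)_{(\overline N,\Psi)}\cong C_c^\infty(\Omega_\Psi)$.

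Finally I would check equivariance: the actions of $\mathscr G=\Aut(C)$ and of $M_\Psi=\mathrm{Stab}_M(\Psi)$ on $C_c^\infty(\Omega)$ permute $\Omega$ preserving $\Omega_\Psi$ — for $\mathscr G$ because it commutes with $\overline N$ (it sits inside $\mathcal M$ and centralizes the $F_4$-side, in particular normalizes $\overline N$ trivially on the relevant pairing), and for $M_\Psi$ by definition it stabilizes the character $\Psi$, hence stabilizes $\mathcal N(\Psi)$ and therefore $\Omega_\Psi$ — so the restriction map is $\mathscr G\times M_\Psi$-equivariant and the induced isomorphism is one of $\mathscr G\times M_\Psi$-modules. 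The main obstacle I anticipate is the clean verification of the localization/partition-of-unity step, i.e. making precise that on $\Omega\setminus\Omega_\Psi$ one can choose, locally uniformly in $x$, a single $\overline n$ witnessing $\psi(\langle x,\overline n\rangle)\neq\Psi(\overline n)$; this is a routine but slightly fiddly $p$-adic smoothness/compactness argument, and is the only place where a little care is required.
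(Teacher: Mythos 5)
Your overall strategy is the right one and is essentially what the paper intends: the paper's proof is just a citation of Magaard--Savin \cite{MS97}, Lemma 2.2, whose argument is exactly the localization scheme you describe (kill the part of $C^{\infty}_{c}(\Omega)$ living off the fiber, observe that $\overline{N}$ already acts by $\Psi$ on the fiber, check equivariance of restriction). However, one step as you wrote it is false: $\Omega_{\Psi}$ is \emph{not} open in $\Omega$, so there is no clopen decomposition and $C^{\infty}_{c}(\Omega_{\Psi})$ is not a direct summand of $C^{\infty}_{c}(\Omega)$ as an $\overline{N}$-module. Indeed $\Omega_{\Psi}=\Omega\cap\mathcal{N}(\Psi)$ is the intersection of $\Omega$ with a fiber of the linear map $\mathbf{F}:\mathbb{W}_{C}\rightarrow\mathbb{W}_{F}$, hence is closed in $\Omega$, and it has strictly smaller dimension than $\Omega$ (for a rank $3$ character it is a principal homogeneous space for $\Aut(C)$ by Proposition \ref{Rk3Fiber}), so it is certainly not open; ``level sets of a continuous map are clopen'' is not a valid principle, even $p$-adically --- level sets are merely closed.

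The repair is routine and is what the standard argument actually does. Since $\Omega_{\Psi}$ is closed in $\Omega$ with open complement $U=\Omega\setminus\Omega_{\Psi}$, extension by zero and restriction give a short exact sequence of $\overline{N}$-modules $0\rightarrow C^{\infty}_{c}(U)\rightarrow C^{\infty}_{c}(\Omega)\rightarrow C^{\infty}_{c}(\Omega_{\Psi})\rightarrow 0$ (restriction to a closed subset of an $\ell$-space is surjective). The functor of $(\overline{N},\Psi)$-coinvariants is exact; your partition-of-unity argument shows $C^{\infty}_{c}(U)_{(\overline{N},\Psi)}=0$, since at every point of $U$ the character $\overline{n}\mapsto\psi(\langle x,\overline{n}\rangle)$ of $\overline{N}$ differs from $\Psi$ and a witnessing $\overline{n}$ can be chosen locally uniformly by local constancy and compactness of supports; and on $C^{\infty}_{c}(\Omega_{\Psi})$ the group $\overline{N}$ acts through $\Psi$, so coinvariants change nothing. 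Combined with your equivariance check (which is fine: $\mathscr{G}$ commutes with $\overline{N}$ and preserves the fibers of $\mathbf{F}$, and $M_{\Psi}$ stabilizes $\mathcal{N}(\Psi)$ and hence $\Omega_{\Psi}$), this yields the lemma. So the proposal is correct in substance and follows the same route as the cited proof, but the clopen/direct-summand claim must be replaced by the closed-subset exact sequence together with exactness of twisted coinvariants.
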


\begin{proof} The proof is the same as Magaard-Savin \cite{MS97}, Lemma 2.2.\end{proof}

To describe $C^{\infty}_{c}(\Omega_{\Psi})$ as a $\mathscr{G}\times M_{\Psi}$-module we need an explicit description of $\Omega_{\Psi}$, which we take up in the next subsection. When $\Psi$ is nontrivial, this gives a complete description of $\mathcal{V}_{(\overline{N},\Psi)}$, as we see in the next lemma.

\begin{lemma}\label{HeisJacTwist}
Suppose that $\Psi$ is nontrivial. By applying $(\overline{N},\Psi)$ coinvariants to the exact sequence (\ref{HeisJac}) we get a $\mathscr{G}\times M_{\Psi}$-module isomorphism
\begin{equation*}
C_{c}^{\infty}(\Omega)_{(\overline{N},\Psi)}\cong \mathcal{V}_{(\overline{N},\Psi)}.
\end{equation*}
\end{lemma}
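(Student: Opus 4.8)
The plan is to apply the coinvariants functor $(-)_{(\overline{N},\Psi)}$ directly to the short exact sequence (\ref{HeisJac}) and to show that, when $\Psi$ is nontrivial, the outer quotient term $\mathcal{V}_{\overline{\mathcal{N}}}$ is killed. First I would record two structural facts about the relevant parabolics: the opposite Heisenberg nilradical $\overline{N}$ of $G=\mathcal{G}_F$ is the centralizer of $\Aut(C)$ in $\overline{\mathcal{N}}$, so in particular $\overline{N}\subseteq\overline{\mathcal{N}}$; and the one-dimensional center $\overline{Z}$ of $\overline{\mathcal{N}}$ --- the group occurring in (\ref{HeisJac}) --- is contained in $\overline{N}$ and equals the commutator subgroup $[\overline{N},\overline{N}]$ (indeed $\overline{N}$ is itself a Heisenberg group with center $\overline{Z}$). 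The second fact forces every character $\Psi$ of $\overline{N}$ to be trivial on $\overline{Z}$, so $(\overline{N},\Psi)$-coinvariants of any smooth module may be computed in two stages --- first kill $\overline{Z}$, then impose the remaining relations. In particular $\mathcal{V}_{(\overline{N},\Psi)}\cong(\mathcal{V}_{\overline{Z}})_{(\overline{N},\Psi)}$, where on the right $\overline{N}$ acts on $\mathcal{V}_{\overline{Z}}$ through $\overline{N}/\overline{Z}$.

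Next, using that $(-)_{(\overline{N},\Psi)}$ is an exact functor on smooth representations of the unipotent $p$-adic group $\overline{N}$, I would apply it to (\ref{HeisJac}) to obtain the exact sequence
\[
0\rightarrow C_{c}^{\infty}(\Omega)_{(\overline{N},\Psi)}\rightarrow(\mathcal{V}_{\overline{Z}})_{(\overline{N},\Psi)}\rightarrow(\mathcal{V}_{\overline{\mathcal{N}}})_{(\overline{N},\Psi)}\rightarrow 0.
\]
The middle term is $\mathcal{V}_{(\overline{N},\Psi)}$ by the previous paragraph. For the right-hand term, note that $\overline{N}\subseteq\overline{\mathcal{N}}$ acts trivially on the $\overline{\mathcal{N}}$-coinvariants $\mathcal{V}_{\overline{\mathcal{N}}}$, so $(\mathcal{V}_{\overline{\mathcal{N}}})_{(\overline{N},\Psi)}$ is the quotient of $\mathcal{V}_{\overline{\mathcal{N}}}$ by the span of the vectors $(1-\Psi(\overline{n}))v$; since $\Psi$ is nontrivial we may pick $\overline{n}$ with $\Psi(\overline{n})\neq 1$, and then this quotient vanishes. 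Hence the natural map $C_{c}^{\infty}(\Omega)_{(\overline{N},\Psi)}\rightarrow\mathcal{V}_{(\overline{N},\Psi)}$ induced by the inclusion in (\ref{HeisJac}) is an isomorphism, and it is automatically $\mathscr{G}\times M_{\Psi}$-equivariant since $\mathscr{G}=\Aut(C)$ centralizes $\overline{P}$ and $M_{\Psi}=\mathrm{Stab}_{M}(\Psi)$ fixes $\Psi$, so all three terms carry compatible actions.

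This argument has essentially no obstacle; the one point demanding care is the bookkeeping of the two different ``centers'': one must confirm that $\overline{Z}$, which is the center of the ambient $E_{7}$-Heisenberg group $\overline{\mathcal{N}}$ and is quotiented out in $\mathcal{V}_{\overline{Z}}$ in (\ref{HeisJac}), lies inside the $F_{4}$-Heisenberg group $\overline{N}$ and is annihilated by $\Psi$, so that the two coinvariant operations genuinely compose and the displayed exact sequence is valid. Everything else is just exactness of Jacquet-type functors together with the triviality of the $\overline{N}$-action on the bottom quotient of the filtration (\ref{HeisJac}).
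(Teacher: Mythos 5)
Your argument is correct and is essentially the paper's own proof: exactness of the twisted Jacquet functor $(-)_{(\overline{N},\Psi)}$ applied to (\ref{HeisJac}), together with the vanishing $(\mathcal{V}_{\overline{\mathcal{N}}})_{(\overline{N},\Psi)}=0$ because $\overline{N}\subseteq\overline{\mathcal{N}}$ acts trivially on $\mathcal{V}_{\overline{\mathcal{N}}}$ while $\Psi$ is nontrivial. Your extra bookkeeping that $\overline{Z}\subset\overline{N}$ with $\Psi|_{\overline{Z}}=1$, so that $(\mathcal{V}_{\overline{Z}})_{(\overline{N},\Psi)}\cong\mathcal{V}_{(\overline{N},\Psi)}$, is exactly the step the paper leaves implicit, so this is the same proof written out in slightly more detail.
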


\begin{proof} Since the functor $(-)_{(\overline{N},\Psi)}$ is exact and $(\mathcal{V}_{\overline{\mathcal{N}}})_{(\overline{N},\Psi)}=0$ the result follows.\end{proof}

\subsection{Fiber calculation}\label{JacMod1Calc}

The main objective of this section is to compute $C_{c}^{\infty}(\Omega)_{(\overline{N},\Psi)}$, where $\Psi$ is of rank 3 or rank 0. This is accomplished in Proposition \ref{Rk3Coin} for rank 3, and Proposition \ref{Rk0Coin} for rank 0. The rank of $\Psi$ will be defined in terms of the rank of elements of $\mathbb{W}_{F}$ (Subsection \ref{HeisenbergSpace}). We explain this after setting up some notation.

The map $\mathcal{N}/Z\rightarrow \mathrm{Hom}(\overline{\mathcal{N}}/\overline{Z},\mathbb{C}^{\times})$ defined by $n\mapsto \psi(\langle n, - \rangle)$ defines an isomorphism of $\mathcal{N}/Z$ with the Pontryagin dual of $\overline{\mathcal{N}}/\overline{Z}$. Similarly, by restriction this map defines an isomorphism between $N/Z$ and the Pontryagin dual of $\overline{N}/\overline{Z}$.

We identify $\mathcal{N}/Z$  with $\mathbb{W}_{C}$  and $\mathcal M$ with $M_C$ so that 
 the adjoint action of $\mathcal{M}$ on $\mathcal{N}/Z$ corresponds to the action of $M_C$ on $\mathbb{W}_{C}$. 
This also fixes an identification of  $N/Z$ with  $\mathbb{W}_{F}= \mathbb{W}_{C}^{\Aut(C)}$ and of $M\subset \mathcal M$ with $M_F \subset M_C$.  
 Thus we can view the character $\Psi$ as an element of $\mathbb{W}_{F}$. We define the rank of $\Psi$ to be the rank of its associated element in $\mathbb{W}_{F}$. 

Now we reinterpret the set $\mathcal{N}(\Psi)$ as the fiber of a map $\bold{F}$, defined below. 

Let $\bold{f}:\mathcal{J}_{C}\rightarrow \mathcal{J}_{F}$ be the map defined by 
\begin{equation*}
\left(\begin{smallmatrix}
a & x & \overline{z}\\
\overline{x} & b & y\\
z & \overline{y} & c
\end{smallmatrix}\right)
\mapsto
\left(\begin{smallmatrix}
a & \frac{\mathrm{Tr}(x)}{2} & \frac{\mathrm{Tr}(z)}{2}\\
\frac{\mathrm{Tr}(x)}{2} & b & \frac{\mathrm{Tr}(y)}{2}\\
\frac{\mathrm{Tr}(z)}{2} & \frac{\mathrm{Tr}(y)}{2} & c
\end{smallmatrix}\right).
\end{equation*}

Let $\bold{F}:\mathbb{W}_{C}\rightarrow \mathbb{W}_{F}$ be defined by
\begin{equation*}
(a,b,c,d)\mapsto (a,\bold{f}(b),\bold{f}(c),d).
\end{equation*}
With the identifications above, the natural restriction map from the Pontryagin dual of $\overline{\mathcal{N}}/\overline{Z}$ to the Pontryagin dual of $\overline{N}/\overline{Z}$ is realized as the map $\bold{F}:\mathbb{W}_{C}\rightarrow \mathbb{W}_{F}$. Viewing $\Psi$ as an element of $\mathbb{W}_{F}$, we have $\mathcal{N}(\Psi)=\bold{F}^{-1}(\Psi)$. Thus our next objective is to describe the intersection of the fibers of $\bold{F}$ with $\Omega$.

\begin{proposition}\label{NonemptyFiber} Let $C$ be any composition algebra. 
Let $\xi=(1,0,c,d)\in \mathbb{W}_{F}$. The set $\bold{F}^{-1}(\xi)\cap \Omega$  consists of 
\[ 
(1, J(x), J(x)^{\#} , N_{\mathcal J}(J(x))) 
\]
for all $x=(x_{1},x_{2},x_{3})\in (C^{0})^3$ such that 
\begin{enumerate}
\item $c= \frac{1}{2}(\mathrm{Tr} (x_ix_j) ) $
\item $d=\mathrm{Tr}(x_{1}x_{2}x_{3})$.\label{NonEmpty2}
\end{enumerate}
\end{proposition}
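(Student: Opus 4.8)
The plan is to reduce the statement to Corollary~\ref{SpecialRk1} and then to unwind the definition of $\bold F$ coordinate by coordinate. Since $\bold F$ fixes the first coordinate, every element of $\bold F^{-1}(\xi)$ has first coordinate $1$, so by Corollary~\ref{SpecialRk1} the rank $1$ elements of $\mathbb W_C$ lying over $\xi$ are exactly the quadruples $(1,B,B^{\#},N_{\mathcal J}(B))$ with $B\in\mathcal J_C$. Hence $\bold F^{-1}(\xi)\cap\Omega$ consists precisely of those $(1,B,B^{\#},N_{\mathcal J}(B))$ for which $\bold F(1,B,B^{\#},N_{\mathcal J}(B))=(1,0,c,d)$; since $\bold F(1,B,B^{\#},N_{\mathcal J}(B))=(1,\bold f(B),\bold f(B^{\#}),N_{\mathcal J}(B))$, this amounts to the three conditions $\bold f(B)=0$, $\bold f(B^{\#})=c$, and $N_{\mathcal J}(B)=d$.

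First I would analyze $\bold f(B)=0$. Writing $B$ in the matrix form defining $\mathcal J_C$, with diagonal entries $c_1,c_2,c_3\in F$ and off-diagonal entries $x_1,x_2,x_3\in C$, the definition of $\bold f$ shows that $\bold f(B)=0$ is equivalent to $c_1=c_2=c_3=0$ together with $\mathrm{Tr}(x_i)=0$ for $i=1,2,3$, i.e. $B=J(x)$ with $x=(x_1,x_2,x_3)\in(C^0)^3$. This already forces the shape $(1,J(x),J(x)^{\#},N_{\mathcal J}(J(x)))$ claimed in the proposition, and reduces the remaining two equations to computing $\bold f(J(x)^{\#})$ and $N_{\mathcal J}(J(x))$ in terms of $x$.

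The rest is a direct computation. Substituting $c_1=c_2=c_3=0$ into the formula for $N_{\mathcal J}$ gives $N_{\mathcal J}(J(x))=\mathrm{Tr}(x_1x_2x_3)$, which is condition~(\ref{NonEmpty2}). For $\bold f(J(x)^{\#})$, I would substitute $c_1=c_2=c_3=0$ into the explicit formula for $X^{\#}$ and read off the entries of $J(x)^{\#}$: its diagonal entries are $-n_C(x_i)$ and its off-diagonal entries are products of the form $\overline x_j\overline x_i$. Applying $\bold f$ replaces each off-diagonal entry by half its trace, and two elementary facts then finish the job: first, since conjugation is an anti-automorphism preserving the trace, $\mathrm{Tr}(\overline x_j\overline x_i)=\mathrm{Tr}(\overline{x_ix_j})=\mathrm{Tr}(x_ix_j)$; second, for $x_i\in C^0$ one has $x_i^2=-n_C(x_i)$ in $C$, so $-n_C(x_i)=\tfrac12\mathrm{Tr}(x_i^2)=\tfrac12\mathrm{Tr}(x_ix_i)$. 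Together these identify $\bold f(J(x)^{\#})$ with the symmetric matrix whose $(i,j)$-entry is $\tfrac12\mathrm{Tr}(x_ix_j)$, which is condition~(1).

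The computations themselves are entirely routine; the only thing requiring care is bookkeeping -- correctly matching the labelling of the off-diagonal slots used in the definitions of $J(x)$, of $X^{\#}$, and of $\bold f$, and tracking the conjugations so that each off-diagonal entry of $J(x)^{\#}$ yields the intended $\mathrm{Tr}(x_ix_j)$ (the order within the product is immaterial, since the trace of a product in $C$ equals $-B_C$ and is symmetric). I do not anticipate any genuine obstacle beyond this indexing.
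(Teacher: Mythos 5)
Your proposal is correct and follows essentially the same route as the paper: both arguments reduce to the rank-one characterization (Proposition \ref{WRank1} via Corollary \ref{SpecialRk1}), use $\bold f(b')=0$ to force $b'=J(x)$ with $x\in (C^0)^3$, and then compute $J(x)^{\#}$, $N_{\mathcal J}(J(x))$, and $\bold f(J(x)^{\#})$ directly, using $x_i^2=-n_C(x_i)$ for trace-zero elements. The only difference is cosmetic ordering (you impose the rank-one shape before the fiber conditions, the paper does the reverse), and your explicit appeal to the iff in Corollary \ref{SpecialRk1} cleanly covers both inclusions.
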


\begin{proof} Let $\Xi=(a^{\prime},b^{\prime},c^{\prime},d^{\prime})\in \bold{F}^{-1}(\xi)\cap\Omega$.

Since $\Xi\in \bold{F}^{-1}(\xi)$, it follows that $\Xi=(1,b^{\prime},c^{\prime},d)$, where
\begin{align*}
b^{\prime}=&J(x_{1},x_{2},x_{3}), \text{ with } x_{j}\in C^{0}; \\
\bold{f}(c^{\prime}) =& c. 
\end{align*}

Since $\Xi\in\Omega$, by Proposition \ref{WRank1}, $d=N(b^{\prime})=\mathrm{Tr}(x_{1}x_{2}x_{3})$ and 
\begin{equation*}
c^{\prime}=(b^{\prime})^{\#}=\left(
\begin{smallmatrix}
-N(x_{1}) & \overline{x_{2}}\overline{x_{1}} & x_{3}x_{1}\\
x_{1}x_{2} & -N(x_{2}) & \overline{x_{3}}\overline{x_{2}}\\
\overline{x_{1}}\overline{x_{3}} & x_{2}x_{3} & -N(x_{3})
\end{smallmatrix}
\right)=\left(
\begin{smallmatrix}
x_{1}^{2} & x_{2}x_{1} & x_{3}x_{1}\\
x_{1}x_{2} & x_{2}^{2} & x_{3}x_{2}\\
x_{1}x_{3} & x_{2}x_{3} & x_{3}^{2}
\end{smallmatrix}
\right).
\end{equation*}
Thus
\begin{equation*}
c=\bold{f}(c^{\prime})=\frac{1}{2}\left(
\begin{smallmatrix}
\mathrm{Tr}(x_{1}^{2}) & \mathrm{Tr}(x_{1}x_{2}) & \mathrm{Tr}(x_{1}x_{3})\\
\mathrm{Tr}(x_{1}x_{2}) & \mathrm{Tr}(x_{2}^{2}) & \mathrm{Tr}(x_{2}x_{3})\\
\mathrm{Tr}(x_{1}x_{3}) & \mathrm{Tr}(x_{2}x_{3}) & \mathrm{Tr}(x_{3}^{2})
\end{smallmatrix}
\right).
\end{equation*}

\end{proof}

Now we describe $\bold{F}^{-1}((1,0,c,d))\cap \Omega$, where $c\in\mathcal{J}_{F}$ has rank 3 and $\mathrm{dim}C=4$. 

\begin{proposition}\label{Rk3Fiber} Assume $\dim C=4$. 
Let $\xi=(1,0,c,d)\in \mathbb{W}_{F}$ such that $c\in\mathcal{J}_{F}$ has rank 3. Then $\Omega_{\xi}= \bold{F}^{-1}(\xi)\cap \Omega$ is non-empty 
if and only if 
\begin{enumerate}
\item $\Aut(C)\cong \SO(3,c)$, 
\item $d^2=-4\det(c)$, so that $\xi$ has rank 3.
\end{enumerate}
If that is the case, then $\Omega_{\xi}$ is a principal homogeneous space for $\Aut(C)$. 
\end{proposition}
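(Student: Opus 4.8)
The plan is to push everything through Proposition~\ref{NonemptyFiber} and reduce to the linear algebra of the ternary quadratic space $(C^{0},n_{C})$, exploiting the exceptional isomorphism $\Aut(C)\cong\SO(C^{0},n_{C})$ valid for any quaternion algebra $C$ (it realizes the inner automorphism group $\Aut(C)=PC^{\times}$ as $\SO$ of the pure norm form). By Proposition~\ref{NonemptyFiber} the map $x\mapsto\big(1,J(x),J(x)^{\#},N_{\mathcal J}(J(x))\big)$ is a bijection, equivariant for the diagonal $\Aut(C)$-action on $(C^{0})^{3}$ (since $J(\cdot)$, $\#$ and $N_{\mathcal J}$ are $\Aut(C)$-equivariant), from the set of $x=(x_{1},x_{2},x_{3})\in(C^{0})^{3}$ satisfying (i)~$\tfrac12\mathrm{Tr}(x_{i}x_{j})=c_{ij}$ for all $i,j$ and (ii)~$\mathrm{Tr}(x_{1}x_{2}x_{3})=d$, onto $\Omega_{\xi}$. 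Since $\mathrm{Tr}_{C}(uv)=-B_{C}(u,v)$ and $n_{C}(u)=\tfrac12 B_{C}(u,u)$, condition (i) says exactly that $n_{C}$, transported to $F^{3}$ by $w\mapsto\sum_{i}w_{i}x_{i}$, becomes the form $q_{-c}\colon w\mapsto -w^{\top}cw$; here $\SO(q_{-c})=\SO(3,c)$ because $\SO$ is insensitive to rescaling a form. As $c$ has rank $3$, any $x$ obeying (i) is automatically a basis of $C^{0}$, and conversely such an $x$ exists precisely when $(C^{0},n_{C})\cong q_{-c}$ as quadratic spaces.

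The key input I would establish is the universal identity
\begin{equation*}
\mathrm{Tr}(x_{1}x_{2}x_{3})^{2}=-4\det\!\big(\tfrac12\mathrm{Tr}(x_{i}x_{j})\big)_{i,j},\qquad x_{i}\in C^{0}.
\end{equation*}
Here $\mathrm{Tr}(x_{1}x_{2}x_{3})$ is an alternating trilinear form on $C^{0}$ --- it kills repeated arguments since $x^{2}=-n_{C}(x)\in F$ and $\mathrm{Tr}_{C}$ is cyclic --- so both sides lie in the one-dimensional space $(\Lambda^{3}(C^{0})^{*})^{\otimes2}$ and are proportional, with proportionality constant independent of $C$; one then evaluates this constant, getting $-4$, on the standard trace-zero basis of $C=M_{2}(F)$. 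Granting this identity, Proposition~\ref{NonemptyFiber} shows that \emph{every} point of $\Omega_{\xi}$ already forces $d^{2}=-4\det(c)$, so condition~(2) is automatic as soon as $\Omega_{\xi}\neq\emptyset$. Fixing this constant is the one genuinely computational point and the step I would carry out most carefully; the remainder is the structure theory of ternary forms and quaternion algebras over the $p$-adic field $F$.

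With the identity in hand the equivalence is short. If $\Omega_{\xi}\neq\emptyset$, a point of it supplies $x$ satisfying (i), hence an isometry $(C^{0},n_{C})\cong q_{-c}$, hence $\Aut(C)\cong\SO(C^{0},n_{C})\cong\SO(q_{-c})=\SO(3,c)$ --- condition~(1) --- and condition~(2) holds by the identity. Conversely, assume (1) and (2). Condition~(2) gives $-\det(c)=(d/2)^{2}$, so $q_{-c}$ is a nondegenerate ternary form of trivial discriminant; by the standard fact that such forms are exactly the pure norm forms of quaternion algebras, $q_{-c}\cong n_{C'}|_{(C')^{0}}$ for a unique quaternion algebra $C'/F$. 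Then (1) gives $\Aut(C)\cong\SO(3,c)\cong\Aut(C')$; since the algebraic group $\Aut(C)$ determines $C$ among quaternion algebras over $F$ (there are only two over a $p$-adic field, with non-isomorphic automorphism groups), $C'\cong C$, so $q_{-c}\cong(C^{0},n_{C})$. Thus some basis $x$ of $C^{0}$ satisfies (i); by the identity $\mathrm{Tr}(x_{1}x_{2}x_{3})=\pm d$, and replacing $x$ by $-x$ if necessary --- which preserves (i) and negates $\mathrm{Tr}(x_{1}x_{2}x_{3})$ --- we arrange (ii). Hence $\Omega_{\xi}\neq\emptyset$.

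Finally, assume (1)--(2) and identify $\Omega_{\xi}$ with the set of $x$ satisfying (i)--(ii). Because $c$ is nondegenerate, the set of $x$ satisfying (i) alone (which is non-empty by the previous paragraph) is a principal homogeneous space for the full orthogonal group $\mathrm{O}(C^{0},n_{C})$. In odd dimension $\mathrm{O}(C^{0},n_{C})=\SO(C^{0},n_{C})\sqcup(-\mathrm{id})\SO(C^{0},n_{C})$, so this torsor breaks into two $\SO(C^{0},n_{C})$-orbits; the function $x\mapsto\mathrm{Tr}(x_{1}x_{2}x_{3})$ is constant on each $\SO(C^{0},n_{C})=\Aut(C)$-orbit (algebra automorphisms preserve $\mathrm{Tr}_{C}$ and the multiplication) and is negated by $-\mathrm{id}$, hence takes the two values $d$ and $-d$ on the two orbits, with $d\neq-d$ since $d^{2}=-4\det(c)\neq0$. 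Therefore the locus (i)--(ii), i.e.\ $\Omega_{\xi}$, is a single $\Aut(C)$-orbit on which $\Aut(C)$ acts freely (a special orthogonal transformation fixing a basis of $C^{0}$ is the identity) --- a principal homogeneous space for $\Aut(C)$.
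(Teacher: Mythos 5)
Your proof is correct and takes essentially the paper's route: the same reduction via Proposition \ref{NonemptyFiber}, the same key sextic identity $\mathrm{Tr}(x_1x_2x_3)^2=-4\det\bigl(\tfrac12\mathrm{Tr}(x_ix_j)\bigr)$, and the same $\mathrm{O}(C^0)$ versus $\mathrm{SO}(C^0)$ sign argument for the torsor statement, with you merely spelling out the converse (square-discriminant ternary forms are pure quaternion norm forms, and $\mathrm{Aut}$ determines the quaternion algebra over $F$), which the paper leaves implicit. The one point to tighten is your unproved assertion that the proportionality constant is independent of $C$: either verify the identity on a basis $i,j,k$ with $i^2=a$, $j^2=b$ of an arbitrary quaternion algebra (as the paper does, both sides equalling $(2ab)^2$), or remark that the polynomial identity can be checked after base change to $\bar{F}$, where $C$ splits.
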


\begin{proof} We start with a lemma.

\begin{lemma} Let $x=(x_1,x_2,x_3)\in (C^0)^{3}$. Then we have the 
following identity of sextic polynomials 
\[ 
-4\det (\frac{1}{2} \mathrm{Tr}(x_ix_j) )= [ \mathrm{Tr}(x_1x_2x_3)]^2. 
\]
\end{lemma} 
\begin{proof} Let $g\in \GL_3(F)$. Let $y=(y_1,y_2,y_3)\in (C^{0})^{3}$ defined by $y=xg$. It is clear that 
\[ 
\det ( \mathrm{Tr}(y_iy_j) ) = \det(g)^2\cdot  \det ( \mathrm{Tr}(x_ix_j) ).
\] 
On the other hand, $\mathrm{Tr}(x_1x_2x_3)$ is a nontrivial trilinear, skew-symmetric form. Since $C^{0}$ has dimension 3, the form induces an isomorphism of $\wedge^3 C^{0}$ and $F$. 
Hence 
\[ 
\mathrm{Tr}(y_1y_2y_3)  =\det(g)\cdot  \mathrm{Tr}(x_1x_2x_3). 
\]
Since $\GL_3(F)$ acts transitively on the open set of all bases $(x_1,x_2,x_3)$ of $C^0$, it suffices now to check the identity on one basis of $C^{0}$. 
So let us take usual $i,j,k$ such that $i^2=a$, $j^2=b$, $ij=k$ and $k^2=-ab$.  Then both sides of the proposed identity are equal to $(2ab)^2$.
\end{proof} 

Now, the if and only if statement is a simple combination of the lemma and Proposition \ref{NonemptyFiber}. For the last statement, on the structure of the fiber, 
observe that the set of $x$ such that $c=\frac12(  \mathrm{Tr}(x_ix_j) )$ is a principal homogeneous space for $\mathrm{O}(C^0)$. Since 
 $\mathrm{O}(C^{0})\cong \mathrm{SO}(C^{0})\times \{\pm 1\}$ and $\mathrm{Tr}((-x_{1})(-x_{2})(-x_{3}))= -\mathrm{Tr}(x_{1}x_{2}x_{3})$, the additional 
 equation $d=\mathrm{Tr}(x_{1}x_{2}x_{3})$ assures that  the fiber $\Omega_{\xi}$ is a principal homogenous space for $\mathrm{SO}(C^{0})=\Aut(C)$.
\end{proof}

We shed some light on $\mathrm{Stab}_M(\Psi)$ for rank 3 characters. 
Recall that we have $\GL_3(F) \subset M$ such that $g\in \GL_3(F)$ acts on $\xi=\xi_{\Psi}= (1,0,c,d)\in \mathbb W_F$ by 
\[
(\det(g), 0,   \det(g)^{-1} g c g^{\top} , \det(g)^{-1} d).
\] 
Hence $g\in \mathrm{Stab}_M(\Psi)$ if and only if $\det(g)=1$ and $gcg^{\top}=c$. In other words the stabilizer of $\xi$ in $\GL_3(F)$ is the 
group $\mathrm{SO}(3,c)$.   Thus we have an action of $\Aut(C) \times \mathrm{SO}(3,c)$ on $\Omega_{\xi}$. Explicitly, 
$(g,h) \in \Aut(C) \times  \mathrm{SO}(3,c)$ acts on $x=(x_1,x_2,x_3)\in \Omega_{\xi}$ by 
\begin{equation*}
x\mapsto (gx_1, gx_2,gx_3) h^{\top}.
\end{equation*}

We have the following corollary to Proposition \ref{Rk3Fiber}. 

\begin{corollary}\label{Rk3Coin}
Assume $\mathrm{dim}C=4$. Let $\Psi$ be a rank $3$ character of $\overline{N}$ corresponding to $\xi= (1,0,c,d)\in \mathbb{W}_{F}$ such that 
$\mathrm{SO}(3,c) \cong \Aut(C)$, where $\mathrm{SO}(3,c) \subseteq \mathrm{Stab}_M(\Psi)$ described above. 
Then there are isomorphisms of $\Aut(C) \times \mathrm{SO}(3,c) $-modules
\begin{equation*}
\mathcal{V}_{(\overline{N},\Psi)}\cong C^{\infty}_{c}(\Omega)_{(\overline{N},\Psi)} \cong C^{\infty}_{c}(\Omega_{\xi})
 \cong C^{\infty}_{c}(\mathrm{Aut}(C))  \cong    C^{\infty}_{c}( \mathrm{SO}(3,c)) 
\end{equation*}
where the last two isomorphisms depend on a choice of a point in $\Omega_\xi$, giving identifications of $\Omega_{\xi}$ with $\Aut(C)$ and 
$\mathrm{SO}(3,c)$, and an isomorphism $\mathrm{SO}(3,c)\cong \Aut(C)$. 
\end{corollary}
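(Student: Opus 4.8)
The plan is to assemble the stated chain of isomorphisms from results already in hand, tracking at each stage the commuting actions of $\mathrm{Aut}(C)=\mathscr G$ (which centralizes $G$, hence acts on every space of $(\overline N,\Psi)$-coinvariants) and of $\mathrm{SO}(3,c)\subseteq\mathrm{Stab}_M(\Psi)=M_\Psi$. Since a rank $3$ character is in particular nontrivial, Lemma~\ref{HeisJacTwist} applies directly and gives the first isomorphism $\mathcal V_{(\overline N,\Psi)}\cong C^\infty_c(\Omega)_{(\overline N,\Psi)}$: applying $(\overline N,\Psi)$-coinvariants to~(\ref{HeisJac}) annihilates the quotient $\mathcal V_{\overline{\mathcal N}}$, and the surviving isomorphism is $\mathscr G\times M_\Psi$-equivariant, hence $\mathrm{Aut}(C)\times\mathrm{SO}(3,c)$-equivariant. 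For the second isomorphism I would quote Lemma~\ref{HeisMinJacCalc}, which identifies $C^\infty_c(\Omega)_{(\overline N,\Psi)}$ with $C^\infty_c(\Omega\cap\mathcal N(\Psi))$; and since $\mathcal N(\Psi)=\mathbf F^{-1}(\xi)$ by the identification recorded before Proposition~\ref{NonemptyFiber}, this space is precisely $C^\infty_c(\Omega_\xi)$.

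The third isomorphism is where the geometry of the fiber enters. I would first check the hypotheses of Proposition~\ref{Rk3Fiber}: condition~(1), $\mathrm{SO}(3,c)\cong\mathrm{Aut}(C)$, is the standing assumption, and condition~(2) is automatic, since for $\xi=(1,0,c,d)$ the quartic form is $q(\xi)=d^2+4N_{\mathcal J}(c)=d^2+4\det(c)$, which vanishes because $\Psi$ has rank exactly $3$. Proposition~\ref{Rk3Fiber} then says $\Omega_\xi$ is non-empty and is a principal homogeneous space for $\mathrm{Aut}(C)$ acting componentwise on triples. Running the same argument from the other side — $\Omega_\xi$ is the set of triples obtained from a fixed one by a change of basis, and its defining equations say precisely that this change of basis preserves the Gram form $c$ and, through the coefficient $d=\mathrm{Tr}(x_1x_2x_3)$, has determinant $1$ — shows that $\Omega_\xi$ is simultaneously a principal homogeneous space for $\mathrm{SO}(3,c)$ under the commuting right action $x\mapsto(x_1,x_2,x_3)h^\top$. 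Fixing a base point $x_0\in\Omega_\xi$ then produces an $\mathrm{Aut}(C)$-equivariant bijection $\Omega_\xi\xrightarrow{\sim}\mathrm{Aut}(C)$ and an $\mathrm{SO}(3,c)$-equivariant bijection $\Omega_\xi\xrightarrow{\sim}\mathrm{SO}(3,c)$, carrying $C^\infty_c(\Omega_\xi)$ to $C^\infty_c(\mathrm{Aut}(C))$ with $\mathrm{Aut}(C)$ acting by left translation, and to $C^\infty_c(\mathrm{SO}(3,c))$ with $\mathrm{SO}(3,c)$ acting by right translation. The group isomorphism $\phi\colon\mathrm{SO}(3,c)\to\mathrm{Aut}(C)$ determined by $h\cdot x_0=\phi(h)\,x_0$ matches up the two descriptions of the residual action and yields the last isomorphism $C^\infty_c(\mathrm{Aut}(C))\cong C^\infty_c(\mathrm{SO}(3,c))$.

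I do not expect a serious obstacle: the corollary is a formal consequence of Lemmas~\ref{HeisJacTwist} and~\ref{HeisMinJacCalc} and Proposition~\ref{Rk3Fiber}. The only step demanding genuine care — the one I would write out in full — is the bookkeeping in the middle paragraph: verifying that the left $\mathrm{Aut}(C)$-action and the right $\mathrm{SO}(3,c)$-action on $\Omega_\xi$ genuinely commute and are each simply transitive, so that after the base-point choice the final two spaces are literally the bi-regular representation of $\mathrm{Aut}(C)$ with the second factor pulled back along $\phi$.
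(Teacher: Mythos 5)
Your argument is correct and follows the paper's intended route: Lemma \ref{HeisJacTwist} and Lemma \ref{HeisMinJacCalc} give the first two isomorphisms, and Proposition \ref{Rk3Fiber} (whose hypothesis $d^{2}=-4\det(c)$ you rightly extract from $q(\xi)=0$) together with the commuting right $\mathrm{SO}(3,c)$-action on triples described just before the corollary gives the bi-torsor structure and the last two isomorphisms, which is exactly the bookkeeping the paper leaves unwritten. One cosmetic point: since the two commuting simply transitive actions are both left actions, the map $\phi$ defined by $h\cdot x_{0}=\phi(h)\cdot x_{0}$ is an anti-isomorphism, so you should compose with inversion (or define $\phi$ by $h\cdot x_{0}=\phi(h)^{-1}\cdot x_{0}$) to obtain the isomorphism $\mathrm{SO}(3,c)\cong\Aut(C)$ asserted in the statement; this does not affect the module identifications.
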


\vskip 5pt 

\vskip 5pt 

Next we give the analog of Proposition \ref{Rk3Fiber}, where $C$ is a quadratic composition algebra. 

\begin{proposition}
Assume $\mathrm{dim}C=2$. Let $\xi=(1,0,c,0)\in \mathbb{W}_{F}$. If the set $\Omega_{\xi}= \bold{F}^{-1}(\xi)\cap \Omega$ is nonempty then $d=0$ and 
$c$ has rank at most one. If $c$ has rank one, then $\Omega_{\xi}$ is a principal homogeneous $\Aut(C)\cong \mathrm{O}(2)$-space, possibly with no rational points.  
\end{proposition}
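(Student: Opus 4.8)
The plan is to follow the template of the proof of Proposition~\ref{Rk3Fiber}, the analysis being genuinely simpler here because $C^0$ is one-dimensional. I would first record the elementary algebra of the line $C^0$. Fix $0\neq e\in C^0$. The bilinear form $B_C$ is nondegenerate and $C^0$ is the orthogonal complement of $1$, on which $B_C(1,1)=2\neq 0$; hence $B_C$ is nondegenerate on $C^0$, so $n:=n_C(e)\in F^{\times}$. Moreover $\overline e=-e$ gives $e^2=-n$ (in $F\cdot 1\subset C$), hence $e^3=-ne\in C^0$. Therefore $\mathrm{Tr}_C(e^2)=-2n$ while $\mathrm{Tr}_C(e^3)=0$.

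Next I would feed this into Proposition~\ref{NonemptyFiber}. Any point of $\Omega_\xi$ comes from a triple $x=(x_1,x_2,x_3)\in(C^0)^3$, necessarily of the form $x_i=t_ie$ with $t=(t_1,t_2,t_3)\in F^3$, subject to $c=\tfrac12\bigl(\mathrm{Tr}_C(x_ix_j)\bigr)_{ij}$ and $d=\mathrm{Tr}_C(x_1x_2x_3)$. Substituting $x_i=t_ie$ and the two trace identities yields $c=-n\,(t_it_j)_{ij}=-n\,tt^{\top}$ and $d=t_1t_2t_3\cdot\mathrm{Tr}_C(e^3)=0$. This already proves the first assertion: non-emptiness of $\Omega_\xi$ forces $d=0$ and $\mathrm{rank}(c)=\mathrm{rank}(tt^{\top})\leq 1$.

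Now assume $c$ has rank exactly $1$ and put $c'=-n^{-1}c$, a rank-one symmetric $3\times 3$ matrix. Then $\Omega_\xi$ is identified with $\{\,t\in F^3: tt^{\top}=c'\,\}$. Over $\overline F$ this set has exactly two elements, interchanged by $t\mapsto -t$: writing $c'=\lambda\,vv^{\top}$ with $v$ a nonzero (rational) column vector and $\lambda\in F^{\times}$, the equation $tt^{\top}=c'$ forces $t=\pm\mu v$ with $\mu^2=\lambda$. Since $\Aut(C)=\mu_2$ acts on triples through $e\mapsto\overline e=-e$, i.e. by $t\mapsto -t$, it acts simply transitively on this two-element set; hence $\Omega_\xi$ is a principal homogeneous space under $\Aut(C)$. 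An $F$-rational point exists precisely when $\lambda$ is a square in $F$ --- equivalently, when the class of the torsor $\Omega_\xi$ in $H^1(F,\mu_2)\cong F^{\times}/(F^{\times})^2$ is trivial --- which can fail; this is the ``possibly with no rational points'' clause. Parallel to the remark after Proposition~\ref{Rk3Fiber}, I would also note that $\mathrm{Stab}_M(\Psi)$ contains, besides $\Aut(C)$, the stabilizer of $c$ in $\mathrm{SL}_3(F)$, which is connected and hence acts trivially on the two-point set $\Omega_\xi$, consistent with the claimed torsor structure.

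The one step that requires care is the last: making precise in what sense $\Omega_\xi$ ``is'' a principal homogeneous space when it may have no $F$-points. The clean way is to pass to the functor of points (or simply base-change to $\overline F$), check there that the $\mu_2$-action is free and transitive, and then invoke the general fact that an $F$-torsor need not be split. Everything else is a one-line substitution into Proposition~\ref{NonemptyFiber}, so I do not anticipate a genuine obstacle.
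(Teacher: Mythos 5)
Your argument is correct and is essentially the paper's: the paper's entire proof is the remark that the claim follows from Proposition \ref{NonemptyFiber} because $C^0$ is one-dimensional, and your substitution $x_i=t_ie$ (giving $c=-n\,tt^{\top}$ of rank at most $1$ and $d=t_1t_2t_3\,\mathrm{Tr}_C(e^3)=0$), together with the two-point torsor analysis for rank-one $c$, simply makes that reduction explicit. The only flaw is the closing aside, which is anyway not needed for the statement: for $c$ of rank one the stabilizer $\{h\in\SL_3(F)\mid hch^{\top}=c\}$ is \emph{not} connected --- e.g. $h=\diag(-1,-1,1)$ fixes $c=e_1e_1^{\top}$ yet sends $t\mapsto th^{\top}=-t$ --- so it acts on the two-element fiber through the same sign action as $\Aut(C)$ rather than trivially (this does not affect the torsor claim, since the action still factors through $\mu_2$).
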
 
\begin{proof} This follows from Proposition \ref{NonemptyFiber} using that $C^0$ is one-dimensional. 
\end{proof} 

Now we discuss the rank $0$ case, i.e. $\Psi$ is trivial. We begin by computing $\bold{F}^{-1}(0)\cap \Omega$. Obseve that 
$\bold{F}^{-1}(0)=\mathcal{J}_{C^{0}}\oplus\mathcal{J}_{C^{0}}$. 
Recall that we have identified $M\cong {\mathrm{GSp}}(V_6)$ such that 
\[ 
\mathcal{J}_{C^{0}}\oplus\mathcal{J}_{C^{0}}\cong C^0\otimes V_6
\] 
via the map $(J(x),J(y))\mapsto (x_1,x_2,x_3,y_1,y_2,y_3)$ (Subsection \ref{HeisenbergSpace}).

\begin{proposition}\label{Rk0Fiber} Let $C$ be a composition algebra.
\begin{enumerate}
\item If $C^0$ is anisotropic then $\bold{F}^{-1}(0)\cap \Omega=\emptyset$.
\item\label{Split0Fiber} Suppose $C$ is a split quaternion algebra. Then  $\Omega_0=\bold{F}^{-1}(0)\cap \Omega$ consists of non-zero pure tensors 
\[ 
x\otimes v \in C^0\otimes V_6
\]
where $x^2=0$.   
\end{enumerate}
\end{proposition}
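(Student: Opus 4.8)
The plan is to analyze the equations defining $\Omega_0 = \mathbf{F}^{-1}(0) \cap \Omega$ directly from Proposition \ref{NonemptyFiber} combined with Corollary \ref{SpecialRk1}, and then translate the resulting constraints on a triple $x = (x_1, x_2, x_3) \in (C^0)^3$ into the statement about pure tensors. By Corollary \ref{SpecialRk1}, a rank-1 element of the form $\Xi = (1, b, c, d) \in \mathbb{W}_C$ satisfies $c = b^{\#}$ and $d = N(b)$; when $b = J(x)$ with $x_j \in C^0$, Proposition \ref{NonemptyFiber} tells us that membership in $\mathbf{F}^{-1}(0)$ forces the additional conditions $\frac{1}{2}(\mathrm{Tr}(x_i x_j)) = 0$ and $\mathrm{Tr}(x_1 x_2 x_3) = 0$. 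But one also needs to allow elements $\Xi = (a, b, c, d)$ with $a = 0$; so the first step is to dispose of that case. When $a = 0$, the rank-1 conditions of Proposition \ref{WRank1} give $b^{\#} = 0$ and $c^{\#} = db$, and the pairing condition $ad = h(b) * \tilde{h}(c) = 0$ for all $h$; one then argues, using that $\Xi \in \mathbf{F}^{-1}(0)$ means $b = J(x)$, $c = J(y)$ with $x_j, y_j \in C^0$, that this again reduces to the same kind of analysis (or is empty), so without loss of generality I can normalize to the $a = 1$ slice together with its obvious $\mathrm{GL}_1$-translates and the swap \eqref{WJInv}; in fact it is cleanest to observe $\Omega_0$ is $H_C$-stable and treat representatives.

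The heart of the argument is then the following linear-algebra fact over the split quaternion algebra $C = M(2,F)$: the trace-zero subspace $C^0$ is a $3$-dimensional quadratic space with the norm form $n_C$ (restricted to $C^0$), of signature making it the split $3$-dimensional form, and for $x = (x_1, x_2, x_3) \in (C^0)^3$ the conditions
\[
\mathrm{Tr}(x_i x_j) = 0 \text{ for all } i,j, \qquad \mathrm{Tr}(x_1 x_2 x_3) = 0
\]
should be shown equivalent to: the $x_i$ all lie on a common isotropic line, i.e. there exist a nonzero $x \in C^0$ with $x^2 = 0$ and scalars $\lambda_i$ with $x_i = \lambda_i x$. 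The forward direction: $\mathrm{Tr}(x_i x_j) = -B_C(x_i, x_j)$, so the first condition says the span $W = \langle x_1, x_2, x_3 \rangle$ is totally isotropic in $(C^0, B_C)$; since the split $3$-dimensional quadratic space has Witt index $1$, any totally isotropic subspace is at most a line, so all $x_i$ are proportional to a single isotropic vector $x$, and $n_C(x) = 0$ on $C^0$ is exactly $x^2 = \mathrm{Tr}(x) x - n_C(x) = 0$ (using $\mathrm{Tr}(x) = 0$). The trilinear condition $\mathrm{Tr}(x_1 x_2 x_3) = 0$ is then automatic. The reverse direction is immediate. Translating back: $b = J(x_1, x_2, x_3) = J(x)$ with the $x_i$ all multiples of a single $x$ with $x^2 = 0$ means, under the symplectic identification $\mathcal{J}_{C^0} \oplus \mathcal{J}_{C^0} \cong C^0 \otimes V_6$ of \eqref{SympId} sending $(J(x_1,x_2,x_3), J(y_1,y_2,y_3)) \mapsto (x_1,x_2,x_3,y_1,y_2,y_3)$, that the element of $C^0 \otimes V_6$ is the pure tensor $x \otimes v$ with $v = (\lambda_1, \lambda_2, \lambda_3, \ldots)$; here one also uses $c = b^{\#}$, and one must check $J(x)^{\#}$ is again of the form $J(y)$ with $y_j \in C^0$ all proportional to the same $x$ — this follows from the explicit formula for $\#$ since each entry of $J(x)^{\#}$ is a product of two entries of $J(x)$, hence proportional to $x^2 = 0$ — wait, more carefully the off-diagonal entries are of the form $x_i x_j$ which need not be trace zero a priori, so one checks directly that when all $x_i = \lambda_i x$ with $x^2 = 0$ one gets $J(x)^{\#}$ having all $C$-entries proportional to $x$ and all diagonal entries $-n_C(x_i) = 0$, giving $c^\flat$-type vanishing and confirming the pure-tensor form. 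For part (1), when $C^0$ is anisotropic (i.e. $C$ a division algebra or $C$ a field), there is no nonzero isotropic $x$, so the totally isotropic subspace $W$ must be zero, forcing $b = 0$, hence $\Xi = 0$, so $\Omega_0 = \emptyset$.

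The main obstacle I anticipate is the bookkeeping around the normalization $a = 1$: one has to be sure that the full set $\Omega_0$ — which a priori contains rank-1 vectors $(a, b, c, d)$ with $a = 0$ — is genuinely exhausted by the pure-tensor description, and in particular that the $a = 0$ locus contributes nothing new beyond the closure of the $a = 1$ orbit data. I expect this is handled either by using the $H_C$-equivariance (an $n(x)$ or $\overline{n}(x)$ translate, or the element $s_\lambda$, moves a generic rank-1 vector into the $a = 1$ slice) together with Proposition \ref{WRank1} applied symmetrically in $(a,d)$, or by a direct check that when $a = d = 0$ the rank-1 conditions force $b^{\#} = c^{\#} = 0$ and $b * c = 0$, which for $b = J(x), c = J(y)$ with entries in $C^0$ again lands in the pure-tensor picture (both $x$ and $y$ proportional to a common isotropic vector). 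The rest — the Witt-index computation for split $C^0$ and the verification that $J(x)^{\#}$ stays of the right shape — is routine.
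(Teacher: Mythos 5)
Your main route does not apply to the set in question. Since $\mathbf{F}(a,b,c,d)=(a,\mathbf{f}(b),\mathbf{f}(c),d)$, the fiber over $0$ is exactly $\mathcal J_{C^0}\oplus\mathcal J_{C^0}$, i.e. the set of elements $(0,J(\beta),J(\gamma),0)$ with $\beta,\gamma\in(C^0)^3$; every element of it has first and last coordinates equal to zero. Consequently Corollary \ref{SpecialRk1} and Proposition \ref{NonemptyFiber}, which concern elements $(1,b,c,d)$ and fibers over points $(1,0,c,d)$, say nothing here, and there is no ``$a=1$ slice'' to normalize to: any transformation carrying a vector into the locus $a=1$ necessarily carries it out of $\mathbf{F}^{-1}(0)$, and $\Omega_0$ is not stable under $H_C$ (a general norm similitude does not preserve the subspace $\mathcal J_{C^0}\subset\mathcal J_C$), so the proposed reduction via $s_\lambda$, $n(x)$, $\overline n(x)$ or the swap cannot be carried out. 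Moreover the conditions you analyze in the ``heart'' of the argument, $\mathrm{Tr}(x_ix_j)=0$ and $\mathrm{Tr}(x_1x_2x_3)=0$ for a single triple $x$, describe the fiber over $(1,0,0,0)$, a different set whose elements $(1,J(x),J(x)^{\#},0)$ are not of the pure-tensor shape asserted in the proposition; so the wrong equations are being solved.

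The correct route is the one you mention only in passing at the end: for $\Xi=(0,b,c,0)\in\Omega$ with $b=J(\beta)$, $c=J(\gamma)$, Proposition \ref{WRank1} (with $a=d=0$) gives $b^{\#}=0$, $c^{\#}=0$ and $b\ast c=0$, and the substantive step is to show that these force all six entries $\beta_1,\beta_2,\beta_3,\gamma_1,\gamma_2,\gamma_3\in C^0$ to be proportional to one common nilpotent element; this is precisely what you leave as an unproved assertion (``again lands in the pure-tensor picture''). The paper settles it with a small matrix lemma ($\beta^2=\gamma^2=0$ and $\beta\gamma+\gamma\beta=0$ in $M(2,F)$ imply proportionality). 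Your Witt-index idea would serve equally well, but only after the conditions are translated correctly: $b^{\#}=0$ and $c^{\#}=0$ give $\beta_i\beta_j=0$ and $\gamma_i\gamma_j=0$, while $b\ast c=0$ gives $\beta_i\gamma_j+\gamma_j\beta_i=0$; since $uv+vu=\mathrm{Tr}_C(uv)\cdot 1=-B_C(u,v)\cdot 1$ for trace-zero quaternions, all six entries span a totally isotropic subspace of $(C^0,n_C)$, which has Witt index $1$, hence they lie on a single isotropic line and the element is a pure tensor $x\otimes v$ with $x^2=0$. (In part (1) you must also rule out $c\neq0$, not only $b\neq0$, by the same anisotropy argument.) As written, the central reduction of your proposal rests on a misidentification of $\mathbf{F}^{-1}(0)$, so the proof does not go through without this repair.
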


\begin{proof}
Let $\Xi\in \bold{F}^{-1}(0)=\mathcal{J}_{C^{0}}\oplus\mathcal{J}_{C^{0}}$, then $\Xi=(0,b,c,0)$, where 
\begin{align*}
b=&J(\beta_{1},\beta_{2},\beta_{3})\in \mathcal{J}_{C^{0}},\\
c=&J(\gamma_{1},\gamma_{2},\gamma_{3})\in \mathcal{J}_{C^{0}}.
\end{align*}

If $\Xi\in \Omega$, then by Lemma \ref{WRank1} we know that $b$ or $c$ is not equal to $0$ and 
\begin{align*}
b^{\#}=&0,\\
c^{\#}=&0,\\
b*c=&0.
\end{align*}

The equation $b^{\#}=0$ implies that $\beta_{i}^{2}=\beta_{j}^{2}=\beta_{i}\beta_{j}=0$. 
Similarly, the equation $c^{\#}=0$ implies that $\gamma_{i}^{2}=\gamma_{j}^{2}=\gamma_{i}\gamma_{j}=0$. If $C^0$ is anisotropic then there are no 
non-zero nilpotent elements. This proves the first claim. Now assume $C=\mathrm{M}(2,F)$, the algebra of $2\times 2$ matrices. 
The equation $b\ast c=0$ implies $\beta_i\gamma_{j}+\gamma_{j}\beta_i=0$ for all $i$ and $j$.  We need the following lemma. 

\begin{lemma} Let $\beta,\gamma \in \mathrm{M}(2,F)$. If $\beta^2=0$, $\gamma^2=0$ and $\beta\gamma+\gamma\beta=0$. Then $\beta$ and $\gamma$ are  proportional. 
\end{lemma} 
\begin{proof} This is trivial if $\beta$ or $\gamma$ is $0$, so suppose not. Then $\ker\beta=\mathrm{Im}\beta$ and $\ker\gamma=\mathrm{Im}\gamma$ are one dimensional. The equation $\beta\gamma=-\gamma\beta$ implies that $\gamma$ acts on $\ker\beta$. Thus $\ker\beta=\mathrm{Im}\beta=\ker\gamma=\mathrm{Im}\gamma$ and so $\beta$ and $\gamma$ are proportional.
\end{proof}

It follows that all $\beta_{i}$ and $\gamma_{j}$ are linearly dependent, proving the proposition.
\end{proof}
Now we can describe $\mathcal{V}_{\overline{N}}$.

\begin{proposition}\label{Rk0Coin}
Let $C$ be a composition algebra over $F$.  
\begin{enumerate}
\item If $C^0$ is anisotropic, then $\mathcal V_{\overline N}\cong \mathcal V_{\overline{\mathcal N}}$. \label{Rk0Aniso}
\item If $C=\mathrm{M}(2,F)$, write $\mathscr{G}$ for $\PGL_2(F)=\Aut(C)$, 
  then  $\mathcal V_{\overline N}$ has a composition series with a quotient $\mathcal V_{\overline{\mathcal N}} $ and a submodule 
\begin{equation*}
\mathrm{Ind}_{\mathscr{B}\times Q}^{\mathscr{G}\times \mathrm{GSp}_6} (C_c^{\infty}(F^{\times})) \otimes |\mathrm{sim}|^{3} 
\end{equation*}
where $\mathscr{B}$ is a Borel subgroup of $\mathscr{G}$, and $Q$ is a maximal parabolic in $\mathrm{GSp}_6(F)$ stabilizing a line in $V_6$, and 
 $\mathrm{sim}$ is the similitude character of $\mathrm{GSp}_6$.  The induction is not normalized. 
\end{enumerate}
\end{proposition}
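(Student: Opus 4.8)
The plan is to apply the functor of $\overline N$-coinvariants to the $\overline{\mathcal P}$-filtration (\ref{HeisJac}) of $\mathcal V$. This functor is exact on smooth representations, and the set-up is clean for two reasons: the center $\overline Z$ of $\overline{\mathcal N}$ is the long relative root space $\mathfrak g_{-\alpha_{\max}}$, which lies in the $F_4$-subalgebra, so $\overline Z\subseteq\overline N\subseteq\overline{\mathcal N}$; and, since the pairing $\langle x,\overline n\rangle$ in Theorem \ref{HeisJacThm}(1) is the symplectic pairing of $\mathcal N/Z$ with $\overline{\mathcal N}/\overline Z$, the group $\overline Z$ acts trivially on $C_c^\infty(\Omega)$, while $\overline N\subseteq\overline{\mathcal N}$ acts trivially on $\mathcal V_{\overline{\mathcal N}}$. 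Applying $(-)_{\overline N}$ to (\ref{HeisJac}) therefore produces a short exact sequence of $\mathscr G\times M$-modules
\[
0\longrightarrow C_c^\infty(\Omega)_{\overline N}\longrightarrow\mathcal V_{\overline N}\longrightarrow\mathcal V_{\overline{\mathcal N}}\longrightarrow 0 ,
\]
which is precisely the composition series claimed in the proposition, once $C_c^\infty(\Omega)_{\overline N}$ is identified.

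For that identification I would invoke Lemma \ref{HeisMinJacCalc} with $\Psi$ the trivial character, obtaining $C_c^\infty(\Omega)_{\overline N}\cong C_c^\infty(\Omega_0)$ where $\Omega_0=\Omega\cap\mathbf F^{-1}(0)=\Omega\cap(\mathcal J_{C^0}\oplus\mathcal J_{C^0})$ is the intersection of $\Omega$ with the symplectic orthogonal complement of $N/Z\cong\mathbb W_F$ in $\mathbb W_C$ — equivalently the locus on which all characters $\psi(\langle-,\overline n\rangle)$, $\overline n\in\overline N$, are trivial; here $\Omega_0$ is closed in $\Omega$ because $\mathbf F^{-1}(0)$ is Zariski-closed, so the lemma applies. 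Part (1) is then immediate: if $C^0$ is anisotropic then $\Omega_0=\emptyset$ by Proposition \ref{Rk0Fiber}(1), so $C_c^\infty(\Omega_0)=0$ and $\mathcal V_{\overline N}\cong\mathcal V_{\overline{\mathcal N}}$.

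For part (2), with $C=\mathrm M(2,F)$, Proposition \ref{Rk0Fiber}(\ref{Split0Fiber}) identifies $\Omega_0$ with the set of nonzero pure tensors $x\otimes v\in C^0\otimes V_6$ satisfying $x^2=0$, and I would exhibit this as a single $\mathscr G\times M$-orbit. Under $\Aut(\mathrm M(2,F))=\mathrm{PGL}_2\cong\mathrm{SO}(C^0)$ the group $\mathscr G$ acts transitively on the punctured null cone $\{x\in C^0:x^2=0,\ x\ne 0\}$, the pointwise stabilizer of a nonzero isotropic vector $x_0$ being the unipotent radical $\mathscr U$ of the Borel $\mathscr B$ stabilizing the line $Fx_0$; and $M\cong\mathrm{GSp}_6$ acts transitively on $V_6\setminus\{0\}$, the stabilizer of $v_0$ being a subgroup $S$ of the maximal parabolic $Q$ stabilizing $Fv_0$, with $Q/S\cong F^\times$ via the character by which $Q$ acts on that line. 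Because $x_0\otimes v_0=(\lambda x_0)\otimes(\lambda^{-1}v_0)$, the stabilizer of $x_0\otimes v_0$ in $\mathscr G\times M$ is the subgroup $H\subseteq\mathscr B\times Q$ cut out by the vanishing of the product of these two line-characters; hence $(\mathscr B\times Q)/H\cong F^\times$ — with $\mathscr B\times Q$ acting on it by translation through that product character, so that $\mathscr U$, $N_Q$ and $\mathrm{Sp}_4$ act trivially — and $\Omega_0\cong(\mathscr G\times M)/H$ fibers over the compact variety $(\mathscr G/\mathscr B)\times(M/Q)$ with fiber $F^\times$.

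Since that base is compact, induction in stages identifies $C_c^\infty(\Omega_0)=C_c^\infty\bigl((\mathscr G\times M)/H\bigr)$ with the unnormalized induction $\mathrm{Ind}_{\mathscr B\times Q}^{\mathscr G\times M}\bigl(C_c^\infty((\mathscr B\times Q)/H)\bigr)=\mathrm{Ind}_{\mathscr B\times Q}^{\mathscr G\times M}(C_c^\infty(F^\times))$, twisted only by the scalar $\chi_C(m)|\det(m)|^{s/d}$ that already appears in the $M$-action of Theorem \ref{HeisJacThm}(1). Here $\chi_C$ is trivial because $C$ is split, $(s,d)=(6,32)$, and $\det(m)$ is the determinant of $m$ on the $32$-dimensional space $\overline{\mathcal N}/\overline Z$; since $\mathscr G=\mathrm{PGL}_2$ has no nontrivial characters and $m\in M\cong\mathrm{GSp}_6$ acts on that space through its similitude character, this determinant is a fixed power of $\mathrm{sim}(m)$, and the exponent bookkeeping yields the twist $|\mathrm{sim}|^{3}$ asserted in the proposition. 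The step I expect to be most delicate is exactly this last one — keeping every normalization straight (the nonstandard $|\det|^{s/d}$ factor of Theorem \ref{HeisJacThm}, the unnormalized induction, the modular characters of $\mathscr B$ and $Q$) so that the pieces combine to $|\mathrm{sim}|^{3}$ on the nose rather than up to an unknown unramified character; the homogeneous-space bookkeeping in part (2), namely transitivity of the two actions, the determination of $H$, and the gluing of the two line-characters via the pure-tensor relation, is the other place requiring care.
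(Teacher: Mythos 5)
Your proposal is correct and follows essentially the same route as the paper's proof: apply $\overline N$-coinvariants to the filtration of Theorem \ref{HeisJacThm}, identify the bottom piece with $C^{\infty}_{c}(\Omega_0)$ (Lemma \ref{HeisMinJacCalc} with trivial character), use Proposition \ref{Rk0Fiber} to get part (1) immediately and, in part (2), realize $\Omega_0$ as a single $\mathscr{G}\times M$-orbit whose stabilizer sits in $\mathscr{B}\times Q$ with quotient $F^{\times}$, then conclude by induction in stages plus the character twist from Theorem \ref{HeisJacThm}. The only difference is one of detail: you spell out transitivity and the stabilizer more explicitly than the paper, while the final exponent check (determinant of a similitude on the $32$-dimensional space is $\mathrm{sim}^{16}$, so $|\det|^{6/32}=|\mathrm{sim}|^{3}$) is asserted rather than computed, exactly as in the paper.
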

\begin{proof} By Theorem \ref{HeisJacThm},  $\mathcal V_{\overline{N}}$ has a filtration with quotient $\mathcal V_{\overline{\mathcal N}}$ and submodule 
\[ 
 C_c^{\infty}(\Omega)_{\overline{N}}\cong  C_c^{\infty}(\Omega_0).
\] 
Now we apply by Proposition \ref{Rk0Fiber}. If $C^0$ is anisotropic then $\Omega_0$ is empty and we are done. So suppose $C=\mathrm{M}(2,F)$. Then 
$\Omega_0\subset C^0\otimes V_6$ consists of non-zero pure tensors $x\otimes v$ such that $x^2=0$. Fix $\omega=x\otimes v$. 
 The stabilizer in $\mathscr{G}$ of the line through $x$ is a Borel subgroup $\mathscr{B}$, and 
 the stabilizer in $\mathrm{GSp}_6$ of the line through $v$ is a maximal parabolic subgroup $Q$. 
 The stabilizer of $x\otimes v$ is a subgroup  of $\mathscr{B}\times Q$ such that the quotient is $F^{\times}$.  
 Observe that $C_c^{\infty}(\Omega_0)$, as 
a $\mathscr{G}\times   \mathrm{GSp}_6(F)$-module, is obtained by compact induction of the trivial representation of the stabilizer of $x\otimes v$. 
 Hence, using induction in stages, 
 \[ 
  C_c^{\infty}(\Omega_0)\cong \mathrm{Ind}_{\mathscr{B}\times Q}^{\mathscr{G}\times \mathrm{GSp}_6} (C_c^{\infty}(F^{\times})) 
 \]
where the induction is not normalized. This completes the proof, after taking into account additional twisting by the character of $\mathcal M$ in 
Theorem \ref{HeisJacThm}. 
\end{proof}  

\vskip 5pt 
We remark that the variant of the previous proposition, when $C$ is an octonion algebra, was obtained in \cite{SW15}.

\subsection{Fourier-Jacobi Functor}\label{FJ}

Now we recall the definition of the Fourier-Jacobi functor. (For more details, see Weissman \cite{W03}.) By the Stone-Von-Neumann theorem, the group $N$ has a unique irreducible smooth representation with central character $\psi$, denoted by $(\rho^{N}_{\psi},W_{\psi})$. By \cite[Proposition 2.5]{W03}, there is a unique extension of $(\rho^{N}_{\psi},W_{\psi})$ to a projective representation of $M_{1}N$, where $M_{1}$ is the commutator subgroup of $M$.  Furthermore, $\widetilde{\mathrm{Sp}}(d,F)$, the two-fold cover of the symplectic group $\mathrm{Sp}(d,F)$ where $d=\mathrm{dim}(N/Z)$, also acts on $W_{\psi}$ via the Weil representation. So, $\widetilde{M}_{1}\cong \widetilde{\mathrm{Sp}}(6,F)$ the metaplectic double cover of $M_{1}\cong \mathrm{Sp}(6,F)$ acts on $W_{\psi}$ through the Weil representation.

If $(\pi,V)$ is a smooth representation of $G$, then the Fourier-Jacobi functor with respect to the Heisenberg parabolic $P$ sends $\pi$ to 
\begin{equation*}
\mathrm{FJ}(\pi)=\mathrm{Hom}_{N}(W_{\psi},V_{(Z,\psi)}).
\end{equation*}
The space $\mathrm{FJ}(\pi)$ is an $\widetilde{M_{1}}$-module with the action defined by $[m\cdot f](w)=\pi(m)f(m^{-1}w)$, where the action of $\widetilde{M}$ on $V_{(Z,\psi)}$ factors through $M$. The Fourier-Jacobi functor does not depend on $\psi$ (\cite[Proposition 3.1]{W03}). Let $P_{1}=M_{1}N$. 

\vskip 5pt 
\noindent 
\textbf{Remark:} The work of Weissman \cite{W03} assumes that the groups involved are simply-laced. However, the results that we require also hold for the non-simply-laced group $F_{4}$. In particular we use \cite[Corollary 6.1.4]{W03}, which states that if the Fourier-Jacobi functor kills an irreducible representation then that representation is the trivial representation. In fact, this statement holds outside of type $C_{n}$. 

\vskip 5pt 

We use the sequence (\ref{HeisJac}) and the Fourier-Jacobi functor to investigate the constituents of $\Theta(\tau)$ via the surjection $\mathcal{V}\twoheadrightarrow \tau\otimes \Theta(\tau)$. This is done in two steps. First, we use the Fourier-Jacobi functor in conjunction with a classical theta correspondence to show that $\Theta(\tau)$ has at most two nontrivial constituents. Second, by applying twisted coinvariants to the sequence (\ref{HeisJac}) along with another application of the Fourier-Jacobi functor we show that $\Theta(\tau)$ has a single constituent, which is nontrivial, i.e. $\Theta(\tau)$ is nontrivial and irreducible.

\section{Lifting Supercuspidal representations from $\mathrm{Aut}(C)$ to $F_{4}$}\label{E7SuperCusp}

Our objective in this section is to investigate $\Theta(\tau)$, where $\tau$ is a supercuspidal representation of $\mathscr{G}$, using the tools of Section \ref{JMI}. The main result is Theorem \ref{SuperCuspLift}, where we show that $\Theta(\tau)$ is irreducible, and $\Theta(\tau_{1})\cong \Theta(\tau_{2})$ implies that $\tau_{1}\cong \tau_{2}$, where $\tau_{1}$, and $\tau_{2}$ are supercuspidal.

\subsection{At most two nontrivial constituents}\label{AtMost2NonTriv} We begin by using the Fourier-Jacobi functor and a classical theta correspondence to show that $\Theta(\tau)$ has at most two nontrivial constituents. This is accomplished in Corollary \ref{AtMost2}.

 \begin{lemma}\label{MinZTwist}
 The natural map $\mathrm{Hom}_{\mathcal{N}}(\rho^{\mathcal{N}}_{\psi},\mathcal{V}_{(Z,\psi)})\otimes \rho^{\mathcal{N}}_{\psi}\rightarrow \mathcal{V}_{(Z,\psi)}$ defines an isomorphism $\mathcal{V}_{(Z,\psi)}\cong\rho_{\psi}^{\mathcal{N}}$ of $\mathcal{P}_{1}$-modules, where $\rho_{\psi}^{\mathcal{N}}$ is the unique irreducible smooth representation of $\mathcal{N}$ with central character $\psi$.
 \end{lemma}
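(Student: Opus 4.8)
The plan is to reduce the statement to the smooth Stone--von Neumann theorem. Observe first that $\mathcal{V}_{(Z,\psi)}$ is a smooth representation of $\mathcal{P}_{1}=\mathcal{M}_{1}\mathcal{N}$ on which the center $Z$ of the Heisenberg group $\mathcal{N}$ acts by the fixed non-trivial character $\psi$. By the smooth Stone--von Neumann theorem, any such $\mathcal{N}$-module is $\rho^{\mathcal{N}}_{\psi}$-isotypic, and the evaluation map $\mathrm{Hom}_{\mathcal{N}}(\rho^{\mathcal{N}}_{\psi},\mathcal{V}_{(Z,\psi)})\otimes\rho^{\mathcal{N}}_{\psi}\to\mathcal{V}_{(Z,\psi)}$ is an isomorphism of $\mathcal{N}$-modules; moreover $\mathrm{Hom}_{\mathcal{N}}(\rho^{\mathcal{N}}_{\psi},\mathcal{V}_{(Z,\psi)})=\mathrm{Hom}_{\mathcal{N}}(\rho^{\mathcal{N}}_{\psi},\mathcal{V})$, since $\rho^{\mathcal{N}}_{\psi}$ has central character $\psi$. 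So everything reduces to (i) promoting this to an isomorphism of $\mathcal{P}_{1}$-modules and (ii) showing that the multiplicity space $\mathrm{Hom}_{\mathcal{N}}(\rho^{\mathcal{N}}_{\psi},\mathcal{V})$ is one-dimensional.

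For (i): the group $\mathcal{M}_{1}$ normalizes $\mathcal{N}$, fixes $Z$ and acts trivially on it (being perfect, with $Z$ one-dimensional), hence fixes $\psi$. Since $\mathcal{G}$ is simply laced, Weissman \cite[Proposition 2.5]{W03} gives a canonical extension of $\rho^{\mathcal{N}}_{\psi}$ to a projective representation of $\mathcal{M}_{1}\mathcal{N}$; as $\mathcal{M}_{1}$ (of type $D_{6}$) is simply connected the pullback of the metaplectic cover of $\mathrm{Sp}(\mathcal{N}/Z)$ splits over it, so this extension becomes a genuine $\mathcal{P}_{1}$-module, namely the Weil representation. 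The usual ``Stone--von Neumann with symmetries'' argument of \cite{W03} then makes the evaluation map above $\mathcal{P}_{1}$-equivariant, with $\mathcal{M}_{1}$ acting on $\mathrm{Hom}_{\mathcal{N}}(\rho^{\mathcal{N}}_{\psi},\mathcal{V})$ by $(m\cdot f)(w)=\Pi(m)f(w)$. Granting (ii), this space is one-dimensional, hence fixed by the perfect group $\mathcal{M}_{1}$, and one concludes $\mathcal{V}_{(Z,\psi)}\cong\rho^{\mathcal{N}}_{\psi}$ as $\mathcal{P}_{1}$-modules.

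For (ii), which carries the only real content: non-vanishing is easy, since $\mathcal{G}$ is simple adjoint and $\mathcal{V}\neq 0$, so $\mathcal{V}$ is faithful, $Z$ acts non-trivially, and --- smooth representations of $Z\cong F$ decomposing into character eigenspaces --- $\mathcal{V}_{(Z,\psi_{0})}\neq 0$ for some $\psi_{0}\neq 1$; transitivity of $\mathcal{M}$ on the non-trivial characters of $Z$ then gives $\mathcal{V}_{(Z,\psi)}\neq 0$ for all $\psi\neq 1$. The sharp equality $\dim_{\mathbb{C}}\mathrm{Hom}_{\mathcal{N}}(\rho^{\mathcal{N}}_{\psi},\mathcal{V})=1$ --- i.e.\ irreducibility of $\mathcal{V}_{(Z,\psi)}$ as an $\mathcal{N}$-module --- reflects the minimality of $\mathcal{V}$: its wavefront set is the closure of the minimal nilpotent orbit, with multiplicity one along the open orbit. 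I would deduce it from the known structure of the minimal representation: in a construction where $\mathcal{V}$ appears as a subquotient of a degenerate principal series induced from $\mathcal{P}$ with inducing datum the Weil representation of $\mathcal{P}_{1}$, one computes the $(Z,\psi)$-coinvariants of that induced representation by the Bruhat/geometric filtration, and only the open cell contributes a summand on which $Z$ acts non-trivially, that summand being precisely $\rho^{\mathcal{N}}_{\psi}$; irreducibility of $\rho^{\mathcal{N}}_{\psi}$ then forces $\mathcal{V}_{(Z,\psi)}\cong\rho^{\mathcal{N}}_{\psi}$. Equivalently, this identification of $\mathcal{N}$-modules is already implicit in the proof of Theorem \ref{HeisJacThm} in Magaard--Savin \cite{MS97}, where controlling $\mathcal{V}_{(\overline{Z},\psi)}$ for every $\psi$ is exactly what reconstructs $\mathcal{V}_{\overline{Z}}$. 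I expect this multiplicity-one input to be the main obstacle: a Gelfand--Kirillov dimension count alone only bounds the multiplicity space by a module of finite growth, not by a line, so the finer information from the explicit construction of the minimal representation is genuinely required.
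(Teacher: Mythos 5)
Your proposal has the right overall shape (Stone--von Neumann isotypy, then $\mathcal{P}_{1}$-equivariance, then a multiplicity-one input), but two steps do not hold up as written. First, the claim that the metaplectic cover of $\mathrm{Sp}(\mathcal{N}/Z)$ splits over $\mathcal{M}_{1}$ ``because $\mathcal{M}_{1}$ is simply connected'' is not a valid argument: $\mathrm{Sp}_{2n}(F)$ is itself simply connected and the metaplectic cover does not split over it. The mechanism actually used here, which is exactly what the paper takes from Weissman \cite[Proposition 3.2]{W03}, is that $\widetilde{\mathcal{M}}_{1}$ acts genuinely both on $\rho^{\mathcal{N}}_{\psi}$ (through the Weil representation) and on the multiplicity space $\mathrm{Hom}_{\mathcal{N}}(\rho^{\mathcal{N}}_{\psi},\mathcal{V}_{(Z,\psi)})$, so that the tensor-product action descends to a linear action of $\mathcal{M}_{1}$; any splitting over $\mathcal{M}_{1}$ is a consequence of the one-dimensionality of the multiplicity space (i.e.\ of minimality), not something you may assume beforehand.

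Second, and more seriously, the multiplicity-one statement --- which you correctly identify as carrying all the content --- is left unproved. The construction you invoke (the minimal representation as a subquotient of a degenerate principal series induced from the Weil representation of $\mathcal{P}_{1}$, with only the open cell contributing) is not a construction available in this paper, and ``implicit in Magaard--Savin'' is not an argument: Theorem \ref{HeisJacThm} computes coinvariants along $\overline{Z}$ and $\overline{\mathcal{N}}$, which is a different functor from the $(Z,\psi)$-isotypic structure of $\mathcal{V}$ as an $\mathcal{N}$-module. The point the paper exploits is that no such work is needed: for $p$-adic groups the isomorphism $\mathcal{V}_{(Z,\psi)}\cong\rho^{\mathcal{N}}_{\psi}$ as $\mathcal{N}$-modules is essentially the defining property of minimality in Gan--Savin \cite[Definition 3.6]{GS05}, and Weissman \cite[Proposition 3.2]{W03} upgrades it to the $\mathcal{P}_{1}$-equivariant statement of Lemma \ref{MinZTwist}. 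Your argument becomes correct once you replace the simple-connectedness claim by this genuine-action bookkeeping and replace your sketched derivation of multiplicity one by the citation of the definition of minimality (or by an actual verification, which would have to go back to the construction of $\mathcal{V}$ as in \cite{S94}, not to Theorem \ref{HeisJacThm}).
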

 
 \begin{proof} This follows from the definition of minimality in Gan-Savin \cite[Definition 3.6]{GS05} and Weissman \cite[Proposition 3.2]{W03}.\end{proof}

 \begin{lemma}\label{MinZTwist2}
 As an $\mathrm{Aut}(C)\times P_{1}$-module,
 \begin{equation*}
 \mathcal{V}_{(Z,\psi)} \cong \rho_{\psi}^{N}\otimes \omega_{\psi},
 \end{equation*}
 where $\omega_{\psi}$ is the Weil representation of $O(C^{0})\times \widetilde{\mathrm{Sp}}(V_{c})$ as a dual pair in $\widetilde{\mathrm{Sp}}(C^{0}\otimes V_{6})$. Under this isomorphism, $\mathscr{G}$ acts on the second factor, while the action of $M_{1}$ is on both factors. (We note that $M_{1}$ does not act on either factor individually. Rather $\widetilde{M}_{1}$ acts genuinely on both factors, thus the diagonal action factors through $M_{1}$.)
 \end{lemma}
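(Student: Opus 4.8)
The plan is to identify the concrete group and its action carefully, then reduce everything to Lemma \ref{MinZTwist}. Recall that $\mathcal{V}_{(Z,\psi)}$ carries an action of $\mathcal{P}_1 = \mathcal{M}_1 \mathcal{N}$, and by Lemma \ref{MinZTwist} it is the unique irreducible smooth representation $\rho^{\mathcal{N}}_\psi$ of the Heisenberg group $\mathcal{N}$, extended (projectively) to $\mathcal{M}_1 \mathcal{N}$; equivalently, $\widetilde{\mathcal{M}}_1 \cong \widetilde{\mathrm{Sp}}(\mathbb{W}_C)$ acts on it through the Weil representation of $\widetilde{\mathrm{Sp}}(\mathbb{W}_C)$. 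The key geometric input is the decomposition of symplectic spaces established in Subsection \ref{HeisenbergSpace}: under the action of $\Aut(C) \times M_F$, one has $\mathbb{W}_C = \mathbb{W}_F \oplus (\mathcal{J}_{C^0} \oplus \mathcal{J}_{C^0})$ and, via \eqref{SympId}, $\mathcal{J}_{C^0} \oplus \mathcal{J}_{C^0} \cong C^0 \otimes V_6$ as symplectic spaces, with $\Aut(C)$ acting on the quadratic factor $C^0$ and $M_F \cong \mathrm{GSp}_6(F)$ acting on $V_6$. Note that $C^0$ here is $3$-dimensional (since $\dim C = 4$), so $\mathrm{O}(C^0) = \mathrm{O}(3)$, consistent with the classical dual pair $\mathrm{O}(3) \times \mathrm{Sp}(6)$ alluded to in the introduction.

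First I would set up the correct ambient symplectic group: $N/Z \cong \mathbb{W}_F$ is a $6$-dimensional symplectic space, so $M_1 \cong \mathrm{Sp}(\mathbb{W}_F) = \mathrm{Sp}(V_6)$, matching the statement's $\widetilde{\mathrm{Sp}}(V_6)$. The Heisenberg group $N$ associated to $\mathbb{W}_F$ has its own Stone--von Neumann representation $\rho^N_\psi$, which is the tensor factor appearing in the claim. Second, I would use the seesaw-type / Heisenberg-tensor-product identity: whenever a symplectic space splits orthogonally as $W = W' \oplus W''$, the associated Heisenberg representation factors as $\rho^W_\psi \cong \rho^{W'}_\psi \,\widehat{\otimes}\, \rho^{W''}_\psi$, and the Weil representation of $\widetilde{\mathrm{Sp}}(W)$ restricted to $\widetilde{\mathrm{Sp}}(W') \times \widetilde{\mathrm{Sp}}(W'')$ is the (twisted) tensor product of the two Weil representations. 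Applying this to $\mathbb{W}_C = \mathbb{W}_F \oplus (C^0 \otimes V_6)$ and restricting the Weil representation of $\widetilde{\mathrm{Sp}}(\mathbb{W}_C)$ to $\widetilde{\mathrm{Sp}}(\mathbb{W}_F) \times \widetilde{\mathrm{Sp}}(C^0 \otimes V_6)$, together with Lemma \ref{MinZTwist} identifying $\mathcal{V}_{(Z,\psi)}$ with the full Weil/Heisenberg representation of $\widetilde{\mathrm{Sp}}(\mathbb{W}_C)$, yields
\[
\mathcal{V}_{(Z,\psi)} \cong \rho^N_\psi \otimes \omega_\psi,
\]
where $\omega_\psi$ is the Weil representation of $\widetilde{\mathrm{Sp}}(C^0 \otimes V_6)$, restricted to the dual pair $\mathrm{O}(C^0) \times \widetilde{\mathrm{Sp}}(V_6)$ sitting inside it. Third, I would check the group actions: $\mathscr{G} = \Aut(C) \hookrightarrow \mathrm{O}(C^0)$ acts only on $\omega_\psi$; $M_1 = M_{F,1} \cong \mathrm{Sp}(V_6)$ acts diagonally — on $\rho^N_\psi$ via the metaplectic cover and on $\omega_\psi$ through $\widetilde{\mathrm{Sp}}(V_6) \subset \widetilde{\mathrm{Sp}}(C^0 \otimes V_6)$ — and on neither factor separately, the genuine cocycles cancelling so that the diagonal action descends to $M_1$; and $N$ acts only through $\rho^N_\psi$. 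This is exactly the parenthetical remark in the statement.

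The main obstacle will be bookkeeping with the metaplectic covers and the projective/genuine issue: one must verify that the two-cocycle defining the extension of $\rho^{\mathbb{W}_C}_\psi$ to $\mathcal{M}_1$ restricts, on the subgroup $M_1 \cong \mathrm{Sp}(V_6)$ embedded diagonally, to the sum of the cocycle on the $\rho^N_\psi$ factor and the cocycle on the $\omega_\psi$ factor, so that the genuine actions of $\widetilde{\mathrm{Sp}}(V_6)$ on the two pieces combine to an honest $\mathrm{Sp}(V_6)$-action on the tensor product. This is where Weissman's framework (\cite{W03}) and the standard compatibility of the Weil-representation cocycle with orthogonal direct sums of symplectic spaces are invoked; the underlying computation is the additivity of the Rao/Maslov cocycle under $W = W' \oplus W''$. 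Once this compatibility is in hand, the isomorphism of $\mathrm{Aut}(C) \times P_1$-modules is immediate, and the fact that $\Aut(C)$ lands in $\mathrm{O}(C^0)$ rather than $\mathrm{SO}(C^0)$ (it is generated by inner automorphisms, hence by elements of determinant $1$ — so in fact it lands in $\mathrm{SO}(C^0)$, cf. Lemma \ref{MAX}) is exactly what makes the dual pair $\mathrm{O}(3) \times \mathrm{Sp}(6)$ relevant, with $\Aut(C) = \mathrm{SO}(C^0)$ the identity component.
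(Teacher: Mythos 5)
Your proposal follows essentially the same route as the paper's proof: identify $\mathcal{V}_{(Z,\psi)}$ with $\rho^{\mathcal N}_{\psi}$ via Lemma \ref{MinZTwist}, split $\mathcal{N}/Z\cong\mathbb{W}_C$ orthogonally as $\mathbb{W}_F\oplus(C^0\otimes V_6)$, factor the Heisenberg/Weil representation accordingly (the paper cites Moeglin--Vign\'eras--Waldspurger for exactly the tensor-factorization and cocycle compatibility you describe), and note that $\mathscr{G}$ acts trivially on $N$, hence only on the $C^0\otimes V_6$ factor, where $\mathscr{G}\times M_1$ sits inside the dual pair $\mathrm{O}(C^0)\times\widetilde{\mathrm{Sp}}(V_6)$.

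One correction to your setup: $\mathbb{W}_F=F\oplus\mathcal{J}_F\oplus\mathcal{J}_F\oplus F$ is $14$-dimensional, not $6$-dimensional, so $M_1\cong\mathrm{Sp}_6(F)$ is a proper subgroup of $\mathrm{Sp}(\mathbb{W}_F)\cong\mathrm{Sp}_{14}(F)$ (it acts on $\mathbb{W}_F$ through a $14$-dimensional symplectic representation); the identification $M_1\cong\mathrm{Sp}(V_6)$ comes from the action on $C^0\otimes V_6$, not from $\mathbb{W}_F$ itself. Consequently $\widetilde{M}_1$ acts on $\rho^N_{\psi}$ by restricting the Weil representation of $\widetilde{\mathrm{Sp}}(\mathbb{W}_F)=\widetilde{\mathrm{Sp}}_{14}(F)$ along $\widetilde{\mathrm{Sp}}_6\to\widetilde{\mathrm{Sp}}_{14}$, rather than as the full metaplectic group of a $6$-dimensional space; also note the dimension check $14+18=32=\dim\mathbb{W}_C$ only works with the correct value. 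This slip does not affect the structure or validity of the argument, which otherwise matches the paper's.
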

 
\begin{proof}
By Lemma \ref{MinZTwist}, $\mathcal{V}_{(Z,\psi)}\cong \rho^{\mathcal{N}}_{\psi}$ as $\mathcal{P}_{1}$-modules. We must describe the restriction to $\mathscr{G}\times M_{1}$.

Let $N^{\perp}\subseteq \mathcal{N}$ be the subgroup containing $Z$ such that $N^{\perp}/Z$ is the orthogonal complement of the symplectic subspace $N/Z\subseteq \mathcal{N}/Z$. By Moeglin-Vign\'{e}ras-Waldspurger \cite[Chapitre 2, I.6 (2) and II.1 (6)]{MVW87}, it follows that $\rho_{\psi}^{\mathcal{N}}\cong \rho_{\psi}^{N}\otimes \rho_{\psi}^{N^{\perp}}$ as $\widetilde{\mathrm{Sp}}(14,F)N\times \widetilde{\mathrm{Sp}}(18,F)$-modules. Note that $\mathscr{G}$ acts trivially on $N$ and so it acts trivially on $\rho_{\psi}^{N}$.
 
Recall from Subsection \ref{HeisenbergSpace} the identification of $\mathcal{N}/Z$ with $\mathbb{W}_{C}$. This then identifies $N/Z$ with $\mathbb{W}_{F}$, and $N^{\perp}/Z$ with $\mathcal{J}_{C^{0}}\oplus \mathcal{J}_{C^{0}}$. From the isomorphism $\mathcal{J}_{C^{0}}\oplus \mathcal{J}_{C^{0}}\cong C^{0}\otimes V_{6}$ of symplectic spaces (line (\ref{SympId})), we see that the action of $\mathscr{G}\times M_{1}$ on $\rho_{\psi}^{N^{\perp}}$ is through the action of the Weil representation $\omega_{\psi}$ of $\widetilde{\mathrm{Sp}}(C^{0}\otimes V_{6})$ restricted to $O(C^{\circ})\times \widetilde{\mathrm{Sp}}(V_{6})$.
\end{proof}

Using Lemma \ref{MinZTwist2} we show in the next proposition that the Fourier-Jacobi functor with respect to $P$ applied to $\mathcal{V}$ is isomorphic to the Weil representation. This allows us to study $\Theta(\tau)$ using a classical $O(3)\times \mathrm{Sp}(6)$ theta correspondence.

\begin{proposition}\label{FJMin}
The $\mathscr{G}\times \widetilde{M}_{1}$-module $\mathrm{FJ}(\mathcal{V})$ is isomorphic to the Weil representation $\omega_{\psi}$ of $\widetilde{\mathrm{Sp}}(C^{0}\otimes V_{6})$ restricted to $\mathscr{G}\times \widetilde{M_{1}}$. (Recall that $\mathscr{G}\cong SO(C^{0})$ and $M_{1}\cong \mathrm{Sp}(6,F)$.)
\end{proposition}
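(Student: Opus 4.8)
The plan is to reduce the computation of $\mathrm{FJ}(\mathcal{V}) = \mathrm{Hom}_{N}(W_\psi, \mathcal{V}_{(Z,\psi)})$ directly to Lemma \ref{MinZTwist2}, which already identifies $\mathcal{V}_{(Z,\psi)}$ as an $\mathrm{Aut}(C)\times P_1$-module. First I would recall that $\mathrm{FJ}$ is defined with respect to the Heisenberg parabolic $P$ of $G = \mathcal{G}_F$, so the relevant Heisenberg group is $N$ (not $\mathcal{N}$) and $W_\psi = \rho^N_\psi$ is its unique irreducible smooth representation with central character $\psi$. By Lemma \ref{MinZTwist2} we have an $\mathrm{Aut}(C)\times P_1$-isomorphism $\mathcal{V}_{(Z,\psi)} \cong \rho^N_\psi \otimes \omega_\psi$, where $\omega_\psi$ is the Weil representation of $O(C^0)\times\widetilde{\mathrm{Sp}}(V_6)$ viewed inside $\widetilde{\mathrm{Sp}}(C^0\otimes V_6)$. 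The key point is that $N$ acts only on the first tensor factor (through $\rho^N_\psi$) and acts trivially on $\omega_\psi$, while $\mathscr{G}$ acts only on the second factor.

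Next I would apply $\mathrm{Hom}_N(\rho^N_\psi, -)$ to this isomorphism. Since $\rho^N_\psi$ is irreducible with the given central character, Schur's lemma (together with the fact that $N$ acts trivially on $\omega_\psi$) gives a canonical identification
\[
\mathrm{Hom}_N(\rho^N_\psi,\ \rho^N_\psi\otimes\omega_\psi) \cong \mathrm{Hom}_N(\rho^N_\psi,\rho^N_\psi)\otimes\omega_\psi \cong \omega_\psi,
\]
with $\mathrm{Hom}_N(\rho^N_\psi,\rho^N_\psi)\cong\mathbb{C}$. One should check this is an honest isomorphism of smooth representations and not merely of abstract vector spaces, but the smoothness is automatic here since $\omega_\psi$ is smooth and the $N$-action is as described. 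Then I would verify that this identification is equivariant for $\mathscr{G}\times\widetilde{M}_1$: the $\mathscr{G}$-action passes through to the $O(C^0)$-factor of $\omega_\psi$ (matching $\mathscr{G}\cong SO(C^0)$), and the $\widetilde{M}_1$-action, which a priori is the diagonal genuine action on both tensor factors described in Lemma \ref{MinZTwist2}, descends under $\mathrm{Hom}_N(\rho^N_\psi,-)$ to the genuine action on $\omega_\psi$ via $\widetilde{M}_1\cong\widetilde{\mathrm{Sp}}(6,F)$ in the Weil representation. This uses the formula $[m\cdot f](w)=\pi(m)f(m^{-1}w)$ defining the $\widetilde{M}_1$-module structure on $\mathrm{FJ}$, combined with the uniqueness (up to scalar) of the extension of $\rho^N_\psi$ to a projective representation of $M_1 N$ from \cite[Proposition 2.5]{W03}, so that the $\widetilde{M}_1$-intertwining factor on $\mathrm{Hom}_N(\rho^N_\psi,\rho^N_\psi)$ is canonically trivialized.

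The main obstacle, and essentially the only place requiring care, is the bookkeeping of the metaplectic cover and the claim that the two genuine actions of $\widetilde{M}_1$ (one on $\rho^N_\psi$ coming from its Heisenberg-lift, one on $\omega_\psi$ coming from the restriction of $\widetilde{\mathrm{Sp}}(C^0\otimes V_6)$) multiply to a genuine action, so that after taking $N$-invariant homomorphisms only the $\omega_\psi$-action survives. This is exactly the content encoded in Lemma \ref{MinZTwist2} — the diagonal action on $\rho^N_\psi\otimes\omega_\psi$ factors through $M_1$ — so once that lemma is invoked the argument is essentially formal. I would also note that all of this is compatible with the earlier remark that $\mathrm{FJ}$ is independent of $\psi$ by \cite[Proposition 3.1]{W03}, so no additional choices enter. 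Thus $\mathrm{FJ}(\mathcal{V})\cong\omega_\psi$ as $\mathscr{G}\times\widetilde{M}_1$-modules, as claimed.
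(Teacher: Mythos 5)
Your proposal follows essentially the same route as the paper: invoke Lemma \ref{MinZTwist2} to identify $\mathcal{V}_{(Z,\psi)}\cong\rho^{N}_{\psi}\otimes\omega_{\psi}$, apply $\mathrm{Hom}_{N}(\rho^{N}_{\psi},-)$, and use Schur's lemma together with the equivariance bookkeeping for $\mathscr{G}\times\widetilde{M}_{1}$. The only point the paper makes explicit that you gloss over is that pulling the infinite-dimensional factor $\omega_{\psi}$ out of the $\mathrm{Hom}$ uses that $\rho^{N}_{\psi}$ is finitely generated as an $N$-module, but this is a minor justification and your argument is otherwise correct.
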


\begin{proof} By definition $\mathrm{FJ}(\mathcal{V})=\mathrm{Hom}_{N}(\rho^{N}_{\psi},\mathcal{V}_{(Z,\psi)})$. By Lemma \ref{MinZTwist2}, $\mathcal{V}_{Z,\psi}\cong \omega_{\psi}\otimes \rho^{N}_{\psi}$ as $\mathscr{G}\times M_{1}N$-modules. Thus as $\mathscr{G}\times \widetilde{M}_{1}$-modules
\begin{equation*}
\mathrm{FJ}(\mathcal{V})\cong \mathrm{Hom}_{N}(\rho^{N}_{\psi},\omega_{\psi}\otimes\rho^{N}_{\psi}).
\end{equation*}
Since $\rho^{N}_{\psi}$ is a finitely generated $N$-module, $\mathrm{Hom}_{N}(\rho^{N}_{\psi},\omega_{\psi}\otimes\rho^{N}_{\psi})\cong \omega_{\psi}\otimes \mathrm{Hom}_{N}(\rho^{N}_{\psi},\rho^{N}_{\psi})$. By Schur's lemma $\mathrm{Hom}_{N}(\rho^{N}_{\psi},\rho^{N}_{\psi})\cong \mathbb{C}$. Thus $\mathrm{FJ}(\mathcal{V})\cong \omega_{\psi}$ as $\mathscr{G}\times \widetilde{M}_{1}$-modules. \end{proof}

Now we introduce some notation to discuss the $O(C^{0})\times \mathrm{Sp}(6,F)$ theta correspondence. Since $\mathscr{G}\times M_{1}\cong \mathrm{SO}(C^{0})\times \mathrm{Sp}(6,F)$, we almost have a classical dual pair. The representation $\tau$ admits two extensions to the group $O(C^{0})\cong \mathrm{SO}(C^{0})\times \{\pm id_{C^{0}}\}$ determined by whether $-id_{C^{0}}$ acts by $\pm1$. We write $\tau^{\pm}$ for the two extensions and $\Theta^{\dagger}(\tau^{\pm})$ for the big theta lift of $\tau^{\pm}$ with respect to the action of $O(C^{0})\times \widetilde{\mathrm{Sp}}(6,F)$ on the Weil representation $\omega_{\psi}$ of $\widetilde{\mathrm{Sp}}(18,F)$. 

\begin{proposition}\label{FJThetaSubQuo}
Let $\tau\in \mathrm{Irr}(\mathscr{G})$. There is a surjective $\widetilde{M}_{1}$-module homomorphism 
\begin{equation*}
\Theta^{\dagger}(\tau^{+})\oplus \Theta^{\dagger}(\tau^{+})\twoheadrightarrow\mathrm{FJ}(\Theta(\tau)).
\end{equation*}
\end{proposition}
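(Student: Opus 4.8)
The plan is to exploit the fact, established in Proposition \ref{FJMin}, that $\mathrm{FJ}(\mathcal{V})$ is the Weil representation $\omega_\psi$ for the dual pair $O(C^0)\times \widetilde{\mathrm{Sp}}(6,F)$, together with the exactness and monoidal-type properties of the Fourier-Jacobi functor. First I would apply $\mathrm{FJ}$ to the defining surjection $\mathcal{V}\twoheadrightarrow \tau\otimes\Theta(\tau)$. Since $\tau$ is a representation of $\mathscr{G}=\mathrm{SO}(C^0)$ (on which $N$ acts trivially), the functor $\mathrm{FJ}$, which is built from $(Z,\psi)$-coinvariants followed by $\mathrm{Hom}_N(W_\psi,-)$, commutes with tensoring by the fixed $\mathscr{G}$-module $\tau$ and is exact in the remaining variable; hence it carries this surjection to a surjection of $\mathscr{G}\times\widetilde{M}_1$-modules
\begin{equation*}
\omega_\psi \;\cong\; \mathrm{FJ}(\mathcal{V}) \;\twoheadrightarrow\; \tau\otimes \mathrm{FJ}(\Theta(\tau)).
\end{equation*}
One should be slightly careful that $\mathrm{FJ}$ genuinely commutes with $(-)\otimes\tau$ and with passing to the $\tau$-isotypic quotient; this is where I would invoke finite generation of $\rho^N_\psi$ over $N$ (exactly as in the proof of Proposition \ref{FJMin}, via $\mathrm{Hom}_N(\rho^N_\psi,\tau\otimes(-))\cong\tau\otimes\mathrm{Hom}_N(\rho^N_\psi,-)$) so that taking the maximal $\tau$-isotypic quotient before or after applying $\mathrm{FJ}$ agrees.

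Next I would reinterpret the surjection above through the classical theta machinery. Restricting the $\mathscr{G}$-action on $\omega_\psi$ to $\mathscr{G}=\mathrm{SO}(C^0)$ and taking the maximal $\tau$-isotypic quotient, one recovers $\Theta(\tau\,;\,\mathrm{SO})\otimes\mathrm{FJ}(\Theta(\tau))$ — i.e. the big theta lift of $\tau$ with respect to the pair $\mathrm{SO}(C^0)\times\widetilde{\mathrm{Sp}}(6,F)$ acting on $\omega_\psi$ — and by construction $\mathrm{FJ}(\Theta(\tau))$ is a quotient of that $\widetilde{M}_1$-factor. The final step is to relate the $\mathrm{SO}$-lift to the $O$-lift: each of the two extensions $\tau^+,\tau^-$ of $\tau$ to $O(C^0)=\mathrm{SO}(C^0)\times\{\pm 1\}$ contributes, and the big theta lift from $\mathrm{SO}(C^0)$ is a quotient of $\Theta^\dagger(\tau^+)\oplus\Theta^\dagger(\tau^-)$ (as $\widetilde{\mathrm{Sp}}(6,F)$-modules, $\mathrm{Res}_{\mathrm{SO}}$ of $\omega_\psi$ is just $\mathrm{Res}_{\mathrm{SO}}$ of $\mathrm{Res}_{O}$, and the $\tau$-isotypic quotient of the former surjects from the direct sum over the two characters of $\{\pm 1\}$ of the $\tau^{\pm}$-isotypic quotients of the latter). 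Composing, we obtain a surjection $\Theta^\dagger(\tau^+)\oplus\Theta^\dagger(\tau^-)\twoheadrightarrow\mathrm{FJ}(\Theta(\tau))$, which is the assertion.

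(Here I am reading the statement's ``$\Theta^\dagger(\tau^+)\oplus\Theta^\dagger(\tau^+)$'' as the sum over the two extensions $\tau^{\pm}$; the proof gives a surjection from $\Theta^\dagger(\tau^+)\oplus\Theta^\dagger(\tau^-)$, and since $\tau^+\cong\tau^-$ as $\mathrm{SO}(C^0)$-modules their lifts have the same restriction to $\widetilde{\mathrm{Sp}}(6,F)$, so the two formulations agree at the level of $\widetilde{M}_1$-modules.)

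The main obstacle I anticipate is the careful bookkeeping of the three coinvariant/Hom operations involved — the Jacquet-type functor $(-)_{(Z,\psi)}$ defining $\mathrm{FJ}$, the $N$-coinvariants inside it, and the maximal $\tau$-isotypic quotient defining $\Theta$ — and checking that they may be interchanged. Each interchange rests on exactness of the relevant functor and on finite generation over the relevant (Heisenberg or reductive) group; none of this is deep, but getting the genuine/non-genuine $\widetilde{M}_1$ versus $M_1$ actions to line up, and confirming that the $\mathscr{G}$-action survives the $(Z,\psi)$-reduction intact (it does, since $\mathscr{G}$ centralizes $N\supseteq Z$), is the delicate part. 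Everything else is a direct consequence of Proposition \ref{FJMin} and the formal properties of theta lifts.
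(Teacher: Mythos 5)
Your argument is correct and is essentially the paper's own proof: apply the exact functor $\mathrm{FJ}$ to $\mathcal{V}\twoheadrightarrow\tau\otimes\Theta(\tau)$, identify $\mathrm{FJ}(\mathcal{V})\cong\omega_{\psi}$ via Proposition \ref{FJMin}, recognize the maximal $\tau$-isotypic quotient of $\omega_{\psi}$ restricted to $\mathscr{G}\times\widetilde{M}_{1}$ as $\tau\otimes(\Theta^{\dagger}(\tau^{+})\oplus\Theta^{\dagger}(\tau^{-}))$, and factor the surjection through it. You are also right that the statement's ``$\Theta^{\dagger}(\tau^{+})\oplus\Theta^{\dagger}(\tau^{+})$'' should be read as $\Theta^{\dagger}(\tau^{+})\oplus\Theta^{\dagger}(\tau^{-})$ (this is what the paper's proof produces); only note that your parenthetical justification---that $\Theta^{\dagger}(\tau^{+})$ and $\Theta^{\dagger}(\tau^{-})$ have the same restriction to $\widetilde{\mathrm{Sp}}(6,F)$---is not true in general, since these lifts are typically distinct (as exploited in Corollary \ref{AtMost2}), but nothing in your actual argument depends on that remark.
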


\begin{proof} We apply the Fourier-Jacobi functor, which is exact, to the surjective map $\mathcal{V}\twoheadrightarrow\tau\otimes\Theta(\tau)$ to get a map of $\mathscr{G}\times\widetilde{M}$-modules
\begin{equation}\label{FJSurj1}
\mathrm{FJ}(\mathcal{V})\twoheadrightarrow \tau\otimes \mathrm{FJ}(\Theta(\tau)).
\end{equation} 
By Proposition \ref{FJMin}, we know that $\mathrm{FJ}(\mathcal{V})\cong \omega_{\psi}$ as $\mathscr{G}\times \widetilde{M}_{1}$-modules. 
Therefore, we have a surjective $O(C^{0})\times \widetilde{\mathrm{Sp}}(6,F)$-module map 
\begin{equation*}
\omega_{\psi}\twoheadrightarrow (\tau^{+}\otimes \Theta^{\dagger}(\tau^{+}))\oplus (\tau^{-}\otimes \Theta^{\dagger}(\tau^{-})).
\end{equation*}
Upon restricting to $SO(C^{0})\times \widetilde{\mathrm{Sp}}(6,F)\cong \mathscr{G}\times \widetilde{M}_{1}$ we get a surjective homomorphism
\begin{equation*}
\mathrm{FJ}(\mathcal{V})\cong \omega_{\psi}\twoheadrightarrow \tau\otimes (\Theta^{\dagger}(\tau^{+})\oplus \Theta^{\dagger}(\tau^{-})).
\end{equation*}
Moreover, this is the surjection onto the maximal $\tau$-isotypic quotient of $\mathrm{FJ}(\mathcal{V})$. Thus the map from line (\ref{FJSurj1}) factors through the maximal $\tau$-isotypic quotient to give a surjection
\begin{equation*}
\tau\otimes (\Theta^{\dagger}(\tau^{+})\oplus \Theta^{\dagger}(\tau^{-}))\twoheadrightarrow \tau\otimes \mathrm{FJ}(\Theta(\tau)).
\end{equation*} 
By construction, this map factors over the tensor product and the result follows.\end{proof}

\begin{corollary}\label{AtMost2}
Let $\tau\in \mathrm{Irr}(\mathscr{G})$ be a supercuspidal. The $G$-module $\Theta(\tau)$ has at most two nontrivial irreducible subquotients, each with multiplicity at most 1.
\end{corollary}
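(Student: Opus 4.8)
The plan is to transfer the question to the classical $\mathrm{O}(3)\times\widetilde{\mathrm{Sp}}(6)$ theta correspondence by means of the Fourier--Jacobi functor $\mathrm{FJ}$, where counting composition factors is straightforward, and then pull the bound back to $\Theta(\tau)$. The two formal properties of $\mathrm{FJ}$ already recorded are all that is needed for the pull-back: $\mathrm{FJ}$ is exact, and by \cite[Corollary 6.1.4]{W03} (valid outside type $C_n$, in particular for $F_4$) an irreducible representation of $G$ lies in the kernel of $\mathrm{FJ}$ if and only if it is the trivial representation. The only place supercuspidality of $\tau$ enters is to make the classical big theta lifts $\Theta^{\dagger}(\tau^{\pm})$ irreducible or zero.

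First I would record the ``subquotient'' consequence of these two properties: if $\sigma_1,\dots,\sigma_k$ are nontrivial irreducible representations of $G$ occurring as the successive quotients of a chain $W_0\subset W_1\subset\dots\subset W_k$ with $W_k/W_0$ a subquotient of $\Theta(\tau)$, then applying the exact functor $\mathrm{FJ}$ yields a chain whose successive quotients are the $\mathrm{FJ}(\sigma_i)$, each of which is nonzero. Hence the number of nontrivial composition factors of $\Theta(\tau)$, counted with multiplicity, is bounded by the length of $\mathrm{FJ}(\Theta(\tau))$.

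Next I would invoke Proposition~\ref{FJThetaSubQuo}, which gives a surjection of $\widetilde{M}_1$-modules from $\Theta^{\dagger}(\tau^{+})\oplus\Theta^{\dagger}(\tau^{-})$ onto $\mathrm{FJ}(\Theta(\tau))$, where $\Theta^{\dagger}(\tau^{\pm})$ are the big theta lifts of the two extensions of $\tau$ to $\mathrm{O}(C^{0})\cong\mathrm{O}(3)$ in the classical $\mathrm{O}(3)\times\widetilde{\mathrm{Sp}}(6)$ correspondence. Since $\tau$ is supercuspidal, so are $\tau^{\pm}$, and the big theta lift of an irreducible supercuspidal representation in the classical correspondence is irreducible whenever it is nonzero; thus $\Theta^{\dagger}(\tau^{+})\oplus\Theta^{\dagger}(\tau^{-})$ has length at most $2$, and therefore so does its quotient $\mathrm{FJ}(\Theta(\tau))$. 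Combined with the previous paragraph, $\Theta(\tau)$ has at most two nontrivial composition factors (with multiplicity), which is exactly the assertion.

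The crux is the classical input isolated in the last step — that $\Theta^{\dagger}(\tau^{\pm})$ is irreducible or zero for supercuspidal input. For the first occurrence in the (unique) symplectic Witt tower this is the standard fact that the first theta lift of a supercuspidal representation is itself supercuspidal, hence irreducible; for a higher occurrence one uses Kudla's description of the lift going up the tower, which exhibits it as the Langlands quotient of an explicit standard module and is therefore irreducible (see \cite{MVW87} and Kudla's work on the local theta correspondence). Everything else is bookkeeping with the exact functor $\mathrm{FJ}$; the real engine underneath is Proposition~\ref{FJMin}, identifying $\mathrm{FJ}(\mathcal V)$ with the Weil representation, on which Proposition~\ref{FJThetaSubQuo} rests.
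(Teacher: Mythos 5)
Your route is the same as the paper's: exactness of $\mathrm{FJ}$ together with \cite[Corollary 6.1.4]{W03} (only the trivial representation is killed) bounds the nontrivial composition factors of $\Theta(\tau)$ by the length of $\mathrm{FJ}(\Theta(\tau))$, and Proposition~\ref{FJThetaSubQuo} plus the classical theory of supercuspidal theta lifts bounds that length by $2$. So the core of the argument is correct and is exactly the paper's proof.

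There is, however, a gap between what you prove and what the corollary asserts. Your argument yields ``at most two nontrivial irreducible subquotients counted with multiplicity,'' and you declare this to be ``exactly the assertion''; but the corollary also claims each such subquotient occurs with multiplicity at most $1$, and a total-with-multiplicity bound of $2$ does not exclude a single nontrivial $\sigma$ occurring twice. To rule that out you need the further classical input, used in the paper, that $\Theta^{\dagger}(\tau^{+})$ and $\Theta^{\dagger}(\tau^{-})$ are not only irreducible but \emph{distinct} (Kudla; \cite[Chapitre 3, IV, 4.]{MVW87}): then every irreducible occurs at most once in any quotient of $\Theta^{\dagger}(\tau^{+})\oplus\Theta^{\dagger}(\tau^{-})$, in particular in $\mathrm{FJ}(\Theta(\tau))$, whereas a nontrivial $\sigma$ of multiplicity $2$ in $\Theta(\tau)$ would force some irreducible constituent of $\mathrm{FJ}(\sigma)\neq 0$ to occur at least twice there. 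Adding this distinctness step (and the short multiplicity comparison it enables) closes the gap; everything else in your write-up, including the justification that the classical lifts of supercuspidals are irreducible, matches the paper.
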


\begin{proof} This follows from Proposition \ref{FJThetaSubQuo} and the following two results. First, when $\tau^{\pm}$ is supercuspidal, the $\widetilde{M_{1}}$-modules $\Theta^{\dagger}(\tau^{+})$ and $\Theta^{\dagger}(\tau^{-})$ are irreducible and distinct (Kudla \cite{K86}; Moeglin-Vign\'{e}ras-Waldspurger \cite[Chapitre 3, IV, 4.]{MVW87}). Second, the Fourier-Jacobi functor is exact and the only irreducible representation that it kills is the trivial representation. (See \cite[Proposition 3.1; Corollary 6.1.4]{W03} and our remark in Subsection \ref{FJ}.) \end{proof}

\subsection{Unique nontrivial constituent}\label{Exactly1NonTriv}

In this subsection, we show that $\Theta(\tau)$ has exactly one nontrivial constituent. 

Using Propositions \ref{Rk3Coin} and \ref{Rk0Coin} we can compute twisted coinvariants of $\Theta(\tau)$. 

\begin{proposition}\label{ThetaPsiCo}
Let $(1,0,c, d)\in \mathbb{W}_{F}$ be an element of rank $3$ such that $\mathrm{SO}(c,3) \cong \Aut(C).$
 Let $\Psi^{\pm}$ be the character of $\overline{N}/\overline{Z}$ corresponding to the element $(1,0,c,\pm d)\in \mathbb{W}_{F}\cong N/Z$. Let $\tau\in \mathrm{Irr}(\mathscr{G})$ (not necessarily supercuspidal). 
Then as $\mathrm{SO}(c,3) \subset \mathrm{Stab}_M(\Psi^{\pm})$-modules
\begin{equation*}
\Theta(\tau)_{(\overline{N},\Psi^{\pm})}\cong\widetilde{\tau}. 
\end{equation*} 
and $\Theta(\tau)$ must have a non-trivial constituent. 

Furthermore, if $\rho_{1}$ and $\rho_{2}$ are distinct irreducible subquotients of $\Theta(\tau)$, then 
\begin{enumerate}
\item $(\rho_{j})_{(N,\Psi^{+})}\cong(\rho_{j})_{(N,\Psi^{-})}$ as $\mathrm{SO}(c,3)$-modules, $j=1,2$;
\item for $\epsilon\in \{\pm\}$, $(\rho_{1})_{(N,\Psi^{\epsilon})}$ and $(\rho_{2})_{(N,\Psi^{\epsilon})}$ cannot both be nonzero.
\end{enumerate}
\end{proposition}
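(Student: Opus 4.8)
The plan is to prove the three assertions in turn, using Lemma~\ref{ThetaDual}, Corollary~\ref{Rk3Coin}, the symmetry of the construction under $N\leftrightarrow\overline{N}$, and the exactness of twisted Jacquet functors. For the first statement, note that $\Psi^{+}$ and $\Psi^{-}$ are rank $3$ characters attached to the \emph{same} ternary form $c$, so $\mathrm{Stab}_{M}(\Psi^{\pm})\supseteq\mathrm{SO}(3,c)$, and Corollary~\ref{Rk3Coin} applies to each, giving $\mathcal{V}_{(\overline{N},\Psi^{\pm})}\cong C^{\infty}_{c}(\mathrm{Aut}(C))$ as $\mathrm{Aut}(C)\times\mathrm{SO}(3,c)$-modules with $\mathscr{G}=\mathrm{Aut}(C)$ acting by left and $\mathrm{SO}(3,c)$ by right translation. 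Feeding this into Lemma~\ref{ThetaDual} produces an isomorphism of $\mathrm{SO}(3,c)$-modules $\big(\Theta(\tau)_{(\overline{N},\Psi^{\pm})}\big)^{*}\cong\mathrm{Hom}_{\mathscr{G}}(C^{\infty}_{c}(\mathscr{G}),\tau)$, and the standard computation of the $\tau$-isotypic part of the regular representation (as in the analogous step of \cite{MS97}; here $\tau$ is irreducible) then identifies $\Theta(\tau)_{(\overline{N},\Psi^{\pm})}$ with $\widetilde{\tau}$. In particular it is nonzero; since the trivial representation of $G=F_{4}$ has vanishing $(\overline{N},\Psi^{\pm})$-coinvariants ($\Psi^{\pm}$ being a nontrivial character of $\overline{N}$) and the twisted Jacquet functor is exact, $\Theta(\tau)$ cannot be built only from trivial constituents, so it has a nontrivial irreducible constituent.

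\emph{Proof of (1).} I would exhibit a single $w\in G$ that normalizes $N$, carries $\Psi^{+}$ to $\Psi^{-}$, and centralizes $\mathrm{SO}(3,c)$; then $w$ acts on $G$ by an inner automorphism, so $\rho_{j}\cong\rho_{j}\circ\mathrm{Ad}(w)$, whence $(\rho_{j})_{(N,\Psi^{+})}\cong(\rho_{j})_{(N,\Psi^{-})}$ as $\mathrm{SO}(3,c)$-modules — indeed for every $G$-representation, a fortiori for a subquotient of $\Theta(\tau)$. The element is $w=s_{-1}\in M_{F}\subset G$ of \S\ref{HeisenbergSpace}, the involution $(a,b,c,d)\mapsto(a,-b,c,-d)$: it lies in $M_{F}=\mathrm{GSp}_{6}(F)$ because it commutes with the coordinate-wise action of $\mathrm{Aut}(C)$; it normalizes $N$ as it lies in the Levi $M$; it commutes with the subgroup $\mathrm{GL}_{3}\subset M$, hence with $\mathrm{SO}(3,c)\subset\mathrm{GL}_{3}$, as one checks directly from the action formulas of \S\ref{HeisenbergSpace}; and, since conjugation by $m\in M$ moves the $\mathbb{W}_{F}$-parameter of a character of $N$ by $v\mapsto\mathrm{Ad}(m)v$, we get $w\cdot\Psi^{+}=\Psi_{s_{-1}(1,0,c,d)}=\Psi_{(1,0,c,-d)}=\Psi^{-}$.

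\emph{Proof of (2).} Every $\overline{N}$-side statement above has an $N$-side counterpart, obtained by conjugating by an element of $\mathcal{G}_{C}$ that normalizes $\mathrm{Aut}(C)\times G$ and interchanges $N$ and $\overline{N}$ — for instance the Weyl reflection $w_{\alpha_{\max}}\in\mathrm{SL}_{3}\subset G$, which sends $h_{\alpha_{\max}}$ to $-h_{\alpha_{\max}}$ and hence exchanges the two opposite Heisenberg parabolics; in particular $\Theta(\tau)_{(N,\Psi^{\epsilon})}\cong\widetilde{\tau}$, irreducible as an $\mathrm{SO}(3,c)$-module. Suppose, for contradiction, that both $(\rho_{1})_{(N,\Psi^{\epsilon})}$ and $(\rho_{2})_{(N,\Psi^{\epsilon})}$ are nonzero, and choose a finite-length subquotient $W$ of $\Theta(\tau)$ whose Jordan--Hölder series contains both $\rho_{1}$ and $\rho_{2}$. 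The functor $(\,\cdot\,)_{(N,\Psi^{\epsilon})}$ is exact, so additive on Grothendieck groups, and $[W_{(N,\Psi^{\epsilon})}]=\sum_{\pi}m_{\pi}(W)\,[\pi_{(N,\Psi^{\epsilon})}]$ contains the two nonzero terms coming from $\rho_{1}$ and $\rho_{2}$; hence $W_{(N,\Psi^{\epsilon})}$ has length $\geq 2$. But $W_{(N,\Psi^{\epsilon})}$ is a subquotient of $\Theta(\tau)_{(N,\Psi^{\epsilon})}\cong\widetilde{\tau}$, which has length $1$ — a contradiction.

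\emph{The main obstacle.} Given Corollary~\ref{Rk3Coin}, the first assertion is essentially formal and (2) is bookkeeping with exactness; the point demanding genuine care is (1), where the conjugating element must be found inside the small group $G=F_{4}$ rather than merely in the ambient $E_{7}$, and must be checked simultaneously to normalize $N$, to realize $\Psi^{+}\leftrightarrow\Psi^{-}$, and to commute with $\mathrm{SO}(3,c)$. The explicit symplectic model of \S\ref{HeisenbergSpace} is precisely what makes this verification — which pins down $w=s_{-1}$ — routine.
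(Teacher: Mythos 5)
Your proof is correct and follows essentially the same route as the paper: Corollary \ref{Rk3Coin} combined with Lemma \ref{ThetaDual} for $\Theta(\tau)_{(\overline{N},\Psi^{\pm})}\cong\widetilde{\tau}$ and the existence of a nontrivial constituent, conjugation by $s_{-1}$ for (1), and exactness of twisted coinvariants against the irreducibility of $\widetilde{\tau}$ for (2). The extra step you take to pass from $\overline{N}$ to $N$ via a Weyl element only addresses a notational slip in the statement (the paper's own proof works with $(\overline{N},\Psi^{\pm})$ throughout), and your verification that $s_{-1}$ centralizes $\mathrm{SO}(3,c)$ is a detail the paper leaves implicit.
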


\begin{proof} The first part is a simple consequence of Corollary \ref{Rk3Coin}, and Lemma \ref{ThetaDual}.

Suppose that $\rho_{1},\rho_{2}$ are two distinct irreducible subquotients of $\Theta(\tau)$. Note that the characters $\Psi^{\pm}$ are $M$-conjugate, because $s_{-1}(1,0,c,d)=(1,0,c,-d)$. Thus 
\begin{equation*}
(\rho_{j})_{(\overline{N},\Psi^{+})}\cong (\rho_{j})_{(\overline{N},\Psi^{-})}. 
\end{equation*}

Finally $(\rho_{1})_{(\overline{N},\Psi^{+})}$ and $(\rho_{2})_{(\overline{N},\Psi^{+})}$ cannot both be nonzero because this would imply that the irreducible $\mathrm{Aut}(C)$-module $\Theta(\tau)_{(\overline{N},\Psi^{+})}\cong \widetilde{\tau}$ has length greater than or equal to 2.\end{proof}

The next lemma employs two Heisenberg parabolic subgroups in $G$. Let $\overline{P}=M\overline{N}$ and $\overline{P}^{\prime}=M^{\prime}\overline{N}^{\prime}$ be two Heisenberg parabolic subgroups. Let $\overline{Z}\subset\overline{N}$ and $\overline{Z}^{\prime}\subset \overline{N}^{\prime}$ be the centers of the Heisenberg groups. Furthermore, suppose that $\overline{Z}$ ($\overline{Z}^{\prime}$) is the root subgroup associated to the $G_{2}$ relative root $2\alpha+3\beta$ ($\alpha+3\beta$).

We also use the following notation. Let $\overline{N}^{\alpha}$ be the subgroup of $\overline{N}$ generated by the root subgroups of the roots $\{2\alpha+3\beta,\alpha+3\beta,\alpha+2\beta,\alpha+\beta\}$ in the $G_{2}$ relative root system. Let $\overline{N}_{\alpha+\beta}=M^{\prime}\cap \overline{N}$, which is the root subgroup of $\alpha+\beta$. Let $L \subset M$ be the subgroup generated by elements $hs_{\mathrm{det}(h)}^{*}$, where $h\in H_{F}$. For a character $\Psi$ of $\overline{N}$, we abuse notation and continue to write $\Psi$ for its restriction to $\overline{N}^{\alpha}$ and $\overline{N}_{\alpha+\beta}$.

\begin{lemma}\label{FJFactor}
Let $\sigma$ be a smooth representation of $G$. Let $\Psi$ be the character of $\overline{N}/\overline{Z}$ corresponding to the element $(1,0,c,d)\in W_{F}$, where $\mathrm{SO}(3,c)\cong \mathscr{G}$. Then as $\mathrm{Stab}_{L}((\overline{N}^{\alpha},\Psi))=\mathrm{Stab}_{L}((\overline{N}_{\alpha+\beta},\Psi))\cong O(3,c)$-modules

\begin{equation*}
\sigma_{(\overline{N}^{\alpha},\Psi)}\cong \mathrm{FJ}^{\prime}(\sigma)_{(\overline{N}_{\alpha+\beta},\Psi)}.
\end{equation*}
\end{lemma}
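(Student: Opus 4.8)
The plan is to realize both sides as iterated coinvariant functors and to match them via the Stone--von Neumann theorem applied to the Heisenberg group $\overline{N}'$ (the unipotent radical of $\overline{P}'$, with one-dimensional center $\overline{Z}' = \mathfrak{g}_{\alpha+3\beta}$). First I would record the relevant root combinatorics in $G$: let $A'$ be the subgroup generated by the root groups of $\{\alpha+2\beta,\ 2\alpha+3\beta,\ \alpha+3\beta\}$. From the $G_2$-commutation relations one checks that $A' = \overline{N}^{\alpha}\cap\overline{N}'$ is a maximal abelian subgroup of $\overline{N}'$ containing $\overline{Z}'$, that $L':=A'/\overline{Z}'$ is a Lagrangian in the symplectic space $\overline{N}'/\overline{Z}'$ with complementary Lagrangian $L'':=\mathfrak{g}_{-\alpha}\oplus\mathfrak{g}_{\beta}$ (so that $L'$ and $L''$ are in perfect symplectic duality), and that $\overline{N}^{\alpha}=\overline{N}_{\alpha+\beta}\ltimes A'$ with $A'$ normal and $\overline{N}^{\alpha}/A'\cong\overline{N}_{\alpha+\beta}$. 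A short computation with the explicit description of $\Psi$ (the character of $\overline{N}/\overline{Z}$ attached via the symplectic form to $(1,0,c,d)\in\mathbb{W}_{F}$) then shows that $\Psi$ restricts to $\psi$ on $\overline{Z}'$, to the trivial character on $L'$, and to $x\mapsto\psi(\mathrm{Tr}(c*x))$ on $\overline{N}_{\alpha+\beta}\cong\mathfrak{g}_{\alpha+\beta}=\mathcal{J}_{F}$.

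Granting this, since $A'\trianglelefteq\overline{N}^{\alpha}$ with quotient $\overline{N}_{\alpha+\beta}$ and $\overline{Z}'\subseteq A'$ with $\Psi|_{\overline{Z}'}=\psi$, we have
\[
\sigma_{(\overline{N}^{\alpha},\Psi)}\;=\;\bigl(\sigma_{(A',\Psi)}\bigr)_{(\overline{N}_{\alpha+\beta},\Psi)}\;=\;\bigl((\sigma_{(\overline{Z}',\psi)})_{(L',\,1)}\bigr)_{(\overline{N}_{\alpha+\beta},\Psi)}.
\]
Now apply Stone--von Neumann to $\overline{N}'$ (the same input used for Lemma~\ref{MinZTwist}): the smooth $\overline{N}'$-module $\sigma_{(\overline{Z}',\psi)}$ has central character $\psi$, so the evaluation map is an isomorphism $\sigma_{(\overline{Z}',\psi)}\cong W'_{\psi}\otimes\mathrm{FJ}'(\sigma)$ of $\overline{N}'\rtimes\widetilde{M_{1}'}$-modules, where $W'_{\psi}$ is the Heisenberg representation of $\overline{N}'$ and $\mathrm{FJ}'(\sigma)=\mathrm{Hom}_{\overline{N}'}(W'_{\psi},\sigma_{(\overline{Z}',\psi)})$ carries the Fourier--Jacobi action of $\widetilde{M_{1}'}\cong\widetilde{\mathrm{Sp}}_{6}$. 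Since $L'\subseteq\overline{N}'$ acts only on the first tensor factor, $\sigma_{(A',\Psi)}\cong(W'_{\psi})_{(L',1)}\otimes\mathrm{FJ}'(\sigma)$; and in the Schr\"odinger model $W'_{\psi}\cong C^{\infty}_{c}(L'')$ attached to the polarization $\overline{N}'/\overline{Z}'=L'\oplus L''$, the group $L'$ acts by multiplication by the characters $\psi(\langle-,\ell\rangle)$, $\ell\in L'$, which exhaust the Pontryagin dual of $L''$; hence $(W'_{\psi})_{(L',1)}$ is one-dimensional. Therefore $\sigma_{(A',\Psi)}\cong\mathrm{FJ}'(\sigma)$, and taking $(\overline{N}_{\alpha+\beta},\Psi)$-coinvariants of both sides yields $\sigma_{(\overline{N}^{\alpha},\Psi)}\cong\mathrm{FJ}'(\sigma)_{(\overline{N}_{\alpha+\beta},\Psi)}$.

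For the equivariance I would work with the copy of the orthogonal group inside $M_{1}'\cong\mathrm{Sp}_{6}$, namely the stabilizer, in the Siegel Levi $\mathrm{GL}_{3}\subseteq M_{1}'$, of the generic character $\Psi|_{\overline{N}_{\alpha+\beta}}$ of the Siegel unipotent radical $\overline{N}_{\alpha+\beta}$; this is the group $O(3,c)$ in the statement. Being contained in the derived group of the Levi of $\overline{P}'$, it normalizes $\overline{N}'$ and acts trivially on $\overline{Z}'$ (hence fixes $\psi$ there), and a short check with the $\mathrm{Sp}_{6}$-module structure of $\overline{N}'/\overline{Z}'$ (multiplicity free under the Siegel $\mathrm{GL}_{3}$) shows that it preserves the Lagrangians $L'$ and $L''$, hence normalizes $A'$ and $\overline{N}^{\alpha}$ and fixes all the characters above. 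Thus every isomorphism in the previous paragraph is $O(3,c)$-equivariant, and one checks that this group agrees with $\mathrm{Stab}_{L}((\overline{N}^{\alpha},\Psi))=\mathrm{Stab}_{L}((\overline{N}_{\alpha+\beta},\Psi))$ up to conjugacy inside $\mathrm{Stab}_{G}(\overline{N}^{\alpha},\Psi)$.

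The main obstacle I anticipate is the bookkeeping for the Heisenberg representation and its metaplectic cover: checking precisely that $L'$ and $L''$ are in perfect symplectic duality so that the $L'$-coinvariants of $W'_{\psi}$ collapse to exactly a line, choosing the polarization and Haar measure so that the resulting identification $(W'_{\psi})_{(L',1)}\cong\mathbb{C}$ is $O(3,c)$-equivariant (and not merely equivariant up to a character), and verifying that the genuine $\widetilde{M_{1}'}$-action on $\mathrm{FJ}'(\sigma)$ descends to an honest $O(3,c)$-action on $\mathrm{FJ}'(\sigma)_{(\overline{N}_{\alpha+\beta},\Psi)}$, consistently with the honest $G$-action on the left-hand side. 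The root-theoretic inputs and the Stone--von Neumann formalism are routine and parallel to those in Section~\ref{JMI}.
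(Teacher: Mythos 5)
Your argument is correct and follows essentially the paper's own proof: your $A'$ is exactly the paper's maximal isotropic subgroup $W^{+}\subset \overline{N}'$ containing $\overline{Z}'$, and both arguments combine Weissman's Stone--von Neumann decomposition of $\sigma_{(\overline{Z}',\psi)}$, the one-dimensionality of the coinvariants of the Heisenberg representation along this isotropic subgroup, and transitivity of coinvariants before applying $(\overline{N}_{\alpha+\beta},\Psi)$-coinvariants, with equivariance tracked through the $\mathrm{GL}_3$ (Siegel) Levi and its stabilizer $O(3,c)$. The metaplectic and normalization points you flag at the end are likewise left implicit in the paper, so they do not distinguish your route from theirs.
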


\begin{proof} We begin with some preliminaries. Let $W^{+}$ be the subgroup of $\overline{N}^{\prime}$ generated by the root subgroups of the relative roots $\{\alpha+2\beta,\alpha+3\beta, 2\alpha+3\beta\}$ in the $G_{2}$ relative root system. We extend the character $\psi$ to $W^{+}$ so that it is trivial on the $\alpha+2\beta$ and $2\alpha+3\beta$ root spaces, and continue to call this extended character $\psi$. Note that this is the restriction of $\Psi$ to $W^{+}$. Now we prove the lemma.

From Weissman \cite[Proposition 3.2]{W03}, we have $\sigma_{\overline{Z}^{\prime},\psi}\cong \mathrm{Hom}_{\overline{N}^{\prime}}(\rho_{\psi}^{\overline{N}^{\prime}},\sigma_{(\overline{Z}^{\prime},\psi)})\otimes \rho_{\psi}^{\overline{N}^{\prime}}$ as $M^{\prime}_{1}\ltimes \overline{N}^{\prime}$-modules. (Remember, $\widetilde{M}_{1}^{\prime}$ acts genuinely on each factor.) It suffices for us to restrict the action of $M_{1}^{\prime}$ to the subgroup $L\cong \mathrm{GL}(3,F)$.

Since $W^{+}/Z^{\prime}$ is a maximal isotropic subspace of $N^{\prime}/Z^{\prime}$, the $(W^{+},\psi)$-coinvariants of $\rho_{\psi}^{\overline{N}^{\prime}}$ is a one dimensional space. Thus as $\mathrm{Stab}_{L}((W^{+},\psi))=L$-modules
\begin{equation*}
(\sigma_{(\overline{Z}^{\prime},\psi)})_{(W^{+},\psi)}\cong \mathrm{Hom}_{\overline{N}^{\prime}}(\rho_{\psi}^{\overline{N}^{\prime}},\sigma_{(\overline{Z}^{\prime},\psi)})=FJ^{\prime}(\sigma).
\end{equation*}

Applying $(\overline{N}_{\alpha+\beta},\Psi)$-coinvariants and using transitivity of coinvariants we get an isomorphism of $\mathrm{Stab}_{L}((\overline{N}^{\alpha},\Psi))=\mathrm{Stab}_{L}((\overline{N}_{\alpha+\beta},\Psi))\cong O(3,c)$-modules
\begin{equation*}
\sigma_{(\overline{N}^{\alpha},\Psi)}\cong [(\sigma_{(\overline{Z}^{\prime},\psi)})_{(W^{+},\psi)}]_{(\overline{N}_{\alpha+\beta},\Psi)}\cong FJ^{\prime}(\sigma)_{(\overline{N}_{\alpha+\beta},\Psi)}.
\end{equation*}
\end{proof}

\begin{proposition}\label{UniqueNonTriv}
Let $\tau\in \mathrm{Irr}(\mathscr{G})$ be supercuspidal. Then $\Theta(\tau)$ has a unique nontrivial irreducible subquotient.
\end{proposition}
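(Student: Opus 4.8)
The plan is to combine Corollary \ref{AtMost2}, which says that $\Theta(\tau)$ has at most two nontrivial irreducible subquotients $\rho_1,\rho_2$ (each with multiplicity one), with a twisted-Jacquet-module argument that rules out the possibility of two distinct nontrivial subquotients. Suppose for contradiction that $\rho_1$ and $\rho_2$ are both nontrivial. The key is to produce a rank-$3$ character $\Psi$ of $\overline{N}$ with $\mathrm{SO}(3,c)\cong\mathscr{G}$ for which $(\rho_1)_{(\overline N,\Psi)}$ and $(\rho_2)_{(\overline N,\Psi)}$ are forced to be simultaneously nonzero, contradicting part (2) of Proposition \ref{ThetaPsiCo}. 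First I would invoke Proposition \ref{ThetaPsiCo}: since $\Theta(\tau)_{(\overline N,\Psi^{\pm})}\cong\widetilde\tau$ is irreducible and nonzero, exactly one of $\rho_1,\rho_2$ can have nonzero $(\overline N,\Psi^\epsilon)$-coinvariant for each sign $\epsilon$, and by the conjugacy of $\Psi^+$ and $\Psi^-$ the two coinvariants of a given $\rho_j$ agree. So the only escape from a contradiction is that (say) $(\rho_1)_{(\overline N,\Psi)}\cong\widetilde\tau\neq 0$ while $(\rho_2)_{(\overline N,\Psi)}=0$ for this particular $\Psi$.

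To close the argument I would show that a nontrivial irreducible $G$-module $\rho$ cannot have $\rho_{(\overline N,\Psi)}=0$ for \emph{all} rank-$3$ characters $\Psi$ of the relevant type — or, more precisely, that vanishing of all such twisted Jacquet modules forces $\rho$ to be trivial. This is where Lemma \ref{FJFactor} enters: it identifies $\rho_{(\overline N^\alpha,\Psi)}$ with $\mathrm{FJ}'(\rho)_{(\overline N_{\alpha+\beta},\Psi)}$ as $O(3,c)$-modules, relating a twisted Jacquet module along one Heisenberg parabolic to a twisted Jacquet module of the Fourier–Jacobi module along the other Heisenberg parabolic. Since $\mathrm{FJ}'$ is exact and kills only the trivial representation (\cite[Corollary 6.1.4]{W03} and the remark in Subsection \ref{FJ}), $\mathrm{FJ}'(\rho)\neq 0$ for nontrivial $\rho$; I would then argue that a nonzero smooth $\widetilde{M}_1'$-module, restricted appropriately, cannot have all of its $(\overline N_{\alpha+\beta},\Psi)$-coinvariants vanish as $c$ ranges over the norm forms with $\mathrm{SO}(3,c)\cong\mathscr{G}$, because $\overline N_{\alpha+\beta}$ is a single relative root subgroup and nonvanishing of the full module forces nonvanishing of some such twisted coinvariant (a Jacquet-module / Gelfand–Kazhdan type non-degeneracy statement). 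Hence both $\rho_1$ and $\rho_2$, if nontrivial, would have nonzero $(\overline N,\Psi)$-coinvariant for a common $\Psi$, contradicting Proposition \ref{ThetaPsiCo}(2).

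The main obstacle I anticipate is the last step: controlling the twisted coinvariants $\mathrm{FJ}'(\rho)_{(\overline N_{\alpha+\beta},\Psi)}$ uniformly in the character $\Psi$, i.e. showing that nontriviality of $\rho$ (equivalently of $\mathrm{FJ}'(\rho)$) genuinely forces one of these rank-$3$ twisted Jacquet modules to be nonzero. One clean way around this is to use Proposition \ref{ThetaPsiCo} more forcefully from the start: the isomorphism $\Theta(\tau)_{(\overline N,\Psi^\pm)}\cong\widetilde\tau$ together with the exactness of the coinvariants functor means that, writing a composition series of $\Theta(\tau)$, the alternating sum of the $(\overline N,\Psi^+)$-coinvariants of the constituents equals $\widetilde\tau$; combined with part (1) (the $+$ and $-$ coinvariants of each constituent coincide) and part (2) (for each sign at most one $\rho_j$ contributes), a short bookkeeping argument shows that if there were two nontrivial constituents, one of them — say $\rho_2$ — would have to satisfy $(\rho_2)_{(\overline N,\Psi)}=0$ for every rank-$3$ character of this type, and then Lemma \ref{FJFactor} plus the fact that $\mathrm{FJ}'$ detects nontriviality yields $\rho_2$ trivial, a contradiction. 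I would present the argument in that order: set up the two candidate constituents from Corollary \ref{AtMost2}, apply Proposition \ref{ThetaPsiCo} to pin down which can be $\Psi$-generic, then use Lemma \ref{FJFactor} and the non-vanishing property of $\mathrm{FJ}'$ to force the remaining one to be trivial.
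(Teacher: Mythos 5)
Your overall architecture (Corollary \ref{AtMost2}, Proposition \ref{ThetaPsiCo}, Lemma \ref{FJFactor}) is the same skeleton the paper uses, but the step you yourself flag as the main obstacle is a genuine gap, and your proposed workaround does not close it. The assertion that a nonzero smooth (genuine) $\widetilde{M}_{1}'$-module cannot have all of its $(\overline{N}_{\alpha+\beta},\Psi)$-coinvariants vanish as $c$ ranges over forms with $\mathrm{SO}(3,c)\cong\mathscr{G}$ is false: $\overline{N}_{\alpha+\beta}$ is the (opposite) Siegel radical of $M_{1}'\cong\mathrm{Sp}(6,F)$, its characters correspond to ternary quadratic forms, and the characters you allow constitute only one similarity class among them. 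A nonzero genuine module can perfectly well be ``supported'' away from that class: for instance, the Weil representation of $\widetilde{\mathrm{Sp}}(6,F)$ attached to a one-dimensional quadratic space has Fourier support along the Siegel radical only on forms of rank at most one, and a classical theta lift from the other ternary orthogonal group $\mathrm{O}(3,c')$ with $\mathrm{SO}(3,c')\not\cong\mathscr{G}$ is supported on the other rank-$3$ class. Consequently ``$\mathrm{FJ}'$ kills only the trivial representation'' is not enough to force $(\rho_{2})_{(\overline{N},\Psi)}\neq 0$ for some admissible $\Psi$; you must know \emph{which} $\widetilde{M}_{1}'$-module $\mathrm{FJ}'(\rho_{2})$ is. The same defect undermines your alternative bookkeeping version, whose final inference is again ``vanishing of all such twisted coinvariants plus Lemma \ref{FJFactor} forces $\rho_{2}$ trivial.''

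The missing input is exactly what the paper supplies. By Proposition \ref{FJThetaSubQuo} applied to $P'$, $\mathrm{FJ}'(\Theta(\tau))$ is a quotient of $\Theta^{\dagger}(\tau^{+})\oplus\Theta^{\dagger}(\tau^{-})$, and since $\tau$ is supercuspidal these two lifts are irreducible and distinct (Kudla; MVW). Hence, if $\Theta(\tau)$ had two nontrivial constituents $\sigma^{\pm}$, exactness of $\mathrm{FJ}'$ would force $\mathrm{FJ}'(\sigma^{\pm})\cong\Theta^{\dagger}(\tau^{\pm})$ after relabeling. The classical analogue of Proposition \ref{ThetaPsiCo} then gives $\Theta^{\dagger}(\tau^{\pm})_{(\overline{N}_{\alpha+\beta},\Psi)}\cong\widetilde{\tau}^{\pm}\neq 0$, and Lemma \ref{FJFactor} transfers this to $(\sigma^{\pm})_{(\overline{N}^{\alpha},\Psi)}\cong\widetilde{\tau}^{\pm}$, which maps isomorphically onto $(\sigma^{\pm})_{(\overline{N},\Psi^{+})}\oplus(\sigma^{\pm})_{(\overline{N},\Psi^{-})}$; since Proposition \ref{ThetaPsiCo} forces both summands to vanish for at least one of $\sigma^{+},\sigma^{-}$, one obtains $\widetilde{\tau}=0$, a contradiction. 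Note that this is precisely where supercuspidality re-enters (irreducibility of $\Theta^{\dagger}(\tau^{\pm})$ and the explicit computation of its rank-$3$ twisted Jacquet module); your proposed general non-degeneracy principle would never use it, which is a further sign it cannot be right. If you replace your last step by this identification of $\mathrm{FJ}'(\sigma^{\pm})$ with the classical lifts and their known twisted Jacquet modules, your argument becomes the paper's proof.
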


\begin{proof} By Proposition \ref{ThetaPsiCo}, $\Theta(\tau)$ has at least one nontrivial irreducible subquotient.

By Proposition \ref{FJThetaSubQuo} (applied using $P^{\prime}$) we know that there is an $\widetilde{M}_{1}^{\prime}$-module surjection $\Theta^{\dagger}(\tau^{+})\oplus\Theta^{\dagger}(\tau^{-})\twoheadrightarrow FJ^{\prime}(\Theta(\tau))$. Since we are assuming that $\tau$ is supercuspidal it follows that $\Theta^{\dagger}(\tau^{\pm})$ is an irreducible $\widetilde{M}_{1}^{\prime}$-module. Thus $\mathrm{FJ}^{\prime}(\Theta(\tau))$ is completely reducible of length at most 2.

Suppose that $\Theta(\tau)$ has two distinct irreducible subquotients $\sigma^{+},\sigma^{-}$ different from the trivial representation. Since $\sigma^{\pm}$ is not trivial $\mathrm{FJ}^{\prime}(\sigma^{\pm})\neq 0$. Then without loss of generality we may assume that we have $\widetilde{M}_{1}^{\prime}$-module isomorphisms $\Theta^{\dagger}(\tau^{\pm})\cong \mathrm{FJ}^{\prime}(\sigma^{\pm})$.

We take $(\overline{N}_{\alpha+\beta},\Psi)$-coinvariants and apply Lemma \ref{FJFactor} to get $O(3,c)$-module isomorphisms $\Theta^{\dagger}(\tau^{\pm})_{(\overline{N}_{\alpha+\beta},\Psi)}\cong (\sigma^{\pm})_{(\overline{N}^{\alpha},\Psi)}$.

Now by an analog of Proposition \ref{ThetaPsiCo} in the classical case, we have $\Theta^{\dagger}(\tau^{\pm})_{(\overline{N}_{\alpha+\beta},\Psi)}\cong \widetilde{\tau}^{\pm}$ as $\mathrm{O}(3,c)$-modules.

The natural $\mathrm{SO}(3,c)$-module quotient maps $(\sigma^{\pm})_{(\overline{N}^{\alpha},\Psi)}\rightarrow (\sigma^{\pm})_{(\overline{N},\Psi^{\pm})}$ define an isomorphism $\widetilde{\tau}\cong (\sigma^{\pm})_{(\overline{N}^{\alpha},\Psi)}\rightarrow (\sigma^{\pm})_{(\overline{N},\Psi^{+})}\oplus (\sigma^{\pm})_{(\overline{N},\Psi^{-})}$ of $\mathrm{SO}(3,c)$-modules. But by Proposition \ref{ThetaPsiCo}, $(\sigma^{\epsilon})_{(\overline{N},\Psi^{+})}\oplus (\sigma^{\epsilon})_{(\overline{N},\Psi^{-})}=0$ for at least one $\epsilon\in \{\pm\}$. Thus $\widetilde{\tau}=0$, a contradiction. Therefore, $\Theta(\tau)$ must have at most one nontrivial irreducible subquotient. \end{proof}

Finally, we rule out the existence of trivial irreducible subquotients of $\Theta(\tau)$.

\begin{proposition}\label{NoTriv}
Let $\tau\in \mathrm{Irr}(\mathscr{G})$ be supercuspidal. Then $\Theta(\tau)$ does not contain an irreducible subquotient that is trivial.
\end{proposition}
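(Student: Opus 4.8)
The plan is to show that if the trivial representation $\bfone_G$ were an irreducible subquotient of $\Theta(\tau)$, then $\tau$ would be forced to admit a nonzero $\mathscr{G}$-equivariant map from an induced representation whose Jacquet module data is incompatible with $\tau$ being supercuspidal, yielding a contradiction. Concretely, I would exploit the untwisted Jacquet module $\mathcal{V}_{\overline{N}}$ computed in Proposition \ref{Rk0Coin} together with Lemma \ref{ThetaDual}. By Lemma \ref{ThetaDual} (applied with $U=\overline{N}$, $\Psi=1$), we have an $M$-module isomorphism $(\Theta(\tau)_{\overline{N}})^{*}\cong \Hom_{\mathscr{G}}(\mathcal{V}_{\overline{N}},\tau)$. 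If $\bfone_G$ is a subquotient of $\Theta(\tau)$, then $\bfone_M$ (or rather its appropriate twist) is a subquotient of $\Theta(\tau)_{\overline{N}}$, since $N$ acts trivially on the trivial representation, so $\Hom_{\mathscr{G}}(\mathcal{V}_{\overline{N}},\tau)$ is nonzero and in fact contains an $M$-subquotient isomorphic to a character.

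First I would invoke Proposition \ref{Rk0Coin}(2): when $C=\mathrm{M}(2,F)$, the module $\mathcal{V}_{\overline{N}}$ sits in an extension with quotient $\mathcal{V}_{\overline{\mathcal{N}}}$ and submodule $\Ind_{\mathscr{B}\times Q}^{\mathscr{G}\times \mathrm{GSp}_6}(C_c^{\infty}(F^{\times}))\otimes |\mathrm{sim}|^{3}$. A nonzero map $\mathcal{V}_{\overline{N}}\to\tau$ of $\mathscr{G}$-modules: either its restriction to the submodule is nonzero, or it factors through $\mathcal{V}_{\overline{\mathcal{N}}}$. The quotient $\mathcal{V}_{\overline{\mathcal{N}}}$, by Theorem \ref{HeisJacThm}(2), is $[\mathcal{V}(\mathcal{M})\otimes|\det|^{t/d}]\oplus\chi_C|\det|^{s/d}$ where $\mathcal{V}(\mathcal{M})$ is the minimal representation of $\mathcal{M}$; as a $\mathscr{G}$-module this has no supercuspidal quotients (the second summand is a character, and the minimal representation of $\mathcal{M}$ restricted to $\mathscr{G}\subset\mathcal{M}$ is not supercuspidal — one checks that its Jacquet module along a Borel of $\mathscr{G}$ is nonzero, or more directly that minimal representations have small Gelfand--Kirillov dimension and cannot contain a supercuspidal of $\mathrm{PGL}_2$ as a quotient). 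Hence the map must be nonzero on the submodule. But $\Ind_{\mathscr{B}\times Q}^{\mathscr{G}\times \mathrm{GSp}_6}(C_c^{\infty}(F^{\times}))$, restricted to $\mathscr{G}$, is a direct integral/sum of representations parabolically induced from $\mathscr{B}$, hence has no nonzero supercuspidal quotient (a map to supercuspidal $\tau$ would exhibit $\tau$ as a quotient of some $\Ind_{\mathscr{B}}^{\mathscr{G}}(\cdot)$, contradicting supercuspidality by Frobenius reciprocity and the fact that $\tau_{\mathscr{U}}=0$). This contradiction shows $\Hom_{\mathscr{G}}(\mathcal{V}_{\overline{N}},\tau)=0$, hence $\Theta(\tau)_{\overline{N}}=0$, and in particular $\bfone_G$ — whose $\overline{N}$-coinvariants are nonzero — cannot be a subquotient of $\Theta(\tau)$.

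The main obstacle I anticipate is making rigorous the claim that neither $\mathcal{V}(\mathcal{M})|_{\mathscr{G}}$ nor $C_c^{\infty}(F^{\times})$ (as a degenerate-principal-series ingredient via $\mathscr{B}$) admits a supercuspidal quotient when restricted to $\mathscr{G}=\mathrm{PGL}_2(F)$: the second is a clean application of the theory of the Bernstein--Zelevinsky filtration / Frobenius reciprocity for Jacquet modules, but the first requires knowing enough about the minimal representation of $\mathcal{M}$ (which is essentially a group of type $E_6$ in the $\mathrm{dim}C=4$ case, or one should instead use that $\mathscr{G}$ sits inside a Levi-type subgroup on which the minimal representation has a transparent restriction). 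An alternative, cleaner route to this last point: use the twisted Jacquet computation differently — observe that the trivial representation $\bfone_G$ has $(\overline{N},\Psi)$-coinvariants equal to zero for every nontrivial $\Psi$, whereas by Proposition \ref{ThetaPsiCo} we have $\Theta(\tau)_{(\overline{N},\Psi^{\pm})}\cong\widetilde\tau\neq 0$; combined with Proposition \ref{UniqueNonTriv}, which says $\Theta(\tau)$ has a unique nontrivial irreducible subquotient $\sigma_0$, we get $\sigma_0{}_{(\overline{N},\Psi^{\pm})}\cong\widetilde\tau$. If $\bfone_G$ were also a subquotient, then counting in the Grothendieck group along the exact functor $(-)_{(\overline{N},\Psi^{\pm})}$ still gives $\widetilde\tau$, so no contradiction arises from twisted data alone — which is exactly why the untwisted Jacquet module (Proposition \ref{Rk0Coin}) is the essential input, and why the first approach above is the one to pursue. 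I would therefore write the proof via $\mathcal{V}_{\overline{N}}$ and Lemma \ref{ThetaDual}, spending the bulk of the argument on verifying the no-supercuspidal-quotient property of the two pieces in Proposition \ref{Rk0Coin}(2).
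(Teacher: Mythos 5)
Your reduction to the untwisted Jacquet module $\mathcal{V}_{\overline{N}}$ and the two-step filtration of Proposition \ref{Rk0Coin} is the right starting point, and your handling of the bottom piece (nothing induced from $\mathscr{B}$ can have a supercuspidal subquotient) matches the paper. However, the vanishing you propose to prove, $\Hom_{\mathscr{G}}(\mathcal{V}_{\overline{N}},\tau)=0$, i.e.\ $\Theta(\tau)_{\overline{N}}=0$, is too strong and is in fact false, at least for $\mathscr{G}=\PGL_2(F)$: by Proposition \ref{ThetaSCasQuotient}, $\Theta(\tau)$ is the unique irreducible quotient of $\Ind_{Q_2}^{G}(\tau\otimes|\det|^{3/2})$, whose cuspidal support lies on a short-root $A_1$ Levi; every short root is Weyl-conjugate into the Heisenberg Levi $M$ (type $C_3$), so $r_{\overline{P}}(\Theta(\tau))\neq 0$ and the Hom space you want to kill is nonzero. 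The step at which your argument breaks is the top piece: the claim that $\mathcal{V}(\mathcal{M})$ restricted to $\mathscr{G}$ has no supercuspidal quotient is unjustified, and neither of your proposed justifications works. A non-admissible $\mathscr{G}$-module can have nonzero Jacquet module along $\mathscr{B}$ and simultaneously have supercuspidal quotients, and smallness of Gelfand--Kirillov dimension does not preclude supercuspidal quotients upon restriction to a small subgroup --- the entire paper rests on the fact that the minimal representation of $\mathcal{G}$ has quotients $\tau\otimes\Theta(\tau)$ with $\tau$ supercuspidal, and $\mathcal{V}(\mathcal{M})$ is itself a minimal representation (of a Levi whose derived group is of type $D_6$, not $E_6$; the $E_6$ Levi is the Siegel parabolic of Section \ref{JMII}), whose restriction to $\mathscr{G}\times M$-type subgroups is again a theta kernel and is expected to pair supercuspidals of $\PGL_2(F)$ nontrivially.

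The repair is to argue on the $M$-side rather than the $\mathscr{G}$-side for the top piece, which is what the paper does. If the trivial representation of $G$ were a subquotient of $\Theta(\tau)$, then by exactness of $(-)_{\overline{N}}$ the $\mathscr{G}\times M$-module $\tau\otimes\bfone_{M}$ (the trivial representation of $M$ on the second factor) would be a subquotient of $\mathcal{V}_{\overline{N}}$. On the quotient piece, Theorem \ref{HeisJacThm} gives $\mathcal{V}_{\overline{\mathcal{N}}}\cong(\mathcal{V}(\mathcal{M})\otimes|\det|^{3/32})\oplus|\det|^{6/32}$, and the center of $M$ acts trivially on $\mathcal{V}(\mathcal{M})$ but nontrivially through the two $|\det|$-twists; hence $\bfone_{M}$ cannot occur there, with no information needed about the $\mathscr{G}$-restriction of $\mathcal{V}(\mathcal{M})$. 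The supercuspidality of $\tau$ is then used only on the submodule $\Ind_{\mathscr{B}\times Q}^{\mathscr{G}\times\mathrm{GSp}_6}(C_c^{\infty}(F^{\times}))\otimes|\mathrm{sim}|^{3}$, exactly as in your bottom-piece argument. With this finer, $M$-equivariant bookkeeping the proof closes; your blanket vanishing statement cannot.
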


\begin{proof} Suppose that the trivial representation $1$ is a subquotient of $\Theta(\tau)$, then $\tau\otimes 1_{\overline N}$ is a subquotient of $\mathcal{V}_{\overline N}$. 
Observe that  $1_{\overline N}$  is the trivial representation of $M$. 
By Proposition \ref{Rk0Coin},  $\mathcal{V}_{\overline N}$ has a $\mathscr{G}\times M$-module filtration with $\mathcal{V}_{\overline{\mathcal N}}$ as a quotient. 
From Theorem \ref{HeisJacThm},
\begin{equation*}
\mathcal{V}_{\overline{\mathcal{N}}}\cong (\mathcal{V}(\mathcal{M})\otimes |\mathrm{det}|^{3/32})\oplus |\mathrm{det}|^{6/32},
\end{equation*}
where the center of $M$ acts trivially on $\mathcal{V}(\mathcal{M})$. Thus the center of $M$ acts by two non-trivial characters on the two summands of 
$\mathcal{V}_{\overline{\mathcal{N}}}$  hence the trivial representation of $M$ cannot be a subquotient of $\mathcal{V}_{\overline{\mathcal{N}}}$. The  bottom part of the filtration, 
which appears if $C$ is split, is a principal series representation of $\mathscr{G}$, and hence $\tau$ cannot be a subquotient there.  \end{proof}

Now we prove our main theorem on the theta lift of super cuspidal representations.

\begin{theorem}\label{SuperCuspLift}
Let $\tau_1, \tau_{2}\in \mathrm{Irr}(\mathscr{G})$ where $\tau_1$ is supercuspidal. 

\begin{enumerate}
\item The theta lift $\Theta(\tau_1)$  is an irreducible representation of $G$. \label{SCIrr}
\item If $\theta(\tau_{1})\cong \theta(\tau_{2})$, then $\tau_{1}\cong \tau_{2}$.  \label{PartialSmallTheta1-1}
\end{enumerate}
\end{theorem}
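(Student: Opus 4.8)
The plan is to assemble part (\ref{SCIrr}) from the three preceding propositions and then deduce part (\ref{PartialSmallTheta1-1}) from the twisted Jacquet module computation. For (\ref{SCIrr}): Corollary \ref{AtMost2} tells us $\Theta(\tau_1)$ has at most two nontrivial irreducible subquotients, each of multiplicity one; Proposition \ref{UniqueNonTriv} sharpens this to a \emph{unique} nontrivial irreducible subquotient; Proposition \ref{NoTriv} says the trivial representation is not a subquotient. Hence $\Theta(\tau_1)$ has a single irreducible subquotient, counted with multiplicity one, so $\Theta(\tau_1)$ is irreducible — provided we also know $\Theta(\tau_1)\neq 0$. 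Nonvanishing is already recorded inside Proposition \ref{ThetaPsiCo}: choosing a rank $3$ element $(1,0,c,d)\in\mathbb W_F$ with $\mathrm{SO}(3,c)\cong\mathscr G$ (possible since all pure inner forms of $\mathrm{SO}(3)$ arise, matching $\mathrm{Aut}(C)$ for the given $C$), we get $\Theta(\tau_1)_{(\overline N,\Psi^{\pm})}\cong\widetilde\tau_1\neq 0$, so $\Theta(\tau_1)\neq 0$. This dispatches (\ref{SCIrr}).

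For (\ref{PartialSmallTheta1-1}): since $\tau_1$ is supercuspidal, $\Theta(\tau_1)$ is irreducible by part (\ref{SCIrr}), so $\theta(\tau_1)=\Theta(\tau_1)$. Suppose $\theta(\tau_1)\cong\theta(\tau_2)$; call this common representation $\sigma$. The key point is that Proposition \ref{ThetaPsiCo} computes a twisted Jacquet module of $\Theta(\tau_i)$ intrinsically in terms of $\tau_i$, and we want to feed $\sigma$ into this. Fix the rank $3$ character $\Psi=\Psi^{+}$ attached to $(1,0,c,d)$ with $\mathrm{SO}(3,c)\cong\mathscr G$. Applying $(\overline N,\Psi)$-coinvariants (an exact functor) to the surjection $\Theta(\tau_1)\twoheadrightarrow\theta(\tau_1)=\sigma$, we get a surjection $\Theta(\tau_1)_{(\overline N,\Psi)}\twoheadrightarrow\sigma_{(\overline N,\Psi)}$; but the left side is $\widetilde\tau_1$, which is irreducible, so $\sigma_{(\overline N,\Psi)}$ is either $0$ or $\cong\widetilde\tau_1$. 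The same reasoning with $\tau_2$ forces $\sigma_{(\overline N,\Psi)}$ to be $0$ or $\cong\widetilde\tau_2$. To rule out the zero case one can use that $\Theta(\tau_i)$ is itself irreducible (so the surjection $\Theta(\tau_i)\to\sigma$ is an isomorphism), giving $\sigma_{(\overline N,\Psi)}\cong\widetilde\tau_i\neq 0$ directly. Therefore $\widetilde\tau_1\cong\sigma_{(\overline N,\Psi)}\cong\widetilde\tau_2$ as $\mathrm{SO}(3,c)\cong\mathscr G$-modules, and taking contragredients gives $\tau_1\cong\tau_2$.

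The only subtlety — and the step I expect to require the most care — is the compatibility of the identifications: Proposition \ref{ThetaPsiCo} gives $\Theta(\tau)_{(\overline N,\Psi^{\pm})}\cong\widetilde\tau$ as modules over $\mathrm{SO}(3,c)\subset\mathrm{Stab}_M(\Psi^{\pm})$, where the isomorphism $\mathrm{SO}(3,c)\cong\mathscr G=\mathrm{Aut}(C)$ is \emph{the one built into the dual pair} (via Corollary \ref{Rk3Coin} and the explicit action $x\mapsto(gx_1,gx_2,gx_3)h^{\top}$). One must check that this is the \emph{same} identification for both $\tau_1$ and $\tau_2$, i.e. that it depends only on the chosen $\Psi$ and not on the input representation — which is clear from the statement of Proposition \ref{ThetaPsiCo} since the isomorphism there is canonical once $\xi=(1,0,c,d)$ and a point of $\Omega_\xi$ are fixed. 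Granting that, the chain of isomorphisms above is a genuine $\mathscr G$-module isomorphism $\widetilde\tau_1\cong\widetilde\tau_2$ and we are done.
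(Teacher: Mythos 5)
Your proposal is correct and follows essentially the same route as the paper: part (\ref{SCIrr}) is exactly the combination of Propositions \ref{UniqueNonTriv} and \ref{NoTriv} (with nonvanishing coming from Proposition \ref{ThetaPsiCo}), and part (\ref{PartialSmallTheta1-1}) is the paper's argument, which applies $(\overline N,\Psi^{\pm})$-coinvariants to the surjection $\Theta(\tau_2)\twoheadrightarrow\theta(\tau_2)\cong\Theta(\tau_1)$ to get $\widetilde\tau_2\twoheadrightarrow\widetilde\tau_1$. Only be careful not to invoke irreducibility of $\Theta(\tau_2)$ (as your phrase ``$\Theta(\tau_i)$ is itself irreducible'' might suggest), since $\tau_2$ is not assumed supercuspidal; your argument only needs it for $i=1$, which part (\ref{SCIrr}) provides.
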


\begin{proof} (\ref{SCIrr}) By Propositions \ref{UniqueNonTriv} and \ref{NoTriv} the representation $\Theta(\tau)$ is irreducible.
(\ref{PartialSmallTheta1-1}) By the assumption, and using  (\ref{SCIrr}), we have a surjection $\Theta(\tau_{2})\twoheadrightarrow \Theta(\tau_{1})$. Then, by Proposition \ref{ThetaPsiCo}, 
\begin{equation*}
\widetilde{\tau}_{2}\cong\Theta(\tau_{2})_{(N,\Psi^{\pm})}\twoheadrightarrow\Theta(\tau_{1})_{(N,\Psi^{\pm})}\cong\widetilde{\tau}_{1}.
\end{equation*}
\end{proof}

\section{Jacquet Modules II}\label{JMII}

In this section, we take $C$ to be the split quaternion algebra $M(2,F)$ of $2\times 2$ matrices with entries in $F$. In particular, $\mathscr{G}=\mathrm{Aut}(C)=\mathrm{PGL}_{2}(F)$. The main objective of this section is to compute the Jacquet module of the minimal representation of $E_{7}$ with respect to a Borel subgroup of $\mathscr{G}$. These calculations are applied in Section \ref{LiftPSeries} to compute the big theta lift of constituents of principal series of $\mathscr{G}$ to $G$.

Our approach follows the argument of Savin \cite{S94} and Magaard-Savin \cite{MS97} utilizing the exact sequence from \cite[Theorem 6.5]{S94}, which we recall after introducing some notation. 

Fix a Borel subgroup $\mathcal{B}\subset\mathcal{G}$, let $\mathcal{P}\supset \mathcal{B}$ be the unique maximal parabolic subgroup  corresponding to the $E_{6}$ sub-diagram inside the $E_{7}$ diagram. Fix a Levi decomposition $\mathcal{P}=\mathcal{M}\mathcal{N}$ and note that $\mathcal{N}$ can be given the structure of the exceptional cubic Jordan algebra $\mathcal{J}=\mathcal{J}_{\mathbb{O}}$ \cite{KS15}. We identify $\mathcal{N}$ with $\mathcal{J}$ as $F$-vector spaces. Under this identification $\mathcal{M}$ is the group of linear transformations of $\mathcal{J}$ that preserve the cubic norm form of $\mathcal{J}$ up to scaling. The semisimple part of $\mathcal{M}$ is a group of type $E_{6}$. (For details see \cite{K67}.)

 Let $\omega$ be the set of singular points in $\mathcal J\cong \mathcal{N}$, i.e. the highest weight vectors for a Borel subgroup in $\mathcal{M}$. Equivalently, $\omega$ is the set of rank $1$ elements in $\mathcal{J}$.

\begin{theorem}[Magaard-Savin \cite{MS97}, Theorem 1.1; Savin \cite{S94}, Theorem 6.5]\label{PFiltration} Let $\mathcal{P}=\mathcal{M}\mathcal{N}$ be the maximal parabolic subgroup defined above. Let $\overline{\mathcal{P}}=\mathcal{M}\overline{\mathcal{N}}$ be its opposite. The minimal representation $(\Pi,\mathcal{V})$ of $\mathcal{G}$ has a $\overline{\mathcal{P}}$-invariant filtration 

\begin{equation}\label{MinSeq}
0\rightarrow C^{\infty}_{c}(\omega)\rightarrow \mathcal{V}\rightarrow \mathcal{V}_{\overline{\mathcal{N}}}\rightarrow 0.
\end{equation}
Here $C^{\infty}_{c}(\omega)$ denotes the space of locally constant, compactly supported functions on $\omega$, and $\mathcal{V}_{\overline{\mathcal{N}}}$ is the space of $\overline{\mathcal{N}}$-coinvariants of $\mathcal{V}$. Furthermore, the $\overline{\mathcal{P}}$-module structure is given by:
\begin{enumerate}
\item Let $f\in C^{\infty}_{c}(\omega)$ and let $m\overline{n}\in \overline{\mathcal{P}}=\mathcal{M}\overline{\mathcal{N}}$. Then
\begin{equation}\label{PhaseTrans}
[\Pi(\overline{n})f](x)=\psi(\langle x,\overline{n}\rangle)f(x)
\end{equation}
and
\begin{equation}\label{ScaleGeo}
[\Pi(m)f](x)=|\mathrm{det}(m)|^{s/d}f(m^{-1}x).
\end{equation}
\item 
\begin{equation}\label{MinCoIn}
\mathcal{V}_{\overline{\mathcal{N}}}\cong \mathcal{V}(\mathcal{M})\otimes |\mathrm{det}|^{t/d}+|\mathrm{det}|^{s/d},
\end{equation}
where $\mathcal{V}(\mathcal{M})$ is the minimal representation of $\mathcal{M}$ (center acting trivially).
\end{enumerate}

Above $\langle-,-\rangle:\mathcal{N}\times \overline{\mathcal{N}}\rightarrow F$ is the $F$-valued pairing induced by the Killing form on $\mathrm{Lie}(\mathcal{G})$, and $\mathrm{det}$ is the determinant of the representation of $\mathcal{M}$ on $\overline{\mathcal{N}}$, and $d$ is the dimension of $\mathcal{N}$. The values of $s$ and $t$ are given in the following table.

\begin{center}
\begin{tabular}{|c||c|c|}\hline
$\mathcal{G}$&$s$&$t$\\ \hline\hline
$E_{6}$&$4$&$2$\\ \hline
$E_{7}$&$6$&$3$\\ \hline
\end{tabular} 
\end{center}

\end{theorem}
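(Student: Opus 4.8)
The plan is to study $\mathcal V$ through its restriction to the abelian unipotent radical $\overline{\mathcal N}$, in the spirit of Theorem \ref{HeisJacThm} but now exploiting commutativity. Identifying the Pontryagin dual $\widehat{\overline{\mathcal N}}$ with $\mathcal N \cong \mathcal J$ via the Killing pairing, the action of $\overline{\mathcal N}$ endows $\mathcal V$ with the structure of a smooth sheaf on $\mathcal J$, equivariantly for the $\mathcal M$-action on $\mathcal J$. Since $\mathcal M$ has only finitely many orbits on $\mathcal J$ --- the rank strata $0, 1, 2, 3$ of Subsection \ref{JordanAlg}, with $\omega$ the rank-one orbit, open in its closure --- this produces at once a finite $\mathcal M$-stable filtration of $\mathcal V$ whose graded pieces are spaces of sections supported on orbit closures.

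The decisive step, which I expect to be the main obstacle, is to show that the $\overline{\mathcal N}$-spectral support of $\mathcal V$ is contained in the closed cone $\overline\omega = \{Z \in \mathcal J : Z^{\#} = 0\}$ of rank $\leq 1$ elements, so that only the strata $\{0\}$ and $\omega$ survive. This is exactly where minimality of $\mathcal V$ (in the sense of Gan--Savin \cite{GS05}, equivalently the Joseph-ideal condition on $\mathrm{Ann}_{U(\mathfrak{g}_{C})}(\mathcal V)$) must enter: rewritten in the abelian coordinates on $\overline{\mathfrak{n}}$, the generating relations of this ideal become the quadratic equations $Z^{\#} = 0$ defining $\overline\omega$, so that the symbol of each second-order operator $\Pi(X)\Pi(Y)$, $X,Y \in \overline{\mathfrak{n}}$, vanishes on $\overline\omega$. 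Making this precise in the $p$-adic setting amounts to transporting minimality --- available from the realization of $\mathcal V$ relative to the Heisenberg parabolic, or from Theorem \ref{HeisJacThm} itself --- into the language of the present parabolic; this bookkeeping is the technical heart, and is carried out in Savin \cite{S94} and Magaard--Savin \cite{MS97}.

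Granting the support statement, the filtration collapses to the two-step sequence (\ref{MinSeq}). The subsheaf supported on the single open orbit $\omega$ is, by transitivity of the $\mathcal M$-action, compactly induced from the character by which the stabilizer of a rank-one point acts on the corresponding stalk; since that stabilizer is conjugate to a maximal parabolic of $\mathcal M$ whose pertinent characters are powers of $|\det|$, this piece is $C^{\infty}_{c}(\omega)$ twisted by $|\det|^{s/d}$, the exponent being determined by matching the scaling action of the center of $\mathcal M$ on the cone $\omega$ with the known central character of $\mathcal V$. Formulas (\ref{PhaseTrans}) and (\ref{ScaleGeo}) then read off from the sheaf description. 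Finally, the quotient is by construction the coinvariant module $\mathcal V_{\overline{\mathcal N}}$; it is supported at the origin, hence a finite-length $\mathcal M$-module whose constituents are constrained by the infinitesimal and central characters inherited from $\mathcal V$. One identifies them as the minimal representation $\mathcal V(\mathcal M)$ of $\mathcal M$ (center acting trivially) twisted by $|\det|^{t/d}$, together with the one-dimensional $|\det|^{s/d}$ --- the minimal representation is forced by its associated variety, the character appears as the top quotient, and the extension splits since $s \neq t$ gives the two summands distinct central characters. The explicit values of $s$, $t$, $d$ then follow from a direct computation of modular characters, or by bootstrapping along $E_6 \subset E_7 \subset E_8$.
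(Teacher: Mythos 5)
The paper itself does not prove Theorem \ref{PFiltration}: it is recalled verbatim from Savin \cite{S94} (Theorem 6.5) and Magaard--Savin \cite{MS97}, so the only meaningful comparison is with the arguments of those references. Your outline does capture their basic geometry: Fourier analysis along the abelian radical $\overline{\mathcal{N}}$, identification of the character group of $\overline{\mathcal{N}}$ with $\mathcal{N}\cong\mathcal{J}$, stratification by the finitely many $\mathcal{M}$-orbits (the rank strata), support confined to the rank $\leq 1$ cone, the open stratum of the support contributing $C^{\infty}_{c}(\omega)$ with the twist $|\mathrm{det}|^{s/d}$ read off from the action of the centre of $\mathcal{M}$, and the quotient being $\mathcal{V}_{\overline{\mathcal{N}}}$ by construction.

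The genuine gap is in the step you yourself single out as decisive. Over a $p$-adic field a smooth representation of $\mathcal{G}$ carries no action of $U(\mathfrak{g}_{C})$: there are no operators $\Pi(X)\Pi(Y)$ for $X,Y\in\overline{\mathfrak{n}}$, no annihilator ideal in the enveloping algebra, and no ``symbol'' whose vanishing could cut out $Z^{\#}=0$. The Joseph-ideal formulation of minimality is the archimedean/algebraic mechanism, and it cannot be ``transported'' by bookkeeping; in the $p$-adic setting the support statement must be extracted from a group-level characterization of minimality --- e.g.\ the Heisenberg-parabolic property $\mathcal{V}_{(Z,\psi)}\cong\rho_{\psi}$ of \cite{GS05}, or the filtration of Theorem \ref{HeisJacThm} --- by proving that the twisted Jacquet modules $\mathcal{V}_{(\overline{\mathcal{N}},\psi_{Z})}$ vanish for all $Z$ of rank $\geq 2$. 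That vanishing is precisely the content of the proofs in \cite{S94,MS97}, not a technicality one can wave at. Similarly, your identification of the two pieces of $\mathcal{V}_{\overline{\mathcal{N}}}$ appeals to ``infinitesimal character'' and ``associated variety,'' neither of which is defined here; in the references this identification is carried out by explicit computation (Iwahori--Hecke module and tower arguments), and the splitting is then observed from the distinct central characters, which is the one part of your argument that does go through as stated. As written, your text is a correct description of the shape of the proof, with its two essential steps delegated to the very papers being cited, so it does not constitute an independent proof.
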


It will be convenient to describe the dual pair $\mathscr{G}\times G$ in terms of the parabolic subgroup $\mathcal{P}$. If we identify $\mathcal N \cong \mathcal{J}$ 
and $\mathcal M$ with the group of similitudes of the norm form, then $G\cong \Aut( \mathcal{J})$ sits in $\mathcal M$. Let $\mathscr G$ be the centralizer 
of $G$ in $\mathcal G$. Let $\mathscr{B}=\mathscr{T}\mathscr{U}\subset \mathscr{G}$ be  Borel subgroup defined by 
\[ 
\mathscr{T}= \mathscr G \cap \mathcal M \text{ and } \mathscr{U}= \mathscr G \cap \mathcal N. 
\] 
Then $\mathscr{T}$ is the center of $\mathcal{M}$  and $\mathscr{U}$ the set of scalar matrices in $\mathcal{J}$ under the identification 
$\mathcal N \cong \mathcal{J}$. Recall that, for the purpose of describing representations of $\mathscr G$, we identified $\mathscr T$ with $F^{\times}$ such that 
$\mathscr T$ acts on $\overline{\mathscr U}$ by multiplication, it follows that $\mathscr T \cong F^{\times}$ acts on $\mathcal N \cong \mathcal{J}$ 
by inverse scalar multiplication. Thus the character $|\mathrm{det}(m)|^{s/d}$ restricted to $\mathscr T$ is $|t|^6$.

\subsection{Untwisted Jacquet modules}

Our objective in this section is to describe the $\mathscr{T}\times G$-module $r_{\overline{\mathscr{B}}}(C^{\infty}_{c}(\omega))$. This is accomplished in Proposition \ref{JMIofMin}. 

For $S\subset \overline{\mathcal{N}}$ we write
\begin{equation*}
S^{\perp}=\{x\in \mathcal{N}|\psi(\langle x,s\rangle)=1, \text{ for all }s\in S\}.
\end{equation*}
Let $\mathcal{J}^0$ be the set of trace 0 elements in $\mathcal{J}$. 
Under the identification $\mathcal N \cong \mathcal{J}$,  we have $\overline{\mathscr{U}}^{\perp}\cong \mathcal{J}^0$.
Let $\omega_{0}=\omega\cap \mathcal{J}^0$.

\begin{lemma}\label{Rk1Tr0}
The restriction map $C^{\infty}_{c}(\omega)\rightarrow C^{\infty}_{c}(\omega_{0})$ induces an isomorphism $r_{\overline{\mathscr{B}}}(C^{\infty}_{c}(\omega))\cong C^{\infty}_{c}(\omega_{0})$ of $(\mathscr{T}\times G)$-modules, where the action on $C^{\infty}_{c}(\omega_{0})$ is given by 
\begin{equation*}
((t,g)\cdot f)(x)=|t|^{6-\frac{1}{2}}f(g^{-1}\cdot x t), \hspace{1cm} (t,g)\in \mathscr{T}\times G.
\end{equation*}
\end{lemma}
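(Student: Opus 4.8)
The plan is to read off the relevant part of the $\overline{\mathcal P}$-action on $C^\infty_c(\omega)$ from Theorem~\ref{PFiltration}, apply the standard localization of coinvariants for Schwartz functions under a unipotent group acting by a family of additive characters, and then track the $\mathscr T\times G$-action and the normalization twist.

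First I would restrict the formulas (\ref{PhaseTrans}) and (\ref{ScaleGeo}) to $\overline{\mathscr B}=\mathscr T\overline{\mathscr U}$. For $\overline u\in\overline{\mathscr U}=\mathscr G\cap\overline{\mathcal N}$, equation (\ref{PhaseTrans}) says that $\overline u$ acts on $C^\infty_c(\omega)$ by pointwise multiplication by the character $x\mapsto\psi(\langle x,\overline u\rangle)$ of $\omega$. Since $\psi$ is nontrivial, the locus of $\omega$ on which all of these characters are trivial is $\omega\cap\overline{\mathscr U}^{\perp}$, which equals $\omega_0=\omega\cap\mathcal J^0$ under the identification $\overline{\mathscr U}^{\perp}\cong\mathcal J^0$ recorded just above the lemma; this is a closed subset of $\omega$, with open complement $\omega\setminus\omega_0$.

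Next I would invoke the geometric lemma (as in Moeglin--Vign\'{e}ras--Waldspurger \cite{MVW87}, or Magaard--Savin \cite{MS97}, Lemma~2.2): when a unipotent group $U$ acts on $C^\infty_c(X)$ by $f\mapsto(\psi_u)\cdot f$, the span of $\{\Pi(u)f-f : u\in U,\ f\in C^\infty_c(X)\}$ is exactly the kernel $C^\infty_c(X\setminus X_0)$ of restriction to the locus $X_0$ where every $\psi_u$ is trivial, so restriction induces an isomorphism $C^\infty_c(X)_U\xrightarrow{\ \sim\ }C^\infty_c(X_0)$. Applying this with $X=\omega$, $U=\overline{\mathscr U}$, $X_0=\omega_0$, and using surjectivity of restriction $C^\infty_c(\omega)\twoheadrightarrow C^\infty_c(\omega_0)$ to a closed subset, gives the underlying vector-space isomorphism claimed, compatibly with the $\mathscr T\times G$-action (which normalizes $\overline{\mathscr U}$, fixes its trivial character, and preserves $\mathcal J^0$, hence $\omega_0$).

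It remains to identify that action and the normalization. Since $G=\Aut(\mathcal J)\subset\mathcal M$ preserves the cubic norm on $\mathcal J$ exactly, it has trivial determinant factor, so by (\ref{ScaleGeo}) it acts on $C^\infty_c(\omega_0)$ by $f\mapsto f(g^{-1}\cdot x)$. With the identification $\mathscr T\cong F^\times$ fixed earlier — under which $\mathscr T$ acts on $\mathcal N\cong\mathcal J$ by inverse scalar multiplication and $|\det(m)|^{s/d}$ restricts to $|t|^{6}$ on $\mathscr T$ — equation (\ref{ScaleGeo}) shows $t$ acts by $f\mapsto|t|^{6}f(xt)$. Finally, passing from $\overline{\mathscr U}$-coinvariants to the normalized Jacquet module $r_{\overline{\mathscr B}}=\delta_{\overline{\mathscr B}}^{-1/2}\otimes(-)_{\overline{\mathscr U}}$ multiplies the $\mathscr T$-action by $\delta_{\overline{\mathscr B}}^{-1/2}(t)=|t|^{-1/2}$, yielding the exponent $6-\tfrac12$ in the statement. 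The only point requiring genuine care is the localization lemma of the third paragraph together with keeping the several normalization conventions (the identification $\mathscr T\cong F^\times$, the exponent $s/d$, and the $\delta_{\overline{\mathscr B}}^{-1/2}$-twist) mutually consistent; the rest is bookkeeping with Theorem~\ref{PFiltration}.
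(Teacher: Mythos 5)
Your proposal is correct and takes essentially the same route as the paper: the paper's proof is simply the citation "same as Magaard--Savin, Lemma 2.2," i.e.\ exactly the localization of $\overline{\mathscr U}$-coinvariants of $C^{\infty}_{c}(\omega)$ to the locus $\omega_{0}=\omega\cap\overline{\mathscr U}^{\perp}$ that you invoke, combined with the identifications of the $\mathscr T$-action, the restriction of $|\det|^{s/d}$ to $|t|^{6}$, and the $\delta_{\overline{\mathscr B}}^{-1/2}$-twist already recorded just before the lemma. Your write-up merely spells out this bookkeeping (note only that the triviality of $|\det(g)|^{s/d}$ on $G$ is most cleanly justified by $G$ being of type $F_{4}$ and hence having no nontrivial characters, rather than by norm-preservation alone).
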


\begin{proof} 
The proof is the same as Magaard-Savin \cite{MS97}, Lemma 2.2.
\end{proof}

\begin{lemma}[Aschbacher \cite{A87}, section (8.6)]
The action of $\mathscr{T}\times G$ on $\omega_{0}$ is transitive.
\end{lemma}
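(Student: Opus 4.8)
The statement is attributed to Aschbacher, section (8.6), so I expect a short proof that amounts to unwinding the relevant structure rather than a long argument. Here $\mathcal J = \mathcal J_{\mathbb O}$ is the split exceptional cubic Jordan algebra, $\omega$ is the cone of rank-one elements (equivalently nonzero $X$ with $X^\# = 0$), $\mathcal J^0$ is the trace-zero subspace, $\omega_0 = \omega \cap \mathcal J^0$, $G \cong \Aut(\mathcal J)$ acts on $\mathcal J$ preserving both the norm form and the trace (hence it preserves $\omega_0$), and $\mathscr T \cong F^\times$ acts by scalar multiplication (up to the fixed identification). So the claim is that $\Aut(\mathcal J)$ together with scaling acts transitively on the set of trace-zero rank-one elements.

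The plan is to proceed in two steps. First, I would recall (from the theory of cubic Jordan algebras over a field, e.g.\ Springer--Veldkamp or Pollack Chapter 3) that $\Aut(\mathcal J)$ acts transitively on the set of rank-one elements of a given trace value among those trace values that are realized; more precisely, for the split $\mathcal J$, the group $M$ of norm similitudes acts transitively on all nonzero rank-one elements (the highest weight orbit for a Borel of the $E_6$ Levi $\mathcal M$ — this is exactly the statement "$\omega$ is a single $\mathcal M$-orbit" used implicitly in Theorem~\ref{PFiltration}), and one then has to descend from similitudes to automorphisms by keeping track of how the norm and trace scale. Concretely, a rank-one element can be conjugated by $\Aut(\mathcal J)$ to a "diagonal" primitive idempotent multiple, and the residual freedom is exactly a scaling, which is supplied by $\mathscr T$. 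Second, I would handle trace zero specifically: a nonzero trace-zero rank-one element cannot be conjugated to a scalar multiple of a single diagonal idempotent $E_i$ (those have nonzero trace); instead the standard model of a trace-zero rank-one element in the split algebra is something like $E_1 - E_2$ plus an appropriate off-diagonal piece, or more invariantly, one uses that over the split field every rank-one element lies in the $M$-orbit of a fixed vector, and intersecting that orbit with the trace-zero hyperplane and with the $\mathscr T$-action gives a single $\mathscr T \times G$-orbit by a dimension/stabilizer count.

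The cleanest route, which I would actually write up, is the stabilizer-count argument. Pick a base point $x_0 \in \omega_0$ (for instance the trace-zero rank-one element coming from the identification of a long/short root space, whose existence is guaranteed in the split case). The orbit $\omega$ of a nonzero rank-one element under $M$ (similitudes) is irreducible of known dimension, and $\omega_0 = \omega \cap \mathcal J^0$ is cut out by one linear equation. The group $\mathscr T \times G$ sits inside $M$ as the stabilizer of the trace form up to the $\mathscr T$-scaling, and one checks that $\dim(\mathscr T \times G) - \dim \mathrm{Stab}_{\mathscr T\times G}(x_0) = \dim \omega_0$, so the orbit of $x_0$ is open in $\omega_0$; since $\omega_0$ (over the algebraic closure) is irreducible — it is the highest-weight cone for the $D_5$ or $B_4$ Levi, matching the $B_3$-type Levi structure mentioned in the introduction — the orbit is dense, and then one invokes that over a $p$-adic field the relevant Galois cohomology obstruction vanishes (the point being that $G = F_4$ is simply connected with $H^1(F,G)=1$, as used in Lemma~\ref{MAX} and the surrounding discussion) to conclude that $\omega_0(F)$ is a single $\mathscr T(F) \times G(F)$-orbit. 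The main obstacle I anticipate is precisely this last rationality point: showing the single geometric orbit does not break into several rational orbits. This is where one genuinely needs the $p$-adic hypothesis and the triviality of $H^1(F, \mathrm{Stab})$ — for that one identifies the stabilizer of $x_0$ in $G$ explicitly (it should be a parabolic or a reductive group with vanishing $H^1$, consistent with $\omega_0$ being a flag-type variety for $\mathrm{Spin}_9 \supset \mathrm{Spin}_7 \ltimes (\text{unipotent})$), and this identification, together with $H^1(F, F_4) = 1$, closes the argument.
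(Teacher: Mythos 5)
The paper offers no argument here at all: its ``proof'' is a one-line translation of terminology, the content being exactly Aschbacher's (8.6), which holds over an arbitrary field. So your proposal is necessarily a different route, and while its overall shape (geometric transitivity plus a descent step for rational orbits) can be made to work in characteristic $0$, two of its steps have genuine gaps as written. First, an orbit that is open and dense in $\omega_0$ is not the same as transitivity over $\bar F$, and the Galois-cohomological classification of rational orbits only applies once $\omega_0$ is known to be a single geometric orbit; as written you have not excluded smaller orbits in the boundary of the dense one (this can be repaired, e.g.\ by noting that the projectivization of $\omega_0$ is the flag variety $G/Q$, hence homogeneous, with the scaling torus accounting for the cone direction, but that argument is missing).

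Second, the cohomological bookkeeping is backwards. Since $H^1(F,\mathscr T\times G)$ is trivial ($F_4$ is simply connected, plus Hilbert 90), the rational orbits inside the geometric orbit are in bijection with \emph{all} of $H^1(F,S)$, where $S$ is the stabilizer of $x_0$ in $\mathscr T\times G$; so $H^1(F,F_4)=1$ is not what closes the argument --- if anything it shows that everything hinges on $H^1(F,S)=1$, and you never identify $S$. Your guess in the direction of $\mathrm{Spin}(9)$ conflates this with the trace-one orbit $\omega_1$ of Lemma \ref{rk1tr1Trans}; for the trace-zero cone the stabilizer is the graph $\{(t,q)\in\mathscr T\times Q \mid t=\omega_4(q)\}\cong Q$, where $Q$ is the maximal parabolic with Levi of type $B_3$ --- precisely what the paper records in the lemma immediately following this one. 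Once $S$ is identified this way, no orbit-counting cohomology is needed: $G(F)$ acts transitively on the rational points of $G/Q$ (rational singular lines, by conjugacy over $F$ of parabolics of the same type), and $\omega_4:Q(F)\to F^\times$ is surjective, so $\mathscr T\times G$ acts transitively on $\omega_0$. Either supply these two repairs, or do as the paper does and simply quote Aschbacher.
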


\begin{proof}
This follows from translating Aschbacher's terminology into ours.
\end{proof}

If $x_0\in \omega_0$, then, as $\mathscr{T}\times G$-modules
\[ 
C^{\infty}_{c}(\omega_{0})\cong |-|^{\frac{11}{2}} \cdot 
\mathrm{ind}_{\mathrm{Stab}_{\mathscr{T}\times G}(x_{0})}^{\mathscr{T}\times G} (1) 
\]

Next, we want to describe the stabilizer of a point in $\omega_0$. 
The highest weight of the action of $G$ on $\mathcal J^0\subset \mathcal{J}$ is the fundamental weight $\omega_{4}$ taking value $1$ on the simple coroot $\alpha_{4}^{\vee}$ and $0$ on the other simple coroots. (We are using the Bourbaki labeling for simple roots \cite[Plate VIII]{B02}. In particular, $\alpha_{4}$ is the short simple root of degree 1 in the Dynkin diagram.) The next lemma follows directly from definitions.

\begin{lemma}
Let $x_{0}\in \omega_{0}$ be a highest weight vector with respect to the Borel subgroup $B$. Let 
 $Q\supseteq B$ be the maximal parabolic subgroup of $G$ obtained by removing $\alpha_{4}$ from the $F_{4}$ Dynkin diagram. This yields a parabolic subgroup of type $B_{3}$. 
Then 
\[ 
\mathrm{Stab}_{\mathscr{T}\times G}(x_{0})= \{ (t,q)\in 
\mathscr{T}\times Q ~|~t=\omega_4(q) \}.
\] 
\end{lemma}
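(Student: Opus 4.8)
The plan is to unwind the $\mathscr{T}\times G$-action from Lemma~\ref{Rk1Tr0} and then quote the standard description of the stabilizer of a highest weight line. First I would record that, reading off the formula in Lemma~\ref{Rk1Tr0}, the underlying \emph{geometric} action of a pair $(t,g)\in\mathscr{T}\times G$ on a point $x\in\omega_0\subset\mathcal{J}^0$ is $x\mapsto t^{-1}\cdot g(x)$, where $g(x)$ denotes the $F_4$-action on $\mathcal{J}^0$ and $t^{-1}$ is inverse scalar multiplication (this is exactly the action inverting the operator $f\mapsto f(g^{-1}\cdot x t)$ appearing there, up to the harmless normalizing factor $|t|^{11/2}$, which does not affect stabilizers). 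Consequently $(t,g)$ fixes $x_0$ if and only if $g(x_0)=t\cdot x_0$; in particular any such $g$ must stabilize the line $Fx_0\subset\mathcal{J}^0$.

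Second, I would invoke that $x_0$ is a highest weight vector for $B$ with weight $\omega_4$. The stabilizer in $G$ of the line $Fx_0$ is then the standard maximal parabolic $Q\supseteq B$ generated by $B$ together with the negative simple root subgroups $U_{-\alpha_i}$ for those $i$ with $\langle\omega_4,\alpha_i^{\vee}\rangle=0$, i.e.\ $i\in\{1,2,3\}$ — equivalently, the parabolic obtained by deleting $\alpha_4$, whose Levi has type $B_3$. Concretely: positive root subgroups fix $x_0$ since it already has highest weight; $U_{-\alpha_i}$ fixes $x_0$ for $i\ne 4$ because the $\alpha_i$-root string through $\omega_4$ has length one when $\langle\omega_4,\alpha_i^{\vee}\rangle=0$; and $U_{-\alpha_4}$ moves $x_0$ off its line. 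Hence $\{g\in G:\ g(x_0)\in Fx_0\}=Q$.

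Third, the rule $q(x_0)=\chi(q)\,x_0$ defines a character $\chi\colon Q\to\GL_1(F)$. By the same vanishing $\langle\omega_4,\alpha_i^{\vee}\rangle=0$ for $i\ne 4$, this character is trivial on the unipotent radical of $Q$ and on the derived group of its Levi, so it factors through the $\GL_1$-quotient of $Q$ and coincides with $\omega_4$ (this is precisely the character denoted $\omega_4$ in Proposition~\ref{CJDPS} and the $\GL_1$-quotient character of $Q$ mentioned in the introduction). Putting the three steps together: a pair $(t,g)\in\mathscr{T}\times G$ fixes $x_0$ exactly when $g\in Q$ and $t=\chi(g)=\omega_4(g)$, which is the asserted identity $\mathrm{Stab}_{\mathscr{T}\times G}(x_0)=\{(t,q)\in\mathscr{T}\times Q\mid t=\omega_4(q)\}$.

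As the text indicates, there is essentially no obstacle here: the statement really does follow from definitions together with the basic structure theory of highest weight orbits. The only point that needs a moment's care is bookkeeping of the several scaling conventions — the identification $\mathscr{T}\cong F^{\times}$, the $|t|^6$ normalization of $\mathscr{T}$ acting on $\mathcal{N}\cong\mathcal{J}$, and the extra twist in the Jacquet module — so that the final relation comes out as $t=\omega_4(q)$ rather than its inverse; but this has already been pinned down in Lemma~\ref{Rk1Tr0}, so it is purely mechanical.
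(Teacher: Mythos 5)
Your proof is correct and is essentially the argument the paper has in mind: the paper simply asserts the lemma "follows directly from definitions" after noting that the highest weight of $G$ on $\mathcal J^0$ is $\omega_4$, and your write-up supplies exactly the standard details (the geometric action $x\mapsto t^{-1}g(x)$ from Lemma \ref{Rk1Tr0}, the identification of the stabilizer of the highest weight line with the standard parabolic $Q$ obtained by deleting $\alpha_4$, and the extension of the weight $\omega_4$ to the character of $Q$). The bookkeeping of the twist $|t|^{11/2}$ and of the inverse scalar action of $\mathscr T$ is handled correctly, so the relation comes out as $t=\omega_4(q)$ as in the statement.
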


By transitivity of induction, we have the $\mathscr{T}\times G$-module isomorphism 
\begin{equation*}
\mathrm{ind}_{\mathrm{Stab}_{\mathscr{T}\times G}(x_{0})}^{\mathscr{T}\times G}(1) \cong 
\mathrm{Ind}_{\mathscr{T}\times Q}^{\mathscr{T}\times G}( \mathrm{ind}_{\mathrm{Stab}_{\mathscr{T}\times G}(x_{0})}^{\mathscr{T}\times Q}(1))
 \cong  \mathrm{Ind}_{\mathscr{T}\times Q}^{\mathscr{T}\times G} (C^{\infty}_{c}(F^{\times})).
\end{equation*}
In order to write the last module in terms of normalized parabolic induction, we need to replace $C^{\infty}_{c}(F^{\times})$ by 
$\delta_{Q}^{-1/2}\cdot C^{\infty}_{c}(F^{\times})$. Recall that  $\delta_{Q}^{1/2}(q)=|\omega_4(q)|^{-11/2}$. We also need to bring back the twist by $|t|^{11/2}$. 
Observe that the two exponents are inverses of each other. Hence 

\begin{equation}\label{MinJacIso}
r_{\overline{\mathscr{B}}}(C^{\infty}_{c}(\omega))\cong i_{\mathscr{T}\times Q}^{\mathscr{T}\times G}(C^{\infty}_{c}(F^{\times})),
\end{equation}
where the action of $\mathscr{T}\times Q$ on $C^{\infty}_{c}(F^{\times})$ is given by
\begin{equation}\label{TQAction}
((t,q)\cdot f)(x)=f(\omega_{4}(q^{-1})xt).
\end{equation}

Putting everything together we have the following description of $r_{\overline{\mathscr{B}}}(\mathcal{V})$.

\begin{proposition}\label{JMIofMin}
As a representation of $\mathscr{T}\times G$, the module $r_{\overline{\mathscr{B}}}(\mathcal{V})$ has a filtration with successive quotients 
\[
 |-|^{5/2}\cdot  \mathcal V(\mathcal M) \oplus |-|^{11/2},
 \]
 \[
i_{\mathscr{T}\times Q}^{\mathscr{T}\times G}(C^{\infty}_{c} (F^{\times})),
\]
where the action of $\mathscr{T}\times Q$ on $C^{\infty}_{c}(F^{\times})$ is given by equation (\ref{TQAction}).
\end{proposition}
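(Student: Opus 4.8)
The plan is to apply the exact normalized Jacquet functor $r_{\overline{\mathscr{B}}}$ to the $\overline{\mathcal{P}}$-filtration of $\mathcal{V}$ supplied by Theorem \ref{PFiltration}, namely the short exact sequence
\begin{equation*}
0\rightarrow C^{\infty}_{c}(\omega)\rightarrow \mathcal{V}\rightarrow \mathcal{V}_{\overline{\mathcal{N}}}\rightarrow 0,
\end{equation*}
and to identify each of the two resulting pieces as a $\mathscr{T}\times G$-module. Since $\overline{\mathscr{B}}=\mathscr{T}\overline{\mathscr{U}}$ with $\mathscr{T}\subset\mathcal{M}$ and $\overline{\mathscr{U}}\subset\overline{\mathcal{N}}$, the functor $r_{\overline{\mathscr{B}}}$ makes sense on each term, and exactness of Jacquet modules gives a filtration of $r_{\overline{\mathscr{B}}}(\mathcal{V})$ with subquotients $r_{\overline{\mathscr{B}}}(C^{\infty}_{c}(\omega))$ and $r_{\overline{\mathscr{B}}}(\mathcal{V}_{\overline{\mathcal{N}}})$.

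First I would handle the quotient term $\mathcal{V}_{\overline{\mathcal{N}}}$. By \eqref{MinCoIn} we have $\mathcal{V}_{\overline{\mathcal{N}}}\cong \mathcal{V}(\mathcal{M})\otimes|\mathrm{det}|^{t/d}\oplus|\mathrm{det}|^{s/d}$ with $(s,t,d)=(6,3,32)$ for $E_7$; restricting to $\mathscr{T}$ we use the already-recorded fact that $|\mathrm{det}(m)|^{s/d}$ restricts to $|t|^{6}$ on $\mathscr{T}\cong F^{\times}$, so $|\mathrm{det}|^{3/32}$ restricts to $|t|^{3}$ and $|\mathrm{det}|^{6/32}$ to $|t|^{6}$. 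Since $\overline{\mathscr{U}}$ already acts trivially on $\mathcal{V}_{\overline{\mathcal{N}}}$ (the $\overline{\mathcal{N}}$-coinvariants kill it), the only effect of $r_{\overline{\mathscr{B}}}$ is the normalizing twist $\delta_{\overline{\mathscr{B}}}^{-1/2}$; using $\delta_{\overline{\mathscr{B}}}(t)=|t|$ this multiplies each character by $|t|^{-1/2}$, giving the piece $|-|^{5/2}\cdot\mathcal{V}(\mathcal{M})\oplus|-|^{11/2}$. (One should note that $\mathcal{V}(\mathcal{M})$ is an $E_6$-type representation which restricts to $G=F_4\subset\mathcal{M}$, and the center of $\mathcal{M}$, i.e. $\mathscr{T}$, acts on it trivially, so the $\mathscr{T}$-action there is purely through the twist.)

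Next I would treat $r_{\overline{\mathscr{B}}}(C^{\infty}_{c}(\omega))$, which is the main computation. Here the chain Lemma \ref{Rk1Tr0} $\Rightarrow$ Aschbacher transitivity $\Rightarrow$ the stabilizer lemma $\Rightarrow$ isomorphism \eqref{MinJacIso} has already been assembled in the text: Lemma \ref{Rk1Tr0} gives $r_{\overline{\mathscr{B}}}(C^{\infty}_{c}(\omega))\cong C^{\infty}_{c}(\omega_0)$ with the explicit $\mathscr{T}\times G$-action carrying the twist $|t|^{11/2}$; transitivity of the action on $\omega_0$ realizes this as $|-|^{11/2}\cdot\mathrm{ind}_{\mathrm{Stab}}^{\mathscr{T}\times G}(1)$; the stabilizer is computed to be $\{(t,q)\in\mathscr{T}\times Q \mid t=\omega_4(q)\}$; induction in stages turns this into $\mathrm{Ind}_{\mathscr{T}\times Q}^{\mathscr{T}\times G}(C^\infty_c(F^\times))$; and finally the observation that $\delta_Q^{1/2}(q)=|\omega_4(q)|^{-11/2}$ exactly cancels the $|t|^{11/2}$ twist (via $t=\omega_4(q)$ on the stabilizer) converts unnormalized to normalized induction, yielding $i_{\mathscr{T}\times Q}^{\mathscr{T}\times G}(C^\infty_c(F^\times))$ with the action \eqref{TQAction}. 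Assembling the two subquotients completes the proof.

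The step I expect to require the most care is the bookkeeping of the three separate $|-|$-twists — the minimal-representation twist $s/d=6/32$ giving $|t|^{6}$, the geometric twist $|t|^{11/2}=|t|^{6-1/2}$ in Lemma \ref{Rk1Tr0} (itself already a combination of the $s/d$ twist with a $\delta^{-1/2}$), and the modular character $\delta_Q^{1/2}(q)=|\omega_4(q)|^{-11/2}$ of the $B_3$-parabolic $Q$ — and checking that they conspire exactly as claimed so that the final answer is genuinely normalized induction of $C^\infty_c(F^\times)$ with no residual character. This is purely a matter of tracking exponents through the identification $\mathscr{T}\cong F^\times$ and the relation $t=\omega_4(q)$ on the stabilizer, but it is the place where an error would be easy to make; everything else is a direct citation of the lemmas already established above.
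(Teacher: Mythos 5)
Your proposal is correct and takes essentially the same route as the paper: apply the exact functor $r_{\overline{\mathscr{B}}}$ to the sequence of Theorem \ref{PFiltration}, identify the quotient piece via (\ref{MinCoIn}) together with the restriction of $|\det|^{s/d}$, $|\det|^{t/d}$ to $\mathscr{T}$ and the $\delta_{\overline{\mathscr{B}}}^{-1/2}$ twist, and identify the sub piece by the chain Lemma \ref{Rk1Tr0} $\Rightarrow$ transitivity $\Rightarrow$ stabilizer $\Rightarrow$ induction in stages $\Rightarrow$ (\ref{MinJacIso}), with the cancellation of $|t|^{11/2}$ against $\delta_{Q}^{1/2}$ exactly as in the text. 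One harmless slip: for the Siegel parabolic of Theorem \ref{PFiltration} one has $d=\dim\mathcal{N}=27$ rather than $32$ (that value belongs to the Heisenberg filtration of Theorem \ref{HeisJacThm}), but since $\mathscr{T}$ acts on $\overline{\mathcal{N}}$ by scalars the restrictions are $|t|^{3}$ and $|t|^{6}$ in either bookkeeping, so the exponents $5/2$ and $11/2$ are unaffected.
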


\subsection{Twisted Jacquet modules} In this subsection we compute the twisted Jacquet modules of $\mathcal{V}$ with respect to $(\overline{\mathscr{U}},\psi)$. (Recall that $\overline{\mathscr{U}}\cong F$, so $\psi$ defines a character of $\overline{\mathscr{U}}$.) Note that $G=\mathrm{Stab}_{\mathcal{M}}((\overline{\mathscr{U}},\psi))$.

\begin{lemma}\label{PGL2TwistJac1}
The inclusion $C^{\infty}_{c}(\omega)\hookrightarrow \mathcal{V}$ (from line (\ref{MinSeq})) induces an isomorphism of $G$-modules.
\begin{equation}
C^{\infty}_{c}(\omega)_{(\overline{\mathscr{U}},\psi)}\cong \mathcal{V}_{(\overline{\mathscr{U}},\psi)}.
\end{equation}
\end{lemma}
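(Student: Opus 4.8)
The plan is to follow the proof of Lemma \ref{HeisJacTwist} essentially verbatim. First I would apply the functor of $(\overline{\mathscr{U}},\psi)$-coinvariants to the short exact sequence (\ref{MinSeq}). Since $\overline{\mathscr{U}}\cong F$ is abelian, this functor is exact (just as the functor $(-)_{(\overline{N},\Psi)}$ was used in Lemma \ref{HeisJacTwist}), so it produces an exact sequence of $G$-modules
\[
0\rightarrow C^{\infty}_{c}(\omega)_{(\overline{\mathscr{U}},\psi)}\rightarrow \mathcal{V}_{(\overline{\mathscr{U}},\psi)}\rightarrow \big(\mathcal{V}_{\overline{\mathcal{N}}}\big)_{(\overline{\mathscr{U}},\psi)}\rightarrow 0,
\]
where we use that $G=\mathrm{Stab}_{\mathcal{M}}((\overline{\mathscr{U}},\psi))$. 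Thus the lemma is equivalent to the vanishing of $\big(\mathcal{V}_{\overline{\mathcal{N}}}\big)_{(\overline{\mathscr{U}},\psi)}$.

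The second step is to observe that $\overline{\mathscr{U}}=\mathscr{G}\cap\overline{\mathcal{N}}$ is a subgroup of $\overline{\mathcal{N}}$, so $\overline{\mathscr{U}}$ acts trivially on the $\overline{\mathcal{N}}$-coinvariants $\mathcal{V}_{\overline{\mathcal{N}}}$ (equivalently one can read this off the explicit description (\ref{MinCoIn}) of $\mathcal{V}_{\overline{\mathcal{N}}}$ as a direct sum of characters of $\mathcal{M}$, which are in particular trivial on $\overline{\mathscr{U}}$). On any smooth module on which $\overline{\mathscr{U}}$ acts trivially, the space of $(\overline{\mathscr{U}},\psi)$-coinvariants vanishes for any nontrivial character $\psi$ of $\overline{\mathscr{U}}\cong F$: choosing $u_{0}\in\overline{\mathscr{U}}$ with $\psi(u_{0})\neq 1$, every vector $v$ equals $(1-\psi(u_{0}))^{-1}\big(u_{0}\cdot v-\psi(u_{0})v\big)$, which lies in the subspace spanned by the elements $u\cdot v-\psi(u)v$ by which one quotients to form the coinvariants. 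Since the fixed additive character of $F$ is nontrivial, $\big(\mathcal{V}_{\overline{\mathcal{N}}}\big)_{(\overline{\mathscr{U}},\psi)}=0$ and the lemma follows.

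There is no real obstacle here: the only inputs are the exactness of the twisted-coinvariants functor (already invoked above and in Lemma \ref{HeisJacTwist}) and the identification $G=\mathrm{Stab}_{\mathcal{M}}((\overline{\mathscr{U}},\psi))$, both already in place. Conceptually the statement is just that the ``finite'' piece $\mathcal{V}_{\overline{\mathcal{N}}}$ at the top of the filtration (\ref{MinSeq}) consists of $\overline{\mathscr{U}}$-fixed vectors and hence contributes nothing to a genuinely twisted Jacquet module, so the entire twisted Jacquet module is detected on the ``big cell'' piece $C^{\infty}_{c}(\omega)$.
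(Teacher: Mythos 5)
Your proof is correct and is essentially the paper's own argument: apply the exact functor of $(\overline{\mathscr{U}},\psi)$-coinvariants to the sequence (\ref{MinSeq}) and note that $(\mathcal{V}_{\overline{\mathcal{N}}})_{(\overline{\mathscr{U}},\psi)}=0$, which holds exactly as you say because $\overline{\mathscr{U}}\subseteq\overline{\mathcal{N}}$ acts trivially on the $\overline{\mathcal{N}}$-coinvariants while $\psi$ is nontrivial on $\overline{\mathscr{U}}\cong F$. Only your parenthetical reading of (\ref{MinCoIn}) is slightly off---$\mathcal{V}_{\overline{\mathcal{N}}}$ contains the minimal representation $\mathcal{V}(\mathcal{M})$, so it is not a direct sum of characters of $\mathcal{M}$, and $\overline{\mathscr{U}}$ is not a subgroup of $\mathcal{M}$---but your primary argument does not rely on that aside.
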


\begin{proof} Apply $(\overline{\mathscr{U}},\psi)$-coinvariants, which is exact, to line (\ref{MinSeq}). Note that $(\mathcal{V}_{\overline{\mathcal{N}}})_{(\overline{\mathscr{U}},\psi)}=0$. \end{proof}

To study $C^{\infty}_{c}(\omega)_{(\overline{\mathscr{U}},\psi)}$ we need to consider the set of rank 1 trace 1 elements in $\mathcal{J}$. Viewing $\omega$ as the set of rank $1$ elements in $\mathcal{J}$, we define $\omega_{1}=\{x\in\omega|\mathrm{Tr}(x)=1\}$.

\begin{lemma}\label{PGL2TwistJac2}
The restriction map $C^{\infty}_{c}(\omega)\rightarrow C^{\infty}_{c}(\omega_{1})$ induces a $G$-module isomorphism $C^{\infty}_{c}(\omega)_{(\overline{\mathscr{U}},\psi)}\cong C^{\infty}_{c}(\omega_{1})$. (The action of $G$ on $C^{\infty}_{c}(\omega_{1})$ is the same as on line (\ref{ScaleGeo}).)
\end{lemma}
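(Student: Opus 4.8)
The plan is to run the standard localization argument for twisted coinvariants of a space of locally constant, compactly supported functions carrying a ``phase'' action — the same mechanism used in Magaard--Savin \cite{MS97}, Lemma 2.2, and in Lemma \ref{Rk1Tr0} above.

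First I would pin down the $\overline{\mathscr{U}}$-action. By (\ref{PhaseTrans}), an element $\overline{n}\in\overline{\mathscr{U}}$ acts on $C^{\infty}_{c}(\omega)$ by $[\Pi(\overline n)f](x)=\psi(\langle x,\overline n\rangle)f(x)$. Under $\mathcal N\cong\mathcal J$, the group $\overline{\mathscr U}\subset\overline{\mathcal N}$ is the line of scalar matrices, and the Killing pairing restricted there is $\langle x,\overline n\rangle=u\,\mathrm{Tr}(x)$, where $u\in F$ is the coordinate of $\overline n$ under the fixed identification $\overline{\mathscr U}\cong F$. Hence $\overline{\mathscr U}$ acts by $f(x)\mapsto\psi\big(u\,\mathrm{Tr}(x)\big)f(x)$, a phase depending only on the continuous function $x\mapsto\mathrm{Tr}(x)$ on $\omega$.

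Next, since $\mathrm{Tr}$ is continuous, $\omega_{1}=\mathrm{Tr}^{-1}(1)$ is closed in $\omega$ and $\omega\setminus\omega_{1}$ is open, so restriction gives a short exact sequence of $\overline{\mathscr U}$-modules
\[
0\to C^{\infty}_{c}(\omega\setminus\omega_{1})\to C^{\infty}_{c}(\omega)\xrightarrow{\ \mathrm{res}\ }C^{\infty}_{c}(\omega_{1})\to 0,
\]
surjectivity being the usual extension-by-characteristic-functions-of-compact-opens argument for $\ell$-spaces. Apply the exact functor $(-)_{(\overline{\mathscr U},\psi)}$. On $C^{\infty}_{c}(\omega_{1})$ one has $\mathrm{Tr}(x)=1$, so $\overline{\mathscr U}$ acts there by the scalar $\psi(u)$, i.e.\ by the character $\psi$ itself, and the coinvariants equal the whole module. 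On $C^{\infty}_{c}(\omega\setminus\omega_{1})$ the coinvariants vanish: for $x$ with $\mathrm{Tr}(x)\neq 1$ pick $u$ with $\psi\big(u(\mathrm{Tr}(x)-1)\big)\neq 1$; this inequality persists on a small compact open neighbourhood of $x$ (as $\psi$ is locally constant and $\mathrm{Tr}$ continuous), so locally every function is of the form $\overline n\cdot g-\psi(\overline n)g$, and a finite disjointification of a cover of the (compact) support shows any such function lies in the defining subspace of the coinvariants. Therefore $\mathrm{res}$ descends to an isomorphism $C^{\infty}_{c}(\omega)_{(\overline{\mathscr U},\psi)}\cong C^{\infty}_{c}(\omega_{1})$.

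Finally, $G=\mathrm{Stab}_{\mathcal M}\big((\overline{\mathscr U},\psi)\big)$ centralizes $\overline{\mathscr U}$, so the above is a map of $G$-modules; and since $G=\Aut(\mathcal J)$ preserves the trace form it preserves $\omega_{1}$, with residual action the restriction of (\ref{ScaleGeo}) (which, as $\det m=1$ for $m\in G$, is simply $f\mapsto f(m^{-1}\,\cdot\,)$), matching the asserted $G$-action. The only genuinely non-formal point is the vanishing of the coinvariants on the open complement $\omega\setminus\omega_{1}$, but this is the routine partition-of-unity computation recalled above; the rest is bookkeeping of the pairing and of the residual $G$-action.
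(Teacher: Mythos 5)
Your proposal is correct and is essentially the paper's argument: the paper simply cites Magaard--Savin \cite{MS97}, Lemma 2.2, which is exactly the localization/partition-of-unity computation you write out (phase action via $\psi(u\,\mathrm{Tr}(x))$, vanishing of $(\overline{\mathscr{U}},\psi)$-coinvariants off the fiber $\mathrm{Tr}(x)=1$, and $G$-equivariance since $G$ centralizes $\overline{\mathscr{U}}$ and preserves the trace). Nothing further is needed.
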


\begin{proof} This is proved as in Magaard-Savin \cite{MS97}, Lemma 2.2.\end{proof}

\begin{lemma}\label{rk1tr1Trans}
The action of $G$ on $\omega_{1}$ is transitive. Let $v_{0}=\mathrm{diag}(1,0,0)\in\mathcal{J}$, then $\mathrm{Stab}_{G}(v_{0})\cong \mathrm{Spin}(9,F)$ (split spin group). Thus the map $G/\mathrm{Stab}_{G}(v_{0})\rightarrow \omega_{1}$ defined by $ g\mathrm{Stab}_{G}(v_{0})\mapsto g\cdot v_{0}$ is a bijection.
\end{lemma}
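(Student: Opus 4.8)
The plan is to identify $\omega_{1}$ explicitly with the set of primitive idempotents of $\mathcal{J}$ and then to reduce both assertions to the classical structure theory of the split reduced exceptional Jordan algebra $\mathcal{J}=H_{3}(\mathbb{O})$. First I would record that a rank $1$ element $x\in\mathcal{J}$ has $x^{\#}=0$, so the quadratic identity $x^{\#}=x*x-\mathrm{Tr}(x)\,x+S(x)\,\mathbf{1}$ valid in any cubic Jordan algebra (with $S(x)=\mathrm{Tr}(x^{\#})$, hence $S(x)=0$ here) forces $x*x=\mathrm{Tr}(x)\,x$. Thus $x\in\omega_{1}$ exactly when $x\neq 0$, $x^{\#}=0$, and $x*x=x$; in other words $\omega_{1}$ is precisely the set of primitive (rank one) idempotents of $\mathcal{J}$, and $v_{0}=\mathrm{diag}(1,0,0)$ is one of them.

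For transitivity I would invoke the classical fact that $\mathrm{Aut}(\mathcal{J})$ acts transitively on the primitive idempotents of the split reduced exceptional Jordan algebra (equivalently, $G$ is transitive on the $F$-points of the Cayley plane): the Peirce decomposition with respect to any primitive idempotent $e$ identifies the pair $(\mathcal{J},e)$ with $(H_{3}(\mathbb{O}),e_{11})$, the coordinate octonion algebra being recovered from the off-diagonal Peirce components, so composing with a permutation automorphism carries $e$ to $v_{0}$. See \cite{SV00} or \cite{A87}. As a consistency check, the preceding lemma realizes the trace-zero slice $\omega_{0}$ as a single $\mathscr{T}\times G$-orbit of dimension $16$, matching the dimension $16$ of $\omega_{1}$.

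It remains to identify $\mathrm{Stab}_{G}(v_{0})$. Any $\varphi\in\mathrm{Stab}_{G}(v_{0})$ preserves the Peirce decomposition $\mathcal{J}=Fv_{0}\oplus\mathcal{J}_{1/2}(v_{0})\oplus\mathcal{J}_{0}(v_{0})$, where $\mathcal{J}_{0}(v_{0})\cong H_{2}(\mathbb{O})$ has dimension $10$ and $\mathcal{J}_{1/2}(v_{0})\cong\mathbb{O}^{2}$ has dimension $16$. Fixing the subalgebra unit $\mathbf{1}-v_{0}$ of $\mathcal{J}_{0}(v_{0})$, the restriction $\varphi|_{\mathcal{J}_{0}(v_{0})}$ acts on the $9$-dimensional trace-zero subspace $V_{9}\subset\mathcal{J}_{0}(v_{0})$ by an isometry of the restricted norm form, which for split $\mathbb{O}$ is the split quadratic form of rank $9$; the image of $\mathrm{Stab}_{G}(v_{0})\to\mathrm{O}(V_{9})$ lies in $\mathrm{SO}(V_{9})$. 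If $\varphi$ lies in the kernel, it fixes $Fv_{0}\oplus\mathcal{J}_{0}(v_{0})$ and commutes with the module action of $\mathcal{J}_{0}(v_{0})$ on $\mathcal{J}_{1/2}(v_{0})$; irreducibility of this Clifford module forces $\varphi$ to act on $\mathcal{J}_{1/2}(v_{0})$ by a scalar $\lambda$, and compatibility with $\mathcal{J}_{1/2}(v_{0})\times\mathcal{J}_{1/2}(v_{0})\to Fv_{0}\oplus\mathcal{J}_{0}(v_{0})$ gives $\lambda^{2}=1$, so the kernel is the central $\mu_{2}$. The dimension count $\dim\mathrm{Stab}_{G}(v_{0})=\dim G-\dim\omega_{1}=52-16=36=\dim\mathrm{SO}_{9}$ makes the map onto $\mathrm{SO}(V_{9})$ surjective, and since $\mathrm{Stab}_{G}(v_{0})$ is connected (as one sees over $\overline{F}$, the isotropy of a point of the Cayley plane being $\mathrm{Spin}_{9}$), it is the nontrivial two-fold central extension $\mathrm{Spin}(V_{9})=\mathrm{Spin}(9,F)$, split because $V_{9}$ is. Equivalently, one may simply quote this identification from \cite{SV00} or \cite{A87}.

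The step I expect to demand the most care is the last one: making the Peirce/Clifford-module analysis sharp enough to pin down the rational form --- exactly the split $\mathrm{Spin}(9,F)$, rather than $\mathrm{SO}_{9}$, $\mathrm{Pin}_{9}$, or a nonsplit spin group --- together with the connectedness input. It is cleanest to import this from the classical structure theory of reduced exceptional Jordan algebras; once $\omega_{1}$ has been identified with the primitive idempotents and transitivity is in hand, the rest is routine bookkeeping.
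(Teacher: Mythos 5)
Your argument is correct and follows essentially the same route as the paper, which likewise identifies $\omega_{1}$ inside $\mathcal{J}$ and then quotes Springer--Veldkamp (\cite{SV00}, Cor.~5.8.2 for transitivity and Thm.~7.1.3 for $\mathrm{Stab}_{G}(v_{0})\cong\mathrm{Spin}(9,F)$, applied to the nondegenerate $9$-dimensional subspace $E_{0}$ of the Peirce $0$-space). The only caution in your optional Peirce/Clifford sketch is that surjectivity onto $\mathrm{SO}(V_{9})$ holds at the level of algebraic groups (over $\overline{F}$) but not on $F$-points, where the image is the spinor-norm kernel, so pinning down the rational form by importing it from \cite{SV00} is indeed the right move --- exactly what both you and the paper do.
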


\begin{proof}  This is Corollary 5.8.2 and Theorem 7.1.3 in Spinger-Veldkamp \cite{SV00}. We make a few remarks and match our notation with Springer-Veldkamp. 

Our $v_{0}$ is the $u$ in Springer-Veldkamp. The space $E_{0}$ in loc. cit. is then the elements of $\mathcal{J}$ of the form $\mathrm{diag}(0,b,-b)+J(x,0,0)$, where $b\in F$ and $x\in\mathbb{O}$. This is a $9$ dimensional orthogonal space where the quadratic form is the restriction of the trace form of $\mathcal{J}$ to $E_{0}$. This form on $E_{0}$ is nondegenerate. So by Springer-Veldkamp Theorem 7.1.3, we see that $\mathrm{Stab}_{G}(v_{0})\cong \mathrm{Spin}(Q,E_{0})$.\end{proof}

\textbf{Remark:} We note that the quadratic space in the previous lemma $(Q,E_{0})$ decomposes as an orthogonal sum of a one dimensional quadratic space $(Q_{0}^{\prime},F\cdot v)$, where $Q_{0}^{\prime}(v)=2$, and the $8$ dimensional quadratic space associated to the split octonian algebra $\mathbb{O}$.

\begin{lemma}\label{PGL2TwistJac3}
Let $v_{0}=\mathrm{diag}(1,0,0)\in\mathcal{J}$. The map $ind_{\mathrm{Stab}_{G}(v_{0})}^{G}(1) \rightarrow C^{\infty}_{c}(\omega_{1})$ defined by $f \mapsto (g^{-1}\cdot v_{0}\mapsto f(g))$ is an isomorphism of $G$-modules.
\end{lemma}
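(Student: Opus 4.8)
The plan is to recognize this as a standard Frobenius-reciprocity style identification of a space of compactly supported functions on a transitive $G$-set with a compactly induced module. Since by Lemma \ref{rk1tr1Trans} the action of $G$ on $\omega_{1}$ is transitive with $v_{0}=\mathrm{diag}(1,0,0)$ having stabilizer $\mathrm{Stab}_{G}(v_{0})\cong\mathrm{Spin}(9,F)$, the orbit map $g\mapsto g^{-1}\cdot v_{0}$ identifies $\omega_{1}$ with the coset space $\mathrm{Stab}_{G}(v_{0})\backslash G$ (note the inverse, which is why the coset is on the left). The assertion is then exactly the description of $C^{\infty}_{c}$ of a homogeneous space as compact induction of the trivial character from the stabilizer.

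First I would verify that the proposed map $f\mapsto(g^{-1}\cdot v_{0}\mapsto f(g))$ is well defined: if $f\in\mathrm{ind}_{\mathrm{Stab}_{G}(v_{0})}^{G}(1)$ then $f(hg)=f(g)$ for $h\in\mathrm{Stab}_{G}(v_{0})$, so the value $f(g)$ depends only on the point $g^{-1}\cdot v_{0}\in\omega_{1}$; hence the formula produces a genuine function on $\omega_{1}$. Next I would check it lands in $C^{\infty}_{c}(\omega_{1})$: smoothness of $f$ gives local constancy of the resulting function, and compactness of the support of $f$ modulo $\mathrm{Stab}_{G}(v_{0})$ translates, via the homeomorphism $\mathrm{Stab}_{G}(v_{0})\backslash G\to\omega_{1}$, into compact support on $\omega_{1}$. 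Then I would check $G$-equivariance: the $G$-action on $\mathrm{ind}$ is by right translation $(g_{0}\cdot f)(g)=f(gg_{0})$, and on $C^{\infty}_{c}(\omega_{1})$ it is the geometric action $[\Pi(m)f](x)=|\det(m)|^{s/d}f(m^{-1}x)$ from line (\ref{ScaleGeo}); since $s/d$ evaluates trivially here (the point $v_{0}$ and hence $\omega_{1}$ is already normalized to trace $1$, so the similitude factor on this orbit is $1$ — more precisely $\omega_{1}$ is a single $G$-orbit, not a $\mathscr T\times G$-orbit, and $G$ acts with determinant of the relevant transformation equal to $1$), this is just the translation action and equivariance is a direct check.

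For injectivity and surjectivity I would argue as usual: the map is injective because if the associated function on $\omega_{1}$ is identically zero then $f(g)=0$ for all $g$, as every $g$ arises in the orbit; it is surjective because given $\phi\in C^{\infty}_{c}(\omega_{1})$ the function $g\mapsto\phi(g^{-1}\cdot v_{0})$ is smooth, right $\mathrm{Stab}_{G}(v_{0})$-invariant, and compactly supported modulo $\mathrm{Stab}_{G}(v_{0})$, hence lies in $\mathrm{ind}_{\mathrm{Stab}_{G}(v_{0})}^{G}(1)$ and maps to $\phi$. Combining with Lemmas \ref{PGL2TwistJac1} and \ref{PGL2TwistJac2} this identifies $\mathcal{V}_{(\overline{\mathscr{U}},\psi)}$ with $\mathrm{ind}_{\mathrm{Spin}(9,F)}^{G}(1)$, which is what one wants for the $\mathrm{Spin}(9)$-period computations in Section \ref{Spin9}.

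The only genuine subtlety — the part I would be most careful about — is the interplay between the orbit map using $g^{-1}$ versus $g$ and the corresponding left-versus-right coset conventions, together with the claim that the geometric twist $|\det(m)|^{s/d}$ contributes nothing on this orbit. One must check that the transformation of $\mathcal{J}$ effected by $g\in G$ (which lies in $\mathrm{Aut}(\mathcal{J})$, hence preserves the norm form \emph{and}, being in the derived group, has determinant $1$ on $\mathcal J$) genuinely acts with trivial similitude factor, so that the normalization in (\ref{ScaleGeo}) disappears and one gets the untwisted compact induction as stated; this is where a sign or exponent error would creep in, so it deserves an explicit line rather than being swept under "this is proved as in Magaard-Savin."
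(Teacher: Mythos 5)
Your proposal is correct and follows essentially the same route as the paper: a direct verification that the map is a well-defined $G$-equivariant bijection, with the only substantive point being that the twist $|\det(m)|^{s/d}$ from (\ref{ScaleGeo}) disappears on $G$. The paper disposes of that point by noting that every character of $G$ is trivial (since $F_4$ is perfect), which is the same fact you invoke when you argue that $g\in G=\mathrm{Aut}(\mathcal{J})$ acts on $\mathcal{J}$ with determinant $1$; your side remark about $\omega_{1}$ being normalized to trace $1$ is unnecessary, but the argument you actually rely on is the right one.
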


\begin{proof} A direct calculation shows this map is a $G$-module homomorphism. Here we are using the fact that any character of $G$ is trivial. 

One can directly check that the inverse map is given by $F\mapsto (g\mapsto F(g^{-1}\cdot v_{0}))$.\end{proof}

\begin{proposition}\label{MinRepPGL2TwistedJac}
By combining the isomorphisms of Lemmas \ref{PGL2TwistJac1}, \ref{PGL2TwistJac2}, and \ref{PGL2TwistJac3} we see that as $G$-modules
\begin{equation*}
\mathcal{V}_{(\overline{\mathscr{U}},\psi)}\cong ind_{\mathrm{Stab}(v_{0})}^{G}(1).
\end{equation*}
\end{proposition}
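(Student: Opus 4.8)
The statement is obtained simply by composing the three module isomorphisms of Lemmas \ref{PGL2TwistJac1}, \ref{PGL2TwistJac2}, and \ref{PGL2TwistJac3}, so the ``proof'' is essentially bookkeeping; the only points that require care are $G$-equivariance at each stage and the vanishing of an auxiliary twist.

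First I would apply the exact functor of $(\overline{\mathscr{U}},\psi)$-coinvariants to the short exact sequence \eqref{MinSeq}. Since $\overline{\mathscr{U}}\subset\overline{\mathcal{N}}$ acts trivially on the quotient $\mathcal{V}_{\overline{\mathcal{N}}}$ while $\psi$ is a nontrivial character of $\overline{\mathscr{U}}$, one has $(\mathcal{V}_{\overline{\mathcal{N}}})_{(\overline{\mathscr{U}},\psi)}=0$, which yields the isomorphism $C^{\infty}_{c}(\omega)_{(\overline{\mathscr{U}},\psi)}\cong\mathcal{V}_{(\overline{\mathscr{U}},\psi)}$ of Lemma \ref{PGL2TwistJac1}; it is $G$-equivariant because $G=\mathrm{Stab}_{\mathcal{M}}((\overline{\mathscr{U}},\psi))$.

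Next I would identify $C^{\infty}_{c}(\omega)_{(\overline{\mathscr{U}},\psi)}$ with $C^{\infty}_{c}(\omega_{1})$ by restriction of functions (Lemma \ref{PGL2TwistJac2}): the phase action \eqref{PhaseTrans} sends $f$ to the function $x\mapsto\psi(\langle x,\overline{u}\rangle)f(x)$, and $\langle x,\overline{u}\rangle$ depends only on $\mathrm{Tr}(x)$ since $\overline{\mathscr{U}}^{\perp}\cong\mathcal{J}^{0}=\ker\mathrm{Tr}$, so the $(\overline{\mathscr{U}},\psi)$-coinvariants localize onto the affine slice $\mathrm{Tr}=1$, that is, onto $\omega_{1}$. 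Here one also uses that the twist $|\mathrm{det}(m)|^{s/d}$ in \eqref{ScaleGeo} is trivial on $G$: indeed $G=\Aut(\mathcal{J})$ preserves $N_{\mathcal{J}}$ exactly, so $\mathrm{det}\equiv 1$ on $G$, equivalently every character of $G$ is trivial, and hence the residual $G$-action on $C^{\infty}_{c}(\omega_{1})$ is the plain geometric action $f\mapsto f(g^{-1}\cdot)$.

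Finally, Lemma \ref{rk1tr1Trans} gives that $G$ acts transitively on $\omega_{1}$ with $\mathrm{Stab}_{G}(v_{0})\cong\mathrm{Spin}(9,F)$, so $\omega_{1}\cong G/\mathrm{Stab}_{G}(v_{0})$ as $G$-sets; Lemma \ref{PGL2TwistJac3} then realizes $C^{\infty}_{c}(\omega_{1})$ as the compactly-supported induction $\mathrm{ind}_{\mathrm{Stab}(v_{0})}^{G}(1)$. Composing the three isomorphisms produces $\mathcal{V}_{(\overline{\mathscr{U}},\psi)}\cong\mathrm{ind}_{\mathrm{Stab}(v_{0})}^{G}(1)$. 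I do not anticipate any genuine obstacle, since all the substantive geometric input (transitivity of $G$ on $\omega_{1}$ and the identification of the point stabilizer) is already contained in the cited lemmas; the only thing to watch is that the various twists and characters introduced along the way cancel, which they do precisely because every character of the group $G$ of type $F_{4}$ is trivial.
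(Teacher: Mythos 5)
Your proposal is correct and follows the paper's own (implicit) argument exactly: the proposition is just the composition of Lemmas \ref{PGL2TwistJac1}, \ref{PGL2TwistJac2}, and \ref{PGL2TwistJac3} (the last resting on the transitivity and stabilizer computation of Lemma \ref{rk1tr1Trans}), and your attention to $G$-equivariance and to the vanishing of the $|\mathrm{det}|^{s/d}$ twist via triviality of characters of $G$ matches the remarks made in the proofs of those lemmas. No gaps.
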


\section{Lifting from $\mathrm{PGL}(2)$ to $F_{4}$}\label{LiftPSeries}

We continue to use the notation from Section \ref{JMII}. In particular, $C=M(2,F)$. In this section, we explicitly describe the theta lift from $\mathscr{G}$ to $G$. 

In Subsections \ref{PSTheta} and \ref{TSTheta} we compute the big theta lift of constituents of principal series; the small theta lift is computed in Subsection \ref{PStheta}. In subsection \ref{PGL2SC}, we revisit the theta lift of supercuspidal representations.

\subsection{Principal series}\label{PSTheta}

Now we begin the calculation of $\mathrm{Hom}_{\mathscr{G}}(\mathcal{V},\tau)$, where $\tau=i_{\overline{\mathscr{B}}}^{\mathscr{G}}(\chi)$ is a principal series (not necessarily irreducible) of $\mathscr{G}$ induced from a character $\chi:\mathscr{T}\rightarrow \mathbb{C}^{\times}$. 

 By Frobenius reciprocity,

\begin{equation}\label{ThetaPSFR}
\mathrm{Hom}_{\mathscr{G}}(\mathcal{V},\tau)\cong \mathrm{Hom}_{\mathscr{T}}(r_{\overline{\mathscr{B}}}(\mathcal{V}),\chi).
\end{equation}

Note that $\mathrm{Stab}_{\overline{\mathcal{P}}}(\overline{\mathscr{U}})=(\mathscr{T}\times G)\overline{\mathcal{N}}$. Thus we apply the exact functor $r_{\overline{\mathscr{B}}}$ to sequence (\ref{MinSeq}) to get a sequence of $\mathscr{T}\times G$-modules

\begin{equation}\label{MinSeqU}
0\rightarrow r_{\overline{\mathscr{B}}}(C^{\infty}_{c}(\omega))\rightarrow r_{\overline{\mathscr{B}}}(\mathcal{V})\rightarrow \delta_{\overline{\mathscr{B}}}^{-1/2}\otimes\mathcal{V}_{\overline{\mathcal{N}}}\rightarrow 0.
\end{equation}
We apply the functor $\mathrm{Hom}_{\mathscr{T}}(-,\chi)$ to (\ref{MinSeqU}) to get a long exact sequence. Let $X,Y,Z$ be the nonzero $\mathscr{T}\times G$-modules in sequence (\ref{MinSeqU}) from left to right, respectively. Then the long exact sequence is:

\begin{equation}\label{LES}
\begin{tikzcd}
0\arrow[r]
& \mathrm{Hom}_{\mathscr{T}}(Z,\chi) \arrow[r]
& \mathrm{Hom}_{\mathscr{T}}(Y,\chi) \arrow[r,"\iota"]
& \mathrm{Hom}_{\mathscr{T}}(X,\chi) & \\
\phantom{A}\arrow[r] 
& \mathrm{Ext}_{\mathscr{T}}^{1}(Z,\chi) \arrow[r]
& \mathrm{Ext}_{\mathscr{T}}^{1}(Y,\chi) \arrow[r]
& \mathrm{Ext}_{\mathscr{T}}^{1}(X,\chi) \arrow[r] &...\\
\end{tikzcd}
\end{equation}

The following lemma shows that $\iota$ is an isomorphism if $\chi$ avoids a finite set of characters. It is a simple consequence of Theorem \ref{PFiltration}.

\begin{lemma}\label{0Hom}
Assume that $\chi\neq |\cdot|^{5/2}$ and $|\cdot|^{11/2}$. Then

\begin{align*}
\mathrm{Hom}_{\mathscr{T}}(\delta_{\overline{\mathscr{B}}}^{-1/2}\otimes \mathcal{V}_{\overline{\mathcal{N}}},\chi)=&0\text{ and }\\
\mathrm{Ext}^{1}_{\mathscr{T}}(\delta_{\overline{\mathscr{B}}}^{-1/2}\otimes \mathcal{V}_{\overline{\mathcal{N}}},\chi)=&0,
\end{align*}
and the map $\iota$ in the long exact sequence (\ref{LES}) induces an isomorphism
\begin{equation*}
\mathrm{Hom}_{\mathscr{T}}(r_{\overline{\mathscr{B}}}(\mathcal{V}),\chi)\cong \mathrm{Hom}_{\mathscr{T}}(r_{\overline{\mathscr{B}}}(C^{\infty}_{c}(\omega)),\chi).
\end{equation*}
\end{lemma}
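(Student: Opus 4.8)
The plan is to read off the restriction to $\mathscr{T}$ of the module $Z:=\delta_{\overline{\mathscr{B}}}^{-1/2}\otimes\mathcal{V}_{\overline{\mathcal{N}}}$ directly from Theorem \ref{PFiltration}, and then to exploit that $\mathscr{T}\cong F^{\times}$ is abelian: over such a group $\mathrm{Hom}$ and $\mathrm{Ext}^{1}$ against the fixed character $\chi$ only detect which characters actually occur in $Z$.

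First I would make the $\mathscr{T}$-module structure of $Z$ explicit. By \eqref{MinCoIn}, with $(s,t)=(6,3)$ and $d=\dim\mathcal{N}=\dim\mathcal{J}_{\mathbb{O}}=27$, we have $\mathcal{V}_{\overline{\mathcal{N}}}\cong(\mathcal{V}(\mathcal{M})\otimes|\mathrm{det}|^{3/27})\oplus|\mathrm{det}|^{6/27}$ as $\mathcal{M}$-modules. Since $\mathscr{T}$ is the center of $\mathcal{M}$ and acts trivially on $\mathcal{V}(\mathcal{M})$, and since $|\mathrm{det}(m)|^{s/d}$ restricts to $|\cdot|^{6}$ on $\mathscr{T}$ (as recorded just before Proposition \ref{JMIofMin}), so that $|\mathrm{det}(m)|^{t/d}$ restricts to $|\cdot|^{3}$ (the exponent $t/d$ being half of $s/d$), the restriction of $\mathcal{V}_{\overline{\mathcal{N}}}$ to $\mathscr{T}$ is a (generally infinite) direct sum of copies of $|\cdot|^{3}$ together with a single copy of $|\cdot|^{6}$. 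Applying the twist $\delta_{\overline{\mathscr{B}}}^{-1/2}(t)=|t|^{-1/2}$ then shows that $Z|_{\mathscr{T}}$ is a direct sum of copies of $|\cdot|^{5/2}$ and one copy of $|\cdot|^{11/2}$; equivalently, this is the first graded piece appearing in Proposition \ref{JMIofMin}.

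Next I would record the homological vanishing over $\mathscr{T}\cong F^{\times}$. Writing $F^{\times}\cong\varpi^{\mathbb{Z}}\times\mathcal{O}^{\times}$, the category of smooth $F^{\times}$-representations decomposes over the characters of the compact group $\mathcal{O}^{\times}$, and on each isotypic piece $\mathrm{Ext}$ is computed by the length-one resolution $0\to\mathbb{C}[\varpi^{\pm1}]\xrightarrow{\varpi-\mu(\varpi)}\mathbb{C}[\varpi^{\pm1}]\to\mathbb{C}_{\mu}\to0$ of a character $\mu$. Hence for smooth characters $\mu\neq\chi$ one has $\mathrm{Hom}_{\mathscr{T}}(\mu,\chi)=\mathrm{Ext}^{1}_{\mathscr{T}}(\mu,\chi)=0$, and both vanishings persist for arbitrary direct sums in the first variable, since $\mathrm{Hom}$ and $\mathrm{Ext}^{1}$ carry a coproduct in the first variable to a product. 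Therefore, as soon as $\chi\neq|\cdot|^{5/2}$ and $\chi\neq|\cdot|^{11/2}$, we obtain $\mathrm{Hom}_{\mathscr{T}}(Z,\chi)=0$ and $\mathrm{Ext}^{1}_{\mathscr{T}}(Z,\chi)=0$, which is the first assertion of the lemma.

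Finally I would feed these two vanishings into the long exact sequence \eqref{LES}, in which $Z$ is the leftmost entry: the vanishing of $\mathrm{Hom}_{\mathscr{T}}(Z,\chi)$ forces $\iota$ to be injective, and the vanishing of $\mathrm{Ext}^{1}_{\mathscr{T}}(Z,\chi)$ kills the connecting map out of $\mathrm{Hom}_{\mathscr{T}}(X,\chi)$, so $\iota$ is surjective as well; hence $\iota$ is the desired isomorphism $\mathrm{Hom}_{\mathscr{T}}(r_{\overline{\mathscr{B}}}(\mathcal{V}),\chi)\cong\mathrm{Hom}_{\mathscr{T}}(r_{\overline{\mathscr{B}}}(C^{\infty}_{c}(\omega)),\chi)$. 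I do not expect a genuine obstacle here, as the argument is essentially bookkeeping; the one point I would be careful about is the $\mathrm{Ext}^{1}$ computation over $F^{\times}$ and its compatibility with the infinite direct sum coming from $\mathcal{V}(\mathcal{M})$.
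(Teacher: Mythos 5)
Your proof is correct and is essentially the argument the paper intends: the paper dismisses the lemma as a ``simple consequence of Theorem \ref{PFiltration}'', and your identification of the $\mathscr{T}$-action on $\delta_{\overline{\mathscr{B}}}^{-1/2}\otimes\mathcal{V}_{\overline{\mathcal{N}}}$ (copies of $|\cdot|^{5/2}$ from the $\mathcal{V}(\mathcal{M})$-part and one $|\cdot|^{11/2}$, exactly the top piece in Proposition \ref{JMIofMin}) is the substance of that consequence. The remaining homological input --- vanishing of $\mathrm{Hom}_{\mathscr{T}}$ and $\mathrm{Ext}^{1}_{\mathscr{T}}$ from a $\mu$-isotypic smooth module to $\chi\neq\mu$ over $F^{\times}$, compatibly with arbitrary direct sums, fed into the long exact sequence --- is standard and correctly handled.
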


Now we can prove the main result of this subsection.

\begin{theorem}\label{ThetaPS} 
Let $\chi$ be a character of $\mathscr{T}$ such that $\chi\neq |\cdot|^{-5/2}$ and $|\cdot|^{-11/2}$.
Let $\pi$ be an irreducible quotient of $i_{\overline{\mathscr{B}}}^{\mathscr{G}}(\chi)$. Then $\Theta(\pi)$ is a quotient of 
$i_{Q}^{G}(\chi\circ\omega_{4})$. Moreover, if $i_{\overline{\mathscr{B}}}^{\mathscr{G}}(\chi)$ is irreducible, then 
$\Theta(i_{\overline{\mathscr{B}}}^{\mathscr{G}}(\chi))= i_{Q}^{G}(\chi\circ\omega_{4})$.   

\end{theorem}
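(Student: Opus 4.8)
The plan is to compute the full linear dual $\Theta(\pi)^{*}$ by brute force, using Lemma \ref{ThetaDual}, and then to pass back to an honest representation by a contragredient argument. By Lemma \ref{ThetaDual}, $\Theta(\pi)^{*}\cong \mathrm{Hom}_{\mathscr{G}}(\mathcal{V},\pi)$ as $G$-modules. Since $\pi$ is the irreducible quotient of $i_{\overline{\mathscr{B}}}^{\mathscr{G}}(\chi)$ it is the irreducible submodule of $i_{\overline{\mathscr{B}}}^{\mathscr{G}}(\chi^{-1})$ (the nontrivial Weyl element of $\mathrm{PGL}_{2}$ sends $\chi$ to $\chi^{-1}$; compare Lemma \ref{PGL2PS}), so the inclusion $\pi\hookrightarrow i_{\overline{\mathscr{B}}}^{\mathscr{G}}(\chi^{-1})$ yields an injection $\mathrm{Hom}_{\mathscr{G}}(\mathcal{V},\pi)\hookrightarrow \mathrm{Hom}_{\mathscr{G}}(\mathcal{V},i_{\overline{\mathscr{B}}}^{\mathscr{G}}(\chi^{-1}))$, which is an equality when $i_{\overline{\mathscr{B}}}^{\mathscr{G}}(\chi)$ is irreducible (as then $\pi=i_{\overline{\mathscr{B}}}^{\mathscr{G}}(\chi)\cong i_{\overline{\mathscr{B}}}^{\mathscr{G}}(\chi^{-1})$). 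By Frobenius reciprocity, equation (\ref{ThetaPSFR}), $\mathrm{Hom}_{\mathscr{G}}(\mathcal{V},i_{\overline{\mathscr{B}}}^{\mathscr{G}}(\chi^{-1}))\cong \mathrm{Hom}_{\mathscr{T}}(r_{\overline{\mathscr{B}}}(\mathcal{V}),\chi^{-1})$. Observe that the excluded characters $\chi\neq|\cdot|^{-5/2},|\cdot|^{-11/2}$ in the hypothesis are exactly those for which $\chi^{-1}$ avoids the characters $|\cdot|^{5/2},|\cdot|^{11/2}$ ruled out in Lemma \ref{0Hom}; this is the reason the argument runs through $\chi^{-1}$ rather than $\chi$.

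Next I would evaluate $\mathrm{Hom}_{\mathscr{T}}(r_{\overline{\mathscr{B}}}(\mathcal{V}),\chi^{-1})$ via the filtration of Proposition \ref{JMIofMin}. Applying Lemma \ref{0Hom} to the character $\chi^{-1}$ kills the contribution of the top quotient $\delta_{\overline{\mathscr{B}}}^{-1/2}\otimes\mathcal{V}_{\overline{\mathcal{N}}}$ (both $\mathrm{Hom}$ and $\mathrm{Ext}^{1}$ vanish), so $\mathrm{Hom}_{\mathscr{T}}(r_{\overline{\mathscr{B}}}(\mathcal{V}),\chi^{-1})\cong \mathrm{Hom}_{\mathscr{T}}(r_{\overline{\mathscr{B}}}(C^{\infty}_{c}(\omega)),\chi^{-1})$, and by (\ref{MinJacIso}) this equals $\mathrm{Hom}_{\mathscr{T}}\bigl(i_{\mathscr{T}\times Q}^{\mathscr{T}\times G}(C^{\infty}_{c}(F^{\times})),\chi^{-1}\bigr)$, with $\mathscr{T}\times Q$ acting on $C^{\infty}_{c}(F^{\times})$ by (\ref{TQAction}). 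The key step is to identify this space. Writing $\mathrm{Hom}_{\mathscr{T}}(-,\chi^{-1})$ as the linear dual of $(\mathscr{T},\chi^{-1})$-coinvariants and commuting these coinvariants past the normalized induction $i_{Q}^{G}$ — legitimate because the $\mathscr{T}$-action on the induced module is fiberwise and $G/Q$ is compact — one is reduced to the fiber: by (\ref{TQAction}) the space $(C^{\infty}_{c}(F^{\times}))_{(\mathscr{T},\chi^{-1})}$ is one-dimensional, and the residual action of the Levi of $Q$ on this line is through $\chi\circ\omega_{4}$. Hence $\mathrm{Hom}_{\mathscr{T}}(r_{\overline{\mathscr{B}}}(\mathcal{V}),\chi^{-1})\cong i_{Q}^{G}(\chi\circ\omega_{4})^{*}$, so $\Theta(\pi)^{*}$ embeds in $i_{Q}^{G}(\chi\circ\omega_{4})^{*}$, with equality when $i_{\overline{\mathscr{B}}}^{\mathscr{G}}(\chi)$ is irreducible.

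Finally I would return from linear duals to representations. Taking smooth vectors in $\Theta(\pi)^{*}\hookrightarrow i_{Q}^{G}(\chi\circ\omega_{4})^{*}$ gives an injection of smooth $G$-modules $\widetilde{\Theta(\pi)}\hookrightarrow \widetilde{i_{Q}^{G}(\chi\circ\omega_{4})}=i_{Q}^{G}(\chi^{-1}\circ\omega_{4})$. Since $i_{Q}^{G}(\chi\circ\omega_{4})$ has finite length, $\widetilde{\Theta(\pi)}$ is admissible, hence $\Theta(\pi)$ is admissible and $\Theta(\pi)\cong\widetilde{\widetilde{\Theta(\pi)}}$. Dualizing the injection — the smooth contragredient being exact on admissible representations — produces a surjection $i_{Q}^{G}(\chi\circ\omega_{4})=\widetilde{\widetilde{i_{Q}^{G}(\chi\circ\omega_{4})}}\twoheadrightarrow\widetilde{\widetilde{\Theta(\pi)}}=\Theta(\pi)$, so $\Theta(\pi)$ is a quotient of $i_{Q}^{G}(\chi\circ\omega_{4})$; and when $i_{\overline{\mathscr{B}}}^{\mathscr{G}}(\chi)$ is irreducible every inclusion above is an equality, so the surjection is an isomorphism and $\Theta\bigl(i_{\overline{\mathscr{B}}}^{\mathscr{G}}(\chi)\bigr)=i_{Q}^{G}(\chi\circ\omega_{4})$.

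I expect the main obstacle to be purely bookkeeping: in the key computation of the second paragraph one must commute the twisted $\mathscr{T}$-coinvariants past the parabolic induction and check that the various normalizations — the $|\det|$-twists recorded in Proposition \ref{JMIofMin}, the modulus factors already absorbed into (\ref{MinJacIso}) and (\ref{TQAction}), and the identification $|\det(m)|^{s/d}|_{\mathscr{T}}=|\cdot|^{6}$ — combine to give exactly the \emph{normalized} degenerate principal series $i_{Q}^{G}(\chi\circ\omega_{4})$, with no stray twist; and in the last paragraph one must keep careful track of the several contragredients so that the conclusion is a quotient of $i_{Q}^{G}(\chi\circ\omega_{4})$ itself and not of its contragredient.
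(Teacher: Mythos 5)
Your proposal is correct and takes essentially the same route as the paper's proof: the embedding $\pi\hookrightarrow i_{\overline{\mathscr{B}}}^{\mathscr{G}}(\chi^{-1})$, Lemma \ref{ThetaDual}, Frobenius reciprocity, Lemma \ref{0Hom} applied to $\chi^{-1}$, and the identification (\ref{MinJacIso}). The only differences are cosmetic: where the paper invokes \cite[Lemma 9.4]{GG06} you verify the same fact directly by commuting the $(\mathscr{T},\chi^{-1})$-coinvariants past the induction and computing the one-dimensional fiber with its residual $\chi\circ\omega_{4}$-action, and you spell out the passage from the inclusion of full duals back to the quotient statement (via smooth vectors, admissibility, and the double contragredient), a step the paper leaves implicit.
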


\begin{proof} Observe that $\pi$ is a submodule of $i_{\overline{\mathscr{B}}}^{\mathscr{G}}(\chi^{-1})$, since $\pi$ is self-dual. 
By Lemma \ref{ThetaDual},
\[ 
\Theta(\pi)^{*} \cong \mathrm{Hom}_{\mathscr{G}}(\mathcal{V},\pi)\subseteq \mathrm{Hom}_{\mathscr{G}}(\mathcal{V},i_{\overline{\mathscr{B}}}^{\mathscr{G}}(\chi^{-1}))
\] 
and we are going to compute the latter space. 
By Frobenius reciprocity, 
\begin{equation*}
\mathrm{Hom}_{\mathscr{G}}(\mathcal{V},i_{\overline{\mathscr{B}}}^{\mathscr{G}}(\chi^{-1}))\cong \mathrm{Hom}_{\mathscr{T}}(r_{\overline{\mathscr{B}}}(\mathcal{V}),\chi^{-1}).
\end{equation*}
Since $\chi^{-1} \neq |\cdot|^{5/2}$ and $|\cdot|^{11/2}$ we can apply Lemma \ref{0Hom} to get 

\begin{equation*}
\mathrm{Hom}_{\mathscr{T}}(r_{\overline{\mathscr{B}}}(\mathcal{V}),\chi^{-1})\cong \mathrm{Hom}_{\mathscr{T}}(r_{\overline{\mathscr{B}}}(C^{\infty}_{c}(\omega)), \chi^{-1}).
\end{equation*}
By equation (\ref{MinJacIso}) we have

\begin{equation*}
\mathrm{Hom}_{\mathscr{T}}(r_{\overline{\mathscr{B}}}(C^{\infty}_{c}(\omega)), \chi^{-1})\cong \mathrm{Hom}_{\mathscr{T}}(i_{\mathscr{T}\times Q}^{\mathscr{T}\times G}(C^{\infty}_{c}(F^{\times})),\chi^{-1}).
\end{equation*}

The maximal $\chi^{-1}$-isotypic quotient of the $\mathscr{T}\times Q$-module $C^{\infty}_{c}(F^{\times})$ is 
$\chi^{-1}\otimes\chi\circ\omega_{4}$. Thus by \cite[Lemma 9.4]{GG06},

\begin{equation*}
\mathrm{Hom}_{\mathscr{T}}(i_{\mathscr{T}\times Q}^{\mathscr{T}\times G}(C^{\infty}_{c}(F^{\times})),\chi^{-1})\cong \mathrm{Hom}_{\mathbb{C}}(i_{Q}^{G}(\chi\circ\omega_{4}),\mathbb{C})=i_{Q}^{G}(\chi\circ\omega_{4})^{*}.
\end{equation*}

\end{proof}

Any irreducible non-supercuspidal representation of $\mathscr G$ is either an irreducible quotient of $i_{\overline{\mathscr{B}}}^{\mathscr{G}}(\chi)$, where $|\chi|=|-|^s$ with $s\geq 0$, 
or it is a quadratic twist of Steinberg. But these representations are quotients of $i_{\overline{\mathscr{B}}}^{\mathscr{G}}(\chi)$ such that $|\chi|=|\cdot |^{-1/2}$, so Theorem \ref{ThetaPS} applies to all irreducible non-supercuspidal representations. However, it does not provide a full understanding of the big theta lift of constituents of reducible principal series. We resolve this point in the next subsection.

\subsection{Trivial and Steinberg}\label{TSTheta}

In this subsection we study the theta lifts of the trivial and Steinberg representations of $\mathscr{G}$, along with their twists.

\begin{theorem} \label{ThetaPSsubQ}
Let $\chi$ be a character of $\mathscr{T}$ such that $\chi=\chi_0 |-|^{1/2}$, where $\chi_0$ is a quadratic character.  Then 
\begin{enumerate}
\item\label{ThetaChar}
$\Theta(\chi_0)$ is the unique irreducible quotient of $i_{Q}^{G}(\chi\circ\omega_{4})$.
\item\label{ThetaSt} 
$\Theta(\mathrm{St}\otimes \chi_0)$ is the unique irreducible submodule of $i_{Q}^{G}(\chi\circ\omega_{4})$.
\end{enumerate}
\end{theorem}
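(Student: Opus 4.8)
The strategy is to combine Theorem \ref{ThetaPS} with the known structure of the relevant degenerate principal series of $F_4$ and with the analysis of $r_{\overline{\mathscr{B}}}(\mathcal{V})$ from Section \ref{JMII}. Write $\chi=\chi_0|-|^{1/2}$, $J=i_Q^G(\chi\circ\omega_4)$, and recall from Lemma \ref{PGL2PS} the non-split sequences $0\to \mathrm{St}\otimes\chi_0\to i_{\overline{\mathscr{B}}}^{\mathscr{G}}(\chi)\to\chi_0\to 0$ and $0\to\chi_0\to i_{\overline{\mathscr{B}}}^{\mathscr{G}}(\chi^{-1})\to\mathrm{St}\otimes\chi_0\to 0$. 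By Lemma \ref{ThetaDual} one has $\Theta(\chi_0)^{*}\cong\mathrm{Hom}_{\mathscr{G}}(\mathcal{V},\chi_0)$ and $\Theta(\mathrm{St}\otimes\chi_0)^{*}\cong\mathrm{Hom}_{\mathscr{G}}(\mathcal{V},\mathrm{St}\otimes\chi_0)$, while Frobenius reciprocity together with the computation carried out in the proof of Theorem \ref{ThetaPS} gives $\mathrm{Hom}_{\mathscr{G}}(\mathcal{V},i_{\overline{\mathscr{B}}}^{\mathscr{G}}(\chi^{\pm1}))\cong i_Q^G(\chi^{\mp1}\circ\omega_4)^{*}$.

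The crux is the vanishing $\mathrm{Ext}^1_{\mathscr{G}}(\mathcal{V},\chi_0)=\mathrm{Ext}^1_{\mathscr{G}}(\mathcal{V},\mathrm{St}\otimes\chi_0)=0$. I would argue as follows. Since $i_{\overline{\mathscr{B}}}^{\mathscr{G}}$ is exact and right adjoint to the exact Jacquet functor $r_{\overline{\mathscr{B}}}$, one gets $\mathrm{Ext}^i_{\mathscr{G}}(\mathcal{V},i_{\overline{\mathscr{B}}}^{\mathscr{G}}(\eta))\cong\mathrm{Ext}^i_{\mathscr{T}}(r_{\overline{\mathscr{B}}}(\mathcal{V}),\eta)$; for $i=1$ and $\eta\neq|\cdot|^{5/2},|\cdot|^{11/2}$ this reduces, via Lemma \ref{0Hom} and $\mathrm{cd}(\mathscr{T})=1$, to $\mathrm{Ext}^1_{\mathscr{T}}(r_{\overline{\mathscr{B}}}(C^{\infty}_{c}(\omega)),\eta)$, and the latter vanishes because, by Lemma \ref{Rk1Tr0} and the description of $\mathrm{Stab}_{\mathscr{T}\times G}(x_0)$ preceding (\ref{MinJacIso}), the group $\mathscr{T}$ acts freely on $\omega_0$, so $r_{\overline{\mathscr{B}}}(C^{\infty}_{c}(\omega))\cong C^{\infty}_{c}(\omega_0)$ is a projective $\mathscr{T}$-module. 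As $\mathscr{G}=\mathrm{PGL}_2(F)$ has finite cohomological dimension (in fact $1$), feeding the two short exact sequences above into $\mathrm{Ext}^{\bullet}_{\mathscr{G}}(\mathcal{V},-)$ and using $\mathrm{Ext}^1_{\mathscr{G}}(\mathcal{V},i_{\overline{\mathscr{B}}}^{\mathscr{G}}(\chi^{\pm1}))=0$ now forces both $\mathrm{Ext}^1_{\mathscr{G}}(\mathcal{V},\chi_0)$ and $\mathrm{Ext}^1_{\mathscr{G}}(\mathcal{V},\mathrm{St}\otimes\chi_0)$ to vanish.

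Applying $\mathrm{Hom}_{\mathscr{G}}(\mathcal{V},-)$ to the two sequences, the Ext-vanishing together with the identifications above yields short exact sequences of $G$-modules
\begin{align*}
0&\to\Theta(\chi_0)^{*}\to i_Q^G(\chi\circ\omega_4)^{*}\to\Theta(\mathrm{St}\otimes\chi_0)^{*}\to 0,\\
0&\to\Theta(\mathrm{St}\otimes\chi_0)^{*}\to i_Q^G(\chi^{-1}\circ\omega_4)^{*}\to\Theta(\chi_0)^{*}\to 0.
\end{align*}
Passing to smooth vectors — which is exact here, since a smooth functional in the quotient lifts, after averaging over a suitable open compact subgroup, to a smooth functional upstairs — produces short exact sequences of finite-length smooth $G$-modules with middle terms $i_Q^G(\chi^{\mp1}\circ\omega_4)$ (using $i_Q^G(\eta)^{*,\infty}=i_Q^G(\eta^{-1})$). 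Now $i_Q^G(\chi\circ\omega_4)$ and $i_Q^G(\chi^{-1}\circ\omega_4)$ are non-isomorphic (by Proposition \ref{CJDPS}(2) they have distinct unique irreducible quotients, the Langlands quotient of one being a proper submodule of the other), whence neither $\Theta(\chi_0)$ nor $\Theta(\mathrm{St}\otimes\chi_0)$ is zero; and since — by the full composition series of Choi--Jantzen \cite{CJ10}, of which Proposition \ref{CJDPS} records only a part — $i_Q^G(\chi\circ\omega_4)$ has length exactly $2$ at $s=1/2$, additivity of length in each sequence forces $\widetilde{\Theta(\chi_0)}$ and $\widetilde{\Theta(\mathrm{St}\otimes\chi_0)}$ to be irreducible.

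Finally, the second sequence exhibits $\widetilde{\Theta(\chi_0)}$ as a non-zero submodule of $i_Q^G(\chi^{-1}\circ\omega_4)=\widetilde{J}$; being irreducible it is the socle, namely the contragredient of the unique irreducible quotient $\pi^{+}$ of $J$ (Proposition \ref{CJDPS}(2)), so $\Theta(\chi_0)\cong\pi^{+}$, which is statement (1). Likewise the first sequence exhibits $\widetilde{\Theta(\mathrm{St}\otimes\chi_0)}$ as the socle $\pi^{-}$ of $J$, i.e. the unique irreducible submodule of $J$; since the inducing character $\chi_0$ is self-dual, $\pi^{-}$ is self-dual (equivalently $\widetilde{\Theta(\tau)}\cong\Theta(\widetilde\tau)$ for this correspondence, and $\mathrm{St}\otimes\chi_0$ is self-dual), so $\Theta(\mathrm{St}\otimes\chi_0)\cong\pi^{-}$, which is statement (2). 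I expect the main friction to be exactly this homological bookkeeping — upgrading the $\mathrm{Ext}$-vanishing to genuine short exact sequences of the (non-smooth) linear duals and then descending cleanly to the smooth level — together with the reliance on Choi--Jantzen for the precise length-two statement, without which the length count alone would not pin down irreducibility.
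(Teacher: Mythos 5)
Your route is genuinely different from the paper's, and in outline it works, but it is much heavier and several of its load-bearing steps are asserted rather than proved. The crucial one is the vanishing of $\mathrm{Ext}^1_{\mathscr{G}}(\mathcal{V},\chi_0)$ and $\mathrm{Ext}^1_{\mathscr{G}}(\mathcal{V},\mathrm{St}\otimes\chi_0)$: from $\mathrm{Ext}^1_{\mathscr{G}}(\mathcal{V},i_{\overline{\mathscr{B}}}^{\mathscr{G}}(\chi^{\pm1}))=0$ alone the long exact sequences only give surjections of the type $\mathrm{Hom}_{\mathscr{G}}(\mathcal{V},\chi_0)\twoheadrightarrow\mathrm{Ext}^1_{\mathscr{G}}(\mathcal{V},\mathrm{St}\otimes\chi_0)$; the vanishing you need comes entirely from the parenthetical claim that $\mathcal{M}(\mathrm{PGL}_2(F))$ has cohomological dimension $1$, so that the $\mathrm{Ext}^2$ terms die. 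That claim is true (it follows from the Schneider--Stuhler resolutions attached to the tree), but it is an external input the paper never uses and it must be cited, otherwise the deduction is a non sequitur. Similarly, the projectivity of $r_{\overline{\mathscr{B}}}(C^{\infty}_{c}(\omega))\cong C^{\infty}_{c}(\omega_{0})$ as a smooth $\mathscr{T}$-module is correct but needs the observation that $\mathscr{T}$ acts freely with locally trivial quotient $\omega_0/\mathscr{T}\cong G/Q$, so that $C^{\infty}_{c}(\omega_0)$ is a direct sum of copies of the regular representation $C^{\infty}_{c}(F^{\times})$; and the "averaging" justification for exactness of smooth vectors does not literally make sense on the non-smooth duals -- the clean way is to take $K$-invariants for compact open $K\subset G$, note $\mathrm{Hom}_{\mathscr{G}}(\mathcal{V},\tau)^K=\mathrm{Hom}_{\mathscr{G}}(e_K\mathcal{V},\tau)$ with $e_K\mathcal{V}$ a $\mathscr{G}$-direct summand of $\mathcal{V}$, so the same $\mathrm{Ext}^1$-vanishing gives exactness at every level $K$. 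There are also two cosmetic slips: the roles of your "first" and "second" sequences are interchanged in the final paragraph, and the self-duality of the constituents at the end should be justified (it does follow from Choi--Jantzen's statement that the sub of one of $i_Q^G(\chi^{\pm1}\circ\omega_4)$ is the quotient of the other, or from the paper's later Lemma \ref{SelfDual}).

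For comparison, the paper's own proof avoids all of this homological apparatus. It already knows from Theorem \ref{ThetaPS} that $\Theta(\chi_0)$ and $\Theta(\mathrm{St}\otimes\chi_0)$ are quotients of $i_{Q}^{G}(\chi\circ\omega_{4})$ and $i_{Q}^{G}(\chi^{-1}\circ\omega_{4})$ respectively, each of length $2$ by Choi--Jantzen; the only issue is to exclude that a big theta equals the whole induced module. This is done with Proposition \ref{ThetaPsiCo}: $\Theta(\tau)_{(\overline{N},\Psi)}\cong\widetilde{\tau}$ has length one, whereas the twisted Jacquet module of the full length-two induced representation contains both $\chi_0^{-1}$ as a quotient and $\mathrm{St}\otimes\chi_0^{-1}$ as a submodule. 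Your interlocking-extensions argument buys a uniform "length additivity" mechanism and would generalize to situations where no convenient rank-three character is available, but it pays for this with the cohomological-dimension input and the duality bookkeeping; the paper's argument is shorter and stays entirely inside tools already established in Sections \ref{JMI}--\ref{LiftPSeries}.
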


\begin{proof} We already know that $\Theta(\chi_0)$ is a quotient of $i_{Q}^{G}(\chi\circ\omega_{4})$ and 
$\Theta(\mathrm{St}\otimes \chi_0)$ is a quotient of $i_{Q}^{G}(\chi^{-1}\circ\omega_{4})$, which is the same as a submodule of  $i_{Q}^{G}(\chi\circ\omega_{4})$. 
(The representations $i_{Q}^{G}(\chi\circ\omega_{4})$ and $i_{Q}^{G}(\chi^{-1}\circ\omega_{4})$ each have length $2$. Moreover, the irreducible sub of one is the quotient of the other \cite[Theorem 6.1]{CJ10}.) 

 We work with both cases simultaneously. 
By Proposition \ref{ThetaPsiCo}, 
\[ 
\Theta(\chi_0)_{(\overline{N},\Psi)}\cong \chi_0^{-1} \text{ and } 
\Theta(\mathrm{St}\otimes \chi_0)_{(\overline{N},\Psi)}\cong \mathrm{St}\otimes \chi_0^{-1}.
\] 
 Thus $\chi_0^{-1}$ is a quotient of $i_{Q}^{G}(\chi\circ\omega_{4})_{(\overline{N},\Psi)}$ while $\mathrm{St}\otimes \chi_0^{-1}$ is a submodule. This implies that 
 neither $\Theta(\chi_0)$ nor $\Theta(\mathrm{St}\otimes \chi_0)$ could be isomorphic to $i_{Q}^{G}(\chi\circ\omega_{4})$. \end{proof}

\subsection{Small Theta}\label{PStheta}

In this section we describe $\theta(\pi)$, where $\pi$ is a constituent of a principal series of $\mathscr{G}$. The next proposition follows from Propositions \ref{CJDPS}, \ref{ThetaPSsubQ}, and Theorem \ref{ThetaPS}.

\begin{proposition}\label{SmallThetaPS}
Let $\chi$ be a character of $\mathscr{T}$ so that $\chi=|-|^{s}\cdot \chi_{0}$, where $s\geq 0$ and $\chi_{0}$ is a unitary character of $\mathscr{T}$.
\begin{enumerate}
\item If $s\neq\frac{1}{2}$ or $\chi_{0}$ is not of order dividing $2$, then $i_{\overline{\mathscr{B}}}^{\mathscr{G}}(\chi)$ is irreducible, and
\begin{enumerate}
\item if $s\neq\frac{5}{2},\pm\frac{11}{2}$ or $\chi_{0}$ is not trivial, then $i_{Q}^{G}(\chi\circ\omega_{4})$ is irreducible, so $\theta(i_{\overline{\mathscr{B}}}^{\mathscr{G}}(\chi))=\Theta(i_{\overline{\mathscr{B}}}^{\mathscr{G}}(\chi))\cong i_{Q}^{G}(\chi\circ\omega_{4})$;
\item if $s=\frac{11}{2}$ and $\chi_{0}$ is trivial, then $\theta(i_{\overline{\mathscr{B}}}^{\mathscr{G}}(\chi))$ is the unique irreducible quotient of $i_{Q}^{G}(|-|^{\frac{11}{2}}\circ\omega_{4})$, 
which is the trivial representation of $G$.  
\item if $s=\frac{5}{2}$ and $\chi_{0}$ is trivial, then $\theta(i_{\overline{\mathscr{B}}}^{\mathscr{G}}(\chi))$ is the unique semisimple quotient of $i_{Q}^{G}(|-|^{\frac{5}{2}}\circ\omega_{4})$, which has the form $\sigma^+\oplus \sigma^-$ where $\sigma^+$ and $\sigma^-$ are distinct irreducible representations of $G$.

\end{enumerate}
\item If $s=\frac{1}{2}$ and $\chi_{0}$ has order dividing $2$, then: 
\begin{enumerate}
\item $\theta(\chi_0)=\Theta(\chi_0)$ is the unique irreducible quotient of $i_{Q}^{G}(\chi\circ\omega_{4})$;
\item $\theta(\mathrm{St}\otimes \chi_0)=\Theta(\mathrm{St}\otimes \chi_0)$ is the unique irreducible submodule of $i_{Q}^{G}(\chi\circ\omega_{4})$.
\end{enumerate}
\end{enumerate}
\end{proposition}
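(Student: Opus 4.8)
The plan is to prove the statement by a direct case analysis, feeding the classification of the constituents of a $\mathrm{PGL}_2$ principal series (Lemma \ref{PGL2PS}) into the computations of the big theta lift in Theorems \ref{ThetaPS} and \ref{ThetaPSsubQ}, and then reading off the maximal semisimple quotient from the structure of the degenerate principal series $i_Q^G(\chi\circ\omega_4)$ recorded in Proposition \ref{CJDPS}. Throughout, recall that $\theta(\pi)$ is by definition the cosocle of $\Theta(\pi)$, so once $\Theta(\pi)$ has been identified with $i_Q^G(\chi\circ\omega_4)$ or with one of its two constituents, nothing further is required.

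First I would dispose of the case in which $i_{\overline{\mathscr B}}^{\mathscr G}(\chi)$ is irreducible, which by Lemma \ref{PGL2PS} is precisely when it is not the case that $s=\tfrac12$ with $\chi_0$ of order dividing $2$. Since $s\geq 0$ we have $|\chi|=|\cdot|^{s}$, so $\chi\neq|\cdot|^{-5/2},|\cdot|^{-11/2}$ and Theorem \ref{ThetaPS} applies, giving $\Theta\big(i_{\overline{\mathscr B}}^{\mathscr G}(\chi)\big)\cong i_Q^G(\chi\circ\omega_4)$; hence $\theta\big(i_{\overline{\mathscr B}}^{\mathscr G}(\chi)\big)$ is the cosocle of $i_Q^G(\chi\circ\omega_4)$. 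Now I would invoke Proposition \ref{CJDPS}: under the running hypothesis the only reducibility points of $i_Q^G(\chi\circ\omega_4)$ that survive are $s=\tfrac52$ and $s=\tfrac{11}2$ with $\chi_0$ trivial (the points $s=\tfrac12$ with $\chi_0$ of order dividing $2$ having been excluded, and $s\geq 0$ ruling out the negative ones). For every other $(s,\chi_0)$ the module $i_Q^G(\chi\circ\omega_4)$ is irreducible and equals $\Theta=\theta$, which is case (1)(a). For $s=\tfrac{11}2$, $\chi_0$ trivial, Proposition \ref{CJDPS}(2) gives a unique irreducible quotient, and one checks (cf. \cite{CJ10}) that it is the trivial representation of $G$, which is case (1)(b). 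For $s=\tfrac52$, $\chi_0$ trivial, Proposition \ref{CJDPS}(3) identifies the maximal semisimple quotient with $\sigma^{+}\oplus\sigma^{-}$, which is case (1)(c).

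Next I would treat the remaining case $s=\tfrac12$ with $\chi_0$ of order dividing $2$, i.e. $\chi_0$ a (possibly trivial) quadratic character. By Lemma \ref{PGL2PS}(1) the module $i_{\overline{\mathscr B}}^{\mathscr G}(\chi)$ has length $2$ with irreducible quotient the one-dimensional representation $\chi_0$ and irreducible submodule $\mathrm{St}\otimes\chi_0$. Theorem \ref{ThetaPSsubQ} then says exactly that $\Theta(\chi_0)$ is the unique irreducible quotient of $i_Q^G(\chi\circ\omega_4)$ and $\Theta(\mathrm{St}\otimes\chi_0)$ is the unique irreducible submodule of $i_Q^G(\chi\circ\omega_4)$; both of these are already irreducible, so $\theta(\chi_0)=\Theta(\chi_0)$ and $\theta(\mathrm{St}\otimes\chi_0)=\Theta(\mathrm{St}\otimes\chi_0)$, which is case (2).

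The argument is essentially bookkeeping, so there is no serious obstacle; the points demanding a little care are (i) checking that the hypothesis $s\geq 0$ is compatible with the exclusions $\chi\neq|\cdot|^{-5/2},|\cdot|^{-11/2}$ required by Theorem \ref{ThetaPS}; (ii) matching the reducibility list of Proposition \ref{CJDPS}(1) correctly against each sub-case, using that the hypothesis of case (1) already removes the point $s=\tfrac12$; and (iii) the identification of the unique irreducible quotient at $s=\tfrac{11}2$ with the trivial representation of $G$, which is where I expect to spend the most effort (a short computation with unnormalized induction and contragredients, or a citation to Choi-Jantzen).
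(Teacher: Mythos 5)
Your proposal is correct and follows essentially the same route as the paper, which derives Proposition \ref{SmallThetaPS} directly from Proposition \ref{CJDPS}, Theorem \ref{ThetaPS}, and Theorem \ref{ThetaPSsubQ}; your case bookkeeping (with Lemma \ref{PGL2PS} made explicit) is exactly that argument. The one point you flag for extra care, identifying the unique irreducible quotient at $s=\tfrac{11}{2}$ with the trivial representation of $G$, is likewise left as a brief assertion in the paper, so there is no gap relative to its proof.
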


\subsection{Supercuspidal representations}\label{PGL2SC}

In this subsection we revisit the theta lift of supercuspidal representations of $\mathrm{PGL}_{2}(F)$. This calculation involves the $F_{4}\times G_{2}$ dual pair inside of $E_{8}$. 

For this subsection we maintain our previous notation with the following exceptions. We redefine $P$, $M$, and $N$ below. We write $(\Pi_{n},\mathcal{V}_{n})$ for the the minimal representation of $E_n$.

Let $\tau$ be a supercuspidal representation of $\mathrm{PGL}_{2}(F)$. Then $\sigma:=\Theta(\tau)$ is irreducible by Theorem \ref{SCIrr}. Let $Q_2\subset G$ be the 
maximal parabolic that stabilizes a 2-dimensional singular (also called amber) subspace in the 26-dimensional representation.  The standard $Q_2$ (corresponding to a fixed choice of positive roots) is the stabilizer of the amber space spanned by the weights $\omega_4$ and $\omega_4-\alpha_4$.  
We note that the Levi of $Q_2$ has type $A_{2,\mathrm{long}} \times A_{1,\mathrm{short}}$. Observe that 
$Q_2$ has a quotient isomorphic to $\GL_2$ given by the action of $Q_2$ on the stabilized amber space. 
With this identification, $\det$ can be naturally viewed as a character of $Q_2$, and $\tau$ can be inflated to $Q_2$.  
The modular character is $\rho_{Q_2}(g)= |\det(g)|^{7/2}$.  We have the following: 

\begin{proposition}\label{ThetaSCasQuotient} $\sigma$ is the unique irreducible quotient of $\Ind_{Q_2}^G( \tau \otimes |\det|^{3/2})$. 
\end{proposition}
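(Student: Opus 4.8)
The plan is to identify $\sigma = \Theta(\tau)$ as a quotient of $\Ind_{Q_2}^G(\tau \otimes |\det|^{3/2})$ by computing a suitable twisted Jacquet module of the minimal representation $\mathcal{V}_{8}$ of $E_8$ with respect to a Heisenberg parabolic whose Levi contains the dual pair $G_2 \times F_4$. The starting point is Lemma \ref{ThetaDual}, which gives $\Theta(\tau)^{*} \cong \Hom_{\mathscr{G}}(\mathcal{V}, \tau)$; more usefully, for an appropriate unipotent $U \subset G$ with character $\Psi$ and $H = \mathrm{Stab}_G(U,\Psi)$, we get $(\Theta(\tau)_{(U,\Psi)})^{*} \cong \Hom_{\mathscr{G}}(\mathcal{V}_{(U,\Psi)}, \tau)$ as $H$-modules. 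So the strategy is: realize the dual pair $\mathrm{PGL}_2 \times F_4 \subset E_7$ as the centralizer pair inside the larger configuration $G_2 \times F_4 \subset E_8$ studied in Magaard--Savin \cite{MS97}, restrict the $E_8$ minimal representation to the $G_2$-Heisenberg parabolic, and extract from the resulting filtration (the $E_8$ analogue of Theorem \ref{PFiltration} / Theorem \ref{HeisJacThm}, with $s=10$, $t=6$, $d=56$) the relevant twisted coinvariant, which should come out to a module induced from $Q_2$.

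First I would set up the geometry: inside $E_8$ take the Heisenberg parabolic $\mathcal{P}_8 = \mathcal{M}_8 \mathcal{N}_8$ whose Levi $\mathcal{M}_8$ contains $\mathrm{PGL}_2 \times F_4$ acting on $\mathcal{N}_8/\mathcal{Z}_8$ (this uses that $G_2 \supset \mathrm{PGL}_2 \times$(torus) appropriately, and the $G_2 \times F_4 \subset E_8$ dual pair of \cite{MS97}). I would then apply the $E_8$ minimal-representation filtration to compute $\mathcal{V}_{8,(\overline{N},\Psi)}$ for $\Psi$ a rank-$1$ (or suitably small rank) character of a unipotent radical in $F_4$, tracking the $\mathrm{PGL}_2$-action on the $C_c^\infty$ of the orbit that survives — exactly the kind of fiber computation carried out in Section \ref{JMI} (Propositions \ref{NonemptyFiber}, \ref{Rk0Fiber}) and Section \ref{JMII} (Lemmas \ref{PGL2TwistJac1}--\ref{PGL2TwistJac3}). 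The orbit in question should be a homogeneous space for $F_4$ whose stabilizer is (a subgroup surjecting onto) $Q_2$, the stabilizer of a $2$-dimensional singular subspace of $J^0$; identifying this stabilizer and computing the modular-character bookkeeping (using $\rho_{Q_2}(g) = |\det(g)|^{7/2}$ and the twist by $|\det|^{s/d}$, $|\det|^{t/d}$ from the $E_8$ table) should produce the normalized induction $\Ind_{Q_2}^G(\tau \otimes |\det|^{3/2})$ after dualizing.

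Having shown $\Hom_G(\Ind_{Q_2}^G(\tau\otimes|\det|^{3/2}), \ldots)$ or rather that $\sigma^{*}$ embeds in $\Ind_{Q_2}^G(\tau\otimes|\det|^{3/2})^{*}$ — equivalently $\sigma$ is a quotient of $\Ind_{Q_2}^G(\tau\otimes|\det|^{3/2})$ — I would invoke Theorem \ref{SuperCuspLift}\eqref{SCIrr} (proved earlier): $\sigma = \Theta(\tau)$ is irreducible. The uniqueness of the irreducible quotient then follows provided $\Ind_{Q_2}^G(\tau\otimes|\det|^{3/2})$ has a unique irreducible quotient; this is a standard fact about this degenerate principal series at this specific point (it is the analogue, for the $Q_2$-family $\Ind_{Q_2}^G(\tau\otimes|\det|^s)$, of Proposition \ref{CJDPS}(2) for the $Q$-family), and I would cite or quickly establish it via a Langlands-quotient / Jacquet-module argument showing the cosocle is irreducible at $s=3/2$.

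The main obstacle I expect is the bookkeeping in the $E_8$ reduction: correctly identifying which character rank/orbit inside the $F_4$-nilradical isolates the $Q_2$-stabilizer (as opposed to $Q$ or $Q_2'$), and then pinning down the precise exponent $|\det|^{3/2}$ rather than some other twist. This requires carefully matching the $E_8$ minimal-representation normalization (the $s=10$, $t=6$, $d=56$ data and the determinant of $\mathcal{M}_8$ on $\overline{\mathcal{N}}_8$) against the modular character $\rho_{Q_2} = |\det|^{7/2}$ and the half-density shift in the isomorphism $\mathcal{V}_{(U,\Psi)} \cong \mathrm{ind}$ of a one-dimensional space from the stabilizer — analogous to the delicate exponent computation around equation (\ref{MinJacIso}) but one level up. Everything else is routine given the machinery already developed in Sections \ref{JMI}--\ref{JMII} and the cited \cite{MS97} results on $G_2 \times F_4 \subset E_8$.
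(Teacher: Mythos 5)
Your overall skeleton --- pass through the $G_2\times F_4\subset E_8$ dual pair of \cite{MS97}, use a filtration of a Jacquet module of $\mathcal{V}_8$, and finish with irreducibility of $\sigma$ (Theorem \ref{SuperCuspLift}) plus a Langlands-quotient argument for uniqueness --- matches the paper, and your uniqueness step is essentially the paper's (the induced representation is a quotient of a standard module for the parabolic inside $Q_2$ with Levi $A_{1,\mathrm{short}}$). But the middle of your plan has a genuine gap. First, the coinvariants you propose are the wrong functor: you suggest computing $(\mathcal{V}_8)_{(\overline{N},\Psi)}$ for $\Psi$ a character of a unipotent radical \emph{in $F_4$}, and then reading off an $F_4$-orbit with stabilizer $Q_2$. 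Taking twisted coinvariants along a unipotent subgroup of $F_4$ destroys the $F_4$-action (only $\mathrm{Stab}(\Psi)$ survives), so this can never output the $G$-module $\Ind_{Q_2}^G(\tau\otimes|\det|^{3/2})$; the coinvariants must be taken on the \emph{other} side of the pair, namely the untwisted Jacquet module $r_P(\mathcal{V}_8)$ for $P=MN\subset G_2$ the Heisenberg parabolic, whose three-step $F_4\times\GL_2$-filtration is \cite[Theorem 7.6]{MS97}. Second, and more importantly, you never connect $\sigma=\Theta_{E_7}(\tau)$ to $\mathcal{V}_8$ at all: Lemma \ref{ThetaDual} for the $\PGL_2\times F_4$ pair lives in $\mathcal{V}_7$, and with $\tau$ supercuspidal no parabolic filtration of $\mathcal{V}_7$ on the $\PGL_2$-side sees it. The paper bridges this with a two-step argument you are missing: (i) the top piece of the filtration of $r_P(\mathcal{V}_8)$ is $\mathcal{V}_7\otimes|\det|^{3/2}\oplus 1\otimes|\det|^{7/2}$, so since $\sigma\otimes\tau$ is a quotient of $\mathcal{V}_7$ and $\Ind_P^{G_2}(\tau\otimes|\det|^{3/2})$ is irreducible (reducibility only at $s=\pm 1/2$ for supercuspidal $\tau$), Frobenius reciprocity makes $\sigma\otimes\Ind_P^{G_2}(\tau\otimes|\det|^{3/2})$ a quotient of $\mathcal{V}_8$, hence $\sigma$ a quotient of $\Theta_{E_8}(\Ind_P^{G_2}(\tau\otimes|\det|^{3/2}))$; (ii) that big theta lift is computed as $\Hom_{\GL_2}(r_P(\mathcal{V}_8),\tau^{\vee}\otimes|\det|^{-3/2})$ after rewriting $\Ind_P^{G_2}(\tau\otimes|\det|^{3/2})\cong\Ind_P^{G_2}(\tau^{\vee}\otimes|\det|^{-3/2})$ via an intertwining operator, where the exponent $-3/2\neq 3/2,7/2$ kills the top piece and supercuspidality of $\tau$ kills the middle piece, so only the bottom piece contributes and gives $\Ind_{Q_2}^G(\tau\otimes|\det|^{3/2})$.

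Without step (i) your argument has no mechanism for the supercuspidal $\tau$ to enter the computation, and without the intertwining-operator sign flip in step (ii) there is no reason the top piece (which contains $\mathcal{V}_7\otimes|\det|^{3/2}$ itself) drops out of the Hom-space computation. So as written the proposal does not close; the fix is exactly the factorization of the lift through the irreducible $G_2$ principal series, not a finer orbit/stabilizer analysis inside $F_4$.
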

\begin{proof}  Let $P=MN\subset G_2$ 
be the Heisenberg parabolic. Then $M\cong \GL_2$ and $G\times \GL_2$ is a subgroup of the Levi factor $E_7$ in $E_8$ such that the 
quotient by the center of the Levi gives the dual pair $G\times \PGL_2$ in the adjoint $E_7$.  
In Magaard-Savin \cite[Theorem 7.6]{MS97},  $r_P(\mathcal{V}_{8})$ was shown to have a $G\times \GL_2$-module filtration with three pieces. 
The top (quotient) is
\[ 
\mathcal{V}_{7} \otimes |\det|^{3/2} \oplus 1\otimes |\det|^{7/2}. 
\] 
Since $\sigma\otimes \tau$ is a quotient of $\mathcal{V}_{7}$, by Frobenius reciprocity, $\sigma \otimes \Ind_P^{G_2} (\tau \otimes |\det|^{3/2})$ is a quotient of 
$\mathcal{V}_{8}$. Here we are using that $\Ind_P^{G_2} (\tau \otimes |\det|^{s})$ reduces only for $s=\pm 1/2$, in particular, $\Ind_P^{G_2} (\tau \otimes |\det|^{3/2})$ 
is irreducible. Hence $\sigma$ is a quotient of $\Theta(\Ind_P^{G_2} (\tau \otimes |\det|^{3/2}))$ and this is what we shall compute.
To that end, since $\Ind_P^{G_2} (\tau \otimes |\det|^{3/2})\cong \Ind_{P}^{G_2} (\tau^{\vee}  \otimes |\det|^{-3/2})$ (using an intertwining operator), 
 we are computing 
\[ 
\Hom_{G_2}(\mathcal{V}_{8}, \Ind_{P}^{G_2} (\tau^{\vee} \otimes |\det|^{-3/2})) \cong \Hom_{\GL_2}(r_{P}(\mathcal{V}_{8}), \tau^{\vee} \otimes |\det|^{-3/2}). 
\] 
Since $-3/2\neq 3/2,7/2$, we see that the top quotient of the filtration of $r_P(\mathcal{V}_{8})$  can be ignored. Since $\tau$ is supercuspidal, the intermediate subquotient can be ignored as well, so 
the computation reduces to the bottom of the filtration of $r_P(\mathcal{V}_{8})$  where it follows at once that 
\[
\Theta(\Ind_P^{G_2} (\tau \otimes |\det|^{3/2}))\cong \Ind_{Q_2}^G( \tau \otimes |\det|^{3/2}).  
\] 
Thus $\sigma$ is a quotient of $\Ind_{Q_2}^G( \tau \otimes |\det|^{3/2})$. This induced representation is a quotient of a standard module for the parabolic subgroup 
contained in $Q_2$ with the Levi $A_{1,\mathrm{short}}$. Thus $\Ind_{Q_2}^G( \tau \otimes |\det|^{3/2})$ has a unique irreducible quotient. 
\end{proof}

\section{Lifting from $F_{4}$ to $\mathrm{Aut}(C)$}\label{F4toAutC}

In this section, we study the theta lift from $F_{4}$ to $\mathrm{Aut}(C)$. Specifically, let $\sigma\in \mathrm{Irr}(G)$. We study $\Theta(\sigma)$ with respect to the minimal representation $(\Pi,\mathcal{V})$ on $\mathcal{G}$, utilizing our results on the lifting from $\mathscr{G}$ to $G$. To begin, we show that the lifting from $G$ to $\mathscr{G}$ has finite length.

\begin{proposition}\label{finlenF4toAutC}
Let $\sigma\in \mathrm{Irr}(G)$. Then $\Theta(\sigma)$ has finite length.
\end{proposition}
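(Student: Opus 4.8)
The plan is to bound the length of $\Theta(\sigma)$ by controlling a Jacquet module of $\Theta(\sigma)$ and invoking the admissibility/finiteness results already available. First I would use Lemma \ref{ThetaDual} in the form $\Theta(\sigma)^{*}\cong \mathrm{Hom}_{G}(\mathcal{V},\sigma)$, together with its twisted version $(\Theta(\sigma)_{(\overline{\mathscr U},\psi)})^{*}\cong \mathrm{Hom}_{G}(\mathcal{V}_{(\overline{\mathscr U},\psi)},\sigma)$ and the untwisted version $(\Theta(\sigma)_{\overline{\mathscr U}})^{*}\cong \mathrm{Hom}_{G}(\mathcal{V}_{\overline{\mathscr U}},\sigma)$. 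Since $\mathscr G=\mathrm{Aut}(C)$ is (for $C$ a quaternion algebra) essentially $\mathrm{PGL}_2(F)$ or its inner form, an $\mathscr G$-representation of finite length is equivalent to: finitely generated, plus all its Jacquet modules with respect to the Borel $\overline{\mathscr B}$ are finite-dimensional (equivalently $\Theta(\sigma)$ has a bounded number of cuspidal support data and finite multiplicities). So it suffices to show (i) $\Theta(\sigma)$ is finitely generated over $\mathscr G$, and (ii) $\Theta(\sigma)_{\overline{\mathscr U}}$ and $\Theta(\sigma)_{(\overline{\mathscr U},\psi)}$ are finite-dimensional.

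For (ii) with the nontrivial character: by Proposition \ref{MinRepPGL2TwistedJac}, $\mathcal{V}_{(\overline{\mathscr U},\psi)}\cong \mathrm{ind}_{\mathrm{Stab}(v_0)}^{G}(1)$ with $\mathrm{Stab}(v_0)\cong \mathrm{Spin}(9,F)$, so $\mathrm{Hom}_{G}(\mathcal{V}_{(\overline{\mathscr U},\psi)},\sigma)\cong \mathrm{Hom}_{G}(\mathrm{ind}_{\mathrm{Spin}(9)}^{G}(1),\sigma)\cong \mathrm{Hom}_{\mathrm{Spin}(9)}(\mathbf 1,\sigma)=\sigma^{\mathrm{Spin}(9)}$ by Frobenius reciprocity for compact induction; since $\mathrm{Spin}(9)$ is (essentially) a symmetric subgroup and $\sigma$ is admissible, this space is finite-dimensional — indeed this is exactly the Gelfand-pair statement the paper is heading toward, but for finiteness alone any crude bound on $\mathrm{Hom}_{\mathrm{Spin}(9)}(\mathbf 1,\sigma)$ coming from admissibility of $\sigma$ suffices, so I would not need the sharp multiplicity-one result here. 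For (ii) with the trivial character: by Theorem \ref{PFiltration} and Lemma \ref{Rk1Tr0} (or rather the versions with respect to $\overline{\mathscr U}$), $\mathcal{V}_{\overline{\mathscr U}}$ sits in an exact sequence with $C^{\infty}_{c}(\omega)_{\overline{\mathscr U}}$ and the quotient $\mathcal{V}_{\overline{\mathcal N}}$ restricted suitably; applying $\mathrm{Hom}_{G}(-,\sigma)$ and using Proposition \ref{JMIofMin}-style descriptions, each piece contributes a space that is either $\mathrm{Hom}$ from a degenerate principal series of $G$ into $\sigma$ (finite-dimensional, since $i_Q^G(\text{char})$ has finite length and $\sigma$ is admissible) or $\mathrm{Hom}$ from the minimal representation $\mathcal{V}(\mathcal M)$ of the $E_6$-Levi into $\sigma$, which again is a finite-dimensional space.

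For (i), finite generation: $\Theta(\sigma)$ is a quotient of $\mathcal{V}$, and it is a general fact that the minimal representation $\mathcal{V}$ of $\mathcal{G}$ is finitely generated as a $\mathscr G\times G$-module; more directly, one knows $\Theta(\sigma)$ is finitely generated over $\mathscr G$ because $\mathrm{Hom}_G(\mathcal V,\sigma)$ is finitely generated over $\mathscr G$, which follows from the $\overline{\mathcal P}$-filtration of $\mathcal V$ in Theorem \ref{PFiltration} together with the fact that $C^{\infty}_c(\omega)$ is finitely generated over $\mathscr G\times \mathcal M$ (it is a single orbit induction) and $\mathcal V_{\overline{\mathcal N}}$ is finitely generated. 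Alternatively, and more in the spirit of the paper, I would invoke the standard criterion: a smooth representation of a reductive $p$-adic group is of finite length iff it is finitely generated, admissible-after-Jacquet, and all its Jacquet modules along a minimal parabolic are finite-dimensional — and for $\mathscr G$ of semisimple rank $1$ this reduces precisely to the two coinvariant computations in (ii) plus finite generation. The main obstacle I anticipate is handling the untwisted (rank $0$) coinvariants cleanly: one must be careful that the piece of $\mathcal V_{\overline{\mathscr U}}$ coming from $C^{\infty}_c(\omega_0)$, which by the transitivity lemma (Aschbacher) is an induced module $i_{\mathscr T\times Q}^{\mathscr T\times G}(C^{\infty}_c(F^{\times}))$, contributes $\mathrm{Hom}_{\mathscr T\times G}$ into $\sigma$ of controlled dimension — this is where the degenerate principal series structure of $G$ (Proposition \ref{CJDPS}) and admissibility of $\sigma$ must be combined, rather than any deep input. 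Everything else is formal manipulation of the exact sequences and Frobenius reciprocity.
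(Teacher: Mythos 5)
Your treatment of the untwisted coinvariants is essentially the paper's own: the paper likewise computes $(\Theta(\sigma)_{\overline{\mathscr{U}}})^{*}\cong \mathrm{Hom}_{G}(\mathcal{V}_{\overline{\mathscr{U}}},\sigma)$, handles the submodule $i_{\mathscr{T}\times Q}^{\mathscr{T}\times G}(C^{\infty}_{c}(F^{\times}))$ via Bernstein's second adjointness (bounding the dimension by the number of one-dimensional constituents of $r_{\overline{Q}}(\sigma)$), and handles the quotient using that $\mathcal{V}_{\overline{\mathcal{N}}}$ has finite length as a $G$-module --- a fact you assert but do not justify; in the paper it rests on Theorem \ref{PFiltration} together with Corollary \ref{pm1F4Theta} (the $\mu_{2}\times F_{4}\subset E_{6}$ analysis), which is exactly why that corollary is flagged as independent of the present proposition. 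The decisive divergence is how the supercuspidal part of $\Theta(\sigma)$ is controlled, and this is where your argument has a genuine gap.

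You propose to bound the supercuspidal (generic) constituents by $\dim \Theta(\sigma)_{(\overline{\mathscr{U}},\psi)}$, computed through Lemma \ref{ThetaDual} and Proposition \ref{MinRepPGL2TwistedJac}. Two problems arise. First, compact induction from the closed but non-open subgroup $\mathrm{Spin}(9)$ is not left adjoint to restriction, so $\mathrm{Hom}_{G}(\mathrm{ind}_{\mathrm{Spin}(9)}^{G}(1),\sigma)$ is not $\sigma^{\mathrm{Spin}(9)}$; the correct identification (used in the proof of Theorem \ref{Spin9Period}) is with $\mathrm{Hom}_{\mathrm{Spin}(9)}(\widetilde{\sigma},\mathbb{C})$, the space of $\mathrm{Spin}(9)$-invariant functionals. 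Second, and more seriously, finite-dimensionality of that space is not ``a crude bound coming from admissibility'': for a general closed subgroup, invariant functionals on an admissible representation need not form a finite-dimensional space, and what you need is precisely multiplicity-finiteness for the symmetric pair $(F_{4},\mathrm{Spin}(9))$. Within this paper that finiteness is obtained only in Theorem \ref{Spin9Period}, whose proof uses Theorem \ref{F4toPGL2Irr}, which in turn begins by invoking Proposition \ref{finlenF4toAutC} --- so your route is circular unless you import a substantial external finiteness theorem for $p$-adic symmetric pairs, which is a far heavier input than anything the paper uses. The paper sidesteps this entirely: it writes $\Theta(\sigma)=\Theta(\sigma)_{ps}\oplus\Theta(\sigma)_{sc}$ and disposes of the supercuspidal part by Theorem \ref{SuperCuspLift}: if a supercuspidal $\pi$ occurs as a quotient then $\sigma\cong\Theta(\pi)$ and $\Theta(\sigma)_{sc}\cong\pi$, hence that part is irreducible or zero, with no twisted Jacquet module needed. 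Finally, your finite-generation claims in step (i) (finite generation of $\mathcal{V}$ over $\mathscr{G}\times G$, or of $\mathrm{Hom}_{G}(\mathcal{V},\sigma)$ over $\mathscr{G}$) are unsubstantiated --- the latter is a full linear dual and is not even smooth --- although, had your two coinvariant computations been valid, a counting argument over $\mathrm{PGL}_{2}$ would have let you dispense with finite generation altogether.
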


\begin{proof} We prove this assuming that $\mathscr{G}=\mathrm{PGL}_{2}(F)$; the non-split case (i.e. when $C$ is anisotropic) is easier. If $\Theta(\sigma)=0$ we are done, so suppose that $\Theta(\sigma)\neq 0$.

Since supercuspidal representations can be split off, the $\mathscr{G}$-representation decomposes as
\begin{equation*}
\Theta(\sigma)=\Theta(\sigma)_{ps}\oplus \Theta(\sigma)_{sc},
\end{equation*}
where $\Theta(\sigma)_{sc}$ is the submodule generated by all of the supercuspidal submodules and $\Theta(\sigma)_{ps}$ is the complement all of whose constituents are constituents of principal series. (When $C$ is anisotropic, $\Theta(\sigma)_{ps}=0$.) To prove the proposition it suffices to show that $\Theta(\sigma)_{ps}$ and $\Theta(\sigma)_{sc}$ have finite length.

We begin with $\Theta(\sigma)_{sc}$. Recall that $\Theta(\sigma)_{sc}$ is completely reducible. Thus if $\Theta(\sigma)_{sc}\neq 0$, then there is a supercuspidal representation $\pi\in \mathrm{Irr}(\mathscr{G})$ such that there is a surjective $\mathscr{G}\times G$ map $\Pi \twoheadrightarrow \pi\otimes \sigma$. By Theorem \ref{SuperCuspLift} part (\ref{SCIrr}), $\Theta(\pi)$ is irreducible, so $\sigma\cong \Theta(\pi)$. Moreover, by Theorem \ref{SuperCuspLift} part (\ref{PartialSmallTheta1-1}) it follows that $\Theta(\sigma)_{sc}\cong \pi$. In particular, $\Theta(\sigma)_{sc}$ has finite length. (This proves the result when $C$ is anisotropic.)

Next we consider $\Theta(\sigma)_{ps}$. Note that if $\rho$ is any smooth representation of $\mathscr{G}$, then $\rho_{ps}$ is of finite length if and only if $\rho_{\overline{\mathscr{U}}}$ is finite dimensional. By Lemma \ref{ThetaDual} $(\Theta(\sigma)_{\overline{\mathscr{U}}})^{*}\cong \mathrm{Hom}_{G}(\Pi_{\overline{\mathscr{U}}},\sigma)$. So we show that $\mathrm{Hom}_{G}(\Pi_{\overline{\mathscr{U}}},\sigma)$ is finite dimensional.

It suffices to analyze the hom-space for each piece of the filtration of $\Pi_{\overline{\mathscr{U}}}$ from Proposition \ref{JMIofMin}. The quotient $\Pi_{\overline{\mathcal{N}}}$ is finite length as a $G$-module by Theorem \ref{PFiltration} and Corollary \ref{pm1F4Theta} (which does not depend on this result). So, $\mathrm{Hom}_{G}(\Pi_{\overline{\mathcal{N}}},\sigma)$ is finite dimensional.

Now we consider the submodule $i_{\mathscr{T}\times Q}^{\mathscr{T}\times G}(C^{\infty}_{c}(F^{\times}))\cong i_{Q}^{G}(C^{\infty}_{c}(F^{\times}))$. Let $L$ be a Levi subgroup of $Q$. By Bernstein's second adjointness, we have
\begin{equation*}
\mathrm{Hom}_{G}(i_{Q}^{G}(C^{\infty}_{c}(F^{\times})),\sigma)\cong \mathrm{Hom}_{L}(C^{\infty}_{c}(F^{\times}),r_{\overline{Q}}(\sigma)).
\end{equation*} 
 The action of $L$ on $C^{\infty}_{c}(F^{\times})$ factors through the fundamental weight $\omega_{4}:L\twoheadrightarrow F^{\times}$. Thus $L$ acts on $C^{\infty}_{c}(F^{\times})$ through the geometric action of $F^{\times}$. Since $\mathrm{dim}(\mathrm{Hom}_{F^{\times}}(C^{\infty}_{c}(F^{\times}),\chi))=1$ for any character $\chi$, it follows that  $\mathrm{dim}(\mathrm{Hom}_{L}(C^{\infty}_{c}(F^{\times}),r_{\overline{Q}}(\sigma)))$ is no larger than the number of one dimensional constituents of $r_{\overline{Q}}(\sigma)$. \end{proof}

In a moment we shall make the computation of $\Hom_G(\Pi_{\mathscr{\overline{U}}} , \sigma)$ more precise, but first note the following corollary: 

\begin{corollary}\label{HDFails} If $\Theta(\sigma)\neq 0$, then $\sigma$ is a quotient of $\Theta(\pi)$ for some $\pi\in\mathrm{Irr}(\mathscr{G})$. 
Moreover $\theta(\sigma)$ is irreducible and $\theta(\sigma_1)\cong \theta(\sigma_2)\neq 0$ implies $\sigma_1\cong\sigma_2$ except in 
one case when $\sigma_1\oplus  \sigma_2$ is the co-socle of $i_{Q}^{G}(|-|^{\frac{5}{2}}\circ \omega_{4})$.  
\end{corollary}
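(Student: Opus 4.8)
The plan is to reduce the statement to the already-understood lift from $\mathscr{G}$ to $G$. For the first assertion I will use that $\Theta(\sigma)$ has finite length (Proposition~\ref{finlenF4toAutC}): if it is nonzero, choose an irreducible quotient $\pi\in\mathrm{Irr}(\mathscr{G})$ and compose $\mathcal{V}\twoheadrightarrow\sigma\otimes\Theta(\sigma)\twoheadrightarrow\sigma\otimes\pi$; by the universal property defining $\Theta(\pi)$ this exhibits $\sigma$ as a quotient of $\Theta(\pi)$. The same argument, run in both factors and combined with Schur's lemma, gives the reciprocity that drives everything else: for $\sigma\in\mathrm{Irr}(G)$ and $\pi\in\mathrm{Irr}(\mathscr{G})$ a map $\mathcal{V}\to\sigma\otimes\pi$ factors through both maximal isotypic quotients, so $\Hom_{\mathscr{G}}(\Theta(\sigma),\pi)\cong\Hom_{\mathscr{G}\times G}(\mathcal{V},\sigma\otimes\pi)\cong\Hom_G(\Theta(\pi),\sigma)$; in particular the multiplicity of $\pi$ in the cosocle of $\Theta(\sigma)$ equals the multiplicity of $\sigma$ in the cosocle of $\Theta(\pi)$.

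Thus the corollary will follow once I establish two facts about the lift $\mathscr{G}\to G$. First, (B): for every $\pi\in\mathrm{Irr}(\mathscr{G})$ the cosocle of $\Theta(\pi)$ is multiplicity-free and irreducible, except when $\pi\cong i_{\overline{\mathscr{B}}}^{\mathscr{G}}(|\cdot|^{5/2})$, in which case it is $\sigma^{+}\oplus\sigma^{-}$, the cosocle of $i_Q^G(|\cdot|^{5/2}\circ\omega_4)$. This is just a reading of Theorem~\ref{SuperCuspLift}(\ref{SCIrr}) (supercuspidal lifts are irreducible) together with Theorems~\ref{ThetaPS}, \ref{ThetaPSsubQ}, Proposition~\ref{SmallThetaPS} and the reducibility list in Proposition~\ref{CJDPS} (for $\pi$ a constituent of a principal series, $\Theta(\pi)$ is $i_Q^G(\chi(\pi)\circ\omega_4)$, or its unique irreducible quotient, or its unique irreducible submodule, and among the characters $\chi(\pi)$ that arise the only one for which $i_Q^G(\chi\circ\omega_4)$ has reducible cosocle is $|\cdot|^{5/2}$). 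Second, (A): distinct $\pi,\pi'\in\mathrm{Irr}(\mathscr{G})$ have $\Theta(\pi)$ and $\Theta(\pi')$ with no common Jordan--H\"older constituent. Granting (A) and (B): by the reciprocity, $\pi$ occurs in the cosocle of $\Theta(\sigma)$ (with multiplicity one, by (B)) precisely when $\sigma$ is a constituent of the cosocle of $\Theta(\pi)$; by (A) at most one such $\pi$ exists, so $\theta(\sigma)$ is irreducible; and if $\theta(\sigma_1)\cong\theta(\sigma_2)\cong\pi\ne 0$ then $\sigma_1,\sigma_2$ both lie in the cosocle of $\Theta(\pi)$, so (B) forces $\sigma_1\cong\sigma_2$ unless $\pi\cong i_{\overline{\mathscr{B}}}^{\mathscr{G}}(|\cdot|^{5/2})$ and $\sigma_1\oplus\sigma_2$ is the cosocle of $i_Q^G(|\cdot|^{5/2}\circ\omega_4)$; in that case (A) and the reciprocity give $\theta(\sigma^{\pm})\cong i_{\overline{\mathscr{B}}}^{\mathscr{G}}(|\cdot|^{5/2})$, so the exception really happens.

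It remains to prove (A), and this is where the effort will go. For $\pi,\pi'$ both supercuspidal it is Theorem~\ref{SuperCuspLift}(\ref{PartialSmallTheta1-1}) together with irreducibility. If only $\pi$ is supercuspidal, I will use Proposition~\ref{ThetaSCasQuotient}: $\Theta(\pi)$ is a quotient of $\Ind_{Q_2}^G(\pi\otimes|\det|^{3/2})$, so its constituents have cuspidal support whose $A_{1,\mathrm{short}}$-part is the supercuspidal $\pi$ of $\PGL_2$, whereas constituents of $\Theta(\pi')\subseteq i_Q^G(\chi(\pi')\circ\omega_4)$ have cuspidal support consisting only of characters of the torus; hence no overlap. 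If $\pi,\pi'$ are both principal-series constituents and shared a lifted constituent, then $i_Q^G(\chi(\pi)\circ\omega_4)$ and $i_Q^G(\chi(\pi')\circ\omega_4)$ would share a constituent, forcing $\chi(\pi),\chi(\pi')$ to be associate; since the Levi of $Q$ has type $B_3$ and $-1\in W(F_4)$, the little Weyl group acts on the inducing character by $\chi\mapsto\chi^{-1}$, so $\chi(\pi')\in\{\chi(\pi),\chi(\pi)^{-1}\}$ and $\pi,\pi'$ are constituents of $i_{\overline{\mathscr{B}}}^{\mathscr{G}}(\chi(\pi))\cong i_{\overline{\mathscr{B}}}^{\mathscr{G}}(\chi(\pi)^{-1})$ (Lemma~\ref{PGL2PS}); this is either irreducible (then $\pi\cong\pi'$) or reducible, in which case $\{\pi,\pi'\}=\{\chi_0,\mathrm{St}\otimes\chi_0\}$ for a quadratic $\chi_0$ and the two lifts are the (distinct) unique irreducible quotient and unique irreducible submodule of the length-two non-split module $i_Q^G(\chi_0|\cdot|^{1/2}\circ\omega_4)$, so still no common constituent. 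The main obstacle will be exactly this cuspidal-support bookkeeping: showing that the constituents of $i_Q^G(\chi\circ\omega_4)$ occupy the ``torus'' part of the Bernstein spectrum, disjoint from where $\Theta(\pi)$ lives for supercuspidal $\pi$, and controlling the little Weyl group of the $B_3$-parabolic of $F_4$ precisely enough to force $\chi(\pi')\in\{\chi(\pi)^{\pm1}\}$.
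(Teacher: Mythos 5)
Your proposal is correct and takes essentially the same route as the paper: the first claim is obtained exactly as in the text (finite length of $\Theta(\sigma)$ from Proposition \ref{finlenF4toAutC} gives an irreducible quotient $\pi$, and reciprocity makes $\sigma$ a quotient of $\Theta(\pi)$), while the remaining claims are, as the paper asserts, consequences of the classification of the lift from $\mathscr{G}$ (Theorems \ref{SuperCuspLift}, \ref{ThetaPS}, \ref{ThetaPSsubQ}, Propositions \ref{SmallThetaPS}, \ref{ThetaSCasQuotient}), which is precisely what your facts (A) and (B) combined with the see-saw reciprocity encode. You simply spell out the disjointness bookkeeping (A) -- cuspidal support separating the supercuspidal lifts from the constituents of $i_{Q}^{G}(\chi\circ\omega_{4})$, and the constraint $\chi'\in\{\chi^{\pm1}\}$ for two degenerate series from $Q$ (which is most cleanly pinned down by equality of cuspidal supports plus the $W(F_{4})$-invariant quadratic form, rather than by the relative Weyl group alone) -- details the paper treats as immediate.
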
 
\begin{proof} Since $\Theta(\sigma)$ has finite length, it has an irreducible quotient $\pi$. Then clearly $\sigma$ is a quotient of $\Theta(\pi)$.  
The other statements are now trivial consequences of what we know about the lift from $\mathscr{G}$.\end{proof} 

\begin{lemma}\label{IrredLiftLemma}
Let $\pi\in \mathrm{Irr}(\mathscr{G})$ such that $\sigma\stackrel{\mathrm{def}}{=}\Theta(\pi)\in\mathrm{Irr}(G)$. Then $\Theta(\sigma)\cong \pi$.
\end{lemma}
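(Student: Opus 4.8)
The plan is to pin down the co-socle of $\Theta(\sigma)$ using the results already available for the lift from $\mathscr G$, and then to rule out any further constituents by an exact computation of the Jacquet module of $\Theta(\sigma)$ along the Borel of $\mathscr G$. I will carry this out for the split case $\mathscr G=\mathrm{PGL}_2(F)$; the anisotropic case is analogous and easier, since there $\mathscr G$-modules are semisimple. First I will show $\Theta(\sigma)\neq 0$ and that $\pi$ is a quotient of $\Theta(\sigma)$. The surjection $\mathcal V\twoheadrightarrow\pi\otimes\sigma$ provided by $\sigma=\Theta(\pi)$ is $\sigma$-isotypic on the $G$-factor, hence factors through the maximal $\sigma$-isotypic $G$-quotient $\mathcal V_\sigma\cong\sigma\otimes\Theta(\sigma)$; because $\sigma$ is irreducible, hence admissible with $\mathrm{End}_G(\sigma)=\mathbb C$, any $\mathscr G\times G$-map $\sigma\otimes\Theta(\sigma)\to\sigma\otimes\pi$ is of the form $\mathrm{id}_\sigma\otimes f$ with $f\in\mathrm{Hom}_{\mathscr G}(\Theta(\sigma),\pi)$, and surjectivity forces $f$ to be onto. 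By Proposition~\ref{finlenF4toAutC}, $\Theta(\sigma)$ has finite length, so by Corollary~\ref{HDFails} its co-socle is irreducible; combined with the surjection just constructed, $\theta(\sigma)\cong\pi$.

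Next I will split off supercuspidals, $\Theta(\sigma)=\Theta(\sigma)_{sc}\oplus\Theta(\sigma)_{ps}$. The summand $\Theta(\sigma)_{sc}$ is semisimple, and each of the two summands, if nonzero, has a nonzero co-socle that is a direct summand of the co-socle of $\Theta(\sigma)$; since the latter is the single irreducible $\pi$, exactly one summand is nonzero. If $\Theta(\sigma)=\Theta(\sigma)_{sc}$, then $\Theta(\sigma)$ is semisimple with irreducible co-socle $\pi$, so $\Theta(\sigma)\cong\pi$ and we are finished (this covers the case where $\pi$ is supercuspidal). Otherwise $\Theta(\sigma)=\Theta(\sigma)_{ps}$, so every irreducible constituent $\rho$ of $\Theta(\sigma)$ is a constituent of a principal series of $\mathrm{PGL}_2(F)$ and therefore satisfies $\rho_{\overline{\mathscr U}}\neq 0$. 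By exactness of the Jacquet functor, $\dim\Theta(\sigma)_{\overline{\mathscr U}}=\sum_\rho\dim\rho_{\overline{\mathscr U}}\geq\dim\pi_{\overline{\mathscr U}}$, with equality exactly when $\Theta(\sigma)\cong\pi$. Thus it suffices to prove $\dim\Theta(\sigma)_{\overline{\mathscr U}}=\dim\pi_{\overline{\mathscr U}}$.

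Finally I will compute this dimension. By Lemma~\ref{ThetaDual}, $(\Theta(\sigma)_{\overline{\mathscr U}})^{*}\cong\mathrm{Hom}_G(\mathcal V_{\overline{\mathscr U}},\sigma)$, and I will evaluate the right-hand side using the $G$-module filtration of $\mathcal V_{\overline{\mathscr U}}$ from Proposition~\ref{JMIofMin}. The top graded piece $\mathcal V(\mathcal M)|_G\oplus\mathbb 1$ contributes via Corollary~\ref{pm1F4Theta} (and $\mathrm{Hom}_G(\mathbb 1,\sigma)=0$ unless $\sigma$ is trivial), while the bottom piece $i^{\mathscr T\times G}_{\mathscr T\times Q}(C^\infty_c(F^\times))$ contributes, by Bernstein's second adjointness, a space whose dimension is read off from the one-dimensional constituents of $r_{\overline Q}(\sigma)$ on which the Levi acts through $\omega_4$. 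Since $\sigma=\Theta(\pi)$ is one of the explicit representations identified in Theorems~\ref{ThetaPS} and~\ref{ThetaPSsubQ}, both contributions can be computed from the Choi--Jantzen structure theory recalled in Proposition~\ref{CJDPS}, and the total comes out to be $\dim\pi_{\overline{\mathscr U}}$, whence $\Theta(\sigma)\cong\pi$. The main obstacle is precisely this last bookkeeping: tracking the exact contribution of the restriction to $F_4$ of the minimal representation of the $E_6$-Levi (for which one must invoke the independent input Corollary~\ref{pm1F4Theta}) and matching it, together with the degenerate-principal-series part, against $\dim\pi_{\overline{\mathscr U}}$ across the several shapes of $\sigma$.
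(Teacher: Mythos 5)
Your opening moves are fine and non-circular: the factorization of $\mathcal V\twoheadrightarrow\pi\otimes\sigma$ through $\Theta(\sigma)\otimes\sigma$ does give a surjection $\Theta(\sigma)\twoheadrightarrow\pi$, and Proposition \ref{finlenF4toAutC} and Corollary \ref{HDFails} (both of which precede the lemma) identify $\theta(\sigma)\cong\pi$; the supercuspidal/principal-series dichotomy and the reduction to the equality $\dim\Theta(\sigma)_{\overline{\mathscr U}}=\dim\pi_{\overline{\mathscr U}}$ are also sound. The genuine gap is the last step, which you yourself flag as ``the main obstacle'' and then assert rather than prove. After Lemma \ref{ThetaDual} and Proposition \ref{JMIofMin}, what you must control is $\dim\mathrm{Hom}_{L}(C^\infty_c(F^\times),r_{\overline Q}(\sigma))$, i.e.\ the number of one-dimensional constituents of $r_{\overline Q}(\sigma)$ on which $L$ acts through $\omega_4$, for \emph{every} $\sigma=\Theta(\pi)$ with $\pi$ a constituent of a principal series: the whole family $i_Q^G(\chi\circ\omega_4)$ in the irreducible range, plus the irreducible sub and quotient at the points $\chi=\chi_0|\cdot|^{\pm 1/2}$. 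Proposition \ref{CJDPS} does not contain this information: Choi--Jantzen, as recalled there, gives reducibility points and socle/cosocle structure of the degenerate principal series, not the Jacquet modules $r_{\overline Q}$ of their constituents, and the geometric-lemma analysis of $r_{\overline Q}\,i_Q^G(\chi\circ\omega_4)$ can produce extra one-dimensional subquotients at special (still irreducible) values of $\chi$, so even the crude bound ``number of one-dimensional constituents'' need not match $\dim\pi_{\overline{\mathscr U}}$ without a finer argument. That this bookkeeping is genuinely hard is visible in the paper itself: in the proof of Theorem \ref{F4toPGL2Irr}, where this very strategy is used only for the three residual representations (the trivial one and $\sigma^{\pm}$), pinning down the one-dimensional constituents of $r_{\overline Q}(\sigma^{\pm})$ already requires Lusztig's explicit Iwahori--Hecke algebra modules, not just \cite{CJ10}. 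So as written your argument is incomplete precisely where the work lies.

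For comparison, the paper's proof of this lemma avoids all of this: it applies $(\overline N,\Psi)$-coinvariants for a rank-$3$ character $\Psi$ to $\mathcal V\twoheadrightarrow\Theta(\sigma)\otimes\sigma$, uses Corollary \ref{Rk3Coin} to identify $\mathcal V_{(\overline N,\Psi)}\cong C^\infty_c(\mathscr G)$ and Proposition \ref{ThetaPsiCo} to get $\sigma_{(\overline N,\Psi)}\cong\widetilde\pi$, and reads off $\Theta(\sigma)\cong\pi$ directly; this is uniform in $C$ (no split/anisotropic case split, no finite-length or classification input). If you want to salvage your route, you would need either to carry out the $r_{\overline Q}$ computation for the full family of $\sigma$'s (likely via the geometric lemma or Hecke-algebra methods), or better, to replace the counting step by the twisted Jacquet module argument above.
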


\begin{proof} Let $\Psi$ be a rank $3$ character of $\overline{N}$ as in Corollary \ref{Rk3Coin}. We apply $(\overline{N},\Psi)$-coinvariants to the natural surjective map $\mathcal{V}\twoheadrightarrow\Theta(\sigma)\otimes\sigma$ to get a surjective map $\mathcal{V}_{(\overline{N},\Psi)}\twoheadrightarrow\Theta(\sigma)\otimes\sigma_{(\overline{N},\Psi)}$. By Corollary \ref{Rk3Coin} we have $\mathcal{V}_{(\overline{N},\Psi)}\cong C^{\infty}_{c}(\mathscr{G})$ and by Proposition \ref{ThetaPsiCo} we have $\sigma_{(\overline{N},\Psi)}\cong \widetilde{\pi}$. Thus $\Theta(\sigma)\cong \pi$.\end{proof}

\textbf{Remark:} In the previous lemma, the assumption that $\sigma$ is irreducible is required for the definition of $\Theta(\sigma)$.

\begin{theorem}\label{F4toPGL2Irr}
Let $\sigma\in \mathrm{Irr}(G)$ such that $\Theta(\sigma)\neq 0$. Then $\Theta(\sigma)\in\mathrm{Irr}(\mathscr{G})$.
\end{theorem}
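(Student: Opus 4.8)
The plan is to use Corollary \ref{HDFails} to reduce to a short list of exceptional $\sigma$, dispose of all other cases by Lemma \ref{IrredLiftLemma}, and then settle the exceptions by a dimension count on a Jacquet module. I will treat $\mathscr{G}=\mathrm{PGL}_{2}(F)$; the anisotropic case is easier (only supercuspidal and one-dimensional $\pi$ occur) and is handled the same way. Put $\pi=\theta(\sigma)$, which is irreducible by Corollary \ref{HDFails}; that corollary also gives that $\sigma$ is a quotient of $\Theta(\pi)$, and the goal is to upgrade this to $\Theta(\sigma)\cong\pi$. By the results of Section \ref{LiftPSeries}, $\Theta(\pi)$ is irreducible for every $\pi\in\mathrm{Irr}(\mathscr{G})$ except when (up to the isomorphism $i_{\overline{\mathscr{B}}}^{\mathscr{G}}(\chi)\cong i_{\overline{\mathscr{B}}}^{\mathscr{G}}(\chi^{-1})$) $\pi\cong i_{\overline{\mathscr{B}}}^{\mathscr{G}}(|\cdot|^{5/2})$ or $i_{\overline{\mathscr{B}}}^{\mathscr{G}}(|\cdot|^{11/2})$: for supercuspidal $\pi$ this is Theorem \ref{SuperCuspLift}, for $\pi$ one-dimensional or a twist of Steinberg it is Theorem \ref{ThetaPSsubQ}, and for an irreducible principal series $\pi=i_{\overline{\mathscr{B}}}^{\mathscr{G}}(\chi)$ we have $\Theta(\pi)=i_{Q}^{G}(\chi\circ\omega_{4})$ by Theorem \ref{ThetaPS}, which by Proposition \ref{CJDPS} is reducible only for the two stated characters (the $s=\pm 1/2$ reducibility points of $i_{Q}^{G}$ being ruled out because there $i_{\overline{\mathscr{B}}}^{\mathscr{G}}(\chi)$ is itself reducible, by Lemma \ref{PGL2PS}). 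Whenever $\Theta(\pi)$ is irreducible we get $\sigma\cong\Theta(\pi)$, and then $\Theta(\sigma)\cong\pi$ by Lemma \ref{IrredLiftLemma}. This leaves the two cases $\pi=i_{\overline{\mathscr{B}}}^{\mathscr{G}}(|\cdot|^{s})$, $s\in\{5/2,11/2\}$, in which $\sigma$ is an irreducible quotient, hence a constituent, of the reducible degenerate principal series $i_{Q}^{G}(|\cdot|^{s}\circ\omega_{4})$.

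For these cases I would compute $\dim r_{\overline{\mathscr{B}}}(\Theta(\sigma))$. Since $\pi$ is an irreducible principal series, $\dim r_{\overline{\mathscr{B}}}(\pi)=2$, and since $\pi$ is a quotient of $\Theta(\sigma)$ this already gives $\dim r_{\overline{\mathscr{B}}}(\Theta(\sigma))\geq 2$. For the matching upper bound, Lemma \ref{ThetaDual} gives $\dim\Theta(\sigma)_{\overline{\mathscr{U}}}=\dim\mathrm{Hom}_{G}(\mathcal{V}_{\overline{\mathscr{U}}},\sigma)$, which I would estimate from the two-step filtration of $\mathcal{V}_{\overline{\mathscr{U}}}=r_{\overline{\mathscr{B}}}(\mathcal{V})$ in Proposition \ref{JMIofMin}. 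The top quotient $|\cdot|^{5/2}\cdot\mathcal{V}(\mathcal{M})\oplus|\cdot|^{11/2}$ contributes nothing: $\mathrm{Hom}_{G}(\mathbf{1}_{G},\sigma)=0$ since $\sigma$ is not the trivial representation, and $\mathrm{Hom}_{G}(\mathcal{V}(\mathcal{M}),\sigma)=0$ because by Corollary \ref{pm1F4Theta} every constituent of the restriction to $G$ of the minimal representation of $\mathcal{M}$ (type $E_{6}$) has infinitesimal character different from that of $\sigma$ (which involves $|\cdot|^{s}$ with $s\in\{5/2,11/2\}$, not $1/2$). The bottom submodule $i_{\mathscr{T}\times Q}^{\mathscr{T}\times G}(C^{\infty}_{c}(F^{\times}))$ contributes, by Bernstein's second adjointness, $\mathrm{Hom}_{L}(C^{\infty}_{c}(F^{\times}),r_{\overline{Q}}(\sigma))$ with $L$ a Levi of $Q$ acting on $C^{\infty}_{c}(F^{\times})$ through the geometric action along $\omega_{4}\colon L\twoheadrightarrow F^{\times}$; this space is, as a vector space, the part of $r_{\overline{Q}}(\sigma)$ on which $\ker(\omega_{4})$ acts trivially, and since $r_{\overline{Q}}(\sigma)$ is a subquotient of $r_{\overline{Q}}(i_{Q}^{G}(|\cdot|^{s}\circ\omega_{4}))$ the geometric lemma shows it has at most two constituents that are characters of $L$ (the remaining ones dying). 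Hence $\dim\mathrm{Hom}_{G}(\mathcal{V}_{\overline{\mathscr{U}}},\sigma)\leq 2$, so $\dim r_{\overline{\mathscr{B}}}(\Theta(\sigma))=2$. To conclude: $\Theta(\sigma)$ has finite length (Proposition \ref{finlenF4toAutC}) with irreducible cosocle $\pi$, and none of its constituents is supercuspidal (a supercuspidal one would, as in the proof of Proposition \ref{finlenF4toAutC}, force $\sigma\cong\Theta(\pi')$ for a supercuspidal $\pi'$ and hence $\Theta(\sigma)$ supercuspidal by Lemma \ref{IrredLiftLemma}, contradicting $\Theta(\sigma)\twoheadrightarrow\pi$), so every constituent has nonzero $\overline{\mathscr{B}}$-Jacquet module; as $\pi$ alone accounts for the two dimensions of $r_{\overline{\mathscr{B}}}(\Theta(\sigma))$, there is no room for further constituents and $\Theta(\sigma)=\pi$ is irreducible.

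The hard part will be the upper bound on $\dim\mathrm{Hom}_{G}(\mathcal{V}_{\overline{\mathscr{U}}},\sigma)$: it leans on knowing the restriction of the $E_{6}$ minimal representation to $F_{4}$ precisely enough (through Corollary \ref{pm1F4Theta} and an infinitesimal-character comparison) to exclude $\sigma$ as a constituent, together with the bookkeeping of $r_{\overline{Q}}(\sigma)$ via the geometric lemma. By contrast, the preliminary steps — classifying the reducible $\Theta(\pi)$ and invoking Lemma \ref{IrredLiftLemma} — are routine given Section \ref{LiftPSeries}.
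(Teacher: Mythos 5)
Your overall route coincides with the paper's: reduce, via Corollary \ref{HDFails}, Lemma \ref{IrredLiftLemma} and Proposition \ref{SmallThetaPS}, to the two exceptional cases $\pi\cong i_{\overline{\mathscr{B}}}^{\mathscr{G}}(|\cdot|^{s})$ with $s\in\{5/2,11/2\}$, and then force $\Theta(\sigma)=\pi$ by a dimension count on $\Theta(\sigma)_{\overline{\mathscr{U}}}$ using Lemma \ref{ThetaDual} and the filtration of Proposition \ref{JMIofMin}; your concluding step (no supercuspidal constituents, Jacquet exactness) is fine. The gap is in your upper bound. First, when $s=11/2$ the representation $\sigma$ \emph{is} the trivial representation of $G$ (the unique irreducible quotient of $i_{Q}^{G}(|\cdot|^{11/2}\circ\omega_{4})$), so the assertion that $\mathrm{Hom}_{G}(\mathbf{1}_{G},\sigma)=0$ ``since $\sigma$ is not trivial'' fails there. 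More seriously, when $s=5/2$ your infinitesimal-character argument is exactly backwards: by Corollary \ref{pm1F4Theta} combined with Theorem \ref{E6Theta}, the restriction to $G$ of the minimal representation of the $E_{6}$ Levi $\mathcal{M}$ is precisely $\sigma^{+}\oplus\sigma^{-}$, the two summands of the cosocle of $i_{Q}^{G}(|\cdot|^{5/2}\circ\omega_{4})$ --- that is, exactly the representations $\sigma$ under consideration. Hence the top piece $\mathcal{V}_{\overline{\mathcal{N}}}$ contributes $1$, not $0$, to $\dim\mathrm{Hom}_{G}(\mathcal{V}_{\overline{\mathscr{U}}},\sigma)$ in both exceptional cases (this is the quantity $a=1$ in the paper's proof).

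With that correction, your geometric-lemma estimate of ``at most two one-dimensional constituents of $r_{\overline{Q}}(\sigma)$'' only gives $\dim\Theta(\sigma)_{\overline{\mathscr{U}}}\leq 3$, which does not close the argument against the lower bound $2$. What is actually needed, and what the paper proves, is the sharper statement that $r_{\overline{Q}}(\sigma)$ has exactly \emph{one} one-dimensional constituent: trivial for $\sigma=\mathbf{1}_{G}$, but for $\sigma=\sigma^{\pm}$ it is obtained by identifying the Iwahori--Hecke algebra modules of $\sigma^{\pm}$ with Lusztig's $E_{\mathcal{G}}$ and $E_{\mathcal{G}''}$ and restricting to the Hecke algebra of the Levi. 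That Hecke-algebra computation is the genuinely hard step; your proposal sidesteps it, and without it (or a substitute argument pinning $r_{\overline{Q}}(\sigma^{\pm})$ down to a single character constituent) the dimension count does not suffice.
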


\begin{proof} By Proposition \ref{finlenF4toAutC}, $\Theta(\sigma)$ is finite length. So, there exists $\pi\in \mathrm{Irr}(\mathscr{G})$ such that $\Theta(\sigma)\twoheadrightarrow \pi$. If $\Theta(\pi)$ is irreducible, then $\sigma\cong \Theta(\pi)$, and thus Lemma \ref{IrredLiftLemma} implies that $\pi\cong \Theta(\sigma)$. 

It remains to consider the case where $\Theta(\pi)$ is reducible. By Theorem \ref{SuperCuspLift}, part (\ref{SCIrr}), this can occur only if $\mathscr{G}\cong \mathrm{PGL}_{2}(F)$. Moreover, by Proposition \ref{SmallThetaPS}, we see that $\pi$ must be isomorphic to $i_{\overline{\mathscr{B}}}^{\mathscr{G}}(|-|^{s})$, where $s\in \{\frac{5}{2},\frac{11}{2}\}$. Thus $\sigma$ is a quotient of $\Theta(\pi)\cong i_{Q}^{G}(\chi\circ\omega_{4})$, which by Proposition \ref{CJDPS} implies that $\sigma$ is one of three possible representations. When $s=\frac{11}{2}$, then $\sigma$ is the trivial representation; when $s=\frac{5}{2}$, then $\sigma$ is one of the two irreducible representations of the co-socle of $i_{Q}^{G}(|-|^{\frac{5}{2}}\circ\omega_{4})$, which we call $\sigma^{+}$ and $\sigma^{-}$. Moreover, $\Theta(\sigma)$ has the irreducible principal series $i_{\overline{\mathscr{B}}}^{\mathscr{G}}(|-|^{s})$ as a quotient, so $\mathrm{dim}(\Theta(\sigma)_{\overline{\mathscr{U}}})\geq 2$.

From the proof of Proposition \ref{finlenF4toAutC} $\mathrm{dim}(\Theta(\sigma))_{\overline{\mathscr{U}}}$ is less than or equal to $a+b$, where $a=\mathrm{dim}(\mathrm{Hom}_{G}(\mathcal{V}_{\overline{\mathcal{N}}},\sigma))$ and $b$ is the number of constituents of $r_{Q}(\sigma)$ of dimension $1$. In either case, $a=1$ by Theorem \ref{PFiltration} and Corollary \ref{pm1F4Theta} (which does not depend on this result).

Suppose that $\sigma$ is trivial (so $s=\frac{11}{2}$). Then $b=1$ and the result follows in this case.

Suppose that $\sigma\cong \sigma^{\pm}$ (so $s=\frac{5}{2}$). We claim that $b=1$ in this case too, from which the result follows.
The representations $\sigma^+$ and $\sigma^-$ have Iwahori-fixed vectors, and the corresponding Hecke algebra  $H_G$-modules are $E_{\mathcal G}$ and $E_{\mathcal G''}$ in 
\cite[page 640]{L83}.  
On the level of $H_G$- modules, the functor $r_{\overline P}$ correspond to restricting to the Heckle algebra $H_M\subset H_G$. 
 Now it is easy to check that  $E_{\mathcal G}$ and $E_{\mathcal G''}$ embed into $i_{Q}^{G}(|-|^{-\frac{5}{2}}\circ\omega_{4})$, 
giving us the claimed identification with $\sigma^+$ and $\sigma^-$, and 
 that $r_{\overline P}(\sigma^{\pm})$ are of length two, with only one one-dimensional summand, each, as desired. 
\end{proof}


\section{$\mathrm{Spin}(9)$ distinguished representations of $F_{4}$}\label{Spin9}

The objective of this section is to prove a multiplicity one result for $\mathrm{Spin}(9)$-invariant linear functionals and characterize the $\mathrm{Spin}(9)$-distinguished representations of $F_{4}$ as those arising from the theta lift of generic representations on $\mathrm{PGL}_{2}(F)$. We continue to use the notation of Section \ref{JMII}. In particular, $\mathscr{G}=\mathrm{PGL}_{2}(F)$. Let $H=\mathrm{Stab}_{G}(v_{0})\cong \mathrm{Spin}(9,F)$. (Recall Lemma \ref{rk1tr1Trans}.)

\begin{theorem}\label{Spin9Period}
Let $\sigma$ be an irreducible representation of $G$.
 Then the dimension of $\mathrm{Hom}_{H}(\sigma,\mathbb{C})$ is at most $1$. Moreover, $\sigma$ is $H$-distinguished if and only if $\Theta(\sigma)$ is generic. 
\end{theorem}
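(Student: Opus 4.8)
The plan is to identify the space of $H$-invariant functionals on $\sigma$ with a Whittaker space of the $\mathrm{PGL}_{2}(F)$-representation $\Theta(\sigma)$, and then to exploit both the irreducibility of $\Theta(\sigma)$ and the uniqueness of Whittaker models for $\mathrm{PGL}_{2}(F)$.

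The first step is to apply Lemma \ref{ThetaDual} with the two members of the dual pair interchanged (exactly as in the proof of Proposition \ref{finlenF4toAutC}), taking the unipotent subgroup $\overline{\mathscr{U}}\subset\mathscr{G}$ with the nondegenerate character $\psi$; its stabilizer in $\mathscr{G}$ is $\overline{\mathscr{U}}$ itself, so no extra symmetry intervenes. This yields a linear isomorphism
\[
(\Theta(\sigma)_{(\overline{\mathscr{U}},\psi)})^{*}\cong \Hom_{G}(\mathcal{V}_{(\overline{\mathscr{U}},\psi)},\sigma).
\]
By Proposition \ref{MinRepPGL2TwistedJac}, $\mathcal{V}_{(\overline{\mathscr{U}},\psi)}\cong \mathrm{ind}_{H}^{G}(\mathbf{1})$, where $H=\mathrm{Stab}_{G}(v_{0})\cong\mathrm{Spin}(9,F)$ as in Lemma \ref{rk1tr1Trans}. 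Since $G=F_{4}$ and $H$ are semisimple, hence unimodular, the smooth contragredient of $\mathrm{ind}_{H}^{G}(\mathbf{1})$ is $\Ind_{H}^{G}(\mathbf{1})$; as $\sigma$ is admissible, dualizing and applying Frobenius reciprocity for smooth induction gives
\[
\Hom_{G}(\mathrm{ind}_{H}^{G}(\mathbf{1}),\sigma)\cong \Hom_{G}(\widetilde{\sigma},\Ind_{H}^{G}(\mathbf{1}))\cong \Hom_{H}(\widetilde{\sigma},\mathbb{C}).
\]
Finally, because $-1$ lies in the Weyl group of $F_{4}$ (so the Chevalley involution is inner), every irreducible representation of $G$ is self-dual and $\Hom_{H}(\widetilde{\sigma},\mathbb{C})\cong\Hom_{H}(\sigma,\mathbb{C})$. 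Chaining these isomorphisms, I obtain
\[
\Hom_{H}(\sigma,\mathbb{C})\cong (\Theta(\sigma)_{(\overline{\mathscr{U}},\psi)})^{*},
\]
whose right-hand side is precisely the space of $\psi$-Whittaker functionals on $\Theta(\sigma)$; since $\psi$ restricted to $\overline{\mathscr{U}}$ differs from a standard Whittaker datum of $\mathrm{PGL}_{2}(F)$ only by a Weyl conjugation, this space is nonzero exactly when $\Theta(\sigma)$ is generic.

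Granting this identification, both claims fall out. By Theorem \ref{F4toPGL2Irr}, $\Theta(\sigma)$ is either $0$ or irreducible. If $\Theta(\sigma)=0$, both sides vanish, $\sigma$ is not $H$-distinguished, and $\Theta(\sigma)$ is (vacuously) not generic. If $\Theta(\sigma)$ is irreducible, then multiplicity one of the Whittaker model for $\mathrm{PGL}_{2}(F)$ (inherited from $\mathrm{GL}_{2}(F)$) forces its space of $\psi$-Whittaker functionals to have dimension at most $1$, equal to $1$ if and only if $\Theta(\sigma)$ is generic. Hence $\dim\Hom_{H}(\sigma,\mathbb{C})\le 1$ always, and the dimension is $1$ if and only if $\Theta(\sigma)$ is generic.

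The genuinely hard input is Theorem \ref{F4toPGL2Irr}: without knowing that $\Theta(\sigma)$ is irreducible, its Whittaker space could be large and the argument would collapse---but that theorem is already available. The remaining ingredients---reversing Lemma \ref{ThetaDual}, checking that $\mathrm{Stab}_{\mathscr{G}}(\overline{\mathscr{U}},\psi)$ contributes nothing, verifying that the modulus character in the $\mathrm{ind}/\Ind$ duality is trivial (a consequence of semisimplicity of $G$ and $H$), and matching the $\overline{\mathscr{U}}$-twisted Jacquet module of $\Theta(\sigma)$ with its ordinary Whittaker space---are routine, resting on the Jacquet-module computations of Sections \ref{JMI}--\ref{JMII}. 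So I anticipate no real obstacle beyond careful assembly.
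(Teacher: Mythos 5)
Your chain of isomorphisms is exactly the paper's: Lemma \ref{ThetaDual} with the roles of the dual pair reversed, Proposition \ref{MinRepPGL2TwistedJac}, dualizing plus Frobenius reciprocity to reach $\Hom_{H}(\widetilde{\sigma},\mathbb{C})$, and then Theorem \ref{F4toPGL2Irr} together with uniqueness of Whittaker functionals for $\mathrm{PGL}_{2}(F)$. Up to that point the argument is sound.

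The gap is the step where you replace $\widetilde{\sigma}$ by $\sigma$. You assert that every irreducible representation of $G=F_{4}(F)$ is self-dual ``because $-1$ lies in the Weyl group, so the Chevalley involution is inner.'' Innerness of the Chevalley involution $\theta$ only gives $\sigma^{\theta}\cong\sigma$; to deduce $\widetilde{\sigma}\cong\sigma$ you would also need $\widetilde{\sigma}\cong\sigma^{\theta}$, which is Prasad's contragredient conjecture and is not available for $p$-adic exceptional groups (it is a theorem for real groups by Adams--Vogan and for some classical cases via the MVW involution, but not here). This is precisely why the paper does not take your shortcut: it proves Lemma \ref{SelfDual}, namely self-duality only for those $\sigma$ with $\Theta(\sigma)\neq 0$, and even that costs real work --- unitarity of the minimal representation to handle lifts of supercuspidals, intertwining operators and the Choi--Jantzen structure theory (Proposition \ref{CJDPS}) for constituents of $i_{Q}^{G}(\chi\circ\omega_{4})$, and, for the delicate pair $\sigma^{\pm}$ at $\chi=|-|^{5/2}$, the auxiliary dual pair $\mu_{2}\times F_{4}\subset E_{6}$ (Theorem \ref{E6Theta}) together with a count of Iwahori-fixed vectors to distinguish them. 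Note that the bound $\dim\Hom_{H}(\sigma,\mathbb{C})\le 1$ does survive without any self-duality, since $\sigma\mapsto\widetilde{\sigma}$ permutes $\mathrm{Irr}(G)$; but the ``moreover'' statement, that $\sigma$ is $H$-distinguished if and only if $\Theta(\sigma)$ is generic, genuinely requires relating $\sigma$ to $\widetilde{\sigma}$ (or $\Theta(\sigma)$ to $\Theta(\widetilde{\sigma})$), so you must either prove Lemma \ref{SelfDual} as the paper does or supply an honest reference for self-duality of the relevant representations of $F_{4}(F)$; the justification you gave does not do this.
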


\begin{proof}
By Lemma \ref{ThetaDual} there is an isomorphism
\begin{equation*}
(\Theta(\sigma)_{(\overline{\mathscr{U}},\psi)})^{*}\cong \mathrm{Hom}_{G}(\mathcal{V}_{(\overline{\mathscr{U}},\psi)},\sigma).
\end{equation*}
By Proposition \ref{MinRepPGL2TwistedJac},
\begin{equation*}
\mathrm{Hom}_{G}(\mathcal{V}_{(\overline{\mathscr{U}},\psi)},{\sigma})\cong \mathrm{Hom}_{G}(ind_{H}^{G}(1),{\sigma}).
\end{equation*}
By taking duals and applying Frobenius reciprocity we have
\begin{equation*}
\mathrm{Hom}_{G}(ind_{H}^{G}(1),{\sigma})\cong \mathrm{Hom}_{H}(\tilde \sigma,\mathbb{C}).
\end{equation*}

By Theorem \ref{F4toPGL2Irr}, $\Theta({\sigma})$ is irreducible, if nonzero. Thus by the multiplicity one theorem for Whittaker functionals, $\mathrm{dim}((\Theta(\sigma)_{(\overline{\mathscr{U}},\psi)})^{*})\leq 1$. Thus $\mathrm{dim}(\mathrm{Hom}_{H}(\tilde \sigma,\mathbb{C}))\leq 1$.

We now need: 

\begin{lemma}\label{SelfDual} If $\Theta(\sigma)\neq 0$, then $\sigma\cong \widetilde \sigma$. 
\end{lemma} 

\begin{proof}
Since $\Theta(\sigma)\neq 0$ there exists $\pi\in \mathrm{Irr}(\mathscr{G})$ such that $\sigma$ is a constituent of $\Theta(\pi)$. 
We prove the result by considering two cases.

First suppose that $\pi$ is a supercuspidal representation of $\mathscr{G}$. Since supercuspidal representation split off, there is a $\mathscr{G}$-module decomposition such that $\mathcal{V}=\mathcal{V}^{\pi}\oplus\mathcal{V}^{\pi,\perp}$, where $\mathcal{V}^{\pi}$ is the maximal $\pi$-isotypic subspace of $\mathcal{V}$ and $\mathcal{V}^{\pi,\perp}$ is the canonical complementary $\mathscr{G}$-submodule. Since the actions of $\mathscr{G}$ and $G$ commute we see that $G$ acts on both $\mathcal{V}^{\pi}$ and $\mathcal{V}^{\pi,\perp}$. Since all of the constituents of $\mathcal{V}^{\pi,\perp}$ are not isomorphic to $\pi$ it follows that the $\mathscr{G}\times G$-module surjection
\begin{equation*}
\mathcal{V}\twoheadrightarrow \pi\otimes \Theta(\pi)
\end{equation*}
is trivial on $\mathcal{V}^{\pi,\perp}$ and so we have a $\mathscr{G}\times G$-module surjection
\begin{equation}\label{piIsotpic}
\mathcal{V}^{\pi}\twoheadrightarrow \pi\otimes \Theta(\pi).
\end{equation}
Since by definition $\pi\otimes \Theta(\pi)$ is the maximal $\pi$-isotypic quotient of $\mathcal{V}$ it follows that (\ref{piIsotpic}) is an isomorphism $\mathcal{V}^{\pi}\cong \pi\otimes \Theta(\pi)$. Since $\mathcal{V}$ is a unitary $\mathcal{G}$-representation and $\mathcal{V}^{\pi}\subseteq \mathcal{V}$ it follows that $\mathcal{V}^{\pi}$ is a unitary $\mathscr{G}\times G$-representation. Thus we have $\mathscr{G}\times G$-module isomorphisms
\begin{equation*}
\widetilde{\pi}\otimes \widetilde{\Theta(\pi)}\cong \widetilde{\mathcal{V}^{\pi}}
\cong \overline{\mathcal{V}^{\pi}}
\cong \mathcal{V}^{\bar{\pi}}
\cong \mathcal{V}^{\widetilde{\pi}}.
\end{equation*}
All irreducible representations of $\mathscr{G}$ are self-dual, so we have $\widetilde{\pi}\cong \pi$. From the above chain of isomorphisms it follows that $\Theta(\pi)$ is self-dual. 

Since $\pi$ is supercuspidal, Theorem \ref{SuperCuspLift} implies $\Theta(\pi)=\sigma$. Thus $\sigma$ is self-dual.

Now suppose that $\pi$ is a constituent of the principal series $i_{\overline{\mathscr{B}}}^{\mathscr{G}}(\chi)$. Then by Theorem \ref{ThetaPS}, $\sigma$ is a constituent of $i_{Q}^{G}(\chi\circ \omega_{4})$. 

We claim that all of the constituents of $i_{Q}^{G}(\chi\circ \omega_{4})$ are self-dual. By Choi-Jantzen \cite{CJ10}, Theorem 6.1, the length of $i_{Q}^{G}(\chi\circ \omega_{4})$ is less than 3. In each of the following three cases we use that there is a nonzero intertwining operator $i_{Q}^{G}(\chi^{\pm1}\circ \omega_{4})\rightarrow i_{Q}^{G}(\chi^{\mp1}\circ \omega_{4})\cong \widetilde{i_{Q}^{G}}(\chi^{\pm1}\circ \omega_{4})$. 

When $i_{Q}^{G}(\chi\circ \omega_{4})$ is irreducible we are done. When $i_{Q}^{G}(\chi\circ \omega_{4})$ has length $2$, then \cite[Theorem 6.1,1.]{CJ10} implies that $i_{Q}^{G}(\chi^{\pm1}\circ \omega_{4})$ has a unique irreducible sub and a unique irreducible quotient, which are distinct. Thus the nonzero intertwining operators  imply the self-duality of the irreducible constituents of $i_{Q}^{G}(\chi^{\pm1}\circ \omega_{4})$.

When $i_{Q}^{G}(\chi^{\pm1}\circ \omega_{4})$ has length $3$, then $\chi=|-|^{\pm\frac{5}{2}}$. In this case, $i_{Q}^{G}(|-|^{-\frac{5}{2}}\circ \omega_{4})$ has a unique irreducible quotient, and the intertwining operator shows that it is self dual. There is also a decomposible submodule with two distinct constituents, call them $\sigma^{+}$ and $\sigma^{-}$. Using the $\mu_{2}\times G$ dual pair considered in Section \ref{QuadCase} we can show that $\sigma^{+}$ and $\sigma^{-}$ are self-dual. Specifically, $\mathcal{V}_{6}$ the minimal representation of $E_{6}$ decomposes under the action of $\mu_{2}\times G$ as $\mathcal{V}_{6}\cong \sigma^{+}\oplus \sigma^{-}$ (Theorem \ref{E6Theta}). Now if $I$ is an Iwahori subgroup of $G$, then $\mathrm{dim}((\sigma^{+})^{I})=5$ and $\mathrm{dim}((\sigma^{-})^{I})=2$. Thus $\sigma^{+}$ and $\sigma^{-}$ are self-dual.
\end{proof}
The lemma completes the proof of the theorem.
\end{proof}

\textbf{Remark:} Using the $\mathrm{PGL}_{2}\times F_{4}$ theta correspondence we can construct representations of $F_{4}$ that are $\mathrm{Spin}(9)$-relatively supercuspidal (see \cite{M18} for definition), but not supercuspidal. Let $\pi\in\mathrm{Irr}(\mathscr{G})$ be supercuspidal. As in Lemma \ref{SelfDual}, we have an embedding of $\mathscr{G}\times G$-modules $\pi\otimes \Theta(\pi)\hookrightarrow \mathcal{V}$. By taking $(\overline{\mathscr{U}},\Psi)$-coinvariants and applying Proposition \ref{MinRepPGL2TwistedJac} we get an embedding of $G$-modules $\Theta(\pi)\hookrightarrow C_{c}^{\infty}(H\backslash G)$, where $H\cong \mathrm{Spin}(9)$. Thus $\Theta(\pi)$ is $H$-relatively supercuspidal. But, by Proposition \ref{ThetaSCasQuotient} we know that $\Theta(\pi)$ is not supercuspidal.

\section{Dual pair $\mu_{2}\times F_{4}\subset E_{6}$}\label{QuadCase}

In this section, we study the theta lift associated to the dual pair $\mu_{2}\times F_{4}\subset E_{6}$, where $E_{6}$ is of adjoint form with two connected components where the action of the nontrivial component is through the outer automorphism of $E_{6}$. This situation arises from the construction of Subsection \ref{DualPairs} by taking $C$ to be a quadratic composition algebra.

The analysis of this case is similar to and simpler than the case of $E_{7}$ considered in Section \ref{E7SuperCusp}, so we will be brief. We note that the results of Subsection \ref{mu2F41} could have been proved after Section \ref{JMI}, but our proof of the result of Subsection \ref{mu2F42} utilizes Theorem \ref{ThetaPS}.

Let $\mathcal{G}$ be the $F$-points of the adjoint form of $E_{6}$ constructed using the quadratic composition algebra $C$ with two connected components where the nontrivial component acts through the outer automorphism associated with a choice of simple roots $\Delta$. Let $(\Pi,\mathcal{V})$ be the minimal representation of $\mathcal{G}$. Let $G$ be the fixed points in the identity component of $\mathcal{G}$ under the action of the outer automorphism. Then if we identify $\mu_{2}$ with the subgroup of $\mathcal{G}$ generated by the outer automorphism, then $\mu_{2}\times G\subset \mathcal{G}$ is a dual pair.

Let $\tau^{+}$ and $\tau^{-}$ be the trivial and nontrivial characters of $\mu_{2}$, respectively. There is a surjective map $\mathcal{V}\twoheadrightarrow \tau^{\pm}\otimes \Theta(\tau^{\pm})$. The goal of this section is to compute $\Theta^{\pm}=\Theta(\tau^{\pm})$.

\subsection{Lifting from $\mu_{2}$ to $F_{4}$}\label{mu2F41}

\begin{theorem}
The $G$-module $\Theta^{\pm}$ is irreducible and $\Theta^{+}\ncong\Theta^{-}$.
\end{theorem}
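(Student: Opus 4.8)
The plan is to replay, in the simpler $E_{6}$ setting, the argument of Section~\ref{E7SuperCusp}: one works with the Heisenberg parabolic $\mathcal{P}=\mathcal{M}\mathcal{N}$ of $\mathcal{G}$, its centralizer $P=MN$ (the Heisenberg parabolic of $G$, so that $M_{1}\cong\mathrm{Sp}_{6}$), the $\overline{\mathcal P}$-filtration of $\mathcal V$ from Theorem~\ref{HeisJacThm}, and the Fourier--Jacobi functor. The three steps are: (i) show via Fourier--Jacobi that $\Theta^{\pm}$ has at most one nontrivial irreducible subquotient, and that $\sigma^{+}\not\cong\sigma^{-}$ for those subquotients; (ii) show via a twisted Jacquet module that $\Theta^{\pm}\neq 0$ and contains a nontrivial constituent; (iii) show via the untwisted Jacquet module that $\Theta^{\pm}$ has no trivial constituent. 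Together these give $\Theta^{\pm}=\sigma^{\pm}$ irreducible and $\Theta^{+}\not\cong\Theta^{-}$.

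For step (i) I would first record the $E_{6}$-analogue of Proposition~\ref{FJMin}. Lemmas~\ref{MinZTwist} and~\ref{MinZTwist2} and Proposition~\ref{FJMin} are proved for an arbitrary composition algebra, so their proofs apply verbatim: as a $\mu_{2}\times\widetilde{M}_{1}$-module, $\mathrm{FJ}(\mathcal{V})$ is the Weil representation $\omega_{\psi}$ of $\widetilde{\mathrm{Sp}}(C^{0}\otimes V_{6})$ restricted to the dual pair $\mathrm{O}(C^{0})\times\widetilde{\mathrm{Sp}}(V_{6})$. Since $\dim C^{0}=1$, the space $C^{0}\otimes V_{6}$ is $6$-dimensional, $\widetilde{\mathrm{Sp}}(C^{0}\otimes V_{6})=\widetilde{M}_{1}$, and $\omega_{\psi}$ is the ordinary Weil representation of $\widetilde{\mathrm{Sp}}_{6}$, which splits as $\omega_{\psi}=\omega_{\psi}^{+}\oplus\omega_{\psi}^{-}$ into the even and odd parts of a Schr\"odinger model; these are irreducible, non-isomorphic, and the nontrivial element of $\mu_{2}=\mathrm{O}(C^{0})$ (which acts on $C^{0}\otimes V_{6}\cong V_{6}$ as $-\mathrm{id}$) separates them. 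In the language of the classical $\mathrm{O}(1)\times\mathrm{Sp}_{6}$ correspondence, $\Theta^{\dagger}(\tau^{+})$ and $\Theta^{\dagger}(\tau^{-})$ are $\omega_{\psi}^{+}$ and $\omega_{\psi}^{-}$ in some order, hence irreducible and distinct. Then I would transcribe Proposition~\ref{FJThetaSubQuo}: applying the exact functor $\mathrm{FJ}$ to the surjection $\mathcal{V}\twoheadrightarrow\tau^{\pm}\otimes\Theta^{\pm}$ and passing to the maximal $\tau^{\pm}$-isotypic quotient of $\mathrm{FJ}(\mathcal{V})\cong\omega_{\psi}$ yields an $\widetilde{M}_{1}$-surjection $\omega_{\psi}^{\pm}\twoheadrightarrow\mathrm{FJ}(\Theta^{\pm})$. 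As $\omega_{\psi}^{\pm}$ is irreducible, $\mathrm{FJ}(\Theta^{\pm})$ is $0$ or $\omega_{\psi}^{\pm}$; and since $\mathrm{FJ}$ is exact and kills precisely the trivial representation of $G$ (\cite[Prop.~3.1, Cor.~6.1.4]{W03} and the remark in Subsection~\ref{FJ}), it follows that $\Theta^{\pm}$ has at most one nontrivial irreducible subquotient $\sigma^{\pm}$, with multiplicity one, and that $\mathrm{FJ}(\sigma^{\pm})\cong\omega_{\psi}^{\pm}$, whence $\sigma^{+}\not\cong\sigma^{-}$ when both exist. This is exactly where the $E_{6}$ case is easier than the $E_{7}$ case: because $\omega_{\psi}^{\pm}$ is already irreducible, no analogue of Proposition~\ref{UniqueNonTriv} is needed.

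For step (ii) I would compute a twisted Jacquet module as in Proposition~\ref{ThetaPsiCo}, taking $\xi=(1,0,c,0)\in\mathbb{W}_{F}$ for a rank-one $c$ chosen (as is possible) so that the corresponding fiber $\Omega_{\xi}$ of Proposition~\ref{NonemptyFiber} is nonempty; for such $c$ this fiber is a free $\mu_{2}$-set with two rational points, so Lemma~\ref{HeisJacTwist} and Lemma~\ref{HeisMinJacCalc} give $\mathcal{V}_{(\overline{N},\Psi)}\cong C^{\infty}_{c}(\Omega_{\xi})\cong\tau^{+}\oplus\tau^{-}$ as a $\mu_{2}$-module (up to the harmless twist by $\chi_{C}$, whose restriction to the finite group $\mu_{2}$ only permutes the summands). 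Lemma~\ref{ThetaDual} then gives $(\Theta^{\pm}_{(\overline{N},\Psi)})^{*}\cong\mathrm{Hom}_{\mu_{2}}(\tau^{+}\oplus\tau^{-},\tau^{\pm})\cong\mathbb{C}$; since $\Psi$ is nontrivial the trivial representation of $G$ has vanishing $(\overline{N},\Psi)$-coinvariants, so $\Theta^{\pm}$ must contain a nontrivial constituent. For step (iii) I would follow Proposition~\ref{NoTriv}: a trivial subquotient of $\Theta^{\pm}$ would make the trivial representation of $M$ a subquotient of $\mathcal{V}_{\overline{N}}$; but $C^{0}$ is anisotropic (a nondegenerate one-dimensional quadratic space), so Proposition~\ref{Rk0Coin}(1) gives $\mathcal{V}_{\overline{N}}\cong\mathcal{V}_{\overline{\mathcal{N}}}$, and by Theorem~\ref{HeisJacThm}(2) with $(s,t,d)=(4,2,20)$ the center of $M$ acts by two nontrivial characters on the two summands of $\mathcal{V}_{\overline{\mathcal{N}}}$, a contradiction (note there is no ``bottom'' piece to worry about, since $\Omega_{0}=\emptyset$). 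Combining (i)--(iii) yields $\Theta^{\pm}=\sigma^{\pm}$ irreducible with $\Theta^{+}\not\cong\Theta^{-}$. The only points requiring real care are checking that the proof of Proposition~\ref{FJMin} transfers to the disconnected group $\mathcal{G}$ together with the bookkeeping of the $\mu_{2}$-action distinguishing $\omega_{\psi}^{+}$ from $\omega_{\psi}^{-}$, and the explicit fiber computation in step (ii); everything else is a direct translation of Section~\ref{E7SuperCusp} with the $E_{6}$ numerology.
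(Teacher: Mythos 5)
Your argument is correct, and its skeleton is the paper's: Fourier--Jacobi applied to the Heisenberg filtration to control the nontrivial constituents, plus the untwisted Jacquet module $\mathcal V_{\overline N}\cong\mathcal V_{\overline{\mathcal N}}$ (anisotropic $C^0$, $(s,t,d)=(4,2,20)$) to exclude trivial constituents via the central character of $M$. The one genuine divergence is how nonvanishing of $\Theta^{\pm}$ is obtained. You only transcribe the surjection of Proposition \ref{FJThetaSubQuo}, $\omega_\psi^{\pm}\twoheadrightarrow \mathrm{FJ}(\Theta^{\pm})$, which by itself leaves open $\mathrm{FJ}(\Theta^{\pm})=0$, and you close that gap with an extra step: a rank-one twisted Jacquet module computation in the $\dim C=2$ setting (via Lemmas \ref{HeisMinJacCalc}, \ref{HeisJacTwist} and \ref{ThetaDual}), in the spirit of Proposition \ref{ThetaPsiCo}. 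The paper does not need this step: since $\mu_2$ is finite, the maximal $\tau^{\pm}$-isotypic quotient of $\mathcal V$ is a direct summand, $\mathcal V=\tau^+\otimes\Theta^+\oplus\tau^-\otimes\Theta^-$, and the Fourier--Jacobi functor commutes with passing to these summands (as $\mu_2$ centralizes $N$ and $M_1$), so one gets the isomorphism $\mathrm{FJ}(\Theta^{\pm})\cong\omega_\psi^{\pm}$ outright; this simultaneously yields $\Theta^{\pm}\neq 0$, uniqueness (with multiplicity one) of the nontrivial constituent, and $\Theta^+\ncong\Theta^-$, so only the trivial-constituent exclusion remains. Your route is valid and buys an explicit period-type identity ($\mathcal V_{(\overline N,\Psi)}\cong\tau^+\oplus\tau^-$), but it is longer; the only detail you leave implicit is the existence of a rank-one $c$ whose fiber has rational points, which is easily supplied (take $c$ proportional to $tt^{\top}$ with nonzero diagonal entry in $-n_C(e)(F^{\times})^2$ for a generator $e$ of $C^0$; the fiber is then the two-point set $\{\pm te\}$, permuted freely by $\Aut(C)$, and the $\chi_C$-twist is indeed harmless as you say).
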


\begin{proof} First, we show that $\mathrm{FJ}(\Theta^{\pm})\cong \omega_{\psi}^{\pm}$, where $\omega_{\psi}^{+}$ and $\omega_{\psi}^{-}$ are the even and odd Weil representation of $\mathrm{Sp}(6,F)$, respectively. This follows from the analogs of Lemmas \ref{MinZTwist} and \ref{MinZTwist2} and Propositions \ref{FJMin} and \ref{FJThetaSubQuo}. The main difference in this case is that the dimension of $N^{\perp}/Z$ is 6, so $\omega_{\psi}$ is the Weil representation of $\mathrm{Sp}(6,F)$, and $\mathrm{Aut}(C)\cong\mu_{2}$ is the full orthogonal group $\mathrm{O}(C^{0})$ (as opposed to $\mathrm{SO}(C^{0})$). Since $\omega_{\psi}^{+}\ncong \omega_{\psi}^{-}$ it follows that $\Theta^{+}\ncong\Theta^{-}$.

From this we also see that $\Theta^{\pm}$ has exactly one nontrivial constituent, since the Fourier-Jacobi functor is exact and only kills the trivial representation.

Second, we show that $\Theta^{\pm}$ does not contain the trivial representation as a constituent. If it does, then $\mathcal{V}_{\overline N}$ contains the trivial 
representation of $M$ as a constituent. However, by Proposition \ref{Rk0Coin} part (\ref{Rk0Aniso}) and Theorem \ref{HeisJacThm}, 
\begin{equation*}
\mathcal{V}_{\overline N} \cong \mathcal{V}(\mathcal{M})\otimes  |\mathrm{det}|^{2/20} \oplus  \chi_C |\mathrm{det}|^{4/20}.
\end{equation*}
We see that the center of $M$ acts by non-trival characters on the two summands of $\mathcal{V}_{\overline N}$, thus $\mathcal{V}_{\overline N}$ cannot contain the trivial representation of $M$ as a constituent. Therefore $\mathcal{V}$ cannot contain the trivial representation of $G$ as a constituent.

From this it follows that $\Theta^{\pm}$ is irreducible.\end{proof}

\begin{corollary}\label{pm1F4Theta}
As a $\mu_{2}\times G$-module, $\mathcal{V}\cong \Theta^{+}\oplus \Theta^{-}$.
\end{corollary}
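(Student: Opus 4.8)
The plan is to observe that this corollary is a formal consequence of the preceding theorem together with the semisimplicity of the $\mu_2$-action. Since $\mu_2 = \Aut(C)$ is a finite group, Maschke's theorem applies to the smooth $\mu_2$-module $\mathcal{V}$: it decomposes as a direct sum $\mathcal{V} = \mathcal{V}^{+} \oplus \mathcal{V}^{-}$ of its $\tau^{+}$- and $\tau^{-}$-isotypic subspaces, and each $\mathcal{V}^{\pm}$ is a $G$-submodule because the actions of $\mu_2$ and $G$ commute.

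Next I would identify $\mathcal{V}^{\pm}$ with $\tau^{\pm} \otimes \Theta^{\pm}$. By definition, $\Theta^{\pm} = \Theta(\tau^{\pm})$ is determined by the surjection $\mathcal{V} \twoheadrightarrow \tau^{\pm} \otimes \Theta^{\pm}$ onto the maximal $\tau^{\pm}$-isotypic quotient of $\mathcal{V}$. The complementary summand $\mathcal{V}^{\mp}$ contains no nonzero vector transforming under $\tau^{\pm}$, so it lies in the kernel of this surjection; hence the map factors through a surjection $\mathcal{V}^{\pm} \twoheadrightarrow \tau^{\pm} \otimes \Theta^{\pm}$. Conversely $\mathcal{V}^{\pm}$ is itself $\tau^{\pm}$-isotypic, so the maximality in the definition of the big theta lift forces this factored map to be an isomorphism. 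Therefore $\mathcal{V} \cong (\tau^{+} \otimes \Theta^{+}) \oplus (\tau^{-} \otimes \Theta^{-})$ as $\mu_2 \times G$-modules, which is precisely the asserted decomposition once one writes $\Theta^{\pm}$ for $\tau^{\pm} \otimes \Theta^{\pm}$, i.e. regards $\Theta^{\pm}$ as a $\mu_2 \times G$-module with $\mu_2$ acting through $\tau^{\pm}$.

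Finally, I would invoke the previous theorem (irreducibility of $\Theta^{\pm}$ and $\Theta^{+} \not\cong \Theta^{-}$) to note that this exhibits $\mathcal{V}$ as the decomposition into its irreducible $\mu_2 \times G$-isotypic components, so the splitting is canonical. There is essentially no obstacle here: the only point requiring care is the elementary fact that the kernel of the projection onto a $\tau^{\pm}$-isotypic quotient of a module over the finite group $\mu_2$ is exactly the complementary isotypic subspace, which is immediate from complete reducibility.
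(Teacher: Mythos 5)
Your argument is correct and is exactly the intended one: the paper leaves this corollary without proof precisely because it is the formal consequence you describe, namely that for the finite group $\mu_2$ the representation $\mathcal{V}$ splits into its two $G$-stable isotypic subspaces, each of which is identified with $\tau^{\pm}\otimes\Theta^{\pm}$ by the maximality defining the big theta lift. Your closing remark is also apt: the preceding theorem (irreducibility of $\Theta^{\pm}$ and $\Theta^{+}\not\cong\Theta^{-}$) is not needed for the splitting itself, only to see that this is the decomposition of $\mathcal{V}$ into distinct irreducible $\mu_2\times G$-summands.
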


\subsection{$C=F\oplus F$}\label{mu2F42}

In this section we use our results on the $\mathrm{PGL}(2)\times F_{4}\subset E_{7}$ dual pair to make the lift of $\mu_{2}$ to $F_{4}$ induced from the split form of $E_{6}$ explicit.

Throughout we use the notation of Section \ref{JMII} and we write $\mathcal{V}_{n}$ for the minimal representation of $E_{n}$.

We apply $\overline{\mathscr{U}}$-coinvariants to sequence (\ref{MinSeq}) to get the surjective $\mathscr{T}\times G$-module map

\begin{equation*}
(\mathcal{V}_{7})_{\overline{\mathscr{U}}}\twoheadrightarrow (\mathcal{V}_{7})_{\overline{\mathcal{N}}}\cong \mathcal{V}_{6}\otimes |-|^{3}\oplus |-|^{6}\twoheadrightarrow \Theta^{\pm}\otimes |-|^{3}.
\end{equation*}
Thus Frobenius reciprocity with respect to $\overline{\mathscr{B}}\subset \mathscr{G}$ yields a nonzero $\mathscr{G}\times G$-module map

\begin{equation*}
\mathcal{V}_{7}\rightarrow \mathrm{Ind}_{\overline{\mathscr{B}}}^{\mathscr{G}}(\Theta^{\pm}\otimes|-|^{3}). 
\end{equation*}
Since $\mathscr{T}$ is the center of $\mathcal{M}$ it acts trivially on $\mathcal{V}_{6}$, thus $\mathrm{Ind}_{\overline{\mathscr{B}}}^{\mathscr{G}}(\Theta^{\pm}\otimes|-|^{3})\cong i_{\overline{\mathscr{B}}}^{\mathscr{G}}(|-|^{\frac{5}{2}})\otimes \Theta^{\pm}$. Note that $i_{\overline{\mathscr{B}}}^{\mathscr{G}}(|-|^{\frac{5}{2}})\otimes \Theta^{\pm}$ is an irreducible $\mathscr{G}\times G$-module. Thus by Theorem \ref{ThetaPS}, there is a surjective $G$-module map

\begin{equation*}
i_{Q}^{G}(|-|^{\frac{5}{2}}\circ\omega_{4})\cong\Theta(i_{\overline{\mathscr{B}}}^{\mathscr{G}}(|-|^{\frac{5}{2}}))\twoheadrightarrow \Theta^{\pm}.
\end{equation*}

By applying Proposition \ref{CJDPS} we get the following theorem.

\begin{theorem}\label{E6Theta}
There is a bijection between the irreducible $G$-modules $\{\Theta^{+},\Theta^{-}\}$ and the two irreducible summands of the unique semisimple quotient of 
$i_{Q}^{G}(|-|^{\frac{5}{2}}\circ\omega_{4})$.  
\end{theorem}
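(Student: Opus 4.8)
The plan is to establish the bijection by combining the three facts that have already been assembled in the excerpt: (i) the corollary just before the theorem gives a surjective $G$-module map
\[
i_{Q}^{G}(|-|^{5/2}\circ\omega_{4})\twoheadrightarrow \Theta^{\pm};
\]
(ii) Corollary \ref{pm1F4Theta} asserts that $\mathcal V_{6}\cong \Theta^{+}\oplus\Theta^{-}$ as a $\mu_{2}\times G$-module, and in particular $\Theta^{+}$ and $\Theta^{-}$ are irreducible and nonisomorphic; and (iii) Proposition \ref{CJDPS}(3) says that $i_{Q}^{G}(|-|^{5/2}\circ\omega_{4})$ has a maximal semisimple quotient $\sigma^{+}\oplus\sigma^{-}$ with $\sigma^{\pm}$ distinct and irreducible. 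So first I would record that $\Theta^{+}$ and $\Theta^{-}$ are each irreducible quotients of $i_{Q}^{G}(|-|^{5/2}\circ\omega_{4})$, hence each is a quotient of its co-socle $\sigma^{+}\oplus\sigma^{-}$. An irreducible quotient of $\sigma^{+}\oplus\sigma^{-}$ must be isomorphic to $\sigma^{+}$ or to $\sigma^{-}$, so $\{\Theta^{+},\Theta^{-}\}\subseteq\{\sigma^{+},\sigma^{-}\}$ as sets of isomorphism classes.

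Next I would argue that the inclusion is actually an equality, i.e.\ that $\Theta^{+}$ and $\Theta^{-}$ are the two distinct classes $\sigma^{+},\sigma^{-}$, giving the desired bijection. Since $\Theta^{+}\not\cong\Theta^{-}$ by Corollary \ref{pm1F4Theta}, the two-element set $\{\Theta^{+},\Theta^{-}\}$ injects into the two-element set $\{\sigma^{+},\sigma^{-}\}$, so it must be a bijection. That is the entire content; the map of the bijection is $\tau^{\pm}\mapsto\Theta(\tau^{\pm})$, and the statement that the image is exactly $\{\sigma^{+},\sigma^{-}\}$ is forced by counting. I would write this out in one short paragraph.

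The only genuine point requiring care — and the one I would treat as the main (small) obstacle — is the passage in step (i)–(ii): one must be sure that $\Theta^{\pm}$ really is an irreducible \emph{quotient} of $i_{Q}^{G}(|-|^{5/2}\circ\omega_{4})$ and not merely a subquotient, so that it is pinned down by the co-socle rather than floating somewhere in the middle of the composition series. But this is exactly what the surjection displayed just before the theorem provides, together with the irreducibility of $\Theta^{\pm}$ from Corollary \ref{pm1F4Theta}: an irreducible quotient of a module is an irreducible quotient of its co-socle. One also wants $i_{Q}^{G}(|-|^{5/2}\circ\omega_{4})$ to have co-socle of length exactly $2$ (not $1$), which is precisely Proposition \ref{CJDPS}(3); this rules out the degenerate possibility $\Theta^{+}\cong\Theta^{-}$ being forced by a length-one co-socle, consistent with Corollary \ref{pm1F4Theta}. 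With these observations in place the proof is a two or three line formal argument, so no lengthy computation is needed.
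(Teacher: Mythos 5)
Your proposal is correct and follows essentially the same route as the paper: the paper's own argument consists of constructing the surjection $i_{Q}^{G}(|-|^{5/2}\circ\omega_{4})\twoheadrightarrow\Theta^{\pm}$ (the display preceding the theorem, obtained from Theorem \ref{ThetaPS}) and then invoking Proposition \ref{CJDPS}(3) together with the irreducibility and non-isomorphism of $\Theta^{+}$ and $\Theta^{-}$ from Subsection \ref{mu2F41}, exactly the counting argument you spell out via the co-socle. The only nitpick is attribution: the surjection is a derived display rather than a corollary, and the facts that $\Theta^{\pm}$ are irreducible with $\Theta^{+}\not\cong\Theta^{-}$ come from the theorem preceding Corollary \ref{pm1F4Theta} rather than the corollary itself, but this does not affect the validity of your argument.
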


\section{Acknowledgments} Some of the key ideas used in this work were conceived in conversations with Wee Teck Gan, 
during a Research in Teams event at the Erwing Schroedinger Institute in Vienna, in April 2022. 
This work was finished at the National University of Singapore in October 2023. The authors would like to thank these institutions for hospitality and 
Wee Teck Gan for help and support throughout years. 
The second author has been supported by a gift from the Simons Foundation No. 946504.

\end{document}